\newcommand{\Rat}{\mathbb{Q}}
\newcommand{\Rea}{\mathbb{R}}
\newcommand{\Nat}{\mathbb{N}}
\newcommand{\Int}{\mathbb{Z}}
\def\DD{\mathcal{D}}
\def\PP{\mathcal P}
\def\KK{\mathcal K}
\def\II{\mathcal I}
\def\BB{\mathcal B}
\def\AA{\mathcal A}
\def\UU{\mathcal{U}}
\def\ZZ{\mathcal{Z}}
\def \Id {\operatorname{Id}} 
\def \Span {\operatorname{span}}
\def \Age {\operatorname{Age}}
\def \BM {\operatorname{BM}}
\def \Aut {\operatorname{Aut}}
\def \Iso {\operatorname{Iso}}
\DeclareMathOperator{\Flim}{Flim}
\DeclareMathOperator{\Mod}{Mod}
\DeclareMathOperator{\BMG}{BMG}
\DeclareMathOperator{\Intg}{Intg}
\DeclareMathOperator{\Norm}{Norm}
\DeclareMathOperator{\PNorm}{PNorm}
\DeclareMathOperator{\Incl}{I}
\DeclareMathOperator{\Proj}{Proj}
\DeclareMathOperator{\Am}{Am}
\DeclareMathOperator{\Fr}{Fr}
\def \Ball {B}
\def \Emb {\operatorname{Emb}} 
\def\restriction{\!\upharpoonright}
\def\setsep{\colon}
\newcommand\isomtrclass[2][]{\langle #2\rangle_{\equiv}^{#1}}
\newcommand\closureOfIsomtrclass[2][]{\overline{\langle #2\rangle_{\equiv}^{#1}}}
\newcommand\closedSpan[1]{\overline{\Span} \{#1\}}
\newcommand\closedAge[1]{\operatorname{Age}_{cl}(#1)}
\newcommand{\sslash}{\mathbin{/\mkern-6mu/}}
\newtheorem{theorem}{Theorem}[section]
\newtheorem{proposition}[theorem]{Proposition}
\newtheorem{corollary}[theorem]{Corollary}
\newtheorem{lemma}[theorem]{Lemma}
\theoremstyle{definition}
\newtheorem{definition}[theorem]{Definition}
\newtheorem*{claim*}{Claim}
\newtheorem{remark}[theorem]{Remark}
\newtheorem{example}[theorem]{Example}
\newcounter{que}
\newtheorem{problem}[que]{Problem}
\newtheorem{question}[que]{Question}
\begin{document}

\title{Guarded Fra\"iss\'e Banach spaces}

\author[M. C\' uth]{Marek C\'uth}
\author[N. de Rancourt]{No\'e de Rancourt}
\author[M. Doucha]{Michal Doucha}

\email{cuth@karlin.mff.cuni.cz}
\email{nderancour@univ-lille.fr}
\email{doucha@math.cas.cz}

\address[M.~C\' uth]{Charles University, Faculty of Mathematics and Physics, Department of Mathematical Analysis, Sokolovsk\'a 83, 186 75 Prague 8, Czech Republic}
\address[M.~Doucha]{Institute of Mathematics of the Czech Academy of Sciences, \v{Z}itn\'a 25, 115 67 Prague 1, Czech Republic}
\address[N.~de Rancourt]{Universit\'e de Lille, CNRS, UMR 8524 – Laboratoire Paul Painlev\'e, F-59000 Lille, France}

\subjclass[2020] {46B04, 46B07, 03E15, 54E52}

\keywords{Banach spaces, descriptive set theory, Fra\"iss\'e limit, $\omega$-categorical Banach space}
\thanks{M. C\'uth was supported by the Czech Science Foundation, project no. GA\v{C}R 24-10705S. N. de Rancourt  support from the Labex CEMPI (ANR-11-LABX-0007-01). M. Doucha was supported by the GA\v{C}R project 22-07833K and by the Czech Academy of Sciences (RVO 67985840).
}

\begin{abstract}
   We characterize separable Banach spaces having $G_\delta$ isometry classes in the Polish codings $\PP$, $\PP_\infty$ and $\BB$ introduced by C\'uth--Dole\v{z}al--Doucha--Kurka \cite{CDDK} as those being \textit{guarded Fra\"iss\'e}, a weakening of the notion of Fra\"iss\'e Banach spaces defined by Ferenczi--Lopez-Abad--Mbombo--Todorcevic \cite{FLT}. We prove a Fra\"iss\'e correspondence for those spaces and make links with the notion of $\omega$-categoricity from continuous logic, showing that $\omega$-categorical Banach spaces are a natural source of guarded Fra\"iss\'e Banach spaces. Using those results, we prove that for many values of $(p, q)$, the Banach space $L_p(L_q)$ has a $G_\delta$ isometry class; we precisely characterize those values.
\end{abstract}
\maketitle

\tableofcontents 


\section{Introduction}

The famous Mazur rotation problem asks whether $\ell_2$ is the only separable infinite-dimensional Banach space whose linear isometry group acts transitively on its unit sphere (see e.g. the survey \cite{Mazursurvey}). It remains open up to now although there exist counterexamples in the non-separable setting. In fact, it is well known and easy to check that $\ell_2$ enjoys a much stronger property, namely that its linear isometry group acts transitively on the space $\Emb(E,\ell_2)$ of all isometric embeddings of $E$ into $\ell_2$, where $E\subseteq\ell_2$ is a finite-dimensional subspace. When one relaxes the transitivity condition, for a Banach space $X$, and replaces it by $\varepsilon$-transivity (see Definition \ref{def:FraisseFLMT}), and moreover replaces $\Emb(E,X)$ by the space of $K$-isomorphic embeddings of $E$ into $X$ for $K> 1$ for all finite-dimensional subspaces $E\subseteq X$, it is possible to obtain a seemingly much larger class of Banach spaces that contains the Gurari\u{\i} space and $L_p([0,1])$, for $p\neq 4,6,8,\ldots$. This led Ferenczi, Lopez-Abad, Mbombo, and Todor\v cevi\'c in \cite{FLT} to introduce Fra\"iss\'e Banach spaces, a class of Banach spaces relevant both to the Mazur problem as well, as the name suggests, as to Fra\"iss\'e theory, a theory studying structures presenting a high degree of homogeneity that originated in model theory. Despite the fact there are more examples it might still be a rather limited class of spaces and the authors of \cite{FLT} conjecture that $\ell_2$, the Gurari\u{\i} space and $L_p([0,1])$, for $p\neq 4,6,8,\ldots$ are the only Fra\"iss\'e Banach spaces. There have been recent continuations of that study in \cite{FerLopEnveloppes, FeRV23}.

Coming from a different direction, Dole\v zal, Kurka, the first and the last named author introduced in \cite{CDDK} certain natural Polish spaces of separable Banach spaces in order to investigate the topological complexity of isometry and isomorphism classes of Banach spaces as well as the complexity of other Banach space properties of interest. They proved in \cite{CDDK23} that $\ell_2$ is the unique separable infinite-dimensional Banach space whose isometry class is closed and so it turns out the best thing one might hope for in non-hilbertian spaces are $F_\sigma$ and $G_\delta$ isometry classes. There is no known example, except $\ell_2$ with a closed isometry class, of the former. On the other hand, several Banach spaces are known to have $G_\delta$ isometry classes. This was verified in \cite{CDDK23} for all $L_p([0,1])$, $p\in\left[1,\infty\right)$, and the Gurari\u{\i} space, and no other examples were known.

The goal of this paper is to merge these two research directions which is achieved by the following
\begin{enumerate}
    \item introducing a class of Banach spaces here called \emph{guarded Fra\"iss\'e Banach spaces} which is obtained by further relaxing the defining condition of Fra\"iss\'e Banach spaces (see Definition \ref{def:GuardedFraisseIntroPart1});
    \item showing that being guarded Fra\"iss\'e is equivalent with the condition of having a $G_\delta$ isometry class in the Polish spaces of Banach spaces (see Theorem \ref{thm:BMgameForIsometryClasses});
    \item demonstrating on particular examples that this class is potentially much richer and contains e.g. all $L_p(L_q)$ spaces for $1\leq p\neq q<\infty$ such that $q \neq 2$ and $q\notin (p,2)$ (see Corollary~\ref{cor:LpLq}).
\end{enumerate} 

Several remarks are now in order. This paper is intended both for Banach space theorists as well as to mathematicians interested in Fra\"iss\'e theory of metric structures. The paper is divided into two parts. The first one develops the general theory and generalizes many results that are known, in their particular context, in the discrete setting of countable structures to the setting of Banach spaces. We note that some of these results could be proved in bigger generality in the context of metric structures. Particularly interesting cases could be those of pure metric spaces and separable (unital) $C^*$-algebras. However, it is the second part with examples that gives life to the theory and which demonstrates that the case study of Banach spaces is particularly interesting and important. There we follow the well-developed and studied model theory of Banach spaces which helps us to find new examples.

Let us mention that the beginning of the first part contains a large separate introduction that will help the reader to motivate the results, put them into a context and give intuition into the proofs we give there.

\section{Notation and Preliminaries}

In the paper we use the following notation. Given a set $A$, we denote by $[A]^{<\omega}$ the family of its finite subsets. In this paper all the Banach spaces are over the field of real numbers (let us emphasize that in this paper we deal with finite-dimensional as well as with infinite-dimensional Banach spaces). Let $X$ and $Y$ be Banach spaces. The set of all the linear and continuous operators $T:X\to Y$ is denoted by $\mathcal{L}(X,Y)$. Given $K\geq 1$ and $T\in \mathcal{L}(X,Y)$ we say it is \emph{$K$-isomorphic embedding} if $K^{-1}\|x\|\leq\|Tx\|\leq K\|x\|$, $x\in X$. We say $T$ is a \emph{$(<K)$-isomorphism} if it is $K'$-isomorphic embedding for some $K'\in[1,K)$. The set of all the surjective linear isometries $T:X\to Y$ is denoted by $\Iso(X,Y)$ and we write $\Iso(X)$ instead of $\Iso(X,X)$. The set of all the isometric linear embeddings is denoted by $\Emb(X,Y)$. Further, for $C\geq 0$ we denote
\[\Emb_C(X,Y):=\{T\in \mathcal{L}(X,Y)\setsep e^{-C}\|x\|\leq \|Tx\|\leq e^C\|x\| \text{ for every } x\in X\}\text{\footnotemark},\]\footnotetext{This slightly unusual convention has the advantage that $\Emb_C(Y, Z) \circ \Emb_{C'}(X, Y) \subseteq \Emb_{C+C'}(X, Z)$, which makes some proofs easier.}
and
\[\Emb_{<C}(X,Y):=\bigcup_{0 \leqslant C' < C}\Emb_{C'}(X, Y).\]

Following the notation from \cite{CDDK}, if $K>0$, $x_1,\ldots,x_n$ are linearly independent elements of $X$ and $y_1,\ldots,y_n\in Y$, we say $(x_1,\ldots,x_n)$ and $(y_1,\ldots,y_n)$ are \emph{$K$-equivalent} if the linear operator $T:\Span\{x_1,\ldots,x_n\}\to\Span\{y_1,\ldots,y_n\}$ sending $x_i$ to $y_i$ is a $(<K)$-isomorphic embedding.

In our paper we shall quite often use the following well-known ``perturbation argument'', we refer e.g. to \cite[Lemma 4.3]{CDDK23} for the proof.

\begin{lemma}\label{lem:perturbationArgument}
Given a basis $\{e_1,\ldots,e_n\}$ of a finite-dimensional Banach space $E$, there is $C>0$ and a function $\varphi:[0,C)\to[0,\infty)$ continuous at zero with $\varphi(0)=0$ such that whenever $X$ is a Banach space with $E\subseteq X$ and $\{x_i\setsep i\leq n\}\subseteq X$ are such that $\|x_i-e_i\|<\varepsilon$, $i\leq n$, for some $\varepsilon<C$, then the linear operator $T:E\to X$ given by $T(e_i):=x_i$ is $(1+\varphi(\varepsilon))$-isomorphic embedding and $\|T-Id_E\|\leq \varphi(\varepsilon)$.
\end{lemma}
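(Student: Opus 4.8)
The plan is to reduce everything to the equivalence of norms on the finite-dimensional space $E$. Fix the basis $\{e_1,\dots,e_n\}$ and, for $x=\sum_{i=1}^n a_i e_i\in E$, write $\|x\|_1:=\sum_{i=1}^n|a_i|$ for the associated coordinate $\ell_1$-norm. Since $\|\cdot\|_1$ and the norm of $E$ are two norms on the finite-dimensional space $E$, they are equivalent; in particular there is a constant $M>0$ with $\|x\|_1\leq M\|x\|$ for all $x\in E$ (concretely, $M=\sup\{\|x\|_1\setsep \|x\|=1\}$, which is finite by compactness of the unit sphere of $E$). The crucial point is that this $M$ depends only on $E$ and the chosen basis, not on the ambient space $X$ nor on the perturbing vectors $x_i$.

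Next I would control the difference operator $S:=T-\Id_E$, which is well defined since $\{e_i\}$ is a basis and which satisfies $S(e_i)=x_i-e_i$, hence $\|S(e_i)\|<\varepsilon$ for every $i\leq n$. For a general $x=\sum_i a_ie_i$ the triangle inequality gives $\|Sx\|\leq\sum_i|a_i|\,\|x_i-e_i\|<\varepsilon\sum_i|a_i|=\varepsilon\|x\|_1\leq M\varepsilon\|x\|$, so that $\|T-\Id_E\|=\|S\|\leq M\varepsilon$. From $T=\Id_E+S$ one then reads off, for every $x\in E$, the two-sided estimate $(1-M\varepsilon)\|x\|\leq\|Tx\|\leq(1+M\varepsilon)\|x\|$.

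It remains to package these bounds into the required form. I would set $C:=1/M$ and $\varphi(\varepsilon):=\frac{M\varepsilon}{1-M\varepsilon}$, which maps $[0,C)$ into $[0,\infty)$ and is continuous with $\varphi(0)=0$. For $\varepsilon<C$ we have $1-M\varepsilon>0$, and $K:=1+\varphi(\varepsilon)=\frac{1}{1-M\varepsilon}$ satisfies $K\geq 1+M\varepsilon$ (since $\frac{1}{1-t}\geq 1+t$ on $[0,1)$); thus the upper estimate gives $\|Tx\|\leq(1+M\varepsilon)\|x\|\leq K\|x\|$ while the lower estimate gives $\|Tx\|\geq(1-M\varepsilon)\|x\|=K^{-1}\|x\|$, so $T$ is a $(1+\varphi(\varepsilon))$-isomorphic embedding. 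Finally $\|T-\Id_E\|\leq M\varepsilon\leq\frac{M\varepsilon}{1-M\varepsilon}=\varphi(\varepsilon)$, as required.

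There is no deep obstacle here: the argument is entirely elementary once finite-dimensionality is invoked. The only points requiring a little care are (i) checking that the equivalence constant $M$ can be chosen uniformly, independently of $X$ — which it can, precisely because $T$ and $S$ factor through the intrinsic data of $E$ and its basis — and (ii) choosing $\varphi$ and $C$ so that the single quantity $1+\varphi(\varepsilon)$ simultaneously dominates the multiplicative upper bound $1+M\varepsilon$, the reciprocal of the lower bound $1-M\varepsilon$, and the operator norm $\|T-\Id_E\|$; the choice $\varphi(\varepsilon)=M\varepsilon/(1-M\varepsilon)$ achieves all three at once.
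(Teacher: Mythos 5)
Your proof is correct: the uniform constant $M$ from the equivalence of $\|\cdot\|_1$ with the norm of $E$ depends only on $E$ and the basis, the bound $\|T-\Id_E\|\leq M\varepsilon$ follows from the triangle inequality, and the choice $\varphi(\varepsilon)=M\varepsilon/(1-M\varepsilon)$, $C=1/M$ does simultaneously yield the two-sided estimate and the operator-norm bound. The paper itself does not prove this lemma but defers to an external reference, and your elementary norm-equivalence perturbation argument is exactly the standard one used there, so there is nothing further to compare.
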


\subsection{Coding of separable Banach spaces} Following \cite{CDDK}, in this paper we use the coding of separable Banach spaces given by (pseudo)norms on $c_{00}$. Let us recall the most important notation/facts from \cite{CDDK}, further details may be found therein.

By $V$ we denote the vector space over $\Rat$ of all finitely supported sequences of rational numbers and by $\PP$ the space of all the pseudonorms on $V$ endowed with the topology of pointwise convergence, subbasis of this topology is given by sets of the form $U[v,I] :=
\{\mu \in \PP\setsep \mu(v) \in I\}$, where $v \in V$ and $I$ is an open interval. We often identify $\mu \in\PP$ with its extension to the pseudonorm on the space $c_{00}$ (the vector space over $\Rea$ of all finitely supported sequences of real numbers). Every $\mu \in \PP$ should be understood as a ``code'' of a separable Banach space $X_\mu$, which is defined as the completion of the
quotient space $X/N$, where $X = (c_{00},\mu)$ and $N = \{x \in X: \mu(x)=0\}$. We
often identify $V$ with a subset of $X_\mu$, more precisely every $v \in V$ is identified with its equivalence
class $[v]_N \in X_\mu$.

By $\PP_\infty$ we denote the set of those $\mu\in\PP$ for which the Banach space $X_\mu$ is infinite-dimensional, and by $\BB$ we denote the set of those $\mu\in\PP_\infty$ for which the extension of $\mu$ to $c_{00}$ is
an actual norm (that is, the canonical vectors $e_1,e_2,\ldots$ are linearly independent in $X_\mu$). We endow $\PP_\infty$ and $\BB$ with topologies inherited from $\PP$. It is known (and rather easy to check) that the topologies on $\PP$, $\PP_\infty$ and $\BB$ are Polish.

Given a separable Banach space $Z$ and $\mathcal{I}\subseteq \PP$ we denote
$\isomtrclass[\mathcal{I}]{Z}:=\{\mu\in\mathcal{I}\setsep X_\mu\equiv Z\}$.
If $\mathcal{I}$ is clear from the context we write $\isomtrclass{Z}$ instead of $\isomtrclass[\mathcal{I}]{Z}$.
If $\mu\in\PP$ is given, we write $\isomtrclass{\mu}$ instead of $\isomtrclass{X_\mu}$.

For $\II\in\{\PP,\PP_\infty,\BB\}$, $\mu\in\II$, a finite set $F\subseteq V$ and $\varepsilon>0$, we denote by $U_\II[\mu;F;\varepsilon]$ the set $\{\nu\in\II\setsep |\nu(f)-\mu(f)|<\varepsilon,\;f\in F\}$. If $\II$ is clear from the context we write $U[\mu;F;\varepsilon]$ instead of $U_\II[\mu;F;\varepsilon]$. When $\mu$ is fixed, the $U_{\mathcal{I}}[\mu, F, \varepsilon]$'s, for $F \subseteq V$ finite and $\varepsilon> 0$, form a basis of neighborhoods of $\mu$ in $\mathcal{I}$.

The following technical lemma, \cite[Lemma 2.4]{CDDK}, will be used several times in this paper.

\begin{lemma}\label{lem:openEquivalence}
Let $X$ be a Banach space with $\{x_1, \ldots, x_n\} \subseteq X$ linearly independant, and let $v_1, \ldots, v_n \in V$. Then for any $K > 1$, the set
    $$\{\mu \in \PP \setsep (x_1, \ldots, x_n)\in X^n \text{ and } (v_1, \ldots, v_n)\in (X_\mu)^n \text{ are $K$-equivalent}\}$$
    is open in $\PP$.
\end{lemma}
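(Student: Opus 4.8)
The plan is to reformulate $K$-equivalence as a uniform two-sided estimate on a compact sphere, and then to show that this estimate is stable under small perturbations of $\mu$. Since $x_1,\dots,x_n$ are linearly independent, the map $\phi(\bar a):=\|\sum_{i=1}^n a_i x_i\|_X$ is a genuine norm on $\Rea^n$, so its unit sphere $S:=\{\bar a\in\Rea^n\setsep \phi(\bar a)=1\}$ is compact. For $\mu\in\PP$ put $\psi_\mu(\bar a):=\mu(\sum_{i=1}^n a_i v_i)$, using the canonical extension of $\mu$ to $c_{00}$. Unwinding the definitions (and using homogeneity to restrict to $S$), the tuples $(x_1,\dots,x_n)$ and $(v_1,\dots,v_n)$ are $K$-equivalent in $X_\mu$ exactly when there is $K'\in[1,K)$ with $(K')^{-1}\le \psi_\mu(\bar a)\le K'$ for every $\bar a\in S$. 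Let $A$ denote the set in the statement.

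Fix $\mu_0\in A$ with witnessing constant $K_0\in[1,K)$, and choose $K_1$ with $K_0<K_1<K$. I would produce a basic neighborhood $U[\mu_0;F;\varepsilon]$ of $\mu_0$ contained in $A$ by forcing every $\mu$ in it to be $K_1$-equivalent. For this it suffices to arrange $|\psi_\mu(\bar a)-\psi_{\mu_0}(\bar a)|<\delta$ uniformly in $\bar a\in S$, for a $\delta>0$ small enough (in particular $\delta<K_0^{-1}$) that $K_0+\delta\le K_1$ and $(K_0^{-1}-\delta)^{-1}\le K_1$; indeed, one then reads off $K_1^{-1}\le\psi_\mu(\bar a)\le K_1$ on $S$ from $K_0^{-1}\le\psi_{\mu_0}(\bar a)\le K_0$.

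The key uniform-continuity estimate rests on the sublinearity of pseudonorms: for any $\mu$ and any $\bar a,\bar a'$ one has $|\psi_\mu(\bar a)-\psi_\mu(\bar a')|\le \mu\big(\sum_i (a_i-a_i')v_i\big)\le \sum_i|a_i-a_i'|\,\mu(v_i)$, so $\psi_\mu$ is Lipschitz in $\bar a$ with constant $\sum_i\mu(v_i)$. Keeping $v_1,\dots,v_n\in F$ and $\varepsilon\le 1$ bounds this constant uniformly by $L:=\sum_i(\mu_0(v_i)+1)$ for every $\mu\in U[\mu_0;F;\varepsilon]$ (and for $\mu_0$ itself). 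Using density of $\Rat$ together with compactness of $S$, I would pick a finite set $\bar b^{(1)},\dots,\bar b^{(m)}\in\Rat^n$ whose $\eta$-neighborhoods cover $S$, with $L\eta<\delta/3$; since the $v_i$ lie in $V$ and the $\bar b^{(k)}$ are rational, the vectors $w_k:=\sum_i b_i^{(k)}v_i$ again lie in $V$. Taking $F:=\{v_1,\dots,v_n\}\cup\{w_1,\dots,w_m\}$ and $\varepsilon:=\min(1,\delta/3)$, for any $\mu\in U[\mu_0;F;\varepsilon]$ and any $\bar a\in S$ I choose $\bar b^{(k)}$ with $|\bar a-\bar b^{(k)}|<\eta$ and split $|\psi_\mu(\bar a)-\psi_{\mu_0}(\bar a)|$ into the two Lipschitz contributions $|\psi_\mu(\bar a)-\psi_\mu(\bar b^{(k)})|$ and $|\psi_{\mu_0}(\bar b^{(k)})-\psi_{\mu_0}(\bar a)|$, each below $\delta/3$, plus the central term $|\mu(w_k)-\mu_0(w_k)|<\varepsilon\le\delta/3$, valid because $w_k\in F$. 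This delivers the uniform bound, whence $U[\mu_0;F;\varepsilon]\subseteq A$.

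I expect the main obstacle to be the bookkeeping at the interface between the real parameters $\bar a$ and the rational data that the topology of $\PP$ can actually test: the neighborhoods $U[\mu_0;F;\varepsilon]$ constrain $\mu$ only on finitely many elements of $V$, whereas $K$-equivalence is a condition over all real scalars. The sublinearity (hence Lipschitz) bound for pseudonorms, combined with compactness of $S$ and the rational $\eta$-net, is exactly what converts finite rational information into the required uniform control over the continuum of scalars.
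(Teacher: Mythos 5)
Your proof is correct. Note that the paper itself does not prove this lemma at all: it is quoted verbatim from \cite[Lemma 2.4]{CDDK}, so there is no internal proof to compare against. Your argument is the natural (and essentially standard) one for this statement: reformulate $K$-equivalence as a uniform two-sided bound for $\psi_\mu$ on the compact sphere $S$ of $(\Rea^n, \phi)$, pass from the witness constant $K_0$ to a slightly larger $K_1 < K$, and stabilize the bound under perturbation of $\mu$. The three points that make it work are all handled properly: the Lipschitz estimate $|\psi_\mu(\bar a)-\psi_\mu(\bar a')| \leqslant \sum_i |a_i-a_i'|\,\mu(v_i)$ with a constant bounded uniformly over the neighborhood (by insisting $v_1,\ldots,v_n \in F$ and $\varepsilon \leqslant 1$); the choice of a \emph{rational} $\eta$-net, which is exactly what guarantees the test vectors $w_k = \sum_i b_i^{(k)} v_i$ lie in $V$, the only place where the topology of $\PP$ can see anything; and the three-term splitting that converts finite rational data into uniform control over all real scalars. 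No gaps.
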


\bigskip

\subsection{Banach-Mazur compactum}

For two finite-dimensional normed spaces $X$ and $Y$ of the same dimension, let:
$$d_{\BM}(E, F) = \inf\left\{\log(\|T\|\cdot\|T^{-1}\|) \setsep T \colon E \to F \text{ is an isomorphism}\right\}.$$
For fixed $n \in \Nat$, this defines a pseudometric on the class of all $n$-dimensional normed spaces, called the \textit{Banach-Mazur pseudometric}. An easy compactness argument shows that the infimum in the above definition is always attained; in particular, $X$ and $Y$ are isometric if and only if $d_{\BM}(X, Y) = 0$.

\smallskip

For $n \in \Nat$, let $\widetilde{\BM}_n$ be the set of all norms on $\Rea^n$, and let $\BM_n$ be the quotient $\widetilde{\BM}_n/\equiv$, where we abusively write $\mu \equiv \nu$ if the spaces $(\Rea^n, \mu)$ and $(\Rea^n, \nu)$ are isometric. The pseudometric $d_{\BM}$ gives rise to a metric, that we still denote by $d_{\BM}$, on $\BM_n$. The metric space $(\BM_n, d_{\BM})$ is known to be compact and is called the \textit{$n$-dimensional Banach-Mazur compactum}, the metric $d_{\BM}$ being called the \textit{Banach-Mazur distance}. We refer the interested reader to \cite[\S 37]{BMdistanceBook}, where more details concerning the Banach-Mazur distance and Banach-Mazur compacta may be found.

\smallskip

Let $\BM = \bigcup_{n \in \Nat} \BM_n$, and endow it with the disjoint union topology. The space $\BM$ can be thought as the set of all isometry classes of finite-dimensional normed spaces. We will often abusively identify elements of $\BM$ with actual finite-dimensional normed spaces.

\smallskip

The following lemma establishes a link between convergence in $\PP$ and convergence in Banach-Mazur distance.

\begin{lemma}\label{lemma:convPBM}
    Let $(\mu_n)$ be a sequence of elements of $\PP$ converging to some $\mu \in \PP$. Let $F \subseteq V$ be finite, and suppose that $\mu$ is a norm on $\Span F$. Then, for large enough $n$, $\mu_n$ is a norm on $\Span F$, and $(\Span F, \mu_n)$ converges to $(\Span F, \mu)$ in the the space $\BM$.
\end{lemma}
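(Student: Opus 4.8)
The plan is to fix a basis of $\Span F$ consisting of rational vectors and to reduce the statement to a single uniform-convergence claim on a fixed compact sphere. Concretely, I would choose $v_1, \dots, v_d \in V$ forming a basis of the finite-dimensional real vector space $E := \Span F$, and equip $E$ with the auxiliary norm $\|\sum_i a_i v_i\|_0 := \sum_i |a_i|$, writing $S$ for its unit sphere. Since $V \cap E$ is dense in $E$ and convergence in $\PP$ is by definition pointwise, I have $\mu_n(v) \to \mu(v)$ for every $v \in V \cap E$. Everything hinges on upgrading this to \emph{uniform} convergence of $\mu_n$ to $\mu$ on $S$.

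This upgrade is the main step and the principal obstacle, precisely because pointwise convergence is available only on the dense rational set $V \cap E$, whereas $S$ contains irrational points as well. I would first observe that the family $(\mu_n)$ is equi-Lipschitz on $E$ with respect to $\|\cdot\|_0$: writing $x = \sum_i a_i v_i$ and $y = \sum_i b_i v_i$, the triangle inequality and absolute homogeneity give $|\mu_n(x) - \mu_n(y)| \le \mu_n(x - y) \le \sum_i |a_i - b_i|\,\mu_n(v_i) \le L\,\|x - y\|_0$, where $L := \sup_n \max_i \mu_n(v_i)$ is finite because each sequence $(\mu_n(v_i))_n$ converges. Equi-Lipschitzness together with pointwise convergence on the dense set $V \cap E$ then yields uniform convergence on the compact sphere $S$ by a routine finite-cover argument: cover $S$ by finitely many $\|\cdot\|_0$-balls of small radius centered at points of $V \cap E$, invoke pointwise convergence at these finitely many centers, and absorb the remaining error using the uniform Lipschitz constant $L$.

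With uniform convergence on $S$ secured, both conclusions follow quickly. Since $\mu$ is a norm on $E$, compactness gives $c := \min_{x \in S} \mu(x) > 0$; by uniform convergence $\mu_n(x) \ge c/2 > 0$ on $S$ for large $n$, and homogeneity (any nonzero $z \in E$ satisfies $\mu_n(z) = \|z\|_0\,\mu_n(z/\|z\|_0) \ge \|z\|_0\, c/2 > 0$) forces $\mu_n$ to be a genuine norm on $E$ for all such $n$, so $(E, \mu_n)$ is a well-defined point of $\BM_d$. For the Banach-Mazur convergence I would compare $\mu_n$ and $\mu$ two-sidedly: given $\varepsilon > 0$, uniform convergence on $S$ and homogeneity give $|\mu_n(x) - \mu(x)| \le \varepsilon \|x\|_0$ for all $x \in E$ and large $n$, and since $\mu$ is equivalent to $\|\cdot\|_0$ on the finite-dimensional space $E$, this rewrites as $(1 - \varepsilon')\mu(x) \le \mu_n(x) \le (1 + \varepsilon')\mu(x)$ with $\varepsilon' \to 0$. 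Testing $d_{\BM}$ on the identity map $\Id \colon (E, \mu) \to (E, \mu_n)$ then yields $d_{\BM}\big((E,\mu),(E,\mu_n)\big) \le \log\frac{1+\varepsilon'}{1-\varepsilon'} \to 0$, which is exactly convergence of $(\Span F, \mu_n)$ to $(\Span F, \mu)$ in $\BM$.
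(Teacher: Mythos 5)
Your proof is correct, but it takes a genuinely different route from the paper. The paper's proof is three lines long: after reducing (WLOG) to the case where $F$ is linearly independent, it invokes Lemma \ref{lem:openEquivalence} (i.e.\ \cite[Lemma 2.4]{CDDK}), which says that the set of $\nu \in \PP$ for which the tuple $(x_1,\dots,x_n)$ seen in $X_\mu$ and seen in $X_\nu$ are $e^\varepsilon$-equivalent is open in $\PP$; since this open set contains $\mu$, it contains $\mu_n$ for large $n$, which simultaneously yields that $\mu_n$ is a norm on $\Span F$ and that $d_{\BM}\bigl((\Span F,\mu_n),(\Span F,\mu)\bigr) \leqslant 2\varepsilon$. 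All the analytic work is thus outsourced to a previously established black box. You instead prove the needed uniform control from scratch: the equi-Lipschitz bound $|\mu_n(x)-\mu_n(y)| \leqslant L\|x-y\|_0$ with $L = \sup_n \max_i \mu_n(v_i) < \infty$, combined with pointwise convergence on the dense rational set $V \cap E$ and a finite-cover argument on the compact $\|\cdot\|_0$-sphere, upgrades pointwise to uniform convergence, after which positivity of $\min_S \mu$ and the two-sided estimate $(1-\varepsilon')\mu \leqslant \mu_n \leqslant (1+\varepsilon')\mu$ finish both claims. The trade-off: the paper's argument is much shorter and reuses machinery needed elsewhere (Lemma \ref{lem:openEquivalence} also underlies Proposition \ref{prop:equivalentNormOpen}), while yours is self-contained, avoids the $K$-equivalence formalism entirely, and makes explicit the equicontinuity mechanism that is implicitly what makes the cited lemma true. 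Both rest on the same underlying phenomenon — finitely many rational evaluation points control a pseudonorm uniformly on a finite-dimensional subspace — so neither is more general, but your version could stand alone without \cite{CDDK}.
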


The assumption that $\mu$ is a norm on $\Span F$ is essential in the above statement. If, for instance, $F$ is a nonzero singleton $\{x\}$, and if $\mu_n(x) = 1/n$, then $(\Span F, \mu_n)$ is isometric to $\mathbb{R}$ for every $n$, but $(\Span F, \mu)$ is isometric to $\{0\}$.

\begin{proof}[Proof of Lemma \ref{lemma:convPBM}]
    Without loss of generality, we can assume that $F$ is a linearly independent subset of $V$. Write $F =: \{x_1, \ldots, x_n\}$.  Since $\mu$ is a norm on $\Span F$, the set $F$ is still linearly independent in $X_\mu$. Given $\varepsilon > 0$, Lemma \ref{lem:openEquivalence} ensures that for large enough $n$, the 
    sequence $(x_1, \ldots, x_n)$ seen in $X_{\mu_n}$ and the same sequence seen in $X_\mu$ are $e^\varepsilon$-equivalent. Hence, for such an $n$, $\mu_n$ is a norm on $\Span F$ and $d_{\BM}((\Span F, \mu_n), (\Span F, \mu)) \leqslant 2\varepsilon$.
\end{proof}

\bigskip

\subsection{Classes of finite-dimensional normed spaces}

In this paper, we will often consider classes of finite-dimensional normed spaces. Those classes will be seen as subsets of $\BM$. A very important example is the \textit{age} of a Banach space, first defined in \cite{FLT}. It is inspired by the classical notion of age for a discrete structure, in the model-theoretic sense. We recall its definition below.

\begin{definition}\label{def:age}
Let $X$ be a separable Banach space. Denote by:
\begin{itemize}
    \item $\Age(X)$ its \textit{age}, that is, the class of all elements of $\BM$ that are isometric to a subspace of $X$;
    \item $\closedAge{X}$ the closure of $\Age(X)$ in the space $\BM$.
\end{itemize}
\end{definition}

Equivalently, $\closedAge{X}$ consists of those finite-dimensional normed spaces which are finitely representable in $X$.

\smallskip

The age of a Banach space always satisfies two properties, called the \textit{hereditary property} and the \textit{joint embedding property}, that are defined below in their full generality. Those two properties are classical for discrete structures, and the hereditary property was already considered for Banach spaces in \cite{FLT}. The third property defined below, called being \textit{infinite-dimensional}, is obviously satisfied by the age of any infinite-dimensional Banach space.

\begin{definition}\label{def:HP-JEP}
    Le $\KK \subseteq \BM$. Say that:
    \begin{itemize}
    \item $\mathcal{K}$ has the \textit{hereditary property (HP)}, or simply is  \textit{hereditary}, if for every $F \in \mathcal{K}$ and for every $E \in \BM$, if $E$ can be isometrically embedded into $F$, then $E \in \mathcal{K}$.
    \item $\KK$ has the \emph{joint embedding property} (JEP) if for any $F,G\in\KK$ there is $H\in\KK$ such that both $F$ and $G$ isometrically embed into $H$.
    \item $\KK$ is \emph{infinite-dimensional} if $\KK \cap \BM_n \neq \varnothing$ for every $n \in \Nat$.
    \end{itemize}
\end{definition}

\begin{definition}\label{def:sigmaK}
    For $\KK \subseteq \BM$, let $\sigma\KK:=\{\mu\in\PP\setsep \Age(X_\mu)\subseteq \KK\}$.
\end{definition}

If $X$ is a Banach space, then $\sigma\closedAge{X}$ is exactly the set of those $\mu \in \PP$ for which $X_\mu$ is finitely representable in $X$. Hence, we can rephrase \cite[Proposition 2.9]{CDDK} as follows.

\begin{proposition}\label{prop:closureOfIsomClass}
    Let $\mathcal{I} \in \{\PP, \PP_\infty, \BB\}$ and $X$ be a separable Banach space. Then $\sigma\closedAge{X} \cap \mathcal{I} = \overline{\isomtrclass[\mathcal{I}]{X}} \cap \mathcal{I}$.
\end{proposition}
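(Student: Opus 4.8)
The plan is to derive the equality from \cite[Proposition 2.9]{CDDK} after recording the identification announced just above the statement; I will also sketch how the two inclusions go, to make clear where the real content sits.

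The first task is to justify that $\sigma\closedAge{X} = \{\mu \in \PP \setsep X_\mu \text{ is finitely representable in } X\}$. By Definition~\ref{def:sigmaK}, $\mu \in \sigma\closedAge{X}$ means $\Age(X_\mu) \subseteq \closedAge{X}$, i.e. every finite-dimensional subspace of $X_\mu$ lies in $\closedAge{X}$. Since $\closedAge{X}$ is precisely the class of finite-dimensional normed spaces finitely representable in $X$ (the remark following Definition~\ref{def:age}), and since $X_\mu$ is finitely representable in $X$ exactly when each of its finite-dimensional subspaces is, the two conditions coincide. Intersecting with $\mathcal{I}$, the equality to prove is then literally \cite[Proposition 2.9]{CDDK}.

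For the inclusion $\overline{\isomtrclass[\mathcal{I}]{X}} \cap \mathcal{I} \subseteq \sigma\closedAge{X}$, suppose $\mu$ is a limit of codes $\nu$ with $X_\nu \equiv X$. For any finite linearly independent $F \subseteq V$ on which $\mu$ is a norm, Lemma~\ref{lemma:convPBM} gives $(\Span F, \nu) \to (\Span F, \mu)$ in $\BM$; as each $(\Span F, \nu)$ embeds isometrically into $X_\nu \equiv X$ it belongs to $\Age(X)$, so the limit $(\Span F, \mu)$ belongs to the closed class $\closedAge{X}$. Letting $F$ range over such finite subsets of $V$ and using that these spans are dense, in $\BM$, among the finite-dimensional subspaces of $X_\mu$, we conclude $\Age(X_\mu) \subseteq \closedAge{X}$, that is $\mu \in \sigma\closedAge{X}$.

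The reverse inclusion is the substantial one, and this is where I expect the main obstacle. Given $\mu \in \mathcal{I}$ with $X_\mu$ finitely representable in $X$, one must show that every basic neighborhood $U_\mathcal{I}[\mu; F; \varepsilon]$ contains some $\nu$ with $X_\nu \equiv X$. Finite representability only provides an almost-isometric embedding of the finite-dimensional space $(\Span F, \mu)$ into $X$; the work is to upgrade this finite approximate datum to a genuine pseudonorm $\nu$ on all of $V$ with $X_\nu \equiv X$ and $|\nu(f) - \mu(f)| < \varepsilon$ for $f \in F$, while keeping $\nu$ in $\mathcal{I}$ (i.e. ensuring $X_\nu$ is infinite-dimensional for $\mathcal{I} = \PP_\infty$, and that the canonical vectors stay independent for $\mathcal{I} = \BB$). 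This amalgamation of a prescribed finite configuration onto a fixed isometric copy of $X$ is exactly the construction of \cite[Proposition 2.9]{CDDK}, which I would cite rather than reproduce.
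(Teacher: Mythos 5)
Your proposal is correct and follows essentially the same route as the paper: the paper's entire ``proof'' consists of the observation that $\sigma\closedAge{X}$ is exactly the set of $\mu \in \PP$ with $X_\mu$ finitely representable in $X$ (which you justify carefully), after which the statement is declared to be a rephrasing of \cite[Proposition 2.9]{CDDK}. Your additional sketches of the two inclusions, including deferring the substantial direction to the cited construction, are consistent with this and add nothing that conflicts with the paper's treatment.
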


It follows in particular that for every separable Banach space $X$, $\sigma\closedAge{X}$ is closed in $\PP$. This phenomenon is actually very general, as shown by next proposition.

\begin{proposition}\label{prop:closedSigmaK}
    Let $\KK \subseteq \BM$ be closed. Then $\sigma\KK$ is closed in $\PP$.
\end{proposition}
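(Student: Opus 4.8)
The plan is to prove that the complement $\PP \setminus \sigma\KK$ is open. Fix $\mu \in \PP \setminus \sigma\KK$; by Definition \ref{def:sigmaK} this means $\Age(X_\mu) \not\subseteq \KK$, so there is a finite-dimensional subspace $E \subseteq X_\mu$ whose isometry class, viewed as a point of $\BM$, lies outside $\KK$. Since $\KK$ is closed, $\BM \setminus \KK$ is open, so I can fix $\delta > 0$ such that every $G \in \BM$ with $d_{\BM}(G, E) < \delta$ satisfies $G \notin \KK$. The goal is then to produce an open neighborhood of $\mu$ entirely contained in $\PP \setminus \sigma\KK$, and the key difficulty is that $E$ is an arbitrary subspace, whereas to control nearby pseudonorms $\nu$ I need to work with a subspace spanned by fixed vectors of $V$.

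To overcome this, I would first replace $E$ by a subspace spanned by rational vectors. Let $e_1, \ldots, e_n$ be a basis of $E$. Since the image of $V$ is dense in $X_\mu$, I can choose $v_1, \ldots, v_n \in V$ with $\|[v_i]_{N} - e_i\|$ arbitrarily small. Applying the perturbation argument (Lemma \ref{lem:perturbationArgument}) to the basis $\{e_1, \ldots, e_n\}$ of $E$ inside $X = X_\mu$, for a small enough choice the operator $T \colon E \to X_\mu$ with $T e_i = [v_i]_N$ is a $(1+\varphi(\varepsilon))$-isomorphic embedding. In particular the $[v_i]_N$ are linearly independent, so $\mu$ is a norm on $F := \Span\{v_1, \ldots, v_n\} \subseteq c_{00}$, and the subspace $F_\mu := \Span\{[v_1]_N, \ldots, [v_n]_N\} \subseteq X_\mu$ satisfies $d_{\BM}(F_\mu, E) \leqslant 2\log(1+\varphi(\varepsilon))$. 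Choosing $\varepsilon$ small enough I may assume $d_{\BM}(F_\mu, E) < \delta/2$.

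Finally I would apply Lemma \ref{lem:openEquivalence} with the Banach space $X_\mu$, the linearly independent tuple $([v_1]_N, \ldots, [v_n]_N)$, the same vectors $v_1, \ldots, v_n \in V$, and $K := e^{\delta/4} > 1$. This yields an open set $O \subseteq \PP$, namely the set of $\nu$ for which $([v_1]_N, \ldots, [v_n]_N)$ seen in $X_\mu$ and $([v_1]_N, \ldots, [v_n]_N)$ seen in $X_\nu$ are $K$-equivalent. The identity witnesses $\mu \in O$. For any $\nu \in O$, the defining $(<K)$-isomorphic embedding sends $[v_i]_N \in X_\mu$ to $[v_i]_N \in X_\nu$, so the latter are linearly independent; hence $\nu$ is a norm on $F$ and $(F, \nu)$ is isometric to the subspace $\Span\{[v_i]_N\} \in \Age(X_\nu)$, with $d_{\BM}\big((F, \nu), F_\mu\big) < 2\log K = \delta/2$. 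By the triangle inequality $d_{\BM}\big((F, \nu), E\big) < \delta$, so $(F, \nu) \notin \KK$, which gives $\Age(X_\nu) \not\subseteq \KK$, i.e.\ $\nu \notin \sigma\KK$. Thus $O \subseteq \PP \setminus \sigma\KK$, proving that $\sigma\KK$ is closed. The only delicate points are the transfer from the abstract subspace $E$ to the rationally spanned $F_\mu$ via the perturbation lemma and the bookkeeping of the constants $\delta$ and $K$, both of which are routine.
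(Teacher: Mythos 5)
Your proof is correct, but it takes a genuinely different route from the paper's. The paper argues sequential closedness directly: given a sequence $(\mu_n)$ in $\sigma\KK$ converging to $\mu$ and a finite-dimensional subspace $E \subseteq X_\mu$, it approximates $E$ by rationally spanned subspaces $(\Span(F_k), \mu)$ via Lemma \ref{lem:perturbationArgument}, invokes Lemma \ref{lemma:convPBM} to get that $(\Span(F_k), \mu_n)$ converges to $(\Span(F_k), \mu)$ in $\BM$, and then uses closedness of $\KK$ twice (first letting $n \to \infty$ for fixed $k$, then letting $k \to \infty$) to conclude $E \in \KK$. You instead show the complement of $\sigma\KK$ is open: closedness of $\KK$ is used only once, to extract a uniform $d_{\BM}$-ball of radius $\delta$ around a single witness $E \notin \KK$; you then pass to a rationally spanned copy $F_\mu$ with the same perturbation lemma, and apply Lemma \ref{lem:openEquivalence} directly (rather than through its consequence, Lemma \ref{lemma:convPBM}) to produce an explicit basic open neighborhood of $\mu$ disjoint from $\sigma\KK$. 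The shared core of both arguments is the transfer from an arbitrary finite-dimensional subspace to one spanned by vectors of $V$, together with the fact that $K$-equivalence of tuples is an open condition on the pseudonorm; what differs is the topological packaging. Your version is more quantitative — it exhibits the neighborhood and the $\delta/2 + \delta/2$ bookkeeping, and only needs one application of closedness of $\KK$ — while the paper's sequential version is shorter because it can lean on the already-established Lemma \ref{lemma:convPBM} and avoids tracking a uniform modulus around the witness space.
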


\begin{proof}
   Let $(\mu_n)$ be a sequence of elements of $\sigma\KK$ converging to some $\mu \in\PP$ and a finite-dimensional subspace $E \subseteq X_\mu$; we need to prove that $E \in \KK$. Fix a basis $(e_1, \ldots, e_d)$ of $E$. For each $1 \leqslant i \leqslant d$, let $(e_i^k)_{k \in \Nat}$ be a sequence of elements of $V$ converging to $e_i$. For each $k \in \Nat$, let $F_k = \{e_1^k, \ldots, e_d^k\} \subseteq V$. It follows from Lemma \ref{lem:perturbationArgument} that for large enough $k \in \Nat$, $\mu$ is a norm on $\Span(F_k)$, and $(\Span(F_k), \mu)_{k \in \Nat}$ converges to $E$ in the space $\BM$.

   \smallskip

   Without loss of generality, assume that for every $k \in \Nat$, $\mu$ is a norm on $\Span(F_k)$. For every $k, n \in \Nat$, since $\mu_n \in \sigma\KK$, we have $(\Span(F_k), \mu_n) \in \KK$. Moreover, by Lemma \ref{lemma:convPBM}, for every $k \in \Nat$, the sequence $(\Span(F_k), \mu_n)_{n \in \Nat}$ converges to $(\Span(F_k), \mu)$ in the space $\BM$, so by closedness of $\KK$, we deduce that $(\Span(F_k), \mu) \in \KK$. Making now $k$ tend to infinity and using once again closedness of $\KK$, we deduce that $E \in \KK$.
\end{proof}

\part{General theory of guarded Fra\"iss\'e Banach spaces}\label{FirstPart}

This first part is devoted to the study of the notion of a guarded Fra\"iss\'e Banach space and its links with descriptive complexity and Baire category in the Polish spaces $\PP$, $\PP_\infty$ and $\BB$. Given that the proofs of the three main results of this part, Theorems \ref{thm:BMgameForIsometryClasses}, \ref{thm:GuardedFraisseCorrespondence} and \ref{thm:IvanovBanach}, partly rely on each other, and the definition of our main notion is motivated by those proofs, it would be difficult to give a clear exposition of our notions and results while following the order in which proofs need to be done. For this reason, we will state and motivate our main definitions and results, without proofs, in Section \ref{sec:IntroPart1}, while the proofs of those results along with some other less important ones will be exposed in Sections \ref{sec:TechnStuffPart1} to \ref{sec:GuardedFraisseCorrespondence}.

\section{Definitions and presentation of the results}\label{sec:IntroPart1}

It has been proved by Dole\v{z}al, Kurka, the first and the last named author that the Gurari\u{\i} space $\mathbb{G}$ and the spaces $L_p$, $1\leqslant p < \infty$, all have a $G_\delta$ isometry class in the Polish spaces $\PP_\infty$ and $\BB$ (see \cite[Theorem 4.1]{CDDK} and \cite[Lemma 2.1 and Theorem 3.4]{CDDK23}; see also \cite[Corollary 4.5]{HMT23}, where this is a part of a more general result). One of our main goals in this paper is to provide new examples of Banach spaces satisfying this property, and for this, our first step is to exhibit an equivalent condition for having a $G_\delta$ isometry class. Our condition happens to be very similar to classical conditions coming from \textit{Fra\"iss\'e theory}, a field studying highly homogeneous mathematical structures and how they can be built from small (often, finitely generated) pieces. Before stating our condition, we quickly review some history and recent progress in Fra\"iss\'e theory, especially in the context of Banach spaces.

\smallskip

Fra\"iss\'e theory has been first developed for discrete structures \cite{FraisseOriginal}, originally in the setting of the \textit{theory of relations} by Fra\"iss\'e, but was mostly studied in the more general and more well-known language of model theory. In this setting, its objects of study are countable model-theoretic structures (\textit{e.g.} graphs, ordered sets, groups...) having the property that every isomorphism between two finitely-generated substructures extends to an automorphism of the whole structure; such structures are said to be \textit{ultrahomogeneous}, or sometimes simply \textit{homogeneous}. For a general introduction to the classical Fra\"iss\'e theory of model-theoretic structures, see \cite[Section 7.1]{Hodges}. While Fra\"iss\'e theory has mainly been studied by model theorists, it has been observed that many classical structures from topology and functional analysis exhibit properties that are very similar to ultrahomogeneity, and the use of Fra\"iss\'e-like methods is particularly efficient on them. One example is the Gurari\u{\i} space, see \cite{KubisSolecki}. This was one motivation to extend this theory to the setting of metric structures. This has been done in the quite general setting of continuous logic by Schoretsanitis \cite{Schoretsanitis} and Ben Yaacov \cite{BenYaacovMetricFraisse} (with slightly different approaches), and a more specific study in the case of Banach spaces was carried out recently by Ferenczi--Lopez-Abad--Mbombo--Todor\v cevi\'c \cite{FLT}, motivated in part by the \textit{Mazur rotation problem}, a well-known open problem from Banach space theory. In that paper, they define the following $\varepsilon$-$\delta$-version of ultrahomogeneity for Banach spaces.

\begin{definition}[Ferenczi--Lopez-Abad--Mbombo--Todor\v cevi\'c, \cite{FLT}]\label{def:FraisseFLMT}
    A Banach space $X$ is called \textit{weak Fra\"iss\'e} if for every finite-dimensional $E \subseteq X$ and every $\varepsilon > 0$, there exists $\delta > 0$ such that $\Iso(X)$ acts $\varepsilon$-transitively on $\Emb_\delta(E, X)$, i.e.
    for every $\phi, \psi \in \Emb_\delta(E, X)$, there exists $T \in \Iso(X)$ such that $\|\psi - T\circ \phi\| < \varepsilon$. Moreover, if in the statement above, $\delta$ only depends on $\varepsilon$ and $\dim(E)$, then $X$ is called \textit{Fra\"iss\'e}.
\end{definition}

\noindent Note that the terminology \textit{ultrahomogeneous} is avoided here because it is used in the same paper in its original sense, \textit{i.e.} without metric approximation. Examples of Fra\"iss\'e Banach spaces exhibited in \cite{FLT} are the Gurari\u{\i} space and the spaces $L_p$, for $p \in [1, \infty) \setminus \{2n \setsep n \in \Nat, \, n \geqslant 2\}$, while the $L_p$'s for $p \in \{2n \setsep n \in \Nat, \, n \geqslant 2\}$ are shown to have a slightly weaker property that is not formally named in \cite{FLT}, but will be called being \textit{cofinally Fra\"iss\'e} in the present paper (see Definition \ref{def:CofFraisse}), following terminology adopted in \cite{krKu} for discrete structures.

\smallskip

Our equivalent condition for a separable Banach space to have a $G_\delta$ isometry class is a metric version of a classical weakening of ultrahomogeneity. This weakening was first introduced in 1972 (in the model-theoretic setting) by Pabion \cite{Pabion}, who called it \textit{prehomogeneity}. For several decades, this property was sparsely studied, and its knowledge seems to have been limited to a small community. It was recently revived by the works of Kruckman \cite{KruckmanPhD}, Krawczyk--Kubi\'s \cite{krKu} and then Kubi\'s \cite{KubisWF}, who rather used the terminology of \textit{weak injectivity} or \textit{weak (ultra)homogeneity} (for a good understanding of this section, it is essential for the reader to note that the meaning of the prefix \textit{weak} used here has nothing to do with the prefix \textit{weak} from Definition \ref{def:FraisseFLMT}: this is an unfortunate coincidence!) This notion is particularly interesting because of its links with Baire category. Indeed, following Cameron \cite{Cameron}, say that a countable (relational) model-theoretic structure $X$ is \textit{ubiquitous in category} if its isomorphism class is comeager in the discrete version of what we denote in the present paper by $\sigma\Age(X)$. Here, comeagerness is expressed in the well-known topological coding $\Mod(\mathcal{L})$ of countable $\mathcal{L}$-structures (see \cite[Section 2.3]{Hjorth} for more details on $\Mod(\mathcal{L})$). An elementary argument based on the Vaught transform (see \cite[Lemma 2.50 and Notation 2.51]{Hjorth}) shows that ubiquity in category is equivalent to having a $G_\delta$ isomorphism class in $\Mod(\mathcal{L})$. The first result about ubiquity in category is due to Cameron \cite{Cameron}, who proved that every ultrahomogeneous structure is ubiquitous in category. Then, Pouzet and Roux \cite{PouzetRoux} proved that ubiquity in category is actually equivalent to weak ultrahomogeneity. The codings $\PP$, $\PP_\infty$ and $\BB$ of separable Banach spaces behaving in a very similar way as the coding $\Mod(\mathcal{L})$ of $\mathcal{L}$-structures, it was therefore natural for us to expect that having a $G_\delta$ isometry class in the codings $\PP$, $\PP_\infty$ and $\BB$ could be characterized by a property very similar to weak ultrahomogeneity.

\smallskip

In order to build such a characterization, we used a game-theoretic method inspired by Kruckman's \cite{KruckmanPhD} and Krawczyk--Kubi\'s's \cite{krKu} approaches to weak ultrahomogeneity. Recall that the \textit{Banach--Mazur game} is a well-known game used by descriptive set theorists to characterize several Baire category properties, where two players alternately choose open subsets of a given topological space in order to build an infinite descending chain (see \cite[Subsection 21.C]{Kechris}). In \cite{KruckmanPhD} and \cite{krKu}, a model-theoretic version of the Banach--Mazur game is considered, where players alternately extend a finitely generated structure, resulting in an infinite ascending chain. It is proved that a structure $X$ is weakly ultrahomogeneous if and only if the second player has a strategy in this game, played with structures embeddable into $X$, to ensure that the union of this chain is isomorphic to $X$. (Note that the same game was already considered and its relationship to ultrahomogeneity studied in \cite{KubisBM}, and that ideas used in \cite{KubisBM}, \cite{KruckmanPhD} and \cite{krKu} were already known in a different and more general form in the model-theoretic community, see \cite{HodgesGames}.) The equivalence between this model-theoretic Banach--Mazur game and the original one, played in a suitable topological space, allows one to recover Pouzet--Roux's characterization of ubiquity in category.

\smallskip

The Banach--Mazur game was even applied by Kubi\'s in the context of Banach spaces in \cite{K18} in order to obtain a new characterization of the Gurari\u{\i} space. Here we introduce a slightly different, Banach-space theoretic version of the game. This game, relative to a separable Banach space $X$ and denoted by $\BMG(X)$, is introduced in Section \ref{sec:BMGame} of the present paper. In this game, players alternately extend a finite-dimensional Banach space by way of approximate isometric embeddings (see Definition \ref{def:BMGame}). On one hand, we show by following standard descriptive set-theoretic techniques that if $\isomtrclass[\mathcal{B}]{X}$ is comeager in its closure in $\mathcal{B}$, then Player II has a winning strategy in $\BMG(X)$ (see Proposition \ref{prop:comeagerAndBMGame}). On the other hand, following the same scheme of proof as in \cite{krKu}, we exhibit a Fra\"iss\'e-like property of Banach spaces that is implied by the non-existence of a winning strategy for Player I in $\BMG(X)$ (see Proposition \ref{prop:winningI}). We call a Banach space having this Fra\"iss\'e-like property a \textit{guarded Fra\"iss\'e Banach space}. There are several equivalent expressions of the guarded Fra\"iss\'e property, whose equivalence is proved in Section \ref{sec:Ultrahom} (see Theorem \ref{thm:EquivalenceGFraGUH}), all of them being quite technical. We present two of them below, to illustrate their respective similarities with notions from previously mentioned works in Fra\"iss\'e theory.

\begin{definition}\label{def:GuardedFraisseIntroPart1}
        A separable Banach space $X$ is said to be \textit{guarded Fra\"iss\'e} if it satisfies one of the following two equivalent properties.
    \begin{enumerate}[label=(gF-\arabic*), series=gFraisse]    
    \item\label{it:gFraisseAction} For every finite-dimensional subspace $E\subseteq X$ and every $\varepsilon>0$, there exist $\delta>0$, and a finite dimensional subspace $F\subseteq X$ with $E \subseteq F$, such that $\Iso(X)$ acts $\varepsilon$-transitively on $\{\iota|E\setsep \iota\in \Emb_\delta(F,X)\}$, that is, for every $\iota,\iota'\in \Emb_\delta(F,X)$ there exists $T\in\Iso(X)$ such that $\|\iota'\restriction_{E} - T\circ\iota\restriction_{E}\|<\varepsilon$.
    
    \item\label{it:gFraisseForPlayerI} For every finite-dimensional subspace $E \subseteq X$ and every $\varepsilon>0$, there exist $F\in \closedAge{X}$, $\phi\in\Emb_{\varepsilon}(E,F)$ and $\delta>0$ such that for every $G\in\closedAge{X}$, $\psi\in\Emb_{\delta}(F,G)$  and $\eta>0$, there exists $\iota\in\Emb_{\eta}(G,X)$ such that $\|\Id_{E} - \iota\circ \psi\circ \phi\|<\varepsilon$.
    \end{enumerate}
\end{definition}

\begin{center}\includegraphics[scale=0.7]{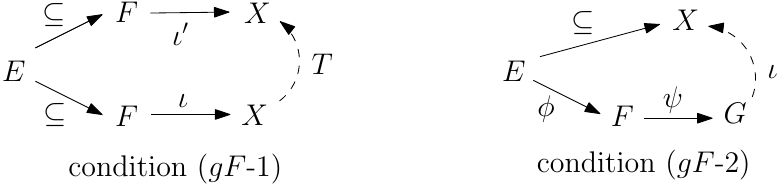}\end{center}

\noindent Property \ref{it:gFraisseForPlayerI} is the one we obtain in Section \ref{sec:BMGame} from the non-existence of a winning strategy for Player I in $\BMG(X)$. It is an $\varepsilon$-$\delta$-version of the property called \textit{weak injectivity} by Krawczyk--Kubi\'s \cite{krKu} and Kubi\'s \cite{KubisWF} and \textit{weak homogeneity} by Kruckman \cite{KruckmanPhD}. The seemingly stronger property \ref{it:gFraisseAction} is an $\varepsilon$-$\delta$-version of the property called \textit{weak homogeneity} by Kubi\'s \cite{KubisWF} and \textit{weak ultrahomogeneity} by Kruckman \cite{KruckmanPhD}. Despite those similarities, we chose not to follow Kruckman, Krawczyk and Kubi\'s's convention to add the prefix \textit{weak} everywhere to name our new notion, in order to avoid confusion with Ferenczi--Lopez-Abad--Mbombo--Todorcevic's notion of weak Fra\"iss\'e Banach spaces defined earlier. The terminology \textit{guarded Fra\"iss\'e} was kindly suggested to us by Dragan Ma\v{s}ulovi\'c, the \textit{guard} being, in property \ref{it:gFraisseForPlayerI}, the map $\phi$, and in property \ref{it:gFraisseAction}, the inclusion map $E \to F$. Also note that guarded Fra\"iss\'eness generalizes weak Fra\"iss\'eness of separable Banach spaces, as shown by the following proposition.

\begin{proposition}
    Every separable weak Fra\"iss\'e Banach space is guarded Fra\"iss\'e.
\end{proposition}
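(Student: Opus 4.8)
The plan is to show that weak Fra\"iss\'eness directly yields property \ref{it:gFraisseAction}. Recall that weak Fra\"iss\'e means: for every finite-dimensional $E \subseteq X$ and every $\varepsilon > 0$, there is $\delta > 0$ such that $\Iso(X)$ acts $\varepsilon$-transitively on $\Emb_\delta(E, X)$. Property \ref{it:gFraisseAction} asks for $\delta > 0$ \emph{and} a finite-dimensional $F$ with $E \subseteq F \subseteq X$ such that $\Iso(X)$ acts $\varepsilon$-transitively on the set $\{\iota\restriction_E \setsep \iota \in \Emb_\delta(F, X)\}$. The key observation is that the natural choice $F := E$ almost works: with $F = E$, the restriction map $\iota \mapsto \iota\restriction_E$ is the identity, so $\{\iota\restriction_E \setsep \iota \in \Emb_\delta(E, X)\} = \Emb_\delta(E, X)$, and the $\varepsilon$-transitivity demanded by \ref{it:gFraisseAction} becomes verbatim the $\varepsilon$-transitivity on $\Emb_\delta(E, X)$ guaranteed by weak Fra\"iss\'eness.

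Concretely, I would fix a finite-dimensional $E \subseteq X$ and $\varepsilon > 0$, apply the weak Fra\"iss\'e hypothesis to obtain $\delta > 0$ witnessing $\varepsilon$-transitivity of $\Iso(X)$ on $\Emb_\delta(E, X)$, and then simply set $F := E$. Given $\iota, \iota' \in \Emb_\delta(F, X) = \Emb_\delta(E, X)$, weak Fra\"iss\'eness produces $T \in \Iso(X)$ with $\|\iota' - T \circ \iota\| < \varepsilon$; since $F = E$, this is exactly $\|\iota'\restriction_E - T \circ \iota\restriction_E\| < \varepsilon$, which is the required conclusion. Thus \ref{it:gFraisseAction} holds, and since \ref{it:gFraisseAction} and \ref{it:gFraisseForPlayerI} are equivalent (Theorem \ref{thm:EquivalenceGFraGUH}), $X$ is guarded Fra\"iss\'e.

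There is essentially no obstacle here: the proof is a triviality once one notices that \ref{it:gFraisseAction} permits the guard $F$ to be taken equal to $E$, reducing the condition to the definition of weak Fra\"iss\'eness. The only point worth a word of care is that the excerpt's statement of weak Fra\"iss\'eness uses $\|\psi - T \circ \phi\| < \varepsilon$ for the full embeddings $\phi, \psi \in \Emb_\delta(E, X)$, whereas \ref{it:gFraisseAction} phrases the approximation in terms of restrictions to $E$; when $F = E$ these coincide, so no genuine strengthening is needed. One might note that the proposition is the expected sanity check confirming that guarded Fra\"iss\'eness is a bona fide weakening of the Ferenczi--Lopez-Abad--Mbombo--Todor\v cevi\'c notion, and its brevity is the whole point.
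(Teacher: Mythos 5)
Your proof is correct and is essentially identical to the paper's own one-line argument: the paper also observes that weak Fra\"iss\'eness is exactly property \ref{it:gFraisseAction} with the guard taken to be $F = E$. Nothing further is needed.
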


\begin{proof}
    The property defining weak Fra\"iss\'eness is obtained by taking $F = E$ in property \ref{it:gFraisseAction}.
\end{proof}

We are now ready to state the first of the three main results of Part \ref{FirstPart}, our characterization of separable Banach spaces having a $G_\delta$ isometry class.

\begin{theorem}\label{thm:BMgameForIsometryClasses}
Let $X$ be a separable Banach space and let $\mathcal{I} \in \{\PP, \PP_\infty, \BB\}$ if $\dim(X) = \infty$, or $\mathcal{I} = \PP$ if $\dim(X) < \infty$. The following conditions are equivalent:
\begin{enumerate}
    \item $\isomtrclass[\mathcal{I}]{X}$ is $G_\delta$ in $\mathcal{I}$;
    \item $\isomtrclass[\mathcal{I}]{X}$ is comeager in its closure in $\mathcal{I}$;
    \item Player II has a winning strategy in $\BMG(X)$;
    \item Player I does not have a winning strategy in $\BMG(X)$;
    \item $X$ is a guarded Fra\"iss\'e Banach space.
\end{enumerate} 
\end{theorem}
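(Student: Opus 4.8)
The plan is to prove the cycle of implications $(1) \Rightarrow (2) \Rightarrow (3) \Rightarrow (4) \Rightarrow (5) \Rightarrow (1)$, invoking the auxiliary Propositions \ref{prop:comeagerAndBMGame} and \ref{prop:winningI} together with the equivalence of the two guarded Fra\"iss\'e properties (Theorem \ref{thm:EquivalenceGFraGUH}) for the middle arrows, and reserving the genuine work for the closing arrow $(5) \Rightarrow (1)$. The conditions $(3)$, $(4)$, $(5)$ being intrinsic to $X$ and not mentioning $\mathcal{I}$, the cycle will simultaneously establish the equivalence of $(1)$ and $(2)$ across all admissible codings.

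The four ``easy'' arrows go as follows. For $(1) \Rightarrow (2)$: since $\mathcal{I}$ is Polish, the closed set $\overline{\isomtrclass[\mathcal{I}]{X}}$ is a Polish subspace, and $\isomtrclass[\mathcal{I}]{X}$ is a dense $G_\delta$ in it, hence comeager there by the Baire category theorem. For $(2) \Rightarrow (3)$ I would first reduce to the case $\mathcal{I} = \BB$ and then quote Proposition \ref{prop:comeagerAndBMGame}. The implication $(3) \Rightarrow (4)$ is the standard observation that in any game at most one player can have a winning strategy: if both players had winning strategies, playing them against each other would produce a single run won by both, contradicting that the payoff conditions are complementary. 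Finally, $(4) \Rightarrow (5)$ is exactly Proposition \ref{prop:winningI}, which yields property \ref{it:gFraisseForPlayerI}, combined with Theorem \ref{thm:EquivalenceGFraGUH} identifying this with guarded Fra\"iss\'eness.

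The heart of the argument is $(5) \Rightarrow (1)$, where I would exhibit the isometry class as an explicit $G_\delta$. By Proposition \ref{prop:closureOfIsomClass}, the closure $\overline{\isomtrclass[\mathcal{I}]{X}} = \sigma\closedAge{X} \cap \mathcal{I}$ is already closed, so it suffices to intersect it with countably many open sets that together cut out exactly the codes $\mu$ with $X_\mu \equiv X$. Fix a countable family of test configurations --- finite tuples from $V$, rational tolerances, and target spaces from a countable dense subset of $\closedAge{X}$ --- and, for each, let $W_n \subseteq \mathcal{I}$ be the set of $\mu$ for which $X_\mu$ meets the corresponding approximate extension demand coming from property \ref{it:gFraisseForPlayerI}. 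Each $W_n$ is open: this is where Lemmas \ref{lem:openEquivalence} and \ref{lem:perturbationArgument} enter, allowing the relevant near-isometric embeddings to be detected by finitely many pseudonorm inequalities and perturbed freely. I would then prove $\isomtrclass[\mathcal{I}]{X} = \sigma\closedAge{X} \cap \mathcal{I} \cap \bigcap_n W_n$. The inclusion ``$\subseteq$'' holds because $X$ itself, being guarded Fra\"iss\'e, satisfies every extension demand; the reverse inclusion is proved by a back-and-forth construction between $X_\mu$ and $X$, using the extension properties encoded by the $W_n$ on the $X_\mu$ side and guarded Fra\"iss\'eness on the $X$ side, together with separability of both spaces and a bookkeeping enumeration of countable dense subsets to guarantee the resulting isometry is surjective.

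I expect the main obstacle to be precisely this back-and-forth: one must arrange the list of demands $W_n$ so that membership in all of them is strong enough to force not merely that $X$ is finitely representable in $X_\mu$, but that an actual surjective linear isometry $X_\mu \to X$ can be assembled in the limit. The delicate points are (i) propagating the $\varepsilon$-$\delta$ approximations through infinitely many back-and-forth steps with summable errors, so that the limiting map is an \emph{exact} isometry rather than an approximate one, and (ii) checking that each demand is genuinely open, and not merely $G_\delta$, in the pseudonorm topology. A secondary, more bureaucratic obstacle is the coding-independence needed to make the single intrinsic conditions $(3)$--$(5)$ characterize the topological conditions $(1)$--$(2)$ uniformly across $\PP$, $\PP_\infty$ and $\BB$, and across the finite- and infinite-dimensional cases; I would handle this by transferring comeagerness and $G_\delta$-ness between the nested codings $\BB \subseteq \PP_\infty \subseteq \PP$.
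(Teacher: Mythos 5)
Your four ``easy'' arrows coincide with the paper's proof: (1)~$\Rightarrow$~(2) and (3)~$\Rightarrow$~(4) are treated as obvious there too, (2)~$\Rightarrow$~(3) is Proposition~\ref{prop:comeagerAndBMGame} (after the reduction to $\BB$ via Lemma~\ref{lem:reflectingCategory}), and (4)~$\Rightarrow$~(5) is Proposition~\ref{prop:winningI}. Your skeleton for (5)~$\Rightarrow$~(1) --- write $\isomtrclass[\mathcal{I}]{X}$ as $\sigma\closedAge{X}\cap\mathcal{I}$ intersected with countably many definable sets, and prove the reverse inclusion by a back-and-forth --- is also the skeleton of the paper's argument (there it is packaged as Proposition~\ref{prop:AmalgamationsAndGurarii}, which simultaneously yields the Fra\"iss\'e correspondence). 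However, there is a genuine gap in how you propose to build the sets $W_n$. Property~\ref{it:gFraisseForPlayerI} has the quantifier shape $\forall E\,\forall\varepsilon\;\exists(F,\phi,\delta)\;\forall(G,\psi,\eta)\;\exists\iota$. The innermost existential $\exists\iota$ is indeed handled by Lemmas~\ref{lem:openEquivalence} and~\ref{lem:perturbationArgument}, as you say; the problem is the \emph{middle} existential, the guard $(F,\phi,\delta)$. If your countable index set includes the guard, then $\bigcap_n W_n$ demands the extension property for \emph{every} guard, which is much stronger than~\ref{it:gFraisseForPlayerI} and is not satisfied even by $X$ itself, so the inclusion $\isomtrclass[\mathcal{I}]{X}\subseteq\bigcap_n W_n$ fails. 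If instead you index only by $(E,\varepsilon)$ and place $\exists(F,\phi,\delta)$ inside the demand, then $W_n$ has an $\exists\forall\exists$ form, and there is no reason for it to be open or even $G_\delta$: an existential over guards survives perturbation of $\mu$ only if the guard property itself is stable under small perturbations of $E$ and $F$, which is precisely what must be proved. This is the central difficulty, and your sketch does not contain the idea that resolves it: the paper replaces ``guard for $X_\mu$'' by ``$(\KK,\varepsilon,\delta)$-amalgamation pair'', a property of the \emph{fixed} class $\KK=\closedAge{X}$ that is perturbation-stable (Corollary~\ref{cor:amalgPairUnderIsomorphisms}); it proves guards exist generically via the wide-family argument (Lemmas~\ref{lem:amWide} and~\ref{lem:manyWidePairs}, giving condition~\ref{it:gAmalgThird}); and it encodes the extension property only \emph{conditionally} on being an amalgamation pair (condition~\ref{it:gAmalgImpliesGFra}, Lemmas~\ref{lem:thirdBaireCatFirstPart} and~\ref{lem:thirdBaireCatSecondPart}).

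A symptom of the same issue is your claim that each $W_n$ is open, together with your ``delicate point (ii)''. Demands of the necessary conditional form (``if $\mu$ restricts to a norm on $F$, $(E,F)$ is an amalgamation pair, and $\psi$ is a $\delta$-embedding of $(F_\Rea,\mu)$, then an extension $\iota$ exists'') have an open hypothesis, so they define a union of a closed set and an open set, hence a $G_\delta$ set but in general not an open one; this is harmless, since a countable intersection of $G_\delta$ sets is $G_\delta$. So the delicate point is not the openness of the innermost existential but the guard quantifier above it. Once the sets are built correctly, your back-and-forth plan does work and is the paper's: membership in all the demands forces $\closedAge{X_\mu}=\closedAge{X}$ (Lemma~\ref{lemma:gdeltaSigmaK}; your ``trivial guard'' demands give exactly this) and forces $X_\mu$ to satisfy~\ref{it:gFraisseForPlayerI}, after which Proposition~\ref{prop:backAndForthConstruction} (equivalently Theorem~\ref{thm:guardedUniqueByAge}) produces the surjective isometry $X_\mu\to X$ with the summable-error bookkeeping you anticipate.
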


Before going further, let us comment on the techniques used to prove each of the implications in the above theorem. The implications (1) $\Rightarrow$ (2) and (3) $\Rightarrow$ (4) are obvious. As mentioned earlier, to prove (2) $\Rightarrow$ (3) we followed the classical proof of the counterpart of this implication for the standard (topological) Banach--Mazur game, and to prove (4) $\Rightarrow$ (5) we followed the proof from \cite{krKu} of the discrete analogue of this implication. Note that the converse implication (5) $\Rightarrow$ (3) is also proved in \cite{krKu} in the case of discrete structures, and that the analogue of (3) $\Rightarrow$ (2) is standard for the topological Banach--Mazur game; in an early draft of the present paper, we followed those proofs to prove the implication (5) $\Rightarrow$ (2) in Theorem \ref{thm:BMgameForIsometryClasses}. Since, as we were informed by Malicki, a simple proof of (2) $\Rightarrow$ (1) can be given using variants of Vaught transform arguments developed in \cite{BDN} (and this is made explicit in a more general context in \cite[Theorem 1.1]{HMT23}), we could have proved the implication (5) $\Rightarrow$ (1) and hence the whole Theorem \ref{thm:BMgameForIsometryClasses} using only the above mentioned ideas. However, we chose to prove (5) $\Rightarrow$ (1) in a completely different way, allowing us to simultaneously obtain another important result: the \textit{Fra\"iss\'e correspondence} for guarded Fra\"iss\'e Banach spaces.

\smallskip

The Fra\"iss\'e correspondence is one of the main features of Fra\"iss\'e theory. It first appeared, for discrete structures, in Fra\"issé's seminal paper \cite{FraisseOriginal} (in the language of the theory of relations; once again, see \cite[Section 7.1]{Hodges} for a modern exposition in the language of model theory). In that setting, it establishes that the operation of taking the age of a structure induces a bijection between the collection of all countable ultrahomogeneous structures (considered up to isomorphism) and the collection of all so-called \textit{Fra\"iss\'e classes}, that are, classes of (isomorphism types of) finite structures satisfying (HP), (JEP), and another property called the \textit{amalgamation property (AP)} (here, the notion of \textit{age} and the properties \textit{(HP)} and \textit{(JEP)} have to be understood as the discrete analogues of those defined for Banach spaces in Definitions \ref{def:age} and \ref{def:HP-JEP}). 

\smallskip
 
The inverse bijection of the age operation is called the \textit{Fra\"iss\'e limit} operation. The main interest of the Fra\"iss\'e correspondence is that this operation offers a way to \textit{construct} interesting ultrahomogeneous structures from classes of finite structures; it is actually, for many ultrahomogeneous structures, the simplest way to construct them. One example is the Gurari\u{\i} space: while it is likely that Gurari\u{\i} has no knowledge of Fra\"iss\'e's work when first building it, the construction he gave in \cite{Gurarii} is very similar to Fra\"iss\'e's construction of Fra\"iss\'e limits.

\smallskip

Previously cited works \cite{Schoretsanitis, BenYaacovMetricFraisse, FLT} extending Fra\"iss\'e theory to metric contexts all contain a version of the Fra\"iss\'e correspondence; we present below in more details the one proved by Ferenczi--Lopez-Abad--Mbombo--Todorcevic \cite{FLT} for Banach spaces. The relevant notion of \textit{Fra\"iss\'e classes} in that context is defined using an $\varepsilon$-$\delta$-version of the amalgamation property; classes of finite-dimensional normed spaces satisfying this $\varepsilon$-$\delta$-version are called \textit{amalgamation classes} in \cite{FLT} (the terminology \textit{amalgamation property} being avoided since it is already used in the same paper in its original sense, i.e. without metric approximation). We reproduce below the definition of a Fra\"iss\'e class of finite-dimensional normed spaces from \cite{FLT}.

\begin{definition}[Ferenczi--Lopez-Abad--Mbombo--Todorcevic, \cite{FLT}]
    Say that a class $\mathcal{K} \subseteq \BM$ is :
    \begin{itemize}
        \item a \textit{weak amalgamation class} if for every $F\in \mathcal{K}$ and $\varepsilon > 0$, there exists $\delta > 0$ such that for every $G, H \in \mathcal{K}$, $\psi_G \in \Emb_\delta(F, G)$ and $\psi_H \in \Emb_\delta(F, H)$, there exist $K \in \mathcal{K}$, $\iota_G \in \Emb(G, K)$ and $\iota_H \in \Emb(H, K)$ for which $\|\iota_G \circ \psi_G - \iota_H\circ\psi_H\| < \varepsilon$;
        \item an \textit{amalgamation class} if it is a weak amalgamation class with, in the above definition, $\delta$ depending only on $\varepsilon$ and $\dim(F)$;
        \item a \textit{Fra\"iss\'e class} if it is nonempty, closed in $\BM$, hereditary, and is an amalgamation class.
    \end{itemize}
\end{definition}

\noindent Note that every weak amalgamation class of finite-dimensional normed spaces satisfies (JEP) (just take $F=0$ in the above definition), hence it is not necessary to include it in the definition of a Fra\"iss\'e class. The Fra\"iss\'e  correspondence proved in \cite{FLT} is the following.

\begin{theorem}[Ferenczi--Lopez-Abad--Mbombo--Todorcevic, \cite{FLT}]\label{thm:FraisseCorrespFLMT}
$X \mapsto \Age(X)$ induces a bijection between the collection of all separable Fra\"iss\'e Banach spaces (considered up to isometry) and the collection of all Fra\"iss\'e classes of finite-dimensional normed spaces. 
\end{theorem}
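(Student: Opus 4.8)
The plan is to verify the three things that make $X \mapsto \Age(X)$ a bijection: that it is \emph{well defined}, i.e. the age of a separable Fra\"iss\'e space is a Fra\"iss\'e class; that it is \emph{injective}, i.e. two separable Fra\"iss\'e spaces with the same age are isometric; and that it is \emph{surjective}, i.e. every Fra\"iss\'e class is the age of some separable Fra\"iss\'e space. The last point amounts to constructing the \emph{Fra\"iss\'e limit}, and it is where the real work lies.

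For well-definedness, $\Age(X)$ is nonempty and hereditary by definition, so the two substantial points are that it is closed in $\BM$ and that it is an amalgamation class. For closedness I would show $\closedAge{X} \subseteq \Age(X)$ by a stabilization argument: given $E$ finitely representable in $X$, choose $u_n \in \Emb_{\delta_n}(E, X)$ with $\delta_n \to 0$, and build a sequence $v_k \in \Emb_{\eta_k}(E, X)$ by applying the Fra\"iss\'e property to the genuine subspace $v_k(E) \subseteq X$ to align its inclusion with $u_m \circ v_k^{-1}$ (for $m$ large) by some $T_k \in \Iso(X)$, then setting $v_{k+1} := T_k^{-1} \circ u_m$. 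Each $v_{k+1}$ has strictly smaller distortion $\eta_{k+1}$, while $\|v_{k+1} - v_k\|$ can be made as small as desired; choosing the errors summable, $v_k$ converges to an exact isometric embedding of $E$, so $E \in \Age(X)$. For amalgamation, given $F \in \Age(X)$ I realize $F$, $G$, $H$ inside $X$ via isometric embeddings; two $\delta$-embeddings $\psi_G, \psi_H$ of $F$ then yield two elements of $\Emb_\delta(F, X)$, which the $\varepsilon$-transitivity of $\Iso(X)$ aligns within $\varepsilon$ by some $T \in \Iso(X)$. Taking $K$ to be the span of the union of the two images inside $X$ produces the amalgam; since the Fra\"iss\'e property supplies $\delta$ depending only on $\varepsilon$ and $\dim F$, the amalgamation is uniform, so $\Age(X)$ is an amalgamation class.

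For injectivity I would run a metric back-and-forth. Given separable Fra\"iss\'e spaces $X, Y$ with $\Age(X) = \Age(Y)$, fix dense sequences in each and build increasing finite-dimensional subspaces $E_k \subseteq X$, $F_k \subseteq Y$ together with near-isometries $T_k \colon E_k \to F_k$ whose distortions tend to $0$ and with $\|T_{k+1}\restriction_{E_k} - T_k\|$ summable, alternately absorbing the next dense vector from $X$ and from $Y$. Each extension step uses that the relevant finite-dimensional space lies in the common age (to find an approximate extension) and the Fra\"iss\'e homogeneity together with Lemma \ref{lem:perturbationArgument} (to correct it so the new map stays close to the old). In the limit the $T_k$ converge to a surjective isometry $X \to Y$.

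For surjectivity, the crux, I would construct the Fra\"iss\'e limit of a Fra\"iss\'e class $\mathcal{K}$ as a chain $F_0 \subseteq F_1 \subseteq \cdots$ of members of $\mathcal{K}$ via a bookkeeping argument enumerating all pending amalgamation tasks — a near-isometric copy of some $F \in \mathcal{K}$ inside the current $F_n$ together with a one-step extension $F \hookrightarrow G$ in $\mathcal{K}$ — each resolved using the amalgamation property with a prescribed small error. Setting $X := \overline{\bigcup_n F_n}$, one checks $\Age(X) = \mathcal{K}$: the inclusion $\mathcal{K} \subseteq \Age(X)$ follows from (JEP) and the exhaustiveness of the bookkeeping, while $\Age(X) \subseteq \mathcal{K}$ follows because every finite-dimensional subspace of $X$ is, by Lemma \ref{lemma:convPBM}, a $\BM$-limit of subspaces of the $F_n \in \mathcal{K}$, and $\mathcal{K}$ is hereditary and closed. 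The extension property built into the construction then yields the Fra\"iss\'e homogeneity of $X$. I expect the main obstacle to be exactly this construction: organizing the bookkeeping so that every one-step extension over every near-copy of a member of $\mathcal{K}$ is eventually amalgamated, while keeping all approximation errors summable so that the limit exists and has precisely the prescribed age — and, most delicately, arranging that the uniform (amalgamation-class, not merely weak-amalgamation) dependence of $\delta$ on the dimension propagates to the limit, so that $X$ is genuinely Fra\"iss\'e rather than only weak Fra\"iss\'e.
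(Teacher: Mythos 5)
Your proposal is correct in outline, but it takes a genuinely different route from the paper's. You give the direct, classical proof --- essentially reconstructing the original argument of \cite{FLT}: well-definedness (closedness of $\Age(X)$ by a stabilization argument, the amalgamation property by realizing everything inside $X$ and invoking $\varepsilon$-transitivity, both correctly exploiting that $\delta$ depends only on $\varepsilon$ and the dimension), injectivity by a metric back-and-forth, and surjectivity by an explicit chain construction with bookkeeping. The paper instead deduces Theorem \ref{thm:FraisseCorrespFLMT} in a few lines from its general machinery: the guarded Fra\"iss\'e correspondence (Theorem \ref{thm:GuardedFraisseCorrespondence}), the translation Lemma \ref{lem:FraisseCorrespFLMT} (for a guarded Fra\"iss\'e space $X$: $X$ is Fra\"iss\'e $\Leftrightarrow$ $\Age(X)$ is an amalgamation class $\Leftrightarrow$ $\closedAge{X}$ is a Fra\"iss\'e class, proved via Proposition \ref{prop:ageOfGuardedHasAmalg} and Corollary \ref{cor:equivAmalgGFPairs}), and two facts imported from \cite{FLT} (its Proposition 2.24, and its Theorem 2.12 giving closedness of the age --- which your stabilization argument reproves from scratch). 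The decisive structural difference is in the existence half: in the paper the Fra\"iss\'e limit is never built by hand; Proposition \ref{prop:AmalgamationsAndGurarii} produces it by a Baire category argument, showing that generic $\mu \in \sigma\KK$ satisfy that $X_\mu$ is guarded Fra\"iss\'e with $\closedAge{X_\mu} = \KK$, while uniqueness comes from the back-and-forth of Proposition \ref{prop:backAndForthConstruction} (Theorem \ref{thm:guardedUniqueByAge}), the counterpart of your injectivity step. What each buys: your route is self-contained and constructive and needs none of the coding-space $\PP$ machinery; the paper's route entirely sidesteps the two points you yourself single out as delicate --- the bookkeeping over an a priori uncountable family of amalgamation tasks, and the propagation of the uniform $\delta(\varepsilon, \dim)$ to the limit --- since genericity replaces the construction, and Lemma \ref{lem:FraisseCorrespFLMT} handles uniformity abstractly at the level of pairs.

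Two remarks on the points your surjectivity step defers; they are real but surmountable, so they are the only distance between your sketch and a complete proof. First, the bookkeeping must be reduced to countably many tasks: fix countable dense subsets of each $\BM_n$ and, for each relevant pair $(F, G)$, countable families of embeddings dense in each $\Emb_\delta(F, G)$; density together with the perturbation Lemma \ref{lem:perturbationArgument} then guarantees every task is approximately handled. Second, genuine Fra\"iss\'eness (rather than weak Fra\"iss\'eness) of the limit does not fall out of the extension property alone: one must rerun the back-and-forth between the limit and itself, feeding in the uniform $\delta$ supplied by the amalgamation class, to convert the extension property into uniform $\varepsilon$-transitivity on $\Emb_\delta(E, X)$ for all $E$ of a given dimension.
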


\noindent In particular, if $\mathcal{K} \subseteq \BM$ is a Fra\"iss\'e class, then there exists a unique Fra\"iss\'e Banach space $X$ such that $\Age(X) = \mathcal{K}$; this space $X$ is called the \textit{Fra\"iss\'e limit} of $\mathcal{K}$ and is denoted by $\Flim(\mathcal{K})$.

\smallskip

In the discrete setting, a version of the Fra\"iss\'e correspondence is known for weakly ultrahomogeneous structures, involving a weakening of the amalgamation property. This weakening was first introduced by Ivanov \cite{Ivanov} who called it the \textit{almost amalgamation property}; it was then independently rediscovered by Kechris--Rosendal \cite{KechrisRosendal} who called it the \textit{weak amalgamation property (WAP)}, the main terminology in use nowadays. In both articles \cite{Ivanov} and \cite{KechrisRosendal}, this property was used to characterize the existence of a comeager isomorphism class in discretes analogues of what we call here $\sigma\mathcal{K}$, $\mathcal{K}$ being a class of finite structures or finite partial expansions of a given structure. Following \cite{krKu}, call a \textit{weak Fra\"iss\'e class} a class of (isomorphism types of) finite structures satisfying (the discrete versions of) (HP), (JEP), and (WAP); then the so-called \textit{weak Fra\"iss\'e correspondence} asserts that the operation of taking the age induces a bijection between the collection of all weakly ultrahomogeneous structures (considered up to isomorphism) and the collection of all weak Fra\"iss\'e classes. Although this correspondence seems to have been, for long, a folklore knowledge in a small community, and its proof implicitely appeared in \cite{Ivanov}, its first explicit statement and proof appeared, up to the authors' knowledge, in Kruckman's Ph.D. thesis \cite{KruckmanPhD} (where the terminology \textit{generalized Fra\"iss\'e classes} was rather used), and independently in Krawczyk--Kubi\'s paper \cite{krKu}. Given a weak Fra\"iss\'e class $\mathcal{K}$ of finite structures, the unique weakly ultrahomogeneous structure $X$ whose age is $\mathcal{K}$ has been called the \textit{generic limit} of $\mathcal{K}$ in \cite{KruckmanPhD} and \cite{KubisWF}, an simply its \textit{limit} in \cite{krKu}; in the present paper, we will stick to the \textit{Fra\"iss\'e limit} terminology. 

\smallskip

The second main result of this Part \ref{FirstPart} is a Fra\"iss\'e correspondence for guarded Fra\"iss\'e Banach spaces. 
Before stating it, we need to define a Banach space version of (WAP); for coherence, it will be called the \textit{guarded amalgamation property}. It is defined very similarly as the property of being a weak amalgamation class for Banach spaces (do not confuse it here with the weak amalgamation property for discrete structures), the main difference being, again, the presence of a ``guard'', which is, in the definition below, the map $\phi$.

\begin{definition}
    Let $\mathcal{K} \subseteq \BM$. Say that:
    \begin{itemize}
        \item $\mathcal{K}$ has the \textit{guarded amalgamation property (GAP)} if for every $E \in \mathcal{K}$ and $\varepsilon > 0$, there exist $F\in \mathcal{K}$, $\phi \in \Emb_\varepsilon(E, F)$ and $\delta > 0$ such that for every $G, H \in \mathcal{K}$, $\psi_G \in \Emb_\delta(F, G)$ and $\psi_H \in \Emb_\delta(F, H)$, there exist $K \in \mathcal{K}$, $\iota_G \in \Emb(G, K)$ and $\iota_H \in \Emb(H, K)$ for which $\|\iota_G \circ \psi_G \circ \phi - \iota_H\circ\psi_H \circ \phi\| < \varepsilon$;
        \begin{center}\includegraphics[scale=0.7]{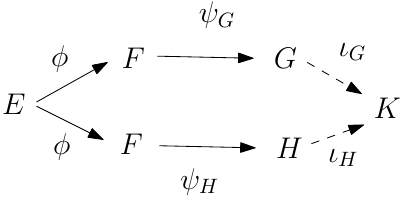}\end{center}
        
        \item $\mathcal{K}$ is a \textit{guarded Fra\"iss\'e class} if it is nonempty, closed in $\BM$, hereditary, and satisfies (JEP) and (GAP).
    \end{itemize}
\end{definition}

\noindent The proposition below describes how those notions relate to those from \cite{FLT}.

\begin{proposition}
    Every weak amalgamation class of finite-dimensional normed spaces satisfies (GAP). Every Fra\"iss\'e class of finite-dimensional normed spaces is a guarded Fra\"iss\'e class.
\end{proposition}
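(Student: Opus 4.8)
The plan is to prove the two assertions separately, with the second reducing almost immediately to the first once one recalls the observation (already made in the text) that weak amalgamation classes satisfy (JEP). The guiding principle is that (GAP) is nothing but the weak amalgamation condition augmented with the freedom to pre-compose with a guard $\phi$; thus weak amalgamation should be exactly the special case of (GAP) in which the guard is taken trivially.

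For the first statement, let $\mathcal{K}$ be a weak amalgamation class, and fix $E \in \mathcal{K}$ and $\varepsilon > 0$. I would set $F := E$ and choose the guard $\phi := \Id_E$. The only point requiring a word of justification is that this is an admissible guard: since $\Id_E$ is an isometry we have $\Id_E \in \Emb_0(E,E) \subseteq \Emb_\varepsilon(E,E)$, so $\phi \in \Emb_\varepsilon(E,F)$ as required. Applying the weak amalgamation property to $E$ and $\varepsilon$ furnishes the associated $\delta > 0$. Then for any $G, H \in \mathcal{K}$, $\psi_G \in \Emb_\delta(F,G)$ and $\psi_H \in \Emb_\delta(F,H)$, weak amalgamation produces $K \in \mathcal{K}$, $\iota_G \in \Emb(G, K)$ and $\iota_H \in \Emb(H,K)$ with $\|\iota_G \circ \psi_G - \iota_H \circ \psi_H\| < \varepsilon$. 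Because $\phi = \Id_E$ gives $\psi_G \circ \phi = \psi_G$ and $\psi_H \circ \phi = \psi_H$, this inequality is literally the conclusion of (GAP), so (GAP) holds.

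For the second statement, let $\mathcal{K}$ be a Fra\"iss\'e class. By definition it is nonempty, closed in $\BM$, and hereditary, and it is an amalgamation class. Since an amalgamation class is in particular a weak amalgamation class (one simply forgets that $\delta$ may additionally be chosen uniformly in $\dim(F)$), it satisfies (JEP)---by specializing the weak amalgamation condition to $F = 0$, as noted above---and it satisfies (GAP) by the first part of this proposition. All the defining clauses of a guarded Fra\"iss\'e class being verified, $\mathcal{K}$ is a guarded Fra\"iss\'e class.

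I do not anticipate any genuine obstacle here: the entire content is the correct bookkeeping of the definitions together with the observation that the trivial guard reduces (GAP) to weak amalgamation. The one place to stay alert is the verification that $\Id_E$ belongs to $\Emb_\varepsilon(E, E)$ and that composing with the identity leaves the amalgamation inequality unchanged; both are immediate from the conventions on $\Emb_C$ fixed in the preliminaries.
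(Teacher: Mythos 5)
Your proposal is correct and follows essentially the same route as the paper: the paper's proof also takes $F = E$ and $\phi = \Id_E$ to reduce (GAP) to the weak amalgamation condition, and invokes the previously noted fact that weak amalgamation classes satisfy (JEP) (via $F = 0$) to conclude the second statement. Your version merely spells out the routine verifications (that $\Id_E \in \Emb_\varepsilon(E,E)$ and that composing with the identity leaves the inequality unchanged) which the paper leaves implicit.
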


\begin{proof}
    The property of being a weak amalgamation class is obtained by taking $F=E$ and $\phi = \Id_E$ in the definition of (GAP). As previously mentioned, all weak amalgamation classes satisfy (JEP).
\end{proof}

\noindent The second main result of Part \ref{FirstPart} is the following.

\begin{theorem}\label{thm:GuardedFraisseCorrespondence}
   $X \mapsto \closedAge{X}$ induces a bijection between the collection of all guarded Fra\"iss\'e Banach spaces (considered up to isometry) and the collection of all guarded Fra\"iss\'e classes of finite-dimensional normed spaces. 
\end{theorem}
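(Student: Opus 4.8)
The plan is to establish that the map $X \mapsto \closedAge{X}$ is a well-defined bijection between guarded Fraïssé Banach spaces (up to isometry) and guarded Fraïssé classes, by checking three things: (i) the map lands in the target set, i.e. if $X$ is guarded Fraïssé then $\closedAge{X}$ is a guarded Fraïssé class; (ii) injectivity, i.e. two guarded Fraïssé spaces with the same closed age are isometric; and (iii) surjectivity, i.e. every guarded Fraïssé class $\KK$ arises as $\closedAge{X}$ for some guarded Fraïssé $X$. I would organize the argument around an \emph{existence-and-uniqueness of the Fraïssé limit} statement, since injectivity and surjectivity are really two halves of the same fact.

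For well-definedness (i), I would start from the intrinsic characterization \ref{it:gFraisseForPlayerI} of guarded Fraïsséness. The class $\closedAge{X}$ is by construction closed in $\BM$ (it is a closure), nonempty, and hereditary, and it satisfies (JEP) since $X$ is a single space into which any two finite-dimensional subspaces embed (after passing to finite representability, using the fact that $\closedAge{X}$ is exactly the finitely representable spaces by the remark after Definition \ref{def:age}). The crux is deriving (GAP) from \ref{it:gFraisseForPlayerI}. Given $E \in \closedAge{X}$ and $\varepsilon > 0$, I would first replace $E$ by an isometric (or nearly isometric) copy inside $X$ using finite representability, apply \ref{it:gFraisseForPlayerI} to obtain $F$, $\phi$, $\delta$, and then show that the witnessing maps $\iota$ into $X$ produced by \ref{it:gFraisseForPlayerI}, for two different extensions $G, H$ and embeddings $\psi_G, \psi_H$, can be amalgamated: the two images sit in $X$, so I can take $K$ to be a large enough finite-dimensional subspace of $X$ containing both images (again invoking that finite representability lets me realize $K \in \closedAge{X}$), with $\iota_G, \iota_H$ the near-inclusions, and the estimate $\|\iota_G \circ \psi_G \circ \phi - \iota_H \circ \psi_H \circ \phi\| < \varepsilon$ following from the two approximation inequalities $\|\Id_E - \iota \circ \psi \circ \phi\| < \varepsilon$ via the triangle inequality (after adjusting the constants in \ref{it:gFraisseForPlayerI} so the sum stays below $\varepsilon$).

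For the inverse direction (surjectivity and uniqueness), given a guarded Fraïssé class $\KK$, the natural candidate is the Fraïssé limit built by a generic chain. Concretely, I would use the equivalence from Theorem \ref{thm:BMgameForIsometryClasses}: a guarded Fraïssé class $\KK$ should determine, via the game $\BMG$ played inside $\KK$, a space $X$ with $\isomtrclass[\BB]{X}$ comeager in $\sigma\KK \cap \BB$ (equivalently, $G_\delta$), and the construction of a winning strategy for Player II consists of alternately amalgamating the current finite-dimensional space against \emph{all} possible demands, using (GAP) with its guard $\phi$ to absorb the approximations; taking the completion of the union of the resulting chain yields $X$, and one checks $\closedAge{X} = \KK$ (the inclusion $\subseteq$ uses closedness and heredity of $\KK$, the inclusion $\supseteq$ uses (JEP) and that every member of $\KK$ eventually gets embedded along the generic chain). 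The \emph{uniqueness} half — that any two guarded Fraïssé spaces with closed age $\KK$ are isometric — is the standard back-and-forth argument, but here the back-and-forth must be \emph{approximate and guarded}: one builds an increasing sequence of partial near-isometries using \ref{it:gFraisseAction} (the guarded action form) on both spaces alternately, with the guards $F \supseteq E$ providing exactly the room needed to continue the chain while keeping the accumulated error summable, and passes to the limit isometry.

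The main obstacle I anticipate is the uniqueness/back-and-forth step, specifically \emph{managing the metric approximations through the guards}. In the discrete case the back-and-forth is exact and the weak amalgamation guard is a harmless bookkeeping device; here every extension step only gives an embedding into $\Emb_\delta$ or an approximation within $\varepsilon$, and the guard $F$ means the map one actually controls is a restriction $\iota\restriction_E$ rather than $\iota$ itself. Keeping the errors geometrically summable so the limit map is a genuine surjective isometry — while simultaneously ensuring density of the images in both spaces (so the limit is onto) and compatibility of the guard choices on the two sides — will require a careful choice of the $\delta_n, \varepsilon_n$ schedule and repeated use of the perturbation Lemma \ref{lem:perturbationArgument} to convert near-embeddings into honest subspaces. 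I would isolate this as a separate lemma (an approximate guarded back-and-forth) to keep the bookkeeping contained, and expect the rest of the correspondence to follow formally from it together with Theorem \ref{thm:BMgameForIsometryClasses}.
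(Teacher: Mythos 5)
Your overall decomposition (well-definedness, uniqueness, existence) matches the paper's: well-definedness is the paper's Proposition \ref{prop:ageOfGuardedHasAmalg} together with stability of (HP)/(JEP)/(GAP) under closures, and your proposed ``approximate guarded back-and-forth'' lemma for uniqueness is exactly the paper's technical core, Proposition \ref{prop:backAndForthConstruction}, which yields Theorem \ref{thm:guardedUniqueByAge}. Where you genuinely diverge is existence: the paper does not build a generic chain, but runs a Baire category argument inside $\sigma\KK$ (Lemmas \ref{lemma:gdeltaSigmaK}, \ref{lem:manygAmalgPairs}, \ref{lem:thirdBaireCatFirstPart} and \ref{lem:thirdBaireCatSecondPart}), showing that for comeagerly many $\mu \in \sigma\KK$ the space $X_\mu$ is guarded Fra\"iss\'e with $\closedAge{X_\mu} = \KK$; uniqueness then collapses this generic set to a single isometry class. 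That route simultaneously yields Theorem \ref{thm:IvanovBanach} and implication (5) $\Rightarrow$ (1) of Theorem \ref{thm:BMgameForIsometryClasses}. Your chain construction is a legitimate alternative in spirit (it is what Krawczyk--Kubi\'s \cite{krKu} do in the discrete setting, and the authors say an early draft followed that scheme), but it buys less.

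There are two concrete gaps. First, in your derivation of (GAP) from \ref{it:gFraisseForPlayerI}: that condition only hands you maps $\iota \in \Emb_\eta(G,X)$, so your amalgamating maps into $K \subseteq X$ are near-isometric embeddings, whereas (GAP) requires genuine isometric embeddings $\iota_G \in \Emb(G,K)$ and $\iota_H \in \Emb(H,K)$ into some $K \in \KK$. (The same issue already lurks in your one-line claim that $\closedAge{X}$ has (JEP): inside $X$ you only find near-isometric copies of elements of the closed age.) The bridge is a compactness argument: take $\eta = 1/n$, note the amalgams $K_n$ have uniformly bounded dimension, pass to a subsequence converging in $\BM$, and use closedness of $\KK$ plus compactness of operator spaces between finite-dimensional spaces to extract limit isometric embeddings; this is precisely the paper's Lemma \ref{lem:amalgableSelfImprovement}. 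Alternatively one can avoid the issue entirely, as the paper does, by arguing from the action form \ref{it:gFraisseAction}: there the amalgamating maps are $T\circ\zeta_G$ and $\zeta_H$ with $T \in \Iso(X)$, which are exactly isometric.

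Second, your existence argument as written is circular: the game $\BMG$ is defined only relative to a given space $X$, so there is no ``game played inside $\KK$'' to which Theorem \ref{thm:BMgameForIsometryClasses} applies, and the only statements that manufacture a space from a class --- implication (5) $\Rightarrow$ (1) of that theorem and Theorem \ref{thm:IvanovBanach} --- are themselves proved in the paper via the correspondence (Proposition \ref{prop:AmalgamationsAndGurarii}). So you cannot cite that theorem here; you must either carry out the chain construction from scratch --- enumerating countable dense families of demands (using separability of the compacta $\BM_n$), interleaving guard choices with (GAP)-amalgamations, and then using perturbation and stability results (in the style of Lemma \ref{lem:stabilityGFTriple}) to transfer guards from chain elements to arbitrary finite-dimensional subspaces of the limit, so as to verify both that the limit is guarded Fra\"iss\'e and that its closed age is exactly $\KK$ --- or replace this step by the paper's Baire category argument. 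That is where the bulk of the work lies; it does not follow formally from anything proved before this point in the paper.
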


\noindent The fact that our Fra\"iss\'e correspondence involves the operation $X \mapsto \closedAge{X}$, and not $X \mapsto \Age(X)$ like in Theorem \ref{thm:FraisseCorrespFLMT}, might look surprising at first sight. However, since all Fra\"iss\'e Banach spaces have a closed age (see \cite[Theorem 2.12]{FLT}) those two operations are the same for those spaces. Theorem \ref{thm:GuardedFraisseCorrespondence} shows that $X \mapsto \closedAge{X}$ is the right generalization in the context of guarded Fra\"iss\'e Banach spaces. Theorem \ref{thm:GuardedFraisseCorrespondence} allows one to define the notion of the Fra\"iss\'e limit of a guarded Fra\"iss\'e class of finite-dimensional normed spaces. The notation $\Flim$ used in the definition below induces no confusion, since our notion extends the notion of Fra\"iss\'e limit defined in \cite{FLT}.

\begin{definition}
    Let $\mathcal{K} \subseteq \BM$ we a guarded Fra\"iss\'e class. The unique guarded Fra\"iss\'e Banach space $X$ (up to isometry) such that $\closedAge{X} = \mathcal{K}$ is called the \textit{Fra\"iss\'e limit} of the class $\mathcal{K}$ and is denoted by $\Flim(\mathcal{K})$. 
    Further, given a separable Banach space $X$ such that $\closedAge{X}$ is guarded Fra\"iss\'e, the space $\Flim(\closedAge{X})$ will be, in order to simplify the notation, denoted by $\Flim(X)$.
\end{definition}

We finally state the last main result of Part \ref{FirstPart}, a version of Ivanov's \cite{Ivanov} and Kechris--Rosendal's \cite{KechrisRosendal} characterization of the existence of a comeager isomorphism class in $\sigma \mathcal{K}$ in the Banach space context.

\begin{theorem}\label{thm:IvanovBanach}
    Let $\KK \subseteq \BM$ be a nonempty, closed and hereditary class satisfying (JEP). Let $\mathcal{I} \in \{\PP, \PP_\infty, \BB\}$ if $\KK$ is infinite-dimensional, or $\mathcal{I} = \PP$ otherwise. The following conditions are equivalent:
    \begin{enumerate}
        \item $\KK$ is a guarded Fra\"iss\'e class;
        \item there exists $\mu \in \sigma\KK\cap \mathcal{I}$ such that $\isomtrclass[\mathcal{I}]{\mu}$ is comeager in $(\sigma\KK) \cap \mathcal{I}$.
    \end{enumerate}
    Moreover, if those condition are satisfied, then the pseudonorm $\mu$ from condition (2) can be chosen in such a way that $X_\mu = \Flim(\KK)$.
\end{theorem}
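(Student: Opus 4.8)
The plan is to deduce both implications from the Fra\"iss\'e correspondence (Theorem~\ref{thm:GuardedFraisseCorrespondence}) and from the characterization of comeager isometry classes (Theorem~\ref{thm:BMgameForIsometryClasses}), bridged by the identification of the closure of an isometry class provided by Proposition~\ref{prop:closureOfIsomClass}.

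\smallskip

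\emph{Proof of $(1)\Rightarrow(2)$ and of the moreover clause.} Assume $\KK$ is a guarded Fra\"iss\'e class and let $X=\Flim(\KK)$ be the separable guarded Fra\"iss\'e space with $\closedAge X=\KK$ furnished by Theorem~\ref{thm:GuardedFraisseCorrespondence}; it is infinite-dimensional precisely when $\KK$ is, so it admits a code $\mu\in\mathcal I$ with $X_\mu=X$. Since $X$ is guarded Fra\"iss\'e, implication $(5)\Rightarrow(2)$ of Theorem~\ref{thm:BMgameForIsometryClasses} shows that $\isomtrclass[\mathcal I]{\mu}$ is comeager in its closure in $\mathcal I$, and Proposition~\ref{prop:closureOfIsomClass} identifies that closure with $\sigma\closedAge X\cap\mathcal I=\sigma\KK\cap\mathcal I$. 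As $\Age(X_\mu)\subseteq\closedAge X=\KK$ we have $\mu\in\sigma\KK$, so (2) holds with $X_\mu=\Flim(\KK)$. Uniqueness in the moreover clause is automatic: any two pseudonorms satisfying (2) have comeager, hence intersecting, isometry classes in the Baire space $\sigma\KK\cap\mathcal I$, so they code isometric spaces.

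\smallskip

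\emph{Proof of $(2)\Rightarrow(1)$.} Fix $\mu$ as in (2). It suffices to show $\closedAge{X_\mu}=\KK$: granting this, $\isomtrclass[\mathcal I]{\mu}$ is comeager in $\sigma\closedAge{X_\mu}\cap\mathcal I$, which by Proposition~\ref{prop:closureOfIsomClass} is the closure of the isometry class, so implication $(2)\Rightarrow(5)$ of Theorem~\ref{thm:BMgameForIsometryClasses} makes $X_\mu$ guarded Fra\"iss\'e, whence Theorem~\ref{thm:GuardedFraisseCorrespondence} makes $\closedAge{X_\mu}=\KK$ a guarded Fra\"iss\'e class (the remaining defining properties of $\KK$ being among our hypotheses). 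The inclusion $\closedAge{X_\mu}\subseteq\KK$ is immediate, since $\mu\in\sigma\KK$ gives $\Age(X_\mu)\subseteq\KK$ and $\KK$ is closed.

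\smallskip

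The main obstacle is the reverse inclusion $\KK\subseteq\closedAge{X_\mu}$. Fix $E\in\KK$; I will show that $E$ is finitely representable in $X_\mu$. First I realize $E$ inside a witness $\nu\in\sigma\KK\cap\mathcal I$: if $\mathcal I=\PP$ (the case $\KK$ finite-dimensional), (JEP) yields $H\in\KK$ with $E\hookrightarrow H$ and I set $X_\nu=H$; if $\mathcal I\in\{\PP_\infty,\BB\}$, then $\KK$ is infinite-dimensional, and combining (JEP) with infinite-dimensionality I build an increasing chain $E=E_0\hookrightarrow E_1\hookrightarrow\cdots$ in $\KK$ with $\dim E_n\to\infty$ and let $X_\nu$ be the completion of its union, choosing $\nu\in\BB$; heredity and closedness of $\KK$ give $\closedAge{X_\nu}\subseteq\KK$, so $\nu\in\sigma\KK$. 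Now fix a basis $(e_i)_{i\le d}$ of $E$ and view it as a linearly independent tuple $(x_i)$ in $X_\nu$. Given $K>1$, choose $v_i\in V$ close enough to $x_i$ that, by Lemma~\ref{lem:perturbationArgument}, $(x_1,\dots,x_d)$ and $(v_1,\dots,v_d)$ are $K$-equivalent in $X_\nu$; then by Lemma~\ref{lem:openEquivalence} the set $O:=\{\rho\in\mathcal I:(x_1,\dots,x_d)\in X_\nu^d\text{ and }(v_1,\dots,v_d)\in X_\rho^d\text{ are }K\text{-equivalent}\}$ is relatively open in $\mathcal I$ and contains $\nu$. Thus $O$ is a nonempty relatively open subset of the closed --- hence Baire --- space $\sigma\KK\cap\mathcal I$ (Proposition~\ref{prop:closedSigmaK}), and since $\isomtrclass[\mathcal I]{\mu}$ is comeager, hence dense, it meets $O$: any $\rho$ in the intersection yields in $X_\mu\equiv X_\rho$ a $d$-dimensional subspace $\Span\{v_1,\dots,v_d\}$ within $d_{\BM}$-distance $\log K$ of $E$. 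Letting $K\downarrow1$ gives $E\in\closedAge{X_\mu}$, which completes the proof.
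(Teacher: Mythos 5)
Your proof is correct, and its overall skeleton --- both implications routed through Theorem~\ref{thm:BMgameForIsometryClasses} together with Proposition~\ref{prop:closureOfIsomClass}, with the moreover clause obtained by coding $\Flim(\KK)$ --- coincides with the paper's. The genuine difference lies in how you establish the key inclusion $\KK\subseteq\closedAge{X_\mu}$ in $(2)\Rightarrow(1)$. The paper gets $\closedAge{X_\mu}=\KK$ in one line by invoking Lemma~\ref{lemma:gdeltaSigmaK} (and Corollary~\ref{cor:gDeltaSigmaK} when $\mathcal{I}\in\{\PP_\infty,\BB\}$), which assert that $\{\nu\in\sigma\KK\cap\mathcal{I}\setsep\closedAge{X_\nu}=\KK\}$ is comeager in $\sigma\KK\cap\mathcal{I}$, and then intersects two comeager sets. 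You instead re-derive exactly what is needed: for each $E\in\KK$ and $K>1$ you build a witness $\nu\in\sigma\KK\cap\mathcal{I}$ whose space contains $E$ isometrically (your chain construction handling $\mathcal{I}\in\{\PP_\infty,\BB\}$), produce from it a nonempty relatively open set $O$ via Lemmas~\ref{lem:perturbationArgument} and~\ref{lem:openEquivalence}, and use that the comeager class $\isomtrclass[\mathcal{I}]{\mu}$ is dense in the Baire space $\sigma\KK\cap\mathcal{I}$ (closed by Proposition~\ref{prop:closedSigmaK}) to find a code of $X_\mu$ in $O$. This is a ``dual'' Baire-category argument: rather than showing the good set is comeager and intersecting it with the isometry class, you exhibit one explicit open set per $(E,K)$ and use density of the class. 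What this buys is self-containedness --- you avoid Lemma~\ref{lemma:gdeltaSigmaK} and Corollary~\ref{cor:gDeltaSigmaK} (the latter resting on Lemma~\ref{lem:equalClosures}) --- at the cost of redoing, for each $E$, the witness construction that those lemmas package once and for all; in effect you are reproving the density half of Lemma~\ref{lemma:gdeltaSigmaK} in the restricted form you need. Two cosmetic slips: the Banach--Mazur estimate should be $2\log K$ rather than $\log K$ (a $(<K)$-isomorphic embedding only gives $\|T\|\cdot\|T^{-1}\|<K^2$), which is harmless since you let $K\downarrow 1$; and the parenthetical tying $\mathcal{I}=\PP$ to ``$\KK$ finite-dimensional'' is inaccurate, since $\mathcal{I}=\PP$ is also permitted when $\KK$ is infinite-dimensional --- but the finite-dimensional witness you build works verbatim in that case, so no gap results.
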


While Theorems \ref{thm:BMgameForIsometryClasses}, \ref{thm:GuardedFraisseCorrespondence} and \ref{thm:IvanovBanach} can be stated separately, their proofs involve common arguments and are so intertwined that it would be difficult to carry them out separately. The rest of Part \ref{FirstPart} is devoted to their proofs, which will be carried out simultaneously. This part will be organized as follows. Section \ref{sec:TechnStuffPart1} contains technical results about the coding $\PP$ of separable Banach spaces. In Section \ref{sec:BMGame}, we define our version of the Banach--Mazur game and prove the implications (1) $\Rightarrow$ (2) $\Rightarrow$ (3) $\Rightarrow$ (4) $\Rightarrow$ (5); the definition of a guarded Fra\"iss\'e Banach space, under the form \ref{it:gFraisseForPlayerI}, will be motivated by the proof of (4) $\Rightarrow$ (5). In Section \ref{sec:Ultrahom}, we prove a technical result (Proposition \ref{prop:backAndForthConstruction}) having two main consequences: one one hand, the equivalence between the different expressions of the guarded Fra\"issé property given in Definition \ref{def:GuardedFraisseIntroPart1} (see Theorem \ref{thm:EquivalenceGFraGUH}) and on the other hand, isometric uniqueness of guarded Fra\"iss\'e Banach spaces having a given age. In Section \ref{sec:gFClass}, we develop the basic theory of guarded Fra\"iss\'e classes. Finally, in Section \ref{sec:GuardedFraisseCorrespondence}, using results from previous sections, we prove Theorems \ref{thm:GuardedFraisseCorrespondence}, \ref{thm:IvanovBanach}, and implication (5) $\Rightarrow$ (1) in Theorem \ref{thm:BMgameForIsometryClasses}; we also define and consider the notion of cofinally Fra\"iss\'e Banach spaces there.

\section{Preliminaries concerning our coding \texorpdfstring{$\PP$}{P}}\label{sec:TechnStuffPart1}

In this section, we prove preliminary results on spaces $\PP$, $\PP_\infty$ and $\BB$ that will be used in the next sections. We start with two lemmas allowing one to transfer Baire category statements from the codings $\PP$ and $\PP_\infty$ to the coding $\BB$.

\begin{lemma}\label{lem:equalClosures}
    Let $X$ be a separable, infinite-dimensional Banach space. Then $\closureOfIsomtrclass[\BB]{X} = \closureOfIsomtrclass[\PP]{X}$.
\end{lemma}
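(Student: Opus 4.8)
The plan is to reduce the claimed equality to a single density statement and then prove it by an explicit perturbation. All closures are taken in $\PP$ (the ambient Polish space). The inclusion $\closureOfIsomtrclass[\BB]{X} \subseteq \closureOfIsomtrclass[\PP]{X}$ is immediate, since $\isomtrclass[\BB]{X} \subseteq \isomtrclass[\PP]{X}$. For the reverse inclusion, I observe that $\closureOfIsomtrclass[\PP]{X}$ is by definition the closure of $\isomtrclass[\PP]{X}$, so it suffices to show $\isomtrclass[\PP]{X} \subseteq \closureOfIsomtrclass[\BB]{X}$ and then take closures. Thus I fix $\mu \in \isomtrclass[\PP]{X}$ and a basic neighborhood $U[\mu; F; \varepsilon]$ with $F \subseteq V$ finite and $\varepsilon > 0$, and I aim to produce some $\nu \in \isomtrclass[\BB]{X}$ lying in $U[\mu; F; \varepsilon]$.

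To set up, I would fix $n \in \Nat$ large enough that $F \subseteq \Span\{e_1, \ldots, e_n\}$, identify $X_\mu$ with $X$ via a fixed surjective isometry, and write $z_i := [e_i]_N \in X$. Then $(z_i)_{i \geqslant 1}$ has dense linear span in $X$, and $\mu(\sum_i q_i e_i) = \|\sum_i q_i z_i\|_X$ for every $\sum_i q_i e_i \in V$. The idea is to build a new sequence $(w_i)_{i \geqslant 1}$ in $X$ that is \emph{linearly independent} (so that the induced pseudonorm is a genuine norm with independent canonical vectors), that still has dense span in $X$ (so that the completion is isometric to $X$), and whose first $n$ terms are as close as I wish to $z_1, \ldots, z_n$ (so that the induced norm stays close to $\mu$ on $F$). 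I then set $\nu(\sum_i q_i e_i) := \|\sum_i q_i w_i\|_X$.

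The construction of $(w_i)$ is the technical heart, and it is where infinite-dimensionality of $X$ is essential. First, perturbing greedily one coordinate at a time, I choose linearly independent $w_1, \ldots, w_n$ with $\|w_i - z_i\| < \eta$ for a small $\eta > 0$ to be fixed: at step $k$ the span $\Span\{w_1, \ldots, w_{k-1}\}$ is a proper, hence nowhere dense, subspace of $X$, so a vector avoiding it can be found arbitrarily close to $z_k$. Then, fixing a dense sequence $(d_j)_{j \geqslant 1}$ in $X$, I extend to $w_{n+1}, w_{n+2}, \ldots$ by a bookkeeping argument: at each step I either adjoin the least not-yet-covered $d_j$ when it is independent of the current span, or, if that $d_j$ already lies in the span, adjoin any vector outside the (finite-dimensional, proper) current span. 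This keeps $(w_i)$ linearly independent while ensuring every $d_j$ eventually belongs to $\Span\{w_i\}$, whence $\overline{\Span}\{w_i\} = X$. I expect this extension step to be the main (though routine) obstacle, everything else being a direct estimate.

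Finally I would verify the two required properties. Linear independence of $(w_i)$ makes $\nu$ an actual norm with linearly independent canonical vectors, and $X_\nu \equiv \overline{\Span}\{w_i\} = X$ is infinite-dimensional, so $\nu \in \isomtrclass[\BB]{X}$. For closeness, if $v = \sum_{i \leqslant n} q_i e_i \in F$ then $|\nu(v) - \mu(v)| \leqslant \sum_{i \leqslant n} |q_i|\,\|w_i - z_i\| \leqslant \eta \sum_{i \leqslant n}|q_i|$, so choosing $\eta$ small enough in terms of the finitely many coefficients of the elements of $F$ (the degenerate case $F \subseteq \{0\}$ being trivial) gives $\nu \in U[\mu; F; \varepsilon]$. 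This shows $\mu \in \closureOfIsomtrclass[\BB]{X}$, completing the argument.
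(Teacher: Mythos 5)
Your proposal is correct. It shares the paper's overall skeleton---reduce to showing that $\isomtrclass[\BB]{X}$ meets every basic neighborhood $U[\mu;F;\varepsilon]$ of every $\mu \in \isomtrclass[\PP]{X}$, and do so by exhibiting an injective linear map $T \colon c_{00} \to X$ with dense range, so that the pulled-back norm $\nu := \|T(\cdot)\|_X$ lies in $\isomtrclass[\BB]{X}$---but your construction of $T$ is genuinely different. The paper splits $\Span_\Rea(A) = E \oplus F$ with $\mu$ a norm on $E$ and $\mu\restriction_F = 0$, embeds $(E,\mu\restriction_E)$ isometrically into $X$ (using that $(E,\mu\restriction_E) \in \Age(X)$), and sends a basis of a complement to $\delta f_n$ for a fixed linearly independent family $(f_n)$ with dense span in a complement of the image; the resulting pseudonorms $\nu_\delta$ agree with $\mu$ exactly on $E$ and converge pointwise to $\mu$ as $\delta \to 0$. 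You avoid the norm/kernel decomposition and the appeal to $\Age(X)$ altogether: you additively perturb the canonical images $[e_i]_N$, $i \leqslant n$, into a linearly independent family---using that finite-dimensional subspaces of the infinite-dimensional $X$ are nowhere dense---and then run a bookkeeping extension against a dense sequence $(d_j)$ to obtain a linearly independent family with dense span; any degeneracy of $\mu$ is absorbed by the perturbation rather than split off in advance. The paper's route buys exact agreement on the non-degenerate part and a one-shot choice of the tail family; yours is more elementary and self-contained, at the price of the greedy independence-plus-bookkeeping construction. Your closing estimate $|\nu(v) - \mu(v)| \leqslant \eta \sum_{i \leqslant n} |q_i|$ for $v = \sum_{i \leqslant n} q_i e_i \in F$ is the right one, and both arguments use infinite-dimensionality in the same essential way (room to move off finite-dimensional subspaces).
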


\begin{proof}
    The left to right inclusion is obvious. We prove the right-to-left one by picking $\mu \in \isomtrclass[\PP]{X}$ and showing that $\mu \in \closureOfIsomtrclass[\BB]{X}$. For this, fix a finite set $A \subseteq V$ and $\varepsilon > 0$; we need to show that $U_\PP[\mu, A, \varepsilon] \cap \isomtrclass[\BB]{X} \neq \varnothing$. Write $\Span_\Rea(A) = E \oplus F$, where $E$ and $F$ are two vectors subspaces such that $\mu$ induces a norm on $E$ and $\mu\restriction_F = 0$. Let $Y$ be a vector subspace of $c_{00}$ such that $c_{00} = E \oplus F \oplus Y$. Let $(e_n)_{n \in \Nat}$ be a basis of $F \oplus Y$. Since $\mu \in \isomtrclass[\PP]{X}$, we have $(E, \mu\restriction_E) \in \Age(X)$, so one can find $\phi \in \Emb((E, \mu\restriction_E), X)$. Let $Z \subseteq X$ be a vector subspace such that $X = \phi(E) \oplus Z$, and let $(f_n)_{n \in \Nat}$ be a linearly independent family of elements of $Z$ having dense span. Given $\delta > 0$, let $T_\delta \colon c_{00} \to X$ be the unique linear map extending $\phi$ and such that for every $n \in \Nat$, $T_\delta(e_n) = \delta f_n$. This map being injective, we can define $\nu_\delta \in \BB$ by $\nu_\delta(x) \coloneq \|T_\delta(x)\|$ for every $x \in c_{00}$. This way, $T_\delta \colon (c_{00}, \nu_\delta) \to X$ is an isometric embedding with dense range, so it extends to an onto isometry $X_{\nu_\delta} \to X$; in particular for every $\delta > 0$, one has $\nu_\delta \in \isomtrclass[\BB]{X}$. For every $\delta > 0$, since $T_\delta$ induces an isometric embedding $(E, \mu\restriction_E) \to X$, the pseudonorms $\mu$ and $\nu_\delta$ coincide on $E$; moreover, when $\delta \to 0$, $\nu_\delta\restriction_{F \oplus Y}$ tends to $0$ pointwise. So, given $x \in E$ and $y \in F$, one has:
    \begin{align*}
        |\nu_\delta(x+y) - \mu(x+y)| &\leqslant |\nu_\delta(x+y) - \nu_\delta(x)| + |\nu_\delta(x) - \mu(x)| + |\mu(x) - \mu(x+y)|\\
        &\leqslant \nu_\delta(y) + 0 + \mu(y)\\
        & = \nu_\delta(y) \to 0
    \end{align*}
    so $\nu_\delta$ tends to $\mu$ pointwise on $E \oplus F$, hence on $A$. Thus, for $\delta > 0$ small enough, one has $\nu_\delta \in U_\PP[\mu, A, \varepsilon] \cap \isomtrclass[\BB]{X}$.
\end{proof}

\begin{lemma}\label{lem:reflectingCategory}
    Let $X$ be a separable, infinite-dimensional Banach space, and $\mathcal{I} \in \{\PP, \PP_\infty\}$. Suppose that $\isomtrclass[\mathcal{I}]{X}$ is comeager in $\closureOfIsomtrclass[\mathcal{I}]{X} \cap \mathcal{I}$. Then $\isomtrclass[\mathcal{\BB}]{X}$ is comeager in $\closureOfIsomtrclass[\BB]{X} \cap \BB$.
\end{lemma}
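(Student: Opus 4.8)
The plan is to deduce the statement from a general category-transfer principle between the subspace $\BB$ and the larger coding $\mathcal{I}$, the essential geometric input being the coincidence of closures supplied by Lemma~\ref{lem:equalClosures}.

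First I would note that all the relevant closures coincide. Since $X$ is infinite-dimensional, any $\mu$ with $X_\mu \equiv X$ automatically lies in $\PP_\infty$, so $\isomtrclass[\PP]{X} = \isomtrclass[\PP_\infty]{X}$ and therefore these two sets have the same closure in $\PP$; together with Lemma~\ref{lem:equalClosures} this shows that the closures (all taken in $\PP$) of $\isomtrclass[\PP]{X}$, $\isomtrclass[\PP_\infty]{X}$ and $\isomtrclass[\BB]{X}$ equal a single closed set, which I will call $C$. Writing $D := C \cap \mathcal{I}$ and $B := C \cap \BB$, the hypothesis becomes ``$A := \isomtrclass[\mathcal{I}]{X}$ is comeager in $D$'' and the goal becomes ``$E := \isomtrclass[\BB]{X}$ is comeager in $B$''. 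I would also record the elementary identities $A \subseteq C$, $E \subseteq B \subseteq D$, and $E = A \cap \BB = A \cap B$, the last of which uses $\BB \subseteq \mathcal{I}$ and $A \subseteq C$.

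The crucial step, and the one where Lemma~\ref{lem:equalClosures} is really used, is to check that $B$ is \emph{dense} in $D$. This is immediate once the closures are identified: since $C$ is precisely the $\PP$-closure of $E$, one has $\overline{E}^{D} = C \cap D = D$, and $E \subseteq B$. With density in hand I would invoke the standard fact that if $Y$ is dense in a topological space $Z$, then $N \cap Y$ is nowhere dense in $Y$ whenever $N$ is nowhere dense in $Z$ (the one-line reason being that $W \cap Y$ is dense in $W$ for every open $W \subseteq Z$); consequently $M \cap Y$ is meager in $Y$ for every meager $M \subseteq Z$. Applying this with $Z = D$, $Y = B$, and $M = D \setminus A$ (meager in $D$ by the hypothesis) gives that $B \setminus E = B \setminus A = B \cap (D \setminus A)$ is meager in $B$, which is exactly the assertion that $E$ is comeager in $B$.

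I expect the only genuine obstacle to be the density of $B$ in $D$; this is where the work of Lemma~\ref{lem:equalClosures} is indispensable, and without it the two notions of category (relative to $\BB$ and relative to $\mathcal{I}$) could not be compared. Everything after that is routine manipulation of meager sets, and I would merely be careful to phrase the meager-transfer lemma for an arbitrary dense subspace, since $B$ is only known to be dense --- not a priori comeager --- in $D$, and density is all the argument requires.
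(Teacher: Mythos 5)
Your proof is correct. It shares with the paper's proof the essential input---Lemma \ref{lem:equalClosures}, which is what makes $B=\closureOfIsomtrclass[\BB]{X}\cap\BB$ dense in $D=\closureOfIsomtrclass[\mathcal{I}]{X}\cap\mathcal{I}$---but the category-transfer mechanism is genuinely different. The paper picks a dense $G_\delta$ set $G\subseteq\isomtrclass[\mathcal{I}]{X}$ in $D$, uses the fact that $\BB$ is a $G_\delta$ subset of $\PP$ to see that $B$ is a \emph{dense $G_\delta$} subset of $D$, and then applies the Baire category theorem in the Polish space $D$ to conclude that $G\cap B$ is a dense $G_\delta$ subset of $D$, hence of $B$; this needs both the $G_\delta$-ness of $\BB$ and the Baire property of $D$. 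You bypass both ingredients: your transfer lemma (a nowhere dense subset of $Z$ meets any dense subspace $Y\subseteq Z$ in a set nowhere dense in $Y$, hence meager sets trace to meager sets on dense subspaces) is purely topological and needs only the density of $B$ in $D$, which you correctly identify as the crux. Your argument is therefore more elementary and slightly more general, whereas the paper's route yields as a by-product that $B$ is itself comeager in $D$ and exhibits an explicit dense $G_\delta$ witness inside $\isomtrclass[\BB]{X}$---extra information not needed for the statement. Your bookkeeping ($E=A\cap B$, $B\setminus E=B\cap(D\setminus A)$, and the identification of all the closures with a single set $C$ via the infinite-dimensionality of $X$) is accurate throughout.
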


\begin{proof}
    To make notation simpler, write $\mathcal{I}_X \coloneq \closureOfIsomtrclass[\mathcal{I}]{X} \cap \mathcal{I}$ and $\BB_X \coloneq \closureOfIsomtrclass[\BB]{X} \cap \BB$. Let $G \subseteq \isomtrclass[\mathcal{I}]{X}$ be a dense $G_\delta$ subset of the topological space $\mathcal{I}_X$. Since $\BB$ is a $G_\delta$ subset of $\PP$ (see \cite[Lemma 2.5]{CDDK}), the set $\BB_X$ is a $G_\delta$ subspace of the space $\mathcal{I}_X$. Moreover, $\BB_X$ contains $\isomtrclass[\BB]{X}$ which is dense in $\mathcal{I}_X$ by Lemma \ref{lem:equalClosures}; thus, $\BB_X$ is a dense $G_\delta$ subset of the space $\mathcal{I}_X$. It follows that $G' \coloneq G \cap \BB_X$ is a dense $G_\delta$ subset of $\mathcal{I}_X$ as well, so in particular it is a dense $G_\delta$ subset of $\BB_X$. Since $G' \subseteq \isomtrclass[\BB]{X}$, we deduce that $\isomtrclass[\BB]{X}$ is comeager in $\BB_X$.
\end{proof}

\smallskip

If $E$ is a finite-dimensional vector subspace of $V$, we will let $E_\Rea \coloneq \Span_\Rea(E) \subseteq c_{00}$, denote by $\PNorm(E)$ the set of all pseudonorms on $E_\Rea$, and by $\Norm(E)$ the subset of $\PNorm(E)$ whose elements are norms on $E_\Rea$. Elements of $\PNorm(E)$ will often be identified with their restrictions to $E$. The following very simple result allows one to ``represent'' finite-dimensional Banach spaces as subspaces of $c_{00}$ in a constrained way.

\begin{lemma}\label{lem:simpleReflection}
    Let $E \subseteq V$ be a finite-dimensional vector subspace, $F$ be a finite-dimensional Banach space, and $\phi \colon E_\Rea \to F$ be a linear map. Then there exist a finite-dimensional vector subspace $G \subseteq V$ containing $E$, $\mu \in \PNorm(G)$, and an onto isometry $\psi \colon (G_\Rea, \mu) \to F$ extending $\phi$. Moreover, if $\phi$ is injective, then $\mu$ can be chosen in $\Norm(G)$.
\end{lemma}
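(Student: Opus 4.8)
The plan is to extend $\phi$ to a \emph{surjective} linear map onto all of $F$ by adjoining finitely many fresh coordinates from $V$, and then to let $\mu$ be the pullback of the norm of $F$ along this map. First I would analyze the image: set $W := \phi(E_\Rea) \subseteq F$ and choose a linear complement $U$ of $W$ in $F$, so that $F = W \oplus U$; fix a basis $f_1, \ldots, f_k$ of $U$, where $k = \dim F - \dim W$. The goal is to create $k$ new basis directions inside $V$ that will be sent onto $f_1, \ldots, f_k$.

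Since $E$ is finite-dimensional, all its elements are supported in some initial segment $\{1, \ldots, N\}$; I would then take $g_i := e_{N+i}$ for $1 \leqslant i \leqslant k$, where $e_n \in V$ denotes the $n$-th canonical vector, and set $G := \Span_\Rat(E \cup \{g_1, \ldots, g_k\})$. Because the $g_i$ have support disjoint from that of $E$, a $\Rat$-basis of $E$ together with $g_1, \ldots, g_k$ is $\Rat$-linearly independent, hence $\Rea$-linearly independent (a family of rational vectors has the same rank over $\Rat$ as over $\Rea$). Consequently $G_\Rea = E_\Rea \oplus \Span_\Rea\{g_1, \ldots, g_k\}$ as a real vector space. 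Next I would define $\psi \colon G_\Rea \to F$ to be the unique linear map extending $\phi$ on $E_\Rea$ and sending $g_i \mapsto f_i$; this is well-defined by the direct sum decomposition just established. Its image is $W + \Span\{f_1,\dots,f_k\} = W + U = F$, so $\psi$ is onto. Setting $\mu(x) := \|\psi(x)\|_F$ for $x \in G_\Rea$ yields a seminorm on $G_\Rea$ (being the pullback of a norm through a linear map, it inherits absolute homogeneity and the triangle inequality), i.e. $\mu \in \PNorm(G)$, and by construction $\psi \colon (G_\Rea, \mu) \to F$ is an onto isometry extending $\phi$, as required.

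For the moreover part, assume $\phi$ is injective; I would show $\psi$ is then injective, so that $\mu \in \Norm(G)$. If $x \in E_\Rea$ and $y \in \Span_\Rea\{g_1,\dots,g_k\}$ satisfy $\psi(x+y) = 0$, then $\phi(x) = \psi(x) = -\psi(y) \in W \cap U = \{0\}$, whence $\psi(x) = 0$ and $\psi(y) = 0$; injectivity of $\phi$ then gives $x = 0$, and linear independence of $f_1, \ldots, f_k$ gives $y = 0$. Thus $\psi$ is injective and $\mu$ is a genuine norm.

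The construction involves no real obstacle; the only points requiring (minor) care are the passage from $\Rat$-linear independence of the adjoined canonical vectors to $\Rea$-linear independence of their real spans---this is exactly what guarantees the direct sum decomposition of $G_\Rea$ needed to define $\psi$ unambiguously---and the verification that injectivity of $\phi$ propagates to $\psi$ in the final part.
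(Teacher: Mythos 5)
Your proof is correct and follows essentially the same route as the paper's: decompose $F$ as $\phi(E_\Rea)$ plus a complement, adjoin to $E$ finitely many fresh directions of $V$ (your explicit choice of canonical vectors beyond the support of $E$ is just a concrete instantiation of the paper's abstract choice of a subspace $K$ with $E_\Rea \cap K_\Rea = \{0\}$), extend $\phi$ to a surjection $\psi$ onto $F$, and pull back the norm of $F$ to get $\mu$. The injectivity argument in the moreover part also matches the paper's.
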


\begin{proof}
Let $H \subseteq F$ be a subspace such that $F = \phi(E_\Rea) \oplus H$. Let $K \subseteq V$ be a vector subspace such that $E_\Rea \cap K_\Rea = \varnothing$ and $\dim(K_\Rea) = \dim(H)$. Let $G \coloneq E \oplus K$ and consider a linear map $\psi \colon G_\Rea \to F$ such that $\psi\restriction_{E_\Rea} = \phi$ and $\psi$ induces a bijection $K_\Rea \to H$. In particular, $\psi$ is onto. We now define $\mu \in \PNorm(G)$ by $\mu(x) = \|\psi(x)\|$ for every $x \in G_\Rea$; in this way $\psi \colon (G_\Rea, \mu) \to F$ is an isometry. If moreover $\phi$ is injective, then $\psi$ is a bijection, so $\mu \in \Norm(G)$.
\end{proof}

In the rest of this section, we introduce and study alternative bases of neighborhoods in the space $\BB$ that are sometimes easier to manipulate than the $U_\BB[\mu, F, \varepsilon]$'s. Partial results for the codings $\PP$ and $\PP_\infty$ will also be obtained at the same time.

\begin{definition}
    Given $\mathcal{I} \in \{\PP, \PP_\infty, \BB\}$, a finite-dimensional vector subspace $E \subseteq V$, a pseudonorm $\mu \in \PNorm(E)$, and $\varepsilon > 0$, let $V_\mathcal{I}[\mu, E, \varepsilon]$ be the set of all $\nu \in \mathcal{I}$ for which the inclusion map $E_\Rea \to X_\nu$ belongs to $\Emb_{< \varepsilon}((E_\Rea, \mu), X_\nu)$.
\end{definition}

\begin{proposition}\label{prop:equivalentNormOpen}
Let $\mathcal{I} \in \{\PP, \PP_\infty, \BB\}$, $E\subseteq V$ be a finite-dimensional vector subspace, $\mu \in \Norm(E)$, and $\varepsilon > 0$. Then $V_\mathcal{I}[\mu, E, \varepsilon]$ is open in $\mathcal{I}$.
\end{proposition}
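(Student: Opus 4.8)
The plan is to reduce the statement to Lemma~\ref{lem:openEquivalence} by unwinding the definition of $V_\mathcal{I}[\mu, E, \varepsilon]$ into an assertion about $K$-equivalence of tuples. First I would fix a basis $e_1, \ldots, e_d$ of $E$ over $\Rat$. Since $\mu \in \Norm(E)$, these vectors are linearly independent in the finite-dimensional Banach space $X := (E_\Rea, \mu)$, and they simultaneously form a basis of $E_\Rea$ over $\Rea$. The hypothesis that $\mu$ is a genuine norm (rather than merely an element of $\PNorm(E)$) is exactly what makes $X$ a space in which the $e_i$ are linearly independent, which is what Lemma~\ref{lem:openEquivalence} requires.

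The key observation is that, for $\nu \in \mathcal{I}$, the inclusion map $E_\Rea \to X_\nu$ is precisely the linear operator sending each $e_i \in X$ to its class $[e_i] \in X_\nu$; that is, it is the operator witnessing $K$-equivalence of the tuple $(e_1, \ldots, e_d)$ viewed in $X$ and the tuple $(e_1, \ldots, e_d)$ viewed in $X_\nu$. Matching the $\Emb_C$ convention ($e^{-C}\|\cdot\| \leqslant \|T\cdot\| \leqslant e^{C}\|\cdot\|$) against the $(<K)$-isomorphism convention, one checks routinely that the inclusion lies in $\Emb_{<\varepsilon}(X, X_\nu)$ if and only if it is a $(<e^\varepsilon)$-isomorphic embedding, i.e. if and only if those two tuples are $e^\varepsilon$-equivalent. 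This yields the reformulation
\[
V_\mathcal{I}[\mu, E, \varepsilon] = \{\nu \in \mathcal{I} \setsep (e_1, \ldots, e_d) \in X^d \text{ and } (e_1, \ldots, e_d) \in (X_\nu)^d \text{ are } e^\varepsilon\text{-equivalent}\}.
\]

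Applying Lemma~\ref{lem:openEquivalence} with this $X$, with $x_i = v_i = e_i$ and $K = e^\varepsilon > 1$, shows that the corresponding set is open in $\PP$; intersecting with $\mathcal{I}$, which carries the subspace topology, gives that $V_\mathcal{I}[\mu, E, \varepsilon]$ is open in $\mathcal{I}$. There is no genuine obstacle here: the only points requiring care are the bookkeeping that identifies the inclusion map with the equivalence operator for the \emph{same} basis on both the fixed and the varying space, and the matching of the two normalizations of $\varepsilon$-closeness. Once these are in place the result is immediate from the cited lemma.
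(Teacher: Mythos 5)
Your proposal is correct and follows essentially the same route as the paper: both reformulate $V_\mathcal{I}[\mu, E, \varepsilon]$ as the set of $\nu$ for which the tuple $(e_1,\ldots,e_d)$ seen in $X_\nu$ is $e^\varepsilon$-equivalent to itself seen in $(E_\Rea,\mu)$, and then invoke Lemma~\ref{lem:openEquivalence}, restricting to the subspace topology on $\mathcal{I}$. The only difference is that you spell out the bookkeeping (the convention matching between $\Emb_{<\varepsilon}$ and $(<e^\varepsilon)$-isomorphic embeddings) that the paper leaves implicit.
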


\begin{proof}
One can assume $\mathcal{I} = \PP$. Fix a basis $(e_1, \ldots, e_n)$ of $E$. Then $V_\PP[\mu, E, \varepsilon]$ is the set of all $\nu \in \PP$ such that $(e_1, \ldots, e_n)$, seen as a sequence in $X_\nu$, is $e^\varepsilon$-equivalent to itself seen as a sequence in $(E_\Rea, \mu)$. So the result follows from Lemma \ref{lem:openEquivalence}.
\end{proof}

\begin{proposition}\label{prop:alternativeBasisPrelim}
Let $\mathcal{I} \in \{\PP, \PP_\infty, \BB\}$, $\mu \in \mathcal{I}$, and $U$ be a neighborhood of $\mu$ in $\mathcal{I}$. Then there exists a finite-dimensional subspace $E \subseteq V$ and $\varepsilon > 0$ such that $V_\mathcal{I}[\mu\restriction_E, E, \varepsilon] \subseteq U$.
\end{proposition}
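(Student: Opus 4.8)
The plan is to show that the sets $V_\mathcal{I}[\mu\restriction_E, E, \varepsilon]$, as $E$ ranges over finite-dimensional subspaces of $V$ and $\varepsilon > 0$, refine the standard basis of neighborhoods of $\mu$ given by the $U_\mathcal{I}[\mu, F, \eta]$'s. First I would record that $\mu$ itself always belongs to $V_\mathcal{I}[\mu\restriction_E, E, \varepsilon]$: the inclusion map $E_\Rea \to X_\mu$ sends each $x$ to its class, whose $X_\mu$-norm is exactly $\mu(x) = (\mu\restriction_E)(x)$, so it is an isometry from $(E_\Rea, \mu\restriction_E)$ into $X_\mu$ and thus lies in $\Emb_0 \subseteq \Emb_{<\varepsilon}$. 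Hence each $V$-set is a genuine neighborhood of $\mu$, and it remains only to fit one of them inside $U$.

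Given $U$, I would use the fact recalled in the preliminaries that the $U_\mathcal{I}[\mu, F, \eta]$'s form a basis of neighborhoods of $\mu$ to fix a finite set $F \subseteq V$ and $\eta > 0$ with $U_\mathcal{I}[\mu, F, \eta] \subseteq U$. I would then set $E := \Span_\Rat(F)$, a finite-dimensional subspace of $V$ with $F \subseteq E_\Rea$, and choose $\varepsilon > 0$ small enough that $(e^\varepsilon - 1)\max_{f \in F}\mu(f) < \eta$. The crux is to unfold membership in $V_\mathcal{I}[\mu\restriction_E, E, \varepsilon]$: if $\nu$ lies in this set, then by definition the inclusion $E_\Rea \to X_\nu$ belongs to $\Emb_{\varepsilon'}((E_\Rea, \mu\restriction_E), X_\nu)$ for some $\varepsilon' < \varepsilon$, which spells out as $e^{-\varepsilon'}\mu(x) \leq \nu(x) \leq e^{\varepsilon'}\mu(x)$ for every $x \in E_\Rea$, using that the $X_\nu$-norm of $x$ is $\nu(x)$ and its $(E_\Rea,\mu\restriction_E)$-norm is $\mu(x)$. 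Specializing to each $f \in F \subseteq E_\Rea$ and using $1 - e^{-\varepsilon'} \leq e^{\varepsilon'} - 1$ yields $|\nu(f) - \mu(f)| \leq (e^{\varepsilon'} - 1)\mu(f) \leq (e^\varepsilon - 1)\max_{f \in F}\mu(f) < \eta$, so $\nu \in U_\mathcal{I}[\mu, F, \eta] \subseteq U$. This proves $V_\mathcal{I}[\mu\restriction_E, E, \varepsilon] \subseteq U$.

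The argument is routine once the definitions are unwound, and it is uniform across the three codings $\mathcal{I} \in \{\PP, \PP_\infty, \BB\}$; I would not expect a genuine obstacle. The single point deserving a word of care is that, for $\mathcal{I} \in \{\PP, \PP_\infty\}$, the restriction $\mu\restriction_E$ need not be a norm but only a pseudonorm. This causes no trouble: the definitions of $V_\mathcal{I}[\cdot]$ and of $\Emb_{<\varepsilon}$ are stated for pseudonorms, and the estimate above is unaffected, the degenerate directions where $\mu(x) = 0$ simply forcing $\nu(x) = 0$ via the two-sided bound. (Note that, unlike Proposition \ref{prop:equivalentNormOpen}, the present statement does not assert openness of the $V$-sets, so no normedness hypothesis on $\mu\restriction_E$ is needed.)
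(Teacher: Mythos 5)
Your proof is correct and follows essentially the same route as the paper's: reduce to a basic neighborhood $U_\mathcal{I}[\mu, F, \eta]$, take $E = \Span_\Rat(F)$, choose $\varepsilon$ small relative to $\max_{f\in F}\mu(f)$, and unfold the definition of $V_\mathcal{I}[\mu\restriction_E, E, \varepsilon]$ into the two-sided bound $e^{-\varepsilon'}\mu(f) \leqslant \nu(f) \leqslant e^{\varepsilon'}\mu(f)$ to get $|\nu(f)-\mu(f)| < \eta$. The only (harmless) differences are your opening observation that $\mu$ itself lies in the $V$-set, which the statement does not require, and your use of $1-e^{-\varepsilon'} \leqslant e^{\varepsilon'}-1$ to merge the paper's two smallness conditions on $\varepsilon$ into one.
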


\begin{proof}
We can assume that $U = U_\mathcal{I}[\mu, A, \delta]$ for some finite set $A \subseteq V$ and some $\delta > 0$. Let $E \coloneq \Span_\Rat(A)$ and let $\varepsilon > 0$ be such that for every $x \in A$, $(e^\varepsilon - 1) \mu(x) < \delta$ and $(1 - e^{-\varepsilon})\mu(x) < \delta$. Then if $\nu \in V_\mathcal{I}[\mu\restriction_E, E, \varepsilon]$, one has for each $x \in A$:
$$\mu(x) - \delta < \mu(x) - (1-e^{-\varepsilon})\mu(x) = e^{-\varepsilon}\mu(x) \leqslant \nu(x) \leqslant e^{\varepsilon}\mu(x) = \mu(x) + (e^\varepsilon - 1)\mu(x) < \mu(x) + \delta,$$
thus $|\nu(x) - \mu(x)| < \delta$, and $\nu \in U$.
\end{proof}

\begin{corollary}\label{cor:alternativeBasis}
    Fix $\mu \in \BB$. Sets of the form $V_\BB[\mu\restriction_E, E, \varepsilon]$, where $E\subseteq V$ is a finite-dimensional vector subspace and $\varepsilon > 0$, form a basis of neighborhoods of $\mu$ in $\mathcal{B}$. 
\end{corollary}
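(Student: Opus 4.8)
The plan is to prove this by verifying the two defining conditions of a neighborhood basis, namely that each candidate set is a neighborhood of $\mu$ and that every neighborhood of $\mu$ contains one of them; both will follow almost immediately from Propositions \ref{prop:equivalentNormOpen} and \ref{prop:alternativeBasisPrelim} specialized to $\mathcal{I} = \BB$.

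First I would record the elementary but crucial observation that makes Proposition \ref{prop:equivalentNormOpen} applicable: since $\mu \in \BB$, its extension to $c_{00}$ is an actual norm, so for \emph{every} finite-dimensional vector subspace $E \subseteq V$ we have $\mu\restriction_E \in \Norm(E)$. Consequently Proposition \ref{prop:equivalentNormOpen}, applied with the norm $\mu\restriction_E$ in the role of its ``$\mu$'', shows that each set $V_\BB[\mu\restriction_E, E, \varepsilon]$ is open in $\BB$. To see that it is a neighborhood of $\mu$ and not merely open, I would check that $\mu$ itself lies in it: taking $\nu = \mu$ in the definition of $V_\BB[\mu\restriction_E, E, \varepsilon]$, the inclusion map $E_\Rea \to X_\mu$ is an isometric embedding of $(E_\Rea, \mu\restriction_E)$ into $X_\mu$, because $\mu$ agrees with $\mu\restriction_E$ on $E_\Rea$; hence it lies in $\Emb_0 \subseteq \Emb_{<\varepsilon}$ for every $\varepsilon > 0$, so $\mu \in V_\BB[\mu\restriction_E, E, \varepsilon]$. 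This establishes that every such set is an open neighborhood of $\mu$.

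For the basis property I would simply invoke Proposition \ref{prop:alternativeBasisPrelim} with $\mathcal{I} = \BB$: given any neighborhood $U$ of $\mu$ in $\BB$, that proposition produces a finite-dimensional subspace $E \subseteq V$ and some $\varepsilon > 0$ with $V_\BB[\mu\restriction_E, E, \varepsilon] \subseteq U$, which is exactly a member of our family sitting inside $U$. Combining the two parts yields the claim. I do not expect any genuine obstacle here, since the statement is essentially a repackaging of the two preceding propositions; the only point needing a (trivial) verification is that each $V_\BB[\mu\restriction_E, E, \varepsilon]$ actually contains $\mu$, which holds precisely because $\mu\restriction_E$ is by definition the restriction of $\mu$ to $E_\Rea$.
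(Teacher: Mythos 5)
Your proof is correct and follows exactly the paper's route: the paper derives this corollary as an immediate consequence of Propositions \ref{prop:equivalentNormOpen} and \ref{prop:alternativeBasisPrelim} with $\mathcal{I} = \BB$, and you have simply made explicit the two small verifications (that $\mu\restriction_E$ is a norm since $\mu \in \BB$, and that $\mu$ belongs to each $V_\BB[\mu\restriction_E, E, \varepsilon]$ via the isometric inclusion map) that justify the word ``immediate.''
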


\begin{proof}
This is an immediate consequence of Propositions \ref{prop:equivalentNormOpen} and \ref{prop:alternativeBasisPrelim} for $\mathcal{I} = \BB$.
\end{proof}

\begin{lemma}\label{lem:traceOnIsomClass}
    Let $X$ be an infinite-dimensional Banach space, $E \subseteq V$ be a finite-dimensional vector space, $\mu \in \Norm(E)$, and $\varepsilon > 0$. If $(E_\Rea, \mu) \in \closedAge{X}$, then $V_\BB[\mu, E, \varepsilon]\cap \isomtrclass[\BB]{X} \neq \varnothing$.
\end{lemma}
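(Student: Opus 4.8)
The plan is to directly produce a code $\nu \in \BB$ with $X_\nu \equiv X$ whose restriction to $E_\Rea$ is multiplicatively $e^\delta$-close to $\mu$ for some $\delta < \varepsilon$; the construction is a streamlined variant of the one carried out in the proof of Lemma \ref{lem:equalClosures}, the simplification being that here I only need to control $\nu$ on $E_\Rea$, where the extension map can be made to agree with a fixed approximate embedding.

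First I would extract an approximate embedding of $(E_\Rea, \mu)$ into $X$. Since $(E_\Rea, \mu) \in \closedAge{X}$, this finite-dimensional space is finitely representable in $X$, so for the given $\varepsilon > 0$ there exist $\delta \in [0, \varepsilon)$ and $\phi \in \Emb_\delta((E_\Rea, \mu), X)$: concretely, choosing a subspace $Y \subseteq X$ with $d_{\BM}((E_\Rea, \mu), Y)$ small enough, any witnessing isomorphism into $Y$ can be rescaled by a scalar so that its distortion becomes symmetric, which places it in $\Emb_\delta((E_\Rea, \mu), X)$ with $\delta < \varepsilon$.

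Next I would extend $\phi$ to an isometric embedding of $c_{00}$ into $X$ with dense range, exactly as in Lemma \ref{lem:equalClosures}. Write $c_{00} = E_\Rea \oplus W$ for a vector-space complement $W$, and fix a basis $(w_n)_{n \in \Nat}$ of $W$. As $\phi(E_\Rea)$ is finite-dimensional it admits a closed complement $Z$ in $X$, which is infinite-dimensional and separable (all spaces considered here being separable); fix a linearly independent sequence $(f_n)_{n \in \Nat}$ in $Z$ with dense span. Let $T \colon c_{00} \to X$ be the unique linear map with $T\restriction_{E_\Rea} = \phi$ and $T(w_n) = f_n$ for every $n$. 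Then $T$ is injective, since $\phi$ is injective and the $f_n$ lie in a complement of $\phi(E_\Rea)$, and its range $\phi(E_\Rea) + \Span\{f_n \setsep n \in \Nat\}$ is dense in $X$. Hence $\nu(x) \coloneq \|T(x)\|$ defines an element $\nu \in \BB$, and $T$ induces an onto isometry $X_\nu \to X$, so $\nu \in \isomtrclass[\BB]{X}$.

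Finally I would verify that $\nu \in V_\BB[\mu, E, \varepsilon]$. The crucial point, and essentially the only one requiring attention, is that $T$ agrees with $\phi$ on $E_\Rea$, so the particular choice of the vectors $f_n$ plays no role there: for every $x \in E_\Rea$ one has $\nu(x) = \|T(x)\| = \|\phi(x)\|$, whence $e^{-\delta}\mu(x) \leqslant \nu(x) \leqslant e^\delta \mu(x)$ because $\phi \in \Emb_\delta$. Since the norm of $[x]$ in $X_\nu$ equals $\nu(x)$, the inclusion map $E_\Rea \to X_\nu$ lies in $\Emb_\delta((E_\Rea, \mu), X_\nu) \subseteq \Emb_{<\varepsilon}((E_\Rea, \mu), X_\nu)$, that is, $\nu \in V_\BB[\mu, E, \varepsilon] \cap \isomtrclass[\BB]{X}$, as desired. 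I do not expect a serious obstacle: the two points needing mild care are the passage from finite representability to a genuine map $\phi$ with controlled symmetric distortion, and the standard fact that an isometric embedding with dense range extends to an onto isometry of the completions.
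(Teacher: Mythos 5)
Your proof is correct and follows essentially the same route as the paper's: extract $\phi \in \Emb_{<\varepsilon}((E_\Rea,\mu), X)$ from $(E_\Rea,\mu)\in\closedAge{X}$, extend it to an injective linear map $T\colon c_{00}\to X$ with dense range via complements and a linearly independent sequence with dense span, set $\nu \coloneq \|T(\cdot)\|$ so that $\nu\in\isomtrclass[\BB]{X}$, and observe that on $E_\Rea$ the inclusion into $X_\nu$ inherits the distortion of $\phi$. The only cosmetic difference is that you spell out the rescaling argument turning a small Banach--Mazur distance into a symmetric-distortion embedding, which the paper leaves implicit.
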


\begin{proof}
    Since $(E_\Rea, \mu) \in \closedAge{X}$, one can find $\phi \in \Emb_{<\varepsilon}((E_\Rea, \mu), X)$. Let $W \subseteq c_{00}$ and $Y \subseteq X$ be two vector subspaces such that $c_{00} = E_\Rea \oplus W$ and $X = \phi(E_\Rea) \oplus Y$. Let $(e_n)_{n \in \Nat}$ be a basis of $W$ and $(f_n)_{n \in \Nat}$ be a linearly independant family of elements of $Y$ having a dense span in $Y$. Let $T \colon c_{00} \to X$ be the unique linear map that maps $e_n$ to $f_n$ for each $n \in \Nat$ and extends $\phi$. Since $T$ is injective, $\nu(x) \coloneq \|T(x)\|$ defines a norm on $c_{00}$, that is, $\nu \in \BB$. The map $T \colon (c_{00}, \nu) \to X$ is by construction an isometry with dense range, thus it extends to an onto isometry $X_\nu \to X$, that we will still denote by $T$. It follows from the existence of $T$ that $\nu \in \isomtrclass[\BB]{X}$. Moreover $T^{-1} \circ \phi \in \Emb_{< \varepsilon}((E_\Rea, \mu), X_\nu)$, and since this map is the inclusion map $E_\Rea \to X_\nu$, we deduce that $\nu \in V_\BB[\mu, E, \varepsilon]$.
\end{proof}

\section{Variant of the Banach-Mazur game}\label{sec:BMGame}

In this section, we introduce our Banach-space theoretic version of the Banach-Mazur game, and we prove the implications (1) $\Rightarrow$ (2) $\Rightarrow$ (3) $\Rightarrow$ (4) $\Rightarrow$ (5) in Theorem \ref{thm:BMgameForIsometryClasses} (where (5) will be here understood as ``$X$ satisfies \ref{it:gFraisseForPlayerI}'', since the equivalence between the two properties \ref{it:gFraisseForPlayerI} and \ref{it:gFraisseAction} used to define the notion of a guarded Fra\"iss\'e Banach space in Definition \ref{def:GuardedFraisseIntroPart1} hasn't be proved yet). The implication (2) $\Rightarrow$ (3) is proved in Proposition \ref{prop:comeagerAndBMGame}, the implication (4) $\Rightarrow$ (5) is proved in Proposition \ref{prop:winningI}, and the two remaining ones are obvious.

\begin{definition}\label{def:directedLimit}Let $(E_n)$ be a sequence of Banach spaces and $f_n\in\mathcal{L}(E_n,E_{n+1})$ be a sequence of linear maps satisfying that $\prod_{n=1}^\infty \|f_n\| < \infty$. For $n < k$ we shall use the notation $f_{n,k}:=f_{k-1}\circ f_{k-2}\circ \ldots \circ f_n:E_n\to E_k$, and $f_{n, n} \coloneq \Id_{E_n}$.

\smallskip

Consider now the Banach space $E$ which is the quotient of $\ell_\infty(\prod_n E_n)$ by the subspace $c_0(\prod_n E_n)=\{(x_n)\in \prod_n E_n\setsep \lim_n \|x_n\| = 0\}$. For $(x_n)\in \prod_n E_n$ we denote by $[(x_n)]$ the equivalence class $\{(x_n) + z\setsep z\in c_0(\prod_n E_n)\}$.

\smallskip

Given $n\in\Nat$, by $f_{n, \infty}:E_n\to E$ we denote the map defined by the formula
\[
f_{n, \infty}(x):=[(0,0,\ldots,0,f_{n, n}(x),f_{n,n+1}(x),\ldots,f_{n,k}(x),\ldots)],\qquad x\in E_n,
\]
that is, $f_{n, \infty}(x) = [g_n(x)]$, where $g_n(x)(k) = 0$ for $k < n$ and $g_n(x)(k) = f_{n,k}(x)$ for $k\geq n$. (Note that the assumption $\prod_n \|f_n\|<\infty$ guarantees that we have $g_n(x)\in \ell_\infty(\prod_n E_n)$.)

\smallskip

Finally, by \emph{directed limit of $(E_n,f_n)$}, denoted by $\lim\limits_{\to} (E_n, f_n)$, we understand the Banach space $\overline{\bigcup_n f_{n, \infty}(E_n)} \subseteq E$.
\end{definition}

\begin{remark}\label{rem:directLimits}
Let us give here some remarks concerning Definition~\ref{def:directedLimit}.
\begin{enumerate}[label=(\arabic*)]
    \item\label{it:normLimit} It is standard and easy to prove that for $[(x_n)]\in E$ we have $\|[(x_n)]\| = \limsup_n \|x_n\|$.
    \item\label{it:compatibilityLimitMaps} For $n\leq m$, $x\in E_n$ and $y\in E_m$ we have that $f_{n, \infty}(x)= f_{m, \infty}(y)$ if and only if $\lim_{k\geq m} \|f_{n,k}(x) - f_{m,k}(y)\| = 0$. In particular, we have $f_{m, \infty}\circ f_{n,m} = f_{n, \infty}$ for $n\leqslant m$.
    \item If all the maps $f_n$ are contractions, then $\lim\limits_{\to} (E_n, f_n)$ is directed limit in the category of Banach spaces, where morphisms are linear contractions. This explains our terminology.
    \item If we have $E_n\subseteq E_{n+1}$, $n\in\Nat$ and mappings $f_n$ are inclusions, then it is easy to see that $\lim\limits_{\to} (E_n, f_n)$ is linearly isometric to the completion of $\bigcup_n E_n$.
    \item\label{it:finDimDirectLimits} If all the $E_n$'s have dimension bounded by some $k \in \Nat$, then the same holds for the spaces $f_{n, \infty}(E_n)$, $n \in \Nat$; it follows that $\dim(\lim\limits_{\to} (E_n, f_n)) \leqslant k$ and $f_{n, \infty}(E_n) = \lim\limits_{\to} (E_n, f_n)$ for every large enough $n$.
\end{enumerate}
\end{remark}

\begin{definition}\label{def:BMGame}
    Given a separable Banach space $X$, we define the game $\BMG(X)$ as follows:
    \smallskip

\begin{tabular}{ccccccc}
\textbf{I} & $E_1,\varepsilon_1$ & & $E_3,f_2,\varepsilon_3$ & & $\hdots$ & \\
\textbf{II} & & $E_2,f_1,\varepsilon_2$ & & $E_4,f_3,\varepsilon_4$ & & $\hdots$ 
\end{tabular}

\smallskip

\noindent where we have $E_n\in\closedAge{X}$, $\varepsilon_{n+1}<\varepsilon_n$ and $f_n\in\Emb_{\varepsilon_n}(E_n,E_{n+1})$ for every $n\in\Nat$. Player II wins if $\sum_{n=1}^\infty \varepsilon_n<\infty$ and $\lim\limits_{\to} (E_n, f_n)$ is isometric to the Banach space $X$, Player I wins otherwise.\\
(Note that $\sum_{n=1}^\infty \varepsilon_n<\infty$ implies $\prod \|f_n\| \leq \prod e^{\varepsilon_n} < \infty$, so $\lim\limits_{\to} (E_n, f_n)$ is defined.)
\end{definition}

\begin{remark}\label{rem:normsConnectingMaps}
    Keep notation from Definition \ref{def:BMGame} and assume that $\sum_{n=1}^\infty \varepsilon_n < \infty$. Let $E_\infty \coloneq \lim\limits_{\to} (E_n, f_n)$ and define maps $f_{n, k}$ for $n \geqslant 1$ and $n \leqslant k \leqslant \infty$ from the $f_n$'s as in Definition \ref{def:directedLimit}. Then for every positive integers $n \leqslant k$, we have $$f_{n, k} \in \Emb_{\sum_{j=n}^{k-1} \varepsilon_j}(E_n, E_k).$$ By Remark \ref{rem:directLimits} \ref{it:normLimit}, this remains true even when $k = \infty$ (taking $k-1 = \infty$ in that case).
\end{remark}

\begin{proposition}\label{prop:BMGAndFinDim}
    Let $E$ be a finite-dimensional Banach space. Then Player II has a winning strategy in $\BMG(E)$.
\end{proposition}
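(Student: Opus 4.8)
The plan is to let Player II steer the game so that, after its very first move, every space appearing in the play is forced to be isometric to $E$, while simultaneously keeping the error sequence summable. First I would record two elementary facts about $\closedAge{E}$, where $d \coloneq \dim E$. Since $\closedAge{E}$ consists of the finite-dimensional spaces finitely representable in $E$, and since any isomorphic embedding into a finite-dimensional space is injective, every $F \in \closedAge{E}$ satisfies $\dim F \leqslant d$. Moreover, by a standard compactness argument, finite representability in the finite-dimensional space $E$ is equivalent to isometric embeddability, so every such $F$ embeds isometrically into $E$.

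Player II's strategy is then the following. Suppose Player I has just played $E_1, \varepsilon_1$. On its first move, Player II sets $E_2 \coloneq E$, lets $f_1 \in \Emb_{\varepsilon_1}(E_1, E)$ be any isometric embedding (which exists by the previous paragraph), and picks $\varepsilon_2 \coloneq \min(\varepsilon_1/2, 2^{-1})$. On every later move, at stage $2k$ with $k \geqslant 2$, Player II simply repeats the current space, setting $E_{2k} \coloneq E_{2k-1}$, $f_{2k-1} \coloneq \Id$, and $\varepsilon_{2k} \coloneq \min(\varepsilon_{2k-1}/2, 2^{-k})$. In particular $\varepsilon_{2k} < \varepsilon_{2k-1}$ always holds and $\varepsilon_{2k} \leqslant 2^{-k}$, so the rule $\varepsilon_{n+1} < \varepsilon_n$ forces $\varepsilon_{2k+1} < 2^{-k}$ as well; hence $\sum_n \varepsilon_n \leqslant \varepsilon_1 + \sum_{k \geqslant 1} 2 \cdot 2^{-k} < \infty$, so $\lim\limits_{\to}(E_n, f_n)$ is defined and Player II secures the summability half of the winning condition regardless of Player I's play.

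The crucial point is a dimension-forcing argument. Once $E_2 = E$ has dimension $d$, I claim that $\dim E_m = d$, and hence $E_m \equiv E$, for every $m \geqslant 2$. Indeed each connecting map $f_m \in \Emb_{\varepsilon_m}(E_m, E_{m+1})$ is an isomorphic embedding, hence injective, so $\dim E_{m+1} \geqslant \dim E_m$; combined with the uniform bound $\dim E_{m+1} \leqslant d$ coming from $E_{m+1} \in \closedAge{E}$, an induction starting from $\dim E_2 = d$ yields $\dim E_m = d$ for all $m \geqslant 2$, and a $d$-dimensional member of $\closedAge{E}$ is necessarily isometric to $E$.

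It remains to identify the limit. Writing $E_\infty \coloneq \lim\limits_{\to}(E_n, f_n)$ and $\delta_n \coloneq \sum_{j \geqslant n} \varepsilon_j$, Remark \ref{rem:directLimits}\ref{it:finDimDirectLimits} gives $\dim E_\infty \leqslant d$, while Remark \ref{rem:normsConnectingMaps} gives $f_{n, \infty} \in \Emb_{\delta_n}(E_n, E_\infty)$. For $n \geqslant 2$ the map $f_{n, \infty}$ is therefore an injective $\delta_n$-isomorphic embedding of the $d$-dimensional space $E_n \equiv E$ into $E_\infty$, which forces $\dim E_\infty = d$ and makes $f_{n, \infty}$ an isomorphism onto $E_\infty$ with $d_{\BM}(E_\infty, E_n) \leqslant 2\delta_n$. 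Since $E_n \equiv E$, the triangle inequality gives $d_{\BM}(E_\infty, E) \leqslant 2\delta_n$ for every $n \geqslant 2$, and as $\delta_n \to 0$ we conclude $d_{\BM}(E_\infty, E) = 0$, i.e. $E_\infty \equiv E$. Thus the strategy is winning. The only mildly delicate point is this last passage from ``isomorphic with vanishing distortion'' to ``genuinely isometric'', which is exactly what the Banach--Mazur metric $d_{\BM}$ is designed to encode; everything else is forced by injectivity of approximate embeddings together with the dimension bound for $\closedAge{E}$.
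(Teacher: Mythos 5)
Your proof is correct, but your strategy is genuinely different from the paper's. The paper's Player II re-plays the space $E$ itself at \emph{every} even stage: it sets $E_{2n} = E$ and uses a map $f_{2n-1} \in \Emb_{\varepsilon_{2n-1}}(E_{2n-1}, E)$, which exists for purely formal reasons — $E_{2n-1} \in \closedAge{E}$ means exactly that approximate embeddings into $E$ exist at every tolerance. The limit is then identified as in your last paragraph (via Remarks~\ref{rem:normsConnectingMaps} and \ref{rem:directLimits}\ref{it:finDimDirectLimits}, giving $d_{\BM}(E_n, E_\infty) \to 0$), using the constant subsequence $E_{2n} = E$ to conclude $E_\infty \equiv E$. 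You instead play $E$ only once and afterwards copy Player I's moves with identity maps, and you pin down the limit through a rigidity argument: injectivity of approximate embeddings plus the bound $\dim F \leqslant d$ on $\closedAge{E}$ forces $\dim E_m = d$ for all $m \geqslant 2$, and a $d$-dimensional member of $\closedAge{E}$ is isometric to $E$. That last step is where your proof carries an extra load the paper's does not: it needs the fact that for finite-dimensional $E$, finite representability in $E$ implies \emph{isometric} embeddability into $E$ (equivalently $\closedAge{E} = \Age(E)$), which is true but requires the compactness argument you invoke; the paper's proof never needs this, since it only ever uses approximate embeddings supplied by the definition of the closed age. What your route buys in exchange is a stronger structural conclusion — under your strategy every space appearing after stage two, including all of Player I's moves, is isometric to $E$, and Player II's later moves are trivial — whereas the paper's buys brevity and minimal prerequisites. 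Both proofs share the same final mechanism for identifying $\lim\limits_{\to}(E_n, f_n)$.
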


\begin{proof}
    The strategy of Player II consists in playing, for every $n \in \Nat$, $E_{2n} = E$. He can then always choose a map $f_{2n-1} \in \Emb_{\varepsilon_{2n-1}}(E_{2n-1}, E)$, since $E_{2n-1} \in \closedAge{E}$. Finally, he can take $\varepsilon_{2n} \coloneq \varepsilon_{2n-1}/2$, ensuring that in the end, $\sum_{n=1}^\infty \varepsilon_n  < \infty$. 

    \smallskip

    To see that this strategy is winning, let $E_\infty \coloneq \lim\limits_{\to} (E_n, f_n)$ and define maps  $f_{n, \infty} \colon E_n \to E_\infty$, $n \in \Nat$ from the $f_n$'s as in Definition \ref{def:directedLimit}. Let $r_n \coloneq \sum_{j = n}^\infty \varepsilon_j$ for every $n \in \Nat$. Since for large enough $n \in \Nat$, $f_{n, \infty} \in \Emb_{r_n}(E_n, E_\infty)$ and this map is onto (see Remark \ref{rem:normsConnectingMaps} and Remark \ref{rem:directLimits} \ref{it:finDimDirectLimits}), and $\lim_{n \to \infty} r_n = 0$, the sequence $(E_n)_{n \in \Nat}$ converges to $E_\infty$ in the space $\BM$. Since $E_{2n} = E$ for every $n \in \Nat$, we deduce that $E_\infty$ is isometric to $E$.
\end{proof}

\begin{proposition}\label{prop:comeagerAndBMGame}
Let $X$ be a separable Banach space and let $\mathcal{I} \in \{\PP, \PP_\infty, \BB\}$ if $\dim(X) = \infty$, or $\mathcal{I} = \PP$ if $\dim(X) < \infty$. If $\isomtrclass[\mathcal{I}]{X}$ is comeager in $\closureOfIsomtrclass[\mathcal{I}]{X}\cap \mathcal{I}$, then Player II has a winning strategy in $\BMG(X)$.
\end{proposition}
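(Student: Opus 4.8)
The plan is to simulate the classical proof that a comeager target set gives the second player a winning strategy in the topological Banach--Mazur game, transporting moves of $\BMG(X)$ into a descending chain of basic open sets of a suitable Polish space.

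\textbf{Reductions.} If $\dim(X) < \infty$ there is nothing to prove, since Player II wins $\BMG(X)$ outright by Proposition \ref{prop:BMGAndFinDim}; so assume $\dim(X) = \infty$. If $\mathcal{I} \in \{\PP, \PP_\infty\}$, Lemma \ref{lem:reflectingCategory} upgrades the hypothesis to comeagerness of $\isomtrclass[\BB]{X}$ in $\closureOfIsomtrclass[\BB]{X} \cap \BB$, and if $\mathcal{I} = \BB$ this is the hypothesis itself; since the conclusion does not mention $\mathcal{I}$, it suffices to treat $\mathcal{I} = \BB$. Write $K := \closureOfIsomtrclass[\BB]{X} \cap \BB$, which is Polish (it is closed in the Polish space $\BB$), fix a sequence $(G_m)_{m \in \Nat}$ of dense open subsets of $K$ with $\bigcap_m G_m \subseteq \isomtrclass[\BB]{X}$, and fix a complete compatible metric on $K$. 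Recall that every $\mu \in K \subseteq \sigma\closedAge{X}$ has all of its finite-dimensional subspaces in $\closedAge{X}$.

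\textbf{The strategy of Player II.} Along the play $(E_n, f_n, \varepsilon_n)$, Player II maintains, after each of his moves $2k$, a point $\mu_k \in K$, an increasing sequence of finite-dimensional rational subspaces $D_k \subseteq V$ with $\{e_1, \dots, e_k\} \subseteq D_k$, and a basic neighborhood $U_k = V_\BB[\mu_k\restriction_{D_k}, D_k, \delta_k]$ of $\mu_k$ (using the basis of Corollary \ref{cor:alternativeBasis}; here $\mu_k\restriction_{D_k} \in \Norm(D_k)$, so $U_k$ is open by Proposition \ref{prop:equivalentNormOpen}), subject to $\overline{U_k} \subseteq U_{k-1}$, $\operatorname{diam}(U_k) < 2^{-k}$, and $U_k \cap K \subseteq G_k$. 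The role of $D_k$ is to realize the abstract space $E_{2k}$: Player II always sets $E_{2k} := ((D_k)_\Rea, \mu_k\restriction_{D_k})$, which lies in $\closedAge{X}$ since $\mu_k \in \sigma\closedAge{X}$, and he lets the connecting maps be induced by the inclusions $D_{k} \subseteq D_{k+1}$. When Player I answers with $E_{2k+1}$ and $f_{2k} \in \Emb_{\varepsilon_{2k}}(E_{2k}, E_{2k+1})$, Player II first uses Lemma \ref{lem:simpleReflection} to represent the pair $(E_{2k+1}, f_{2k})$ as an enlargement $(D_k \subseteq D', \mu')$ of rational subspaces, with $\mu'\restriction_{D_k}$ within $\varepsilon_{2k}$ of $\mu_k\restriction_{D_k}$; since $((D')_\Rea, \mu') \cong E_{2k+1} \in \closedAge{X}$, Lemma \ref{lem:traceOnIsomClass} yields a code in $V_\BB[\mu', D', \varepsilon] \cap \isomtrclass[\BB]{X}$ realizing it, and for $\varepsilon$ and $\varepsilon_{2k}$ small enough relative to $\delta_k$ this code lies in $U_k$. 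Using density of $G_{k+1}$ in $K$ he then shrinks into $G_{k+1}$, further shrinks the diameter below $2^{-(k+1)}$ and adjoins the coordinate vector $e_{k+1}$, arriving at the new data $(\mu_{k+1}, D_{k+1} \supseteq D', U_{k+1})$; he finally reads off $E_{2k+2} = ((D_{k+1})_\Rea, \mu_{k+1}\restriction_{D_{k+1}})$, the connecting map $f_{2k+1}$ induced by $D' \subseteq D_{k+1}$, and a summable $\varepsilon_{2k+2} < \varepsilon_{2k+1}$.

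\textbf{Why the strategy wins, and the main obstacle.} The nested sets $\overline{U_k}$ have vanishing diameter, so by completeness $\bigcap_k \overline{U_k} = \{\mu_\infty\}$ for some $\mu_\infty \in \bigcap_m G_m \subseteq \isomtrclass[\BB]{X}$, whence $X_{\mu_\infty} \cong X$. Since $\mu_\infty \in U_k$ for every $k$, the inclusion $(D_k)_\Rea \to X_{\mu_\infty}$ is a $\delta_k$-isomorphic embedding of $E_{2k}$; these embeddings are compatible with the connecting maps up to the summable errors recorded by the $\varepsilon_n$'s and $\delta_k$'s, and their images exhaust $c_{00}$ and are therefore dense in $X_{\mu_\infty}$. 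By the description of the directed-limit norm in Remark \ref{rem:directLimits} \ref{it:normLimit} and the embedding estimates of Remark \ref{rem:normsConnectingMaps}, this produces an isometry between $\lim\limits_{\to}(E_n, f_n)$ and $X_{\mu_\infty}$, so $\lim\limits_{\to}(E_n, f_n) \cong X$, and as $\sum_n \varepsilon_n < \infty$ by construction, Player II wins. The delicate point is the realization step in the previous paragraph: translating a single arbitrary move of Player I into a genuine shrinking of the open set in $K$ while keeping the realization of $E_{2k+1}$ coherent with the one already built for $E_{2k}$ and simultaneously capturing $G_{k+1}$. This forces a careful matching of three layers of estimates---the distortions $\varepsilon_n$ of the game's connecting maps, the Banach--Mazur perturbations of Lemma \ref{lem:perturbationArgument}, and the widths $\delta_k$ of the neighborhoods $V_\BB[\cdot,\cdot,\cdot]$---all kept summable, so that in the end the directed-limit norm agrees with $\mu_\infty$ and the two limit objects genuinely coincide.
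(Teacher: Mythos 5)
Your proposal is correct, and its strategy-building phase is essentially the paper's: the same reductions (Proposition \ref{prop:BMGAndFinDim} for finite dimension, Lemma \ref{lem:reflectingCategory} to pass to $\BB$), and the same realization machinery (Lemma \ref{lem:simpleReflection} to reflect Player I's move into rational subspaces, Lemma \ref{lem:traceOnIsomClass} to produce codes in $\isomtrclass[\BB]{X}$, Proposition \ref{prop:equivalentNormOpen} and Corollary \ref{cor:alternativeBasis} for the neighborhoods $V_\BB[\cdot,\cdot,\cdot]$, inclusions as composite connecting maps, coordinate vectors adjoined so that the even-stage subspaces exhaust $V$). Where you genuinely diverge is the endgame. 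You run the textbook Choquet-style argument: a complete metric on $K = \closureOfIsomtrclass[\BB]{X}\cap\BB$ and nested closures with vanishing diameters produce a single point $\mu_\infty$, automatically in $K$ and hence in $\bigcap_m G_m \subseteq \isomtrclass[\BB]{X}$; you then still have to identify $\lim\limits_{\to}(E_n,f_n)$ with $X_{\mu_\infty}$, which forces you to also impose $\delta_k \to 0$. The paper dispenses with any metric: its invariant \ref{it:inDenseOpen} ties the width of the tracking neighborhood to $4\varepsilon_{2n}$, the limit code $\mu$ is constructed directly as the norm induced on $c_{00}$ by the direct limit, and the tail estimate $\sum_{k \geqslant 2n}\varepsilon_k < 4\varepsilon_{2n}$ puts $\mu$ in every $V_\BB[\mu_{2n},F_{2n},4\varepsilon_{2n}]$; the price is a short extra argument that $\mu \in \closureOfIsomtrclass[\BB]{X}$ (from $\mu_{2n}\to\mu$ pointwise, using that $\mu_{2n} \in \isomtrclass[\BB]{X}$), which your approach gets for free. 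So your route buys automatic membership of the limit in $K$ at the cost of the metric/diameter/closure bookkeeping and the extra identification step, while the paper's buys a self-contained limit object at the cost of a convergence argument; both are sound. Two compressed points in your write-up need care when expanded: diameters and closures must be taken inside $K$ (your $U_k$ are open in $\BB$, so one works with $U_k \cap K$); and when you ``shrink into $G_{k+1}$'', the new code $\mu_{k+1}$ must be kept inside $V_\BB[\mu',D',\varepsilon]$ with $\varepsilon \leqslant \varepsilon_{2k+1}$, not merely inside $U_k$, otherwise the legality bound $f_{2k+1}\in\Emb_{\varepsilon_{2k+1}}(E_{2k+1},E_{2k+2})$ is lost --- you correctly flag this as the delicate point, and it is exactly where the estimates must be matched.
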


\begin{proof}
    The case when $\dim(X) < \infty$ is treated by Proposition \ref{prop:BMGAndFinDim}, so we now suppose that $\dim(X) = \infty$. By Lemma \ref{lem:reflectingCategory}, we can assume that $\mathcal{I} = \BB$. Let $(v_n)_{n \in \Nat}$ be an enumeration of $V$. Let $D_n$, $n \in \Nat$, be open dense subsets of the topological space $\closureOfIsomtrclass[\BB]{X}\cap \BB$ such that $\bigcap_{n \in \Nat} D_n \subseteq \isomtrclass[\BB]{X}$. Player I starts with playing $E_1$ and $\varepsilon_1$. In order not to consider separately the first step of the game, we let $E_0 = \{0\} \subseteq c_{00}$, $\varepsilon_0 = 2\varepsilon_1$, and $f_0$ be the unique element of $\Emb_{\varepsilon_0}(E_0, E_1)$.
    We also let $F_0:=\{0\}\subseteq V$ and $\mu_0 \in \isomtrclass[\BB]{X}$ be arbitrary. The strategy of Player II will consist in ensuring that for all $n \in \Nat$, the following conditions are kept satisfied:
    \begin{enumerate}[label=(A\alph*)]
    \item\label{it:subsOfV} $E_{2n}$ has the form $((F_{2n})_\Rea, \mu_{2n}\restriction_{(F_{2n})_\Rea})$, where $F_{2n}$ is a finite-dimensional vector subspace of $V$ and $\mu_{2n} \in \isomtrclass[\BB]{X}$;
    \item\label{it:exhaust} $v_n \in F_{2n}$;
    \item\label{it:inclusion} $F_{2n-2} \subseteq F_{2n}$, and $f_{2n-1} \circ f_{2n-2}$ is the inclusion map $E_{2n-2} \to E_{2n}$;
    \item\label{it:decrEps} $\varepsilon_{2n} \leqslant \varepsilon_{2n-1}/2$.
    \item\label{it:inDenseOpen} $V_\BB[\mu_{2n}\restriction_{F_{2n}}, F_{2n}, 4\varepsilon_{2n}] \cap \closureOfIsomtrclass[\BB]{X} \subseteq D_n$.
    \end{enumerate}
    (Note that $F_0$ and $\mu_0$ have been chosen so that condition \ref{it:subsOfV} is satisfied even for $n=0$.)

    \smallskip

    We first inductively show that Player II can always ensure that the five above conditions are kept satisfied. To simplify notation, auxiliary objects introduced in an inductive step will be denoted without indices, which will be no issue as they won't be reused elsewhere. Fix $n \in \Nat$ and suppose Player I just played $E_{2n-1}$, $f_{2n-2}$ and $\varepsilon_{2n-1}$. By Lemma \ref{lem:simpleReflection} applied to $f_{2n-2} \colon (F_{2n-2})_{\Rea} \to E_{2n-1}$, there exists a finite-dimensional vector space $G \subseteq V$ containing $F_{2n-2}$, $\nu \in \Norm(G)$, and an onto isometry $\psi \colon (G_\Rea, \nu) \to E_{2n-1}$ extending $f_{2n-2}$. By the rules of the game, $E_{2n-1}$, and hence $(G_\Rea, \nu)$, belong to $\closedAge{X}$, so by Lemma \ref{lem:traceOnIsomClass}, we have $V_\BB[\nu, G, \varepsilon_{2n-1}] \cap \closureOfIsomtrclass[\BB]{X} \neq \varnothing$. Since moreover, by Proposition \ref{prop:equivalentNormOpen}, $V_\BB[\nu, G, \varepsilon_{2n-1}] \cap \closureOfIsomtrclass[\BB]{X}$ is an open subset of $\closureOfIsomtrclass[\BB]{X}\cap \BB$, the density of $D_n$ ensures that $D_n \cap V_\BB[\nu, G, \varepsilon_{2n-1}]$ is a nonempty open subset of $\closureOfIsomtrclass[\BB]{X}\cap \BB$. Hence, one can pick $\mu_{2n} \in D_n \cap V_\BB[\nu, G, \varepsilon_{2n-1}] \cap \isomtrclass[\BB]{X}$. Since $D_n$ is a neighborhood of $\mu_{2n}$ in $\closureOfIsomtrclass[\BB]{X}\cap \BB$, by Corollary \ref{cor:alternativeBasis}, one can find a finite-dimensional vector subspace $F_{2n} \subseteq V$ and $\varepsilon_{2n} > 0$ such that condition \ref{it:inDenseOpen} is satisfied. Shrinking $\varepsilon_{2n}$ and extending $F_{2n}$ if necessary, we can ensure that conditions \ref{it:exhaust} and \ref{it:decrEps} are satisfied, and that $G \subseteq F_{2n}$. Let 
    $E_{2n} \coloneq ((F_{2n})_\Rea, \mu_{2n}\restriction_{(F_{2n})_\Rea})$, so that condition \ref{it:subsOfV} is satisfied. Denote by $\iota \colon G_\Rea \to E_{2n}$  the inclusion map, and let $f_{2n-1} \coloneq \iota \circ \psi^{-1} \colon E_{2n-1} \to E_{2n}$. Since $\psi$ extends $f_{2n-2}$, we deduce that condition \ref{it:inclusion} is satisfied. Since $E_{2n}$ is a Banach subspace of $X_{\mu_{2n}}$, which is itself isometric to $X$, we have $E_{2n} \in \Age(X)$. Moreover, since $\mu_{2n} \in  V_\BB[\nu, G, \varepsilon_{2n-1}]$, we have $\iota \in \Emb_{< \varepsilon_{2n-1}}((G_\Rea, \nu), E_{2n})$, thus $f_{2n-1} \in \Emb_{< \varepsilon_{2n-1}}(E_{2n-1}, E_{2n})$. Thus, Player II can legally play $E_{2n}$, $f_{2n-1}$,  and $\varepsilon_{2n}$; this finishes the inductive argument.

    \smallskip

    We now prove that the strategy we described is winning for Player II. Remember that for every $n \in \Nat$, we have $\varepsilon_{2n} \leqslant \varepsilon_{2n-1}/2$ by condition \ref{it:decrEps}, and $\varepsilon_{2n+1} < \varepsilon_{2n}$ by the rules of the game. It follows that for every $n \in \Nat$,
    $$\sum_{k = 2n}^\infty \varepsilon_k < 2\sum_{j = n}^\infty \varepsilon_{2j} \leqslant 2\sum_{j=n}^\infty \varepsilon_{2n}\cdot 2^{j-n} = 4\varepsilon_{2n}.$$
    In particular, $\sum_{n = 1}^\infty \varepsilon_n < \infty$. Let $E_\infty \coloneq \lim\limits_{\to} (E_n, f_n)$ and define maps  $f_{n, k}$ for $n \geqslant 1$ and $n \leqslant k \leqslant \infty$ from the $f_n$'s as in Definition \ref{def:directedLimit}. The above computation combined with Remark \ref{rem:normsConnectingMaps} shows that for every $n \in \Nat$, $f_{2n, \infty} \in \Emb_{4\varepsilon_{2n}}(E_{2n}, E_\infty)$. By condition \ref{it:inclusion} and Remark \ref{rem:directLimits} \ref{it:compatibilityLimitMaps}, for every $n \in \Nat$, the map $f_{2n+2, \infty}$ extends $f_{2n, \infty}$. Moreover by condition \ref{it:exhaust}, $\bigcup_{n \in \Nat} E_{2n} = c_{00}$. Hence, there is a unique linear and injective map $T \colon c_{00} \to E_\infty$ extending all the $f_{2n}$'s, $n \in \Nat$. Define $\mu \in \BB$ by $\mu(x) = \|T(x)\|$ for every $x \in c_{00}$, so that $T \colon (c_{00}, \mu) \to E_\infty$ is an isometric embedding. Since the range of $T$ contains $f_{2n, \infty}(E_{2n})$ for every $n \in \Nat$, by the definition of a direct limit, it is dense in $E_\infty$. We deduce that $T$ extends to an onto isometry $X_\mu \to E_\infty$, that we will still denote by $T$. For every $n \in \Nat$, we have $T^{-1} \circ f_{2n, \infty} \in \Emb_{4\varepsilon_{2n}}(E_{2n}, X_\mu)$, and $T^{-1} \circ f_{2n, \infty}$ is the inclusion map $E_{2n} \to X_\mu$, so $\mu \in V_\BB[\mu_{2n}, F_{2n}, 4\varepsilon_{2n}]$. In particular, for all positive integers $m \leqslant n$, we have $e^{-4\varepsilon_n}\mu(v_m)<\mu_{2n}(v_m)<e^{4\varepsilon_n}\mu(v_m)$, thus the sequence $(\mu_{2n})_{n \in \Nat}$ converges to $\mu$ in the space $\BB$. Since, for every $n \in \Nat$, $\mu_{2n} \in \isomtrclass[\BB]{X}$, we deduce that $\mu \in \closureOfIsomtrclass[\BB]{X}$. Hence, by condition \ref{it:inDenseOpen},
    $$\mu \in \bigcap_{n \in \Nat} V_\BB[\mu_{2n}, F_{2n}, 4\varepsilon_{2n}] \cap \closureOfIsomtrclass[\BB]{X} \subseteq \bigcap_{n \in \Nat} D_n \subseteq \isomtrclass[\BB]{X},$$
    and since $X_\mu$ is isometric to $E_\infty$, we deduce that $E_\infty$ is isometric to $X$.
\end{proof}

\begin{proposition}\label{prop:winningI}Let $X$ be a separable Banach space which does not satisfy the condition \ref{it:gFraisseForPlayerI}. Then Player I has a winning strategy in $\BMG(X)$.
\end{proposition}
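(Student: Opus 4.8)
The plan is to read off from the failure of \ref{it:gFraisseForPlayerI} one ``bad'' pair and then have Player I prevent \emph{every} way in which the limit of a run could be isometric to $X$ around that bad pair. Spelling out the negation of \ref{it:gFraisseForPlayerI}, there are a finite-dimensional subspace $D\subseteq X$ and a constant $\varepsilon_\ast>0$ such that for every $F\in\closedAge{X}$, every $\phi\in\Emb_{\varepsilon_\ast}(D,F)$ and every $\delta>0$, one can find $G\in\closedAge{X}$, $\psi\in\Emb_\delta(F,G)$ and $\eta>0$ for which $\|\Id_D-\iota\circ\psi\circ\phi\|\geq\varepsilon_\ast$ holds for \emph{every} $\iota\in\Emb_\eta(G,X)$ (here $\Id_D$ is the inclusion $D\hookrightarrow X$). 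I will call such data $(G,\psi,\eta)$ an \emph{obstruction} for the guard $(F,\phi)$. The idea is that whenever $E_\infty\cong X$ via an onto isometry $U$, the copy $U^{-1}\circ\Id_D$ of $D$ inside $E_\infty$ is captured at some finite stage of the run, and if Player I has obstructed a near-isometric copy of $D$ close to it, the map $\iota:=U\circ f_{\cdot,\infty}$ becomes an admissible witness violating the obstruction.

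I first isolate Player I's basic move. Assume that at some even stage $2m$ a space $E_{2m}\in\closedAge{X}$ and a copy $\theta\in\Emb_{<\varepsilon_\ast}(D,E_{2m})$ have been singled out. Player I applies the negation to the guard $(E_{2m},\theta)$ with $\delta:=\varepsilon_{2m}$, obtaining an obstruction $(G,\psi,\eta)$ with $\psi\in\Emb_{\varepsilon_{2m}}(E_{2m},G)$, and then plays $E_{2m+1}:=G$, $f_{2m}:=\psi$ and $\varepsilon_{2m+1}:=\min(\varepsilon_{2m}/2,\eta/5)$; this is legal since $\psi\in\Emb_{\varepsilon_{2m}}$ and $\varepsilon_{2m+1}<\varepsilon_{2m}$. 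By always playing so (or, when no copy is scheduled, the trivial extension $E_{2m+1}=E_{2m}$, $f_{2m}=\Id$, $\varepsilon_{2m+1}=\varepsilon_{2m}/2$), Player I keeps $\varepsilon_{2m+1}\leq\varepsilon_{2m}/2$ throughout; exactly as in the computation in the proof of Proposition \ref{prop:comeagerAndBMGame} this forces $\sum_n\varepsilon_n<\infty$ and $\sum_{j\geq 2m+1}\varepsilon_j<4\varepsilon_{2m+1}<\eta$. By Remark \ref{rem:normsConnectingMaps}, writing $E_\infty:=\lim\limits_{\to}(E_n,f_n)$, this yields the crucial estimate $f_{2m+1,\infty}\in\Emb_{<\eta}(E_{2m+1},E_\infty)$.

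The full strategy starts with $E_1:=D$ (with $\varepsilon_1$ small, so that copies of $D$ are available at every stage) and, at each subsequent move of Player I, obstructs one copy of $D$ drawn from a scheduled list. The scheduling must secure the following \emph{density} property: for every isometric embedding $c\colon D\to E_\infty$ and every $\rho>0$ there are $m$ and a copy $\theta\in\Emb_{<\varepsilon_\ast}(D,E_{2m})$, obstructed by Player I at stage $2m+1$, with $\|f_{2m,\infty}\circ\theta-c\|<\rho$. This is achieved by a bookkeeping argument: along a cofinal sequence of stages Player I fixes finite $\rho$-nets (with $\rho\to 0$) of the sets $\Emb_{<\varepsilon_\ast/2}(D,E_n)$ and, using that a copy pushes forward along $f_{n,n'}$ with only a small increase of distortion, obstructs the push-forwards of all net points over his infinitely many moves; since $E_\infty=\overline{\bigcup_n f_{n,\infty}(E_n)}$, every $c$ is approximated by some $f_{n,\infty}\circ\theta$, hence by an obstructed one.

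It remains to check that this strategy wins. The decay of the $\varepsilon_n$ already guarantees $\sum_n\varepsilon_n<\infty$, so it suffices to show $E_\infty$ is not isometric to $X$. Suppose it were, via an onto isometry $U\colon E_\infty\to X$, and set $c:=U^{-1}\circ\Id_D\colon D\to E_\infty$, so that $U\circ c=\Id_D$ and $c$ is isometric. By the density property there is an obstruction performed at a stage $2m+1$, with copy $\theta$ and data $(G,\psi,\eta)$, such that $\|f_{2m,\infty}\circ\theta-c\|<\varepsilon_\ast$. Put $\iota:=U\circ f_{2m+1,\infty}$; then $\iota\in\Emb_{<\eta}(E_{2m+1},X)$ by the estimate above, while $\iota\circ\psi\circ\theta=U\circ f_{2m+1,\infty}\circ f_{2m}\circ\theta=U\circ f_{2m,\infty}\circ\theta$ by Remark \ref{rem:directLimits} \ref{it:compatibilityLimitMaps}. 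Since $U$ is an isometry, $\|\Id_D-\iota\circ\psi\circ\theta\|=\|U\circ c-U\circ f_{2m,\infty}\circ\theta\|=\|c-f_{2m,\infty}\circ\theta\|<\varepsilon_\ast$, contradicting the defining property of the obstruction. Hence $E_\infty\not\cong X$, and Player I wins. The main difficulty is the density bookkeeping of the third paragraph: the spaces hosting the copies of $D$ are produced during play (partly by Player I's own obstructing moves), so one must enumerate net points across the dynamically built chain while respecting both legality and the geometric decay needed for the tail estimate; the rest is an unwinding of the definitions of the directed limit and of $\Emb_C$.
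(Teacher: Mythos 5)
Your proof is correct and follows essentially the same strategy as the paper's: extract the bad pair $(D,\varepsilon_\ast)$ from the negation of \ref{it:gFraisseForPlayerI}, have Player I maintain finite nets of approximate copies of $D$ inside the spaces built during play and obstruct each (pushed-forward) net point via the negation, enforce $\varepsilon_{2m+1}\leqslant \min(\varepsilon_{2m}/2,\eta/5)$ so that tail sums stay below the obstruction thresholds, and derive a contradiction from a purported isometry $E_\infty \to X$ by locating an obstructed copy close to the pull-back of $\Id_D$. The only differences are cosmetic: you use nets with mesh tending to $0$ and an abstract ``density'' property, where the paper fixes a single mesh $\theta = \varepsilon e^{-4\gamma}/3$ and an explicit enumeration $\chi_n$ of net points across the nets $\mathcal{N}_{2m}$.
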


\begin{proof}
Fix a finite-dimensional subspace $E\subseteq X$ and $\varepsilon>0$ witnessing that the negation of the condition \ref{it:gFraisseForPlayerI} holds. Let $\gamma \coloneq \varepsilon / 9$ and $\theta \coloneq \varepsilon e^{-4\gamma}/3$. We will inductively describe a winning strategy for Player I in $\BMG(X)$ on a play of this game whose moves will be denoted by $E_i$, $f_i$, and $\varepsilon_i$, as in Definition \ref{def:BMGame}. The direct limit $\lim\limits_{\to} (E_n, f_n)$ will be denoted by $E_\infty$ and maps  $f_{n, k}$ for $n \geqslant 1$ and $n \leqslant k \leqslant \infty$ will be defined from the $f_n$'s as in Definition \ref{def:directedLimit}.

\smallskip

At the first step of the game, Player I plays $E_1 \coloneq E$ and $\varepsilon_1 \coloneq \gamma$. Before describing the next steps, we describe conditions Player I will preserve and auxiliary objects he will construct while playing. At each step $n \geqslant 1$, Player I will ensure that $\varepsilon_{2n+1} \leqslant \varepsilon_{2n} / 2$; knowing by the rules of the game that moreover, one has $\varepsilon_{2n+2} < \varepsilon_{2n+1}$ for each $n$, it will follow that for every $n \geqslant 0$, one has:
$$\sum_{k = 2n+1}^\infty \varepsilon_n < 2\sum_{j = n}^\infty \varepsilon_{2j+1} \leqslant 2\sum_{j=n}^\infty \varepsilon_{2n+1}\cdot 2^{j-n} = 4\varepsilon_{2n+1},$$
which, combined with Remark \ref{rem:normsConnectingMaps}, will ensure that the following conditions are satisfied:
\begin{enumerate}[label=(B\alph*), series=winningI]
\item\label{it:etaBound} for every $n \geqslant 0$, one has $f_{2n+1, \infty} \in \Emb_{4\varepsilon_{2n+1}}(E_{2n+1}, E_\infty)$;
\item\label{it:gammaBound} for every $n \geqslant 1$ and $n \leqslant k \leqslant \infty$, $f_{n, k} \in \Emb_{4\gamma}(E_n, E_k)$.
\end{enumerate}
While playing, Player I will also choose, at the same time as $E_{2n+1}$, $f_{2n}$, and $\varepsilon_{2n+1}$, a finite $\theta$-net $\mathcal{N}_{2n}$ in the compact set $\Emb_{5\gamma}(E, E_{2n})$, for each $n \geqslant 1$. Letting, for every $n \geqslant 0$, $p_n \coloneq \sum_{m=1}^n \left|\mathcal{N}_{2m}\right|$, elements of $\mathcal{N}_{2n}$ will be denoted by $\chi_{p_{n-1}+1}, \ldots, \chi_{p_n}$. Note that, for each $n \geqslant 1$, $\Emb_{5\gamma}(E, E_{2n})$ is nonempty (as, by \ref{it:gammaBound}, it contains $f_{1, 2n}$), so $\mathcal{N}_{2n}$ is also nonempty, so $p_n > p_{n-1}$; it follows that for every $n \geqslant 1$, $p_n \geqslant n$ and hence, $\chi_n \in \mathcal{N}_{2m}$ for some $m \leqslant n$.

\smallskip

Fix $n \geqslant 1$; we now explain how Player I plays at step $2n+1$. He first chooses the net $\mathcal{N}_{2n}$. From the previous paragraph, we know that $\chi_n$ has already been constructed and belongs to $\mathcal{N}_{2m}$ for some $m \leqslant n$. Let $\phi_n \coloneq f_{2m, 2n} \circ \chi_n$. Since $f_{2m, 2n} \in \Emb_{4\gamma}(E_{2m}, E_{2n})$ and $\chi_n \in \Emb_{5\gamma}(E, E_{2m})$, we deduce that $\phi_n \in \Emb_\varepsilon(E, E_{2n})$. Hence, by the choice of $E$ and $\varepsilon$, we deduce the existence of $E_{2n+1} \in \closedAge{X}$, $f_{2n} \in \Emb_{\varepsilon_{2n}}(E_{2n}, E_{2n+1})$, and $\eta_n > 0$ such that for every $\iota \in \Emb_{\eta_n}(E_{2n+1}, X)$, one has $\|\Id_E - \iota \circ f_{2n} \circ \phi_n\| \geqslant \varepsilon$. Player I then plays $E_{2n+1}$, $f_{2n}$, and $\varepsilon_{2n+1} \coloneq \min(\varepsilon_{2n}/2, \eta_n/4)$.

\smallskip

We now prove that the strategy we just described is winning for Player I. Towards a contradiction, assume that $E_\infty$ is isometric to $X$ and fix an isometry $T \colon E_\infty \to X$. One has $X = \overline{\bigcup_{k \geqslant 1}T\circ f_{2k, \infty}(E_{2k})}$, so by Lemma \ref{lem:perturbationArgument}, one can find $m \geqslant 1$ and $\alpha \in \Emb_\gamma(E, X)$ such that $\|\alpha - \Id_E\| \leqslant \varepsilon /3$ and $\alpha(E) \in T\circ f_{2m, \infty}(E_{2m})$. Since $f_{2m, \infty} \in \Emb_{4\gamma}(E_{2m}, E_\infty)$, we deduce that $f_{2m, \infty}^{-1} \circ T^{-1} \circ \alpha \in \Emb_{5\gamma}(E, E_{2m})$. So we can find an element $\chi_n \in \mathcal{N}_{2m}$ such that $\|\chi_n - f_{2m, \infty}^{-1} \circ T^{-1} \circ \alpha\| \leqslant \theta$. We have:
\begin{align*}
    \|T \circ f_{2m, \infty} \circ \chi_n - \Id_E\| & \leqslant \|T \circ f_{2m, \infty} \circ \chi_n - \alpha\| + \|\alpha - \Id_E\|\\
    & \leqslant \|T\|\cdot\|f_{2m, \infty}\|\cdot\|\chi_n - f_{2m, \infty}^{-1} \circ T^{-1} \circ \alpha\| + \frac{\varepsilon}{3}\\
    & \leqslant 1 \cdot e^{4\gamma} \cdot \theta + \frac{\varepsilon}{3}\\
    & = \frac{2\varepsilon}{3} < \varepsilon.
\end{align*}
Also observe that, since $\chi_n \in \mathcal{N}_{2m}$, we have $n \geqslant m$ and $\phi_n = f_{2m, 2n} \circ \chi_n$. Hence, letting $\iota \coloneq T \circ f_{2n+1, \infty}$, we have $T \circ f_{2m, \infty} \circ \chi_n = \iota \circ f_{2n} \circ \phi_n$, and the last inequality becomes: $$\|\iota \circ f_{2n} \circ \phi_n - \Id_E\| < \varepsilon.$$
Knowing, by \ref{it:etaBound} and the choice of $\varepsilon_{2n+1}$, that $\iota \in \Emb_{4\varepsilon_{2n+1}}(E_{2n+1}, X) \subseteq \Emb_{\eta_n}(E_{2n+1}, X)$, this contradicts the choice of $E_{2n+1}$, $f_{2n}$, and $\eta_n$.
\end{proof}

We end this section by mentionning that, as an immediate consequence of Propositions \ref{prop:BMGAndFinDim} and \ref{prop:winningI}, all finite-dimensional Banach spaces are guarded Fra\"iss\'e.

\begin{proposition}
    Every finite-dimensional Banach space satisfies \ref{it:gFraisseForPlayerI}.
\end{proposition}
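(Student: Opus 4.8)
The plan is to deduce the statement purely formally from the two cited propositions, using the elementary fact that no two-player game can admit winning strategies for both players at once. Concretely, I would argue by contradiction: suppose a finite-dimensional Banach space $X$ fails to satisfy \ref{it:gFraisseForPlayerI}. Then Proposition \ref{prop:winningI} furnishes a winning strategy for Player I in the game $\BMG(X)$. On the other hand, since $X$ is finite-dimensional, Proposition \ref{prop:BMGAndFinDim} furnishes a winning strategy for Player II in the very same game $\BMG(X)$. The goal is to show these two facts are incompatible.

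The key step is to pit the two strategies against each other. Following the alternating move structure prescribed in Definition \ref{def:BMGame}, where Player I produces the moves $E_{2n-1}, f_{2n-2}, \varepsilon_{2n-1}$ and Player II produces the moves $E_{2n}, f_{2n-1}, \varepsilon_{2n}$, I would let Player I follow his winning strategy and Player II follow hers; since at each turn the relevant player's strategy dictates a legal move, this generates a single, well-defined infinite play of $\BMG(X)$. By definition, the winning condition of $\BMG(X)$ (namely, whether $\sum_n \varepsilon_n < \infty$ and $\lim\limits_{\to}(E_n, f_n)$ is isometric to $X$) partitions all plays into exactly those won by Player I and those won by Player II, with no overlap. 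But the play just constructed is won by Player I (he followed a winning strategy) and simultaneously won by Player II (she followed a winning strategy), which is absurd.

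I do not anticipate any real obstacle here, as the argument is entirely formal; the only points worth checking carefully are that the two strategies can indeed be combined into a single legal run — which is immediate from the turn-based format of Definition \ref{def:BMGame} — and that the winning conditions for the two players are genuinely complementary, which is explicit in the phrase ``Player I wins otherwise'' of that definition. Having reached a contradiction, I conclude that $X$ must satisfy \ref{it:gFraisseForPlayerI}, establishing the proposition.
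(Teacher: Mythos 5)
Your proposal is correct and takes essentially the same route as the paper: the paper derives this proposition as an immediate consequence of Propositions \ref{prop:BMGAndFinDim} and \ref{prop:winningI}, exactly as you do. Your explicit verification that the two strategies can be pitted against each other to yield a play won by both players — which is absurd since the winning conditions of $\BMG(X)$ are complementary — is precisely the elementary game-theoretic fact the paper leaves implicit.
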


\section{Guarded ultrahomogeneity}\label{sec:Ultrahom}

This section is devoted to the proofs of two important theorems which, while being seemingly unrelated, are actually both consequences of the same technical result (Proposition \ref{prop:backAndForthConstruction}). The first one, Theorem \ref{thm:EquivalenceGFraGUH} below, is the equivalence between the various conditions defining guarded Fra\"iss\'eness: conditions \ref{it:gFraisseAction} and \ref{it:gFraisseForPlayerI} already introduced in Definition \ref{def:GuardedFraisseIntroPart1}, along with three new conditions.

\begin{theorem}\label{thm:EquivalenceGFraGUH}
Let $X$ be a separable Banach space. Consider the following conditions.
\begin{enumerate}[label=(gF-\arabic*), resume*=gFraisse]
    \item\label{it:gFraisseFourth} For every finite-dimensional subspace $E\subseteq X$ and every $\varepsilon>0$, there exist $F\in\Age(X)$, $\phi\in\Emb_{\varepsilon}(E,F)$ and $\delta>0$ such that for every $\psi\in\Emb_{\delta}(F,X)$, there exists $T\in \Iso(X)$ satisfying $\|T\restriction_E - \psi\circ\phi\|<\varepsilon$.
    
    \smallskip\item\label{it:gFraisseFifth} For every finite-dimensional subspace $E\subseteq X$ and every $\varepsilon>0$, there exist $F\in\Age(X)$, $\phi\in\Emb_{\varepsilon}(E,F)$ and $\delta > 0$ such that for every $G\in\Age(X)$ and $\psi\in\Emb_{\delta}(F,G)$, there exists $\iota\in \Emb(G,X)$ satisfying $\|\Id_E - \iota\circ\psi\circ\phi\|<\varepsilon$.

    \smallskip\item\label{it:gFraisseThird} For every finite-dimensional subspace $E\subseteq X$ and every $\varepsilon>0$, there exist a finite dimensional subspace $F$ with $E \subseteq F \subseteq X$ and $\delta>0$ such that for every $G \in \closedAge{X}$, $\psi \in \Emb_{\delta}(F, G)$ and $\eta > 0$, there exists $\iota \in \Emb_{\eta}(G, X)$ with  $\|\Id_E-\iota\circ \psi\restriction_ E\|<\varepsilon$.
\end{enumerate}
Then \ref{it:gFraisseAction} holds $\Leftrightarrow$ \ref{it:gFraisseForPlayerI} holds $\Leftrightarrow$ \ref{it:gFraisseFourth} holds $\Leftrightarrow$  \ref{it:gFraisseFifth} holds $\Leftrightarrow$ \ref{it:gFraisseThird} holds.
\end{theorem}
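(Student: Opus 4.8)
The plan is to prove all five conditions equivalent along a cycle that isolates a single genuinely hard implication. I would split the conditions into two families: the \emph{ultrahomogeneity-type} properties \ref{it:gFraisseAction} and \ref{it:gFraisseFourth}, which hand back a surjective isometry of $X$, and the \emph{injectivity-type} properties \ref{it:gFraisseForPlayerI}, \ref{it:gFraisseFifth} and \ref{it:gFraisseThird}, which only hand back embeddings into $X$. The strong properties imply the weak ones by direct manipulation, so I would first establish the cycle $\ref{it:gFraisseAction} \Rightarrow \ref{it:gFraisseFourth} \Rightarrow \ref{it:gFraisseFifth} \Rightarrow \ref{it:gFraisseForPlayerI} \Rightarrow \ref{it:gFraisseAction}$, together with the side arrows $\ref{it:gFraisseAction} \Rightarrow \ref{it:gFraisseThird} \Rightarrow \ref{it:gFraisseForPlayerI}$ to fold in the remaining condition. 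Every arrow except $\ref{it:gFraisseForPlayerI} \Rightarrow \ref{it:gFraisseAction}$ is elementary, and the last one is where the back-and-forth construction of Proposition \ref{prop:backAndForthConstruction} enters.

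For the elementary arrows the recurring tricks are to take the guard to be an inclusion and to pass to inverse isometries. For $\ref{it:gFraisseAction} \Rightarrow \ref{it:gFraisseFourth}$, take $\phi$ to be the inclusion $E \to F$ and apply $\varepsilon$-transitivity of $\Iso(X)$ to $\psi$ and to the inclusion $F \to X$, obtaining $T$ with $\|\Id_E - T \circ \psi\restriction_E\| < \varepsilon$; then $T^{-1}$ witnesses \ref{it:gFraisseFourth}. For $\ref{it:gFraisseFourth} \Rightarrow \ref{it:gFraisseFifth}$, precompose a given $\psi \in \Emb_\delta(F, G)$ with an exact embedding $j \colon G \hookrightarrow X$ (available since $G \in \Age(X)$), apply \ref{it:gFraisseFourth} to $j \circ \psi$, and set $\iota \coloneq T^{-1} \circ j$. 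For $\ref{it:gFraisseFifth} \Rightarrow \ref{it:gFraisseForPlayerI}$, keep $F$ and $\phi$ and, given $G \in \closedAge{X}$, approximate it by some $G' \in \Age(X)$ with a low-distortion isomorphism $\rho$ (density of $\Age(X)$ in $\closedAge{X}$), transport $\psi$ to $\rho \circ \psi$, apply \ref{it:gFraisseFifth}, and recompose with $\rho$ to land in $\Emb_\eta(G, X)$; this only costs shrinking $\delta$ to absorb the distortion. For $\ref{it:gFraisseAction} \Rightarrow \ref{it:gFraisseThird}$, keep the same concrete $F$, use finite representability to pick $\kappa \in \Emb_{\eta'}(G, X)$ with $\eta'$ so small that $\kappa \circ \psi$ meets the threshold of \ref{it:gFraisseAction}, apply $\varepsilon$-transitivity to $\kappa \circ \psi$ and the inclusion, and set $\iota \coloneq T \circ \kappa$. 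Finally, $\ref{it:gFraisseThird} \Rightarrow \ref{it:gFraisseForPlayerI}$ is the observation that \ref{it:gFraisseThird} is literally the case of \ref{it:gFraisseForPlayerI} in which $F \subseteq X$ and $\phi$ is the inclusion, since then $\psi\restriction_E = \psi \circ \phi$. Throughout I would use that $\varepsilon$-transitivity of a group action is equivalent, up to doubling $\varepsilon$, to the one-sided statement that every point can be moved $\varepsilon$-close to a fixed base point.

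The hard implication $\ref{it:gFraisseForPlayerI} \Rightarrow \ref{it:gFraisseAction}$ is where I would invoke Proposition \ref{prop:backAndForthConstruction}. Given $E$ and $\varepsilon$, I would apply \ref{it:gFraisseForPlayerI} to obtain the guard data, realize the abstract guard as a concrete finite-dimensional $F \subseteq X$ containing $E$ (using finite representability and Lemma \ref{lem:perturbationArgument}), and declare the corresponding threshold $\delta$. By the group reduction it suffices, for each $\iota \in \Emb_\delta(F, X)$, to build $T \in \Iso(X)$ with $\|\Id_E - T \circ \iota\restriction_E\| < \varepsilon$. I would feed the initial approximate partial isometry matching $\iota(F)$ back to $F$ into the back-and-forth construction, which produces an increasing chain of finite-dimensional approximate partial isometries of $X$ — alternately extending domain and range so as to exhaust a countable dense subset on both sides — where each extension step is made possible by \ref{it:gFraisseForPlayerI}, which reflects the current configuration back into $X$ while pinning down the already-matched part. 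Choosing a summable schedule of tolerances (as in the $\sum_n \varepsilon_n < \infty$ condition of $\BMG(X)$), the chain converges to a surjective isometry $T$ with the required property.

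The main obstacle is the back-and-forth construction itself, and its difficulty is threefold. First, the guard must be threaded correctly: each extension is legal only after passing through the guard supplied by \ref{it:gFraisseForPlayerI}, so one must carefully track which space plays the role of the domain $E$ and which the role of the guard $F$ at every stage. Second, one must engineer the tolerances so that the errors accumulated over infinitely many steps remain summable, guaranteeing that the limit is a genuine onto isometry rather than a merely approximate map, while the original $E$-approximation is not degraded past $\varepsilon$. Third, one must continually translate between the abstract formulation (spaces in $\closedAge{X}$, approximate embeddings) and the concrete one (subspaces of $X$, inclusions), which is handled by finite representability together with the density of $\Age(X)$ in $\closedAge{X}$; in particular, the concretization of the abstract guard into $F \subseteq X$ is the delicate point that forces the homogenization and the concretization to be carried out simultaneously inside one construction. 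Everything outside this construction reduces to the routine constant-chasing indicated above.
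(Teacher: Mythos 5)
Your overall strategy coincides with the paper's: Proposition \ref{prop:backAndForthConstruction} is the single hard ingredient, and every other arrow is elementary constant-chasing (inclusions as guards, inverting isometries, density of $\Age(X)$ in $\closedAge{X}$). Your elementary implications are all correct and correspond to the paper's Lemmas \ref{lem:ImplicationTriples} and \ref{lem:equivalencePairsTriples} and Corollary \ref{cor:implicationsPairs}; the fact that you organize them into a different cycle than the paper is immaterial. But there is one genuine gap, sitting exactly at the crux of your hard implication \ref{it:gFraisseForPlayerI} $\Rightarrow$ \ref{it:gFraisseAction}, namely the concretization of the abstract guard.

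You propose to realize the abstract guard as a concrete finite-dimensional $F''\subseteq X$ containing $E$ ``using finite representability and Lemma \ref{lem:perturbationArgument}''. This tool is insufficient. Finite representability produces \emph{some} embedding $j \in \Emb_\gamma(F, X)$, but with no control whatsoever on where the copy $j(F)$ sits relative to $E$: the composite $j\circ\phi \colon E \to X$ need not be anywhere near $\Id_E$. Consequently, the conclusion of the back-and-forth, $\|T\restriction_E - \iota\circ j\circ\phi\| < \varepsilon$ for $\iota \in \Emb_\delta(F'',X)$, gives no information about $\|T\restriction_E - \iota\restriction_E\|$, which is what $\varepsilon$-transitivity in \ref{it:gFraisseAction} requires once the guard is forced to be the inclusion $E \to F''$. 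What is actually needed is an embedding $\sigma \colon F \to X$ that is \emph{anchored}, i.e.\ satisfies $\|\Id_E - \sigma\circ\phi\| < \varepsilon$, and the only available source of such a $\sigma$ is the property \ref{it:gFraisseForPlayerI} itself: apply the definition of the triple $(E,F,\phi)$ to $G = F$, $\psi = \Id_F$ and a small $\eta$. This is precisely the paper's trick in its proof of \ref{it:gFraisseForPlayerI} $\Rightarrow$ \ref{it:gFraisseThird} (with the constants $\varepsilon/23$ and $2\delta$), combined with a stability lemma (Lemma \ref{lem:stabilityGFTriple}) guaranteeing that the perturbed triple $(E, \, E + \sigma(F), \, \sigma\circ\phi)$ is still a \ref{it:gFraisseForPlayerI} triple. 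Your hedge that the concretization is ``carried out simultaneously inside one construction'' with the back-and-forth cannot rescue the plan: the concrete pair $(E, F'')$ and its threshold $\delta$ must exist \emph{before} one can even formulate the transitivity statement quantifying over $\iota \in \Emb_\delta(F'', X)$, so the anchored concretization is a prerequisite of the back-and-forth, not a step inside it. Once this soft (but non-obvious) anchoring argument is supplied, the rest of your proposal goes through as written.
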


\noindent As already mentioned in Definition \ref{def:GuardedFraisseIntroPart1}, from now on, a \textit{guarded Fra\"iss\'e Banach space} will be defined as a separable Banach space satisfying the equivalent conditions \ref{it:gFraisseAction}, \ref{it:gFraisseForPlayerI}, \ref{it:gFraisseFourth}, \ref{it:gFraisseFifth} and \ref{it:gFraisseThird}. Before going further, let us comment on those conditions. Properties \ref{it:gFraisseAction} and \ref{it:gFraisseFourth} are metric versions of what Kruckman \cite{KruckmanPhD} calls \textit{weak ultrahomogeneity} in the discrete setting. Those two properties are easily seen to be equivalent, and their main difference is that, in \ref{it:gFraisseFourth}, the ``guard'' (that is, the map $\phi$) is not required to be an isometry, while in \ref{it:gFraisseAction} it is one (it is actually the inclusion map $E \to F$). On the other hand, properties \ref{it:gFraisseForPlayerI} and \ref{it:gFraisseThird} are metric versions of what Krawczyk--Kubis \cite{krKu} call \textit{weak injectivity} in the discrete setting. They are also easily seen to be equivalent, and their main difference is that, in \ref{it:gFraisseForPlayerI}, the ``guard'' is not required to be an isometry, while in \ref{it:gFraisseThird} it is an inclusion map. In the discrete setting, weak ultrahomogeneity trivially implies weak injectivity, but the converse requires a more involved back-and-forth argument; it will be similar here, the back-and-forth argument being the core of the proof of the technical Proposition \ref{prop:backAndForthConstruction}. Finally, property \ref{it:gFraisseFifth} is a slight modification of \ref{it:gFraisseForPlayerI} where the map $\iota$ is required to be isometric; we do not know any simple proof of the equivalence between those two property, and were only able to get it as a consequence of the back-and-forth argument.

\smallskip

The second important theorem to be proved in this section, Theorem \ref{thm:guardedUniqueByAge} is the ``injectivity'' part of Theorem \ref{thm:GuardedFraisseCorrespondence}, our Fra\"iss\'e correspondence for guarded Fra\"iss\'e spaces. It will also be instrumental in the proof of Theorem \ref{thm:BMgameForIsometryClasses}.

\begin{theorem}\label{thm:guardedUniqueByAge}
Let $X$ and $Y$ be two separable Banach spaces satisfying condition \ref{it:gFraisseForPlayerI}, such that $\closedAge{X} = \closedAge{Y}$. Then $X$ and $Y$ are linearly isometric.
\end{theorem}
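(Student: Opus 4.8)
The plan is to construct a surjective linear isometry $X \to Y$ by a back-and-forth argument between dense increasing chains of finite-dimensional subspaces, the limit being understood in the sense of the directed limits of Definition \ref{def:directedLimit}. Fix dense sequences $(x_k)_k$ in $X$ and $(y_k)_k$ in $Y$, and a summable sequence of positive reals $(\varepsilon_n)$. I would build, by induction on $n$, finite-dimensional subspaces $A_n \subseteq X$ and $B_n \subseteq Y$ together with approximate surjective isometries $T_n \colon A_n \to B_n$, $T_n \in \Emb_{c_n}(A_n, B_n)$ with $c_n \to 0$, that are \emph{coherent}: $A_n \subseteq A_{n+1}$, $B_n \subseteq B_{n+1}$, and $T_{n+1}\restriction_{A_n}$ is $\varepsilon_n$-close to $T_n$. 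Feeding $x_n$ (resp.\ $y_n$) into the construction at the forth (resp.\ back) steps makes $\bigcup_n A_n$ dense in $X$ and $\bigcup_n B_n$ dense in $Y$; coherence together with summability of the errors then guarantees, via Lemma \ref{lem:perturbationArgument} and Remark \ref{rem:directLimits}, that the $T_n$ converge to a well-defined surjective linear isometry between the completions, that is, between $X$ and $Y$.

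The heart of the matter is the single extension (forth) step; the back step is symmetric after exchanging the roles of $X$ and $Y$, which is legitimate since both spaces satisfy \ref{it:gFraisseForPlayerI} and have the same closed age $\KK \coloneq \closedAge{X} = \closedAge{Y}$. Given $T_n \colon A_n \to B_n$ and the target vector $x_n$, set $A' \coloneq A_n + \langle x_n\rangle \subseteq X$. Since $T_n$ is an approximate surjective isometry, one can glue $A'$ to $B_n$ along $T_n$ to produce a finite-dimensional space $C \supseteq B_n$ carrying an approximate embedding $j \colon A' \to C$ extending $T_n$ up to a small error; as $C$ is $c_n$-isometric to $A' \in \Age(X) \subseteq \KK$ and $\KK$ is closed, one gets $C \in \closedAge{Y} = \KK$. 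It then remains to find an approximate embedding $\sigma \colon C \to Y$ whose restriction to $B_n$ is close to the inclusion $B_n \hookrightarrow Y$; composing with $j$ yields $T_{n+1} \coloneq \sigma \circ j$ and $B_{n+1} \coloneq \sigma(C)$, with $x_n$ now (approximately) in the domain.

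To produce such a $\sigma$ I would invoke the weak-injectivity form \ref{it:gFraisseThird} of the hypothesis for $Y$, applied to $B_n \subseteq Y$ and a small parameter: it furnishes a guard $F$ with $B_n \subseteq F \subseteq Y$ and a threshold $\delta > 0$ such that any $\psi \in \Emb_\delta(F, G)$ into some $G \in \KK$ can be reversed back into $Y$ over $B_n$. The role of the guard is exactly what makes this step non-trivial: one cannot embed $C$ into $Y$ over $B_n$ directly, but only after first passing through $F$. Concretely, I would amalgamate $C$ and the guard $F$ over $B_n$ inside $\KK$ to obtain $G \in \KK$ with compatible approximate embeddings of $C$ and of $F$ into $G$; then \ref{it:gFraisseThird} returns $\iota \in \Emb_\eta(G, Y)$ fixing $B_n$ up to the chosen error, and $\sigma$ is the composite $C \to G \xrightarrow{\iota} Y$. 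The exact-isometry version \ref{it:gFraisseFifth} can be used here to keep the reversing map $\iota$ isometric, which simplifies the bookkeeping of distortions.

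The two steps I expect to be the main obstacles are: first, carrying out the amalgamation of $C$ and the guard $F$ over $B_n$ inside the common age $\KK$ --- this is where the guarded structure is genuinely used, the guard preventing any naive one-sided extension, and it is the technical core that the paper isolates as Proposition \ref{prop:backAndForthConstruction}; and second, the quantitative calibration of the error parameters at each stage so that all distortions remain summable and the coherence errors telescope, ensuring that the limiting map has distortion exactly zero and is therefore an honest surjective isometry rather than merely a small-distortion isomorphism. Handling $(x_k)$ and $(y_k)$ alternately at the forth and back steps then yields both injectivity with dense range and surjectivity, completing the construction.
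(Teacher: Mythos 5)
Your global strategy (back-and-forth along dense sequences with summable errors, the limit of the finite-stage maps being a surjective isometry) matches the paper's, which obtains the theorem as an immediate corollary of the back-and-forth Proposition \ref{prop:backAndForthConstruction} applied to the trivial triple $(\{0\}, \{0\}, \phi)$ and the unique map $\{0\} \to Y$. But your key extension step has a genuine gap. To produce $\sigma \colon C \to Y$ almost fixing $B_n$, you propose to ``amalgamate $C$ and the guard $F$ over $B_n$ inside $\KK$''. That is an unrestricted (approximate) amalgamation of two arbitrary extensions of $B_n$ over $B_n$, and a guarded Fra\"iss\'e class does not provide it: (GAP) only amalgamates two maps that are \emph{defined on a guard} of $B_n$, and even then the commutation is only over $B_n$, not over the guard. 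Likewise, condition \ref{it:gFraisseThird} for $Y$ only reverses maps $\psi \in \Emb_\delta(F, G)$ whose domain is the whole guard $F$; since your space $C$ contains only $B_n$ and no copy of $F$, there is nothing to feed into it. If amalgamation over an arbitrary finite-dimensional $B_n$ were available, $\KK$ would be a weak amalgamation class and $Y$ would be weak Fra\"iss\'e --- precisely the stronger hypothesis the theorem avoids; so this step begs the question. (A secondary, more fixable slip: from ``$C$ is $c_n$-isometric to $A' \in \KK$ and $\KK$ is closed'' you cannot conclude $C \in \KK$; closedness says nothing about spaces at positive Banach--Mazur distance from the class.)

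The missing idea is that the induction itself must carry the guards, which is exactly how Proposition \ref{prop:backAndForthConstruction} is organized. Instead of chains of inclusions $A_n \subseteq A_{n+1}$, $B_n \subseteq B_{n+1}$ with comparison maps, the paper builds a zigzag $E_0 \xrightarrow{\phi_0} F_0 \xrightarrow{\psi_0} E_1 \xrightarrow{\phi_1} F_1 \xrightarrow{\psi_1} E_2 \cdots$ alternating between $X$ and $Y$: here $\phi_n \colon E_n \to F_n$ is a guard map given by \ref{it:gFraisseForPlayerI} in the appropriate space, and the next subspace $E_{n+1}$ of the opposite space is chosen to contain $\psi_n(F_n)$, i.e.\ the image of the \emph{entire} guard, together with the next point of the dense sequence. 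Consequently $\phi_{n+1} \circ \psi_n$ is a legitimate small-distortion embedding of the guard $F_n$ into $F_{n+1} \in \KK$, and the triple property of $(E_n, F_n, \phi_n)$ applies to it directly, yielding the reversal $\psi_{n+1} \colon F_{n+1} \to X_n$ with $\|\Id_{E_n} - \psi_{n+1}\circ\phi_{n+1}\circ\psi_n\circ\phi_n\| < \varepsilon_n$. No two-sided amalgamation is ever needed: the guard of stage $n$ is swallowed by stage $n+1$ before it has to be reversed. With the induction restructured this way, your error bookkeeping and limiting argument go through essentially as you describe.
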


We will first state and prove our technical Proposition \ref{prop:backAndForthConstruction}, and then obtain Theorems \ref{thm:EquivalenceGFraGUH} and \ref{thm:guardedUniqueByAge} as consequences. But before doing so, we first need to introduce definitions of some notions of pairs and triples witnessing conditions \ref{it:gFraisseAction} to \ref{it:gFraisseThird}. This will enable us to shorten some arguments and to obtain more precise results in what follows.

\begin{definition}\label{def:gFpairs}
Let $X$ be a separable Banach space, and $\varepsilon, \delta > 0$.
\smallskip
\begin{enumerate}
\item A pair $(E, F)$, where $E \subseteq F \subseteq X$ are finite-dimensional subspaces, is called:
\begin{enumerate}
    \item an \textit{($X, \varepsilon, \delta)$-\ref{it:gFraisseAction} pair} if there exists $\varepsilon' \in (0, \varepsilon)$ and $\delta' > \delta$ such that $\Iso(X)$ acts $\varepsilon'$-transitively on $\{\iota\restriction_E\setsep \iota\in \Emb_{\delta'}(F,X)\}$;
    \item an \textit{($X, \varepsilon, \delta)$-\ref{it:gFraisseThird} pair} if there exists $\varepsilon' \in (0, \varepsilon)$ and $\delta' > \delta$ such that for every $G \in \closedAge{X}$, $\psi \in \Emb_{\delta'}(F, G)$ and $\eta > 0$, there exists $\iota \in \Emb_{\eta}(G, X)$ with  $\|\Id_E-\iota\circ \psi\restriction_ E\|<\varepsilon'$.
\end{enumerate}

\smallskip\item A triple $(E, F, \phi)$, where $E \subseteq X$ is a finite-dimensional subspace, $F \in \closedAge{X}$, and $\phi\colon E \to F$, is called:
\begin{enumerate}
    \item an \textit{($X, \varepsilon, \delta)$-\ref{it:gFraisseForPlayerI} triple} if there exists $\varepsilon' \in (0, \varepsilon)$ and $\delta' > \delta$ such that $\phi \in \Emb_{\varepsilon'}(E, F)$ and for every $G\in\closedAge{X}$, $\psi\in\Emb_{\delta'}(F,G)$  and $\eta>0$, there exists $\iota\in\Emb_{\eta}(G,X)$ such that $\|\Id_{E} - \iota\circ \psi\circ \phi\|<\varepsilon'$;
    \item an \textit{($X, \varepsilon, \delta)$-\ref{it:gFraisseFourth} triple} if there exists $\varepsilon' \in (0, \varepsilon)$ and $\delta' > \delta$ such that $\phi \in \Emb_{\varepsilon'}(E, F)$ and for every $\psi\in\Emb_{\delta'}(F,X)$, there exists $T\in \Iso(X)$ such that $\|T\restriction_E - \psi\circ\phi\|<\varepsilon'$;
    \item an \textit{($X, \varepsilon, \delta)$-\ref{it:gFraisseFifth} triple} if there exists $\varepsilon' \in (0, \varepsilon)$ and $\delta' > \delta$ such that $\phi \in \Emb_{\varepsilon'}(E, F)$ and for every $G\in\Age(X)$ and $\psi\in\Emb_{\delta'}(F,G)$, there exists $\iota\in \Emb(G,X)$ such that $\|\Id_E - \iota\circ\psi\circ\phi\|<\varepsilon'$.
\end{enumerate}
\end{enumerate}
\end{definition}

\begin{remark}\label{rem:gFFromPairs}
As names indicate, each of the notions of pairs and triples introduced in Definition \ref{def:gFpairs} corresponds to one of the equivalent conditions defining guarded Fra\"iss\'eness. However, additional quantifications on $\varepsilon'$ and $\delta'$ have been added in order to make those notions stable under small perturbations of $\varepsilon$ and $\delta$; this will ease presentations of some results. Despite this addition, it is easy to see that, for a separable Banach space $X$, the following holds:
\begin{itemize}
    \item $X$ satisfies \ref{it:gFraisseAction} (resp. \ref{it:gFraisseThird}) iff for 
    every finite-dimensional subspace $E\subseteq X$ and every $\varepsilon>0$, there exist a finite dimensional subspace $F$ with $E \subseteq F \subseteq X$ and $\delta>0$ such that $(E, F)$ is an $(X,\varepsilon,\delta)$-\ref{it:gFraisseAction} pair (resp. an $(X, \varepsilon, \delta)$-\ref{it:gFraisseThird} pair);
    \item $X$ satisfies \ref{it:gFraisseForPlayerI} iff 
    for every finite-dimensional subspace $E \subseteq X$ and every $\varepsilon>0$, there exist $F\in \closedAge{X}$, $\phi\colon E \to F$ and $\delta>0$ such that $(E, F, \phi)$ is an $(X, \varepsilon, \delta)$-\ref{it:gFraisseForPlayerI} triple.
    \item $X$ satisfies \ref{it:gFraisseFourth} (resp. \ref{it:gFraisseFifth}) iff 
    for every finite-dimensional subspace $E \subseteq X$ and every $\varepsilon>0$, there exist $F\in \Age{X}$, $\phi\colon E \to F$, and $\delta>0$ such that $(E, F, \phi)$ is an $(X, \varepsilon, \delta)$-\ref{it:gFraisseFourth} triple (resp. an $(X, \varepsilon, \delta)$-\ref{it:gFraisseFifth} triple).
\end{itemize}
\end{remark}

\begin{proposition}\label{prop:backAndForthConstruction}
Let $X$ and $Y$ be two separable Banach spaces satisfying \ref{it:gFraisseForPlayerI} with\linebreak $\closedAge{X} = \closedAge{Y}$. Let $\varepsilon, \delta > 0$ and $(E, F, \phi)$ be an $(X, \varepsilon, \delta)$-\ref{it:gFraisseForPlayerI} triple. Then for every $\psi\in\Emb_{\delta}(F,Y)$, there exists $T\in\Iso(X,Y)$ such that $\|T\restriction_E - \psi \circ \phi\| < \varepsilon$.
\end{proposition}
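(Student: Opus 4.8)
The plan is to run a metric back-and-forth argument, building $T$ as the limit of a zig-zag of approximate isometries between growing finite-dimensional pieces of $X$ and $Y$. First I would unfreeze the parameters: since $(E, F, \phi)$ is an $(X, \varepsilon, \delta)$-\ref{it:gFraisseForPlayerI} triple, fix the witnessing $\varepsilon' \in (0, \varepsilon)$ and $\delta' > \delta$, and fix sequences $(x_k)$ in $X$ and $(y_k)$ in $Y$ with dense linear spans. I would also fix a rapidly decreasing, summable sequence of tolerances $(\eta_n)$ whose total is small enough that all accumulated perturbations stay below $\varepsilon - \varepsilon'$. The slack $\delta' > \delta$ is precisely what allows the \emph{given} $\psi$ to be fed into the triple at the base step.

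For the base step, choose a finite-dimensional $G_0 \subseteq Y$ containing $\psi(F)$. Since $\delta < \delta'$ and $\psi(F) \subseteq G_0$, the map $\psi$ viewed as a map $F \to G_0$ lies in $\Emb_{\delta'}(F, G_0)$, and $G_0 \in \Age(Y) \subseteq \closedAge{Y} = \closedAge{X}$; hence the defining property of the triple (applied with this $G_0$, the extension $\psi$, and tolerance $\eta_0$) yields $\iota_0 \in \Emb_{\eta_0}(G_0, X)$ with $\|\Id_E - \iota_0 \circ \psi \circ \phi\| < \varepsilon'$. The map $\iota_0$ is the germ of the inverse isometry $T^{-1}$: on $\iota_0(G_0) \subseteq X$ it will be approximately inverted by the germ of $T$, and the displayed inequality is exactly the record that, near $E$, $\iota_0$ undoes $\psi \circ \phi$.

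I would then alternate. At an odd step I extend the current germ of $T^{-1}$ (a near-isometry from a chunk of $Y$ into $X$) to absorb the next vector $x_k$ and to produce the germ of $T$ on a larger chunk of $X$; at an even step I perform the symmetric move, absorbing the next $y_k$ via \ref{it:gFraisseForPlayerI} for $Y$. Each extension is performed through the guarded property of the appropriate space: to extend an approximate isometry defined on a subspace $P$ so that it stays compatible on the already-matched part while taking in one new direction, I first apply \ref{it:gFraisseForPlayerI} to $P$ to obtain a guard $P \to F_P$, then realize the previously constructed germ as an extension of $F_P$, which I pull back. Because \ref{it:gFraisseForPlayerI} guarantees the pulled-back map is near the identity on $P$, the new germ approximately inverts the previous one on the matched subspace; drawing the tolerances along these nested guards from $(\eta_n)$ keeps each successive map an $\Emb_{\zeta_n}$-perturbation of its predecessor with $\sum_n \zeta_n < \infty$.

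Having arranged that the $X$-images exhaust a dense subset of $X$ (every $x_k$ is eventually captured) and the $Y$-images exhaust a dense subset of $Y$, I pass to the limit: the compatible germs of $T$ assemble, via the $\limsup$-norm description of direct limits (Remark~\ref{rem:directLimits}\ref{it:normLimit} and Remark~\ref{rem:normsConnectingMaps}) together with Lemma~\ref{lem:perturbationArgument}, into a well-defined linear isometry $T \colon X \to Y$ whose range is dense, hence onto, with approximate inverse furnished by the germs of $T^{-1}$; the base-step relation $\iota_0 \circ \psi \circ \phi \approx \Id_E$, together with $\iota_0$ being a restriction of $T^{-1}$, survives the limit to give $\|T\restriction_E - \psi \circ \phi\| < \varepsilon$. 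The main obstacle, and the reason this is the technical heart of the section, is that \ref{it:gFraisseForPlayerI} is only a \emph{guarded} (weak) amalgamation: one cannot freely amalgamate an approximate isometry over a new direction, but must first commit to a guard dominating all already-matched data and then express the new data as an extension of that guard before pulling it back. Making these guards nest correctly through the alternation, ensuring that both families of images become dense \emph{simultaneously}, and keeping every metric error summable so that the two limit maps are genuine, mutually inverse surjective isometries, is the delicate bookkeeping on which the construction turns.
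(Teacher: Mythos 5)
Your proposal follows essentially the same route as the paper's proof: a metric back-and-forth alternating applications of \ref{it:gFraisseForPlayerI} in $X$ and $Y$, with nested guards, summable tolerances, the given $\psi$ absorbed at the base step via the slack $\delta' > \delta$ in the definition of the triple, and the surjective isometry $T$ obtained as a pointwise limit of the composed germs. The paper organizes the same idea with explicit sequences $(E_n, F_n, \phi_n, \psi_n)$ and quantitative conditions ensuring the near-inversion and Cauchy estimates, but the underlying argument is identical.
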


\begin{proof}
For simplicity of notation, we will let, for every $n \geqslant 0$, $X_{2n} = X$ and $X_{2n+1} = Y$. Consider a sequence $(x_n)_{n \geqslant 0}$ with $x_n \in X_n$ for every $n \geqslant 0$, such that $(x_{2n})_{n \geqslant 0}$ is dense in $X$ and $(x_{2n+1})_{n \geqslant 0}$ is dense in $Y$. We also assume that $x_0 \in E$. We build by induction a sequence $(E_n)_{n \geqslant 0}$ where for every $n \geqslant 0$, $E_n$ is a finite-dimensional subspace of $X_n$, a sequence $(F_n)_{n \geqslant 0}$ of elements of $\closedAge{X}$, two sequences $(\varepsilon_n)_{n \geqslant 0}$ and $(\delta_n)_{n \geqslant 0}$ of positive real numbers, and for each $n \geqslant 0$, maps $\phi_n \colon E_n \to F_n$ and $\psi_n \colon F_n \to X_{n+1}$, satisfying the following properties, for every $n \geqslant 0$:
\begin{enumerate}[label=(c\arabic*), series=ultrahomConstruction]
    \item\label{it:subset} $E_n \subseteq E_{n+2}$
    \item\label{it:point} $x_n \in E_n$;
    \item\label{it:imageSubset} $\psi_n(F_n) \subseteq E_{n+1}$;
    \item\label{it:gFraTriple} $(E_n, F_n, \phi_n)$ is an $(X_n, \varepsilon_n, \delta_n)$-\ref{it:gFraisseForPlayerI} triple;
    \item\label{it:isoPsi} $\psi_n\in\Emb_{<\delta_n}(F_n,X_{n+1})$;
    \item\label{it:isoPhiPsi} $\phi_{n+1} \circ \psi_n\in\Emb_{\delta_n}(F_n,F_{n+1})$;
    \item\label{it:epsNext} $\varepsilon_{n+1} \leqslant \frac{\varepsilon_n}{2}$;
    \item\label{it:deltaNextt} $\delta_{n+1} \leqslant \frac{\delta_n}{2}$
    \item\label{it:commute} $\|Id_{X_n}\restriction_{E_n} - \psi_{n+1} \circ \phi_{n+1} \circ \psi_n \circ \phi_n\| < \varepsilon_n$;
\end{enumerate}
along with the following properties at the initial steps of the construction:
\begin{enumerate}[label=(c\arabic*), resume*=ultrahomConstruction]
    \item\label{it:epsOneFirstCondition} 
    $(\varepsilon_0\exp(\varepsilon_{1}) + 3\varepsilon_{1}\exp(\delta_0+\varepsilon_0)) < \frac{\varepsilon + \varepsilon_0}{2}$;
    \item\label{it:deltaOne} $\delta_1 \leqslant \varepsilon_1$.
\end{enumerate}
\begin{center}\includegraphics[scale=0.8]{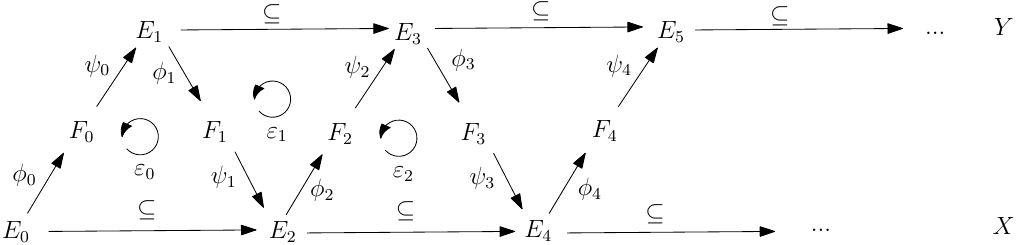}\end{center}
We start with taking $E_0 = E$, $F_0 = F$, $\phi_0 = \phi$, $\psi_0 = \psi$, and we choose $\varepsilon_0 \in (0, \varepsilon)$ and $\delta_0 > \delta$ such that $(E, F, \phi)$ is still an $(X, \varepsilon_0, \delta_0)$-\ref{it:gFraisseForPlayerI} triple.
We now fix $n \geqslant 0$. We assume that the $E_i$'s, the $F_i$'s, the $\phi_i$'s, the $\psi_i$'s, the $\varepsilon_i$'s and the $\delta_i$'s have been built for $i \leqslant n$, and we build $E_{n+1}$, $F_{n+1}$, $\phi_{n+1}$, $\psi_{n+1}$, $\varepsilon_{n+1}$, and $\delta_{n+1}$.

\smallskip

We first choose $E_{n+1}$ satisfying conditions \ref{it:point}, \ref{it:imageSubset}, and if $n \geqslant 1$, also condition \ref{it:subset}. We choose $\varepsilon_{n+1} > 0$ satisfying condition \ref{it:epsNext} and, if $n=0$, condition \ref{it:epsOneFirstCondition}, such that the following inequality is satisfied:
\begin{equation}\label{eqn:DefEpsilonN}e^{\varepsilon_{n+1}} \leqslant \frac{e^{\delta_n}}{\max(\|\psi_n\|, \|\psi_n^{-1}\|)}.\tag{A}\end{equation} Note that by condition \ref{it:isoPsi}, the right hand side of the above inequality is greater than $1$, so such a choice of $\varepsilon_{n+1}$ is always possible.
Now, since $X_{n+1}$ satisfies \ref{it:gFraisseForPlayerI}, we can find $F_{n+1} \in \closedAge{X}$, $\phi_{n+1} \colon E_{n+1} \to F_{n+1}$, and $\delta_{n+1} > 0$ such that $(E_{n+1}, F_{n+1}, \phi_{n+1})$ is an $(X_{n+1}, \varepsilon_{n+1}, \delta_{n+1})$-\ref{it:gFraisseForPlayerI} triple, that is, condition \ref{it:gFraTriple} is satisfied. Decreasing $\delta_{n+1}$ if necessary, we can ensure that condition \ref{it:deltaNextt} and, if $n=0$, condition \ref{it:deltaOne}, are satisfied. We know, by the definition of a \ref{it:gFraisseForPlayerI} triple, that $\phi_{n+1} \in \Emb_{\varepsilon_{n+1}}(E_{n+1}, F_{n+1})$, so inequality \eqref{eqn:DefEpsilonN} ensures that condition \ref{it:isoPhiPsi} is satisfied. Now, using that $(E_n, F_n, \phi_n)$ is an $(X_n, \varepsilon_n, \delta_n)$-\ref{it:gFraisseForPlayerI} triple, that $F_{n+1} \in \closedAge{X_n}$, and that $\phi_{n+1}\circ \psi_n\in\Emb_{\delta_n}(F_n,F_{n+1})$, we can find $\psi_{n+1} \in \Emb_{<\delta_{n+1}}(F_{n+1}, X_n)$ such that $\|Id_{X_n}\restriction_{E_n} - \psi_{n+1} \circ \phi_{n+1} \circ \psi_n \circ \phi_n\| < \varepsilon_n$. That is, this choice of $\psi_{n+1}$ simultaneously satisfies conditions \ref{it:isoPsi} and \ref{it:commute}. This finishes the inductive construction.

\smallskip

Fix $x \in \bigcup_{n \geqslant 0}E_{2n}$, and let $n_x \geqslant 0$ be minimal such that $x \in E_{2n_x}$. For $m \geqslant n_x$, we have, by conditions \ref{it:isoPsi}, \ref{it:commute}, and the fact (contained in condition \ref{it:gFraTriple}) that $\phi_n \in \Emb_{\varepsilon_n}(E_n, F_n)$ for all $n \geqslant 0$:
\begin{multline}\label{eqn:IneqB}
    \|\psi_{2m+2}\circ \phi_{2m+2}(x) -  \psi_{2m} \circ \phi_{2m}(x)\|\\
    \begin{aligned}
    \leqslant \;\; & \|\psi_{2m+2}\circ \phi_{2m+2}\circ(\Id_{E_{2m}} - \psi_{2m+1}\circ\phi_{2m+1}\circ\psi_{2m}\circ\phi_{2m})(x)\|\\  &  + \|(\psi_{2m+2}\circ\phi_{2m+2}\circ\psi_{2m+1}\circ\phi_{2m+1}  - \Id_Y\restriction_{E_{2m+1}})\circ(\psi_{2m}\circ\phi_{2m})(x)\| \\
    \leqslant \;\; & (\varepsilon_{2m}\cdot\|\psi_{2m+2}\|\cdot \|\phi_{2m+2}\| + \varepsilon_{2m+1}\cdot\|\psi_{2m}\|\cdot \|\phi_{2m}\|)\cdot\|x\| \\
    \leqslant \;\; & (\varepsilon_{2m}\exp(\delta_{2m+2}+\varepsilon_{2m+2}) + \varepsilon_{2m+1}\exp(\delta_{2m}+\varepsilon_{2m}))\cdot\|x\|.
    \end{aligned}
    \tag{B}
\end{multline}
In particular, if $m \geqslant 1$, using conditions \ref{it:epsNext} and \ref{it:deltaNextt}, we get:
\begin{align*}
    \|\psi_{2m+2}\circ \phi_{2m+2}(x) -  \psi_{2m} \circ \phi_{2m}(x)\| & \leqslant 2\varepsilon_{2m}\exp( \delta_0 + \varepsilon_0)\|x\| \\
    & \leqslant 2^{-2m+2}\varepsilon_1\exp(\delta_0+\varepsilon_0)\|x\|.
\end{align*}
Thus, for $n \geqslant m \geqslant \max(1, n_x)$, we have:
\begin{align}\label{eqn:IneqC}
\begin{split}
    \|\psi_{2n}\circ \phi_{2n}(x) & -  \psi_{2m} \circ \phi_{2m}(x)\| \leqslant \sum_{i=m}^{n-1} \|\psi_{2i+2}\circ \phi_{2i+2}(x) -  \psi_{2i} \circ \phi_{2i}(x)\| \\
    & \leqslant \varepsilon_1\exp(\delta_0+\varepsilon_0)\cdot \|x\| \cdot \sum_{i=m}^{n-1} 2^{-2i+2} \leqslant \varepsilon_1\exp(\delta_0+\varepsilon_0)\cdot \|x\| \cdot \sum_{j=2m-2}^\infty 2^{-j}\\
    & \leqslant \frac{\varepsilon_1\exp(\delta_0+\varepsilon_0)}{2^{2m-3}}\cdot \|x\|.
    \end{split}
    \tag{C}
\end{align}

It follows that the sequence $(\psi_{2n} \circ \phi_{2n}(x))_{n \geqslant n_x}$ is a Cauchy sequence in $Y$, so it converges to some $T(x) \in Y$. In this way we defined a linear mapping $T \colon \bigcup_{n \geqslant 0} E_{2n} \to Y$. As an immediate consequence of inequality \eqref{eqn:IneqC}, we obtain that for every $m \geqslant 1$ and $x \in E_{2m}$, one has:
\begin{equation}\label{eqn:IneqD}
    \|T(x) - \psi_{2m} \circ \phi_{2m}(x)\| \leqslant \frac{\varepsilon_1\exp(\delta_0+\varepsilon_0)}{2^{2m-3}}\cdot \|x\|.\tag{D}
\end{equation}
By conditions \ref{it:gFraTriple} and \ref{it:isoPsi}, we have for every $n \geqslant n_x$, $\exp(-\varepsilon_{2n} - \delta_{2n})\|x\| < \|\psi_{2n} \circ \phi_{2n}(x)\| < \exp(\varepsilon_{2n} + \delta_{2n})\|x\|$, so, using the fact that the sequences $(\varepsilon_n)_{n \geqslant 0}$ and $(\delta_n)_{n \geqslant 0}$ converge to $0$, we deduce that $T$ is isometric. Condition \ref{it:point} ensures that $\bigcup_{n \geqslant 0} E_{2n}$ is dense in $X$; it follows that $T$ can be extended to an isometric embedding $X \to Y$, that will still be denoted by $T$. Given $N \geqslant 0$ and $y \in E_{2N+1}$, we obtain, using successively condition \ref{it:commute}, inequality \eqref{eqn:IneqD}, and conditions \ref{it:gFraTriple} and \ref{it:isoPsi}, that for every $n \geqslant N$, 
\begin{eqnarray*}
    \|y - T \circ \psi_{2n+1} \circ \phi_{2n+1}(y)\| & \leqslant &  \|y - \psi_{2n+2}\circ\phi_{2n+2}\circ \psi_{2n+1} \circ \phi_{2n+1}(y)\| \\ & & + \|(\psi_{2n+2}\circ \phi_{2n+2} - T)\circ \psi_{2n+1} \circ \phi_{2n+1}(y)\|\\
    & \leqslant & \left(\varepsilon_{2n+1} + \frac{\varepsilon_1\exp(\delta_0+\varepsilon_0 + \delta_{2n+1} + \varepsilon_{2n+1})}{2^{2n-1}}\right)\|y\|.
\end{eqnarray*}
In particular, the sequence $(T \circ \psi_{2n+1} \circ \phi_{2n+1}(y))_{n \geqslant N}$ converges to $y$, hence showing that $y$ is in the closure of $T(X)$. Since, by condition \ref{it:point}, $\bigcup_{N \geqslant 0} E_{2N+1}$ is dense in $Y$, we deduce that $T$ has dense range, so $T \in \Iso(X, Y)$.

\smallskip

We now want to estimate $\|T\restriction_E - \psi \circ \phi\|$. We fix $x \in E \setminus \{0\}$. Combining inequalities \eqref{eqn:IneqB} for $m=0$ and \eqref{eqn:IneqD} for $m=1$, we obtain:
\begin{align*}
\|T(x) - \psi\circ\phi(x)\| 
&\leqslant (\varepsilon_0\exp(\delta_{2}+\varepsilon_{2}) + 3\varepsilon_{1}\exp(\delta_0+\varepsilon_0))\cdot\|x\|,
\end{align*}
so applying successively conditions \ref{it:epsNext}, \ref{it:deltaNextt}, \ref{it:deltaOne} and \ref{it:epsOneFirstCondition}, we obtain:
\begin{align*}
    \|T(x) - \psi\circ\phi(x)\| &\leqslant \left(\varepsilon_0\exp\left(\frac{\varepsilon_1}{2} + \frac{\delta_1}{2}\right) + 3\varepsilon_{1}\exp(\delta_0+\varepsilon_0)\right)\\
    & \leqslant (\varepsilon_0\exp(\varepsilon_{1}) + 3\varepsilon_{1}\exp(\delta_0+\varepsilon_0))\cdot\|x\|< \frac{\varepsilon + \varepsilon_0}{2}\|x\|.
\end{align*}
Thus $\|T\restriction_E - \psi \circ \phi\| \leqslant \frac{\varepsilon + \varepsilon_0}{2} < \varepsilon$.
\end{proof}

As an almost immediate application we obtain Theorem~\ref{thm:guardedUniqueByAge}.

\begin{proof}[Proof of Theorem~\ref{thm:guardedUniqueByAge}]
The existence of $T \in \Iso(X, Y)$ is given by Proposition \ref{prop:backAndForthConstruction} applied to the spaces $X$ and $Y$, the triple $(\{0\}, \{0\}, \phi)$ where $\phi$ is the unique map $\{0\} \to \{0\}$ (it is easily see to be an $(X, 1, 1)$-\ref{it:gFraisseForPlayerI} triple), and the unique map $\psi \colon \{0\} \to Y$.
\end{proof}

We now turn to the proof of Theorem ~\ref{thm:EquivalenceGFraGUH}. We start with a few lemmas about manipulation of pairs and triples introduced in Definition \ref{def:gFpairs}; they will also be useful in the next sections.

\begin{lemma}\label{lem:ImplicationTriples}
Let $X$ be a separable Banach space, $\varepsilon > 0$, $\delta > 0$, $E \subseteq X$ be a finite-dimensional subspace, $F \in \closedAge{X}$, and $\phi \colon E \to F$ be an operator. Consider the following statements:
\begin{enumerate}
    \item $(E, F, \phi)$ is an $(X, \varepsilon, \delta)$-\ref{it:gFraisseForPlayerI} triple;
    \item $(E, F, \phi)$ is an $(X, \varepsilon, \delta)$-\ref{it:gFraisseFourth} triple;
    \item $(E, F, \phi)$ is an $(X, \varepsilon, \delta)$-\ref{it:gFraisseFifth} triple.
\end{enumerate}
Then (2) $\Rightarrow$ (3) $\Rightarrow$ (1). Moreover, if $X$ satisfies \ref{it:gFraisseForPlayerI}, then (1) $\Rightarrow$ (2).
\end{lemma}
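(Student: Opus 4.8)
The plan is to dispatch the two elementary implications $(2)\Rightarrow(3)$ and $(3)\Rightarrow(1)$ by direct manipulation of the maps in Definition~\ref{def:gFpairs}, and to reduce the remaining implication $(1)\Rightarrow(2)$ to the back-and-forth Proposition~\ref{prop:backAndForthConstruction} applied with $Y=X$.

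For $(2)\Rightarrow(3)$, I would keep the witnesses $\varepsilon',\delta'$ given by the \ref{it:gFraisseFourth} triple and check they also witness \ref{it:gFraisseFifth}. Given $G\in\Age(X)$ and $\psi\in\Emb_{\delta'}(F,G)$, fix an isometric embedding $j\in\Emb(G,X)$ (available exactly because $G\in\Age(X)$); the composition convention $\Emb_0(G,X)\circ\Emb_{\delta'}(F,G)\subseteq\Emb_{\delta'}(F,X)$ gives $j\circ\psi\in\Emb_{\delta'}(F,X)$, so $(2)$ yields $T\in\Iso(X)$ with $\|T\restriction_E-j\circ\psi\circ\phi\|<\varepsilon'$. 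Setting $\iota:=T^{-1}\circ j\in\Emb(G,X)$ (a composition of isometries) and using $\Id_E=T^{-1}\circ T\restriction_E$ together with the fact that $T^{-1}$ is a surjective isometry, one gets $\|\Id_E-\iota\circ\psi\circ\phi\|=\|T\restriction_E-j\circ\psi\circ\phi\|<\varepsilon'$, as required. This is routine.

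For $(3)\Rightarrow(1)$ the only real point is that $(1)$ allows $G\in\closedAge{X}$ and only asks for an $\eta$-isometric $\iota$, whereas $(3)$ needs a genuine subspace and an exact isometry; the idea is to realize $G$ approximately inside $X$. With $\varepsilon',\delta'$ witnessing $(3)$, I would fix $\delta''\in(\delta,\delta')$ and claim $\varepsilon',\delta''$ witness $(1)$. Given $G\in\closedAge{X}$, $\psi\in\Emb_{\delta''}(F,G)$ and $\eta>0$, set $\eta_1:=\min(\eta,\delta'-\delta'')>0$ and, since $G$ is finitely representable in $X$, choose $\rho\in\Emb_{\eta_1}(G,X)$. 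Its image $G':=\rho(G)$ lies in $\Age(X)$, $\rho\colon G\to G'$ is onto, and $\psi':=\rho\circ\psi\in\Emb_{\delta''+\eta_1}(F,G')\subseteq\Emb_{\delta'}(F,G')$. Applying $(3)$ gives an isometry $\iota'\in\Emb(G',X)$ with $\|\Id_E-\iota'\circ\psi'\circ\phi\|<\varepsilon'$; then $\iota:=\iota'\circ\rho\in\Emb_{\eta_1}(G,X)\subseteq\Emb_\eta(G,X)$ satisfies $\iota\circ\psi=\iota'\circ\psi'$, so the same estimate holds, and $\phi\in\Emb_{\varepsilon'}(E,F)$ carries over from $(3)$.

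The implication $(1)\Rightarrow(2)$, under the standing hypothesis that $X$ satisfies \ref{it:gFraisseForPlayerI}, is where the work is genuinely hard, but essentially all of it is already packaged in Proposition~\ref{prop:backAndForthConstruction}: the plan is simply to invoke it with $Y=X$, so that $\Iso(X,Y)=\Iso(X)$ and $\closedAge{X}=\closedAge{Y}$ hold trivially. Let $\varepsilon',\delta'$ witness that $(E,F,\phi)$ is an $(X,\varepsilon,\delta)$-\ref{it:gFraisseForPlayerI} triple, and choose $\varepsilon''\in(\varepsilon',\varepsilon)$ and $\delta''\in(\delta,\delta')$. Because $\varepsilon'<\varepsilon''$ and $\delta''<\delta'$, the same witnesses $\varepsilon',\delta'$ show that $(E,F,\phi)$ is also an $(X,\varepsilon'',\delta'')$-\ref{it:gFraisseForPlayerI} triple; feeding this into Proposition~\ref{prop:backAndForthConstruction} produces, for every $\psi\in\Emb_{\delta''}(F,X)$, an isometry $T\in\Iso(X)$ with $\|T\restriction_E-\psi\circ\phi\|<\varepsilon''$. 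Combined with $\phi\in\Emb_{\varepsilon'}(E,F)\subseteq\Emb_{\varepsilon''}(E,F)$, this exhibits $\varepsilon'',\delta''$ as witnesses that $(E,F,\phi)$ is an $(X,\varepsilon,\delta)$-\ref{it:gFraisseFourth} triple. The main obstacle is thus entirely the back-and-forth construction of Proposition~\ref{prop:backAndForthConstruction}; the only delicate point remaining here is the parameter bookkeeping, namely arranging the strict inequalities $\varepsilon'<\varepsilon''$ and $\delta''<\delta'$ so that the looser triple is recognized as an $(X,\varepsilon'',\delta'')$-triple.
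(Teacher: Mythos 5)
Your proposal is correct and takes essentially the same route as the paper's proof: the same $T^{-1}$-composition argument for (2)~$\Rightarrow$~(3), the same approximation of $G \in \closedAge{X}$ by an element of $\Age(X)$ for (3)~$\Rightarrow$~(1) (you phrase it via finite representability in $X$ rather than via an onto map in $\Emb_\eta(G,G')$ from the closure in $\BM$, which is equivalent), and the same invocation of Proposition~\ref{prop:backAndForthConstruction} with $Y = X$ for (1)~$\Rightarrow$~(2). The parameter bookkeeping you worry about at the end is exactly the role of the built-in quantification over $\varepsilon', \delta'$ in Definition~\ref{def:gFpairs}, and your handling of it matches the paper's.
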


\begin{proof}
    Assume (2) holds, witnessed by $\varepsilon' \in (0, \varepsilon)$ and $\delta'>\delta$; we show that (3) holds, witnessed by the same $\varepsilon'$ and $\delta'$. Pick $G \in \Age(X)$ and $\psi\in\Emb_{\delta'}(F,G)$. Let $\rho \in \Emb(G, X)$. Then $\rho \circ \psi\in\Emb_{\delta'}(F,X)$; hence there exists $T\in\Iso(X)$ satisfying $\|T\restriction_E - \rho \circ \psi\circ \phi\| < \varepsilon'$. Left-composing by $T^{-1}$, we get that $\|\Id_E - T^{-1}\circ \rho \circ \psi \circ \phi\| < \varepsilon'$. So $\iota := T^{-1}\circ \rho\in\Emb(G,X)$ satisfies $\|\Id_E - \iota \circ \psi \circ \phi\| < \varepsilon'$.

    \smallskip Assume (3) holds, witnessed by  $\varepsilon' \in (0, \varepsilon)$ and $\delta' > \delta$; also let $\delta'' \in (\delta, \delta')$. To prove (1), we pick $G \in \closedAge{X}$, $\psi\in\Emb_{\delta''}(F,G)$, and $\eta>0$, and we want to find $\iota \in \Emb_{\eta}(G, X)$ such that $\|\Id_E - \iota \circ \psi \circ \phi\| < \varepsilon'$. We can assume that $\eta \leqslant \delta' - \delta''$. By the definition of $\closedAge{X}$, there are $G' \in \Age(X)$ and an onto map $\rho\in\Emb_{\eta}(G,G')$. We have $\rho \circ \psi \in \Emb_{\delta'}(F, G')$, so we can find $\iota' \in \Emb(G', X)$ such that $\|\Id_E - \iota'\circ \rho \circ \psi\circ\phi\| < \varepsilon'$. Letting $\iota \coloneq \iota' \circ \rho$, we have $\iota \in \Emb_{\eta}(G, X)$ and $\|\Id_E - \iota \circ \psi \circ \phi\| < \varepsilon'$.

    \smallskip Assume that $X$ satisfies \ref{it:gFraisseForPlayerI} and (1) holds. Fix $\varepsilon' \in (0, \varepsilon)$ and $\delta' > \delta$ such that $(E, F, \phi)$ is still an $(X, \varepsilon', \delta')$-\ref{it:gFraisseForPlayerI} triple. Then Proposition \ref{prop:backAndForthConstruction} applied to $Y = X$ immediately shows that $(E, F, \phi)$ is a $(X, \varepsilon, \delta)$-\ref{it:gFraisseFourth} triple, witnessed by $\varepsilon'$ and $\delta'$. Thus, (2) holds.
\end{proof}

\begin{lemma}\label{lem:equivalencePairsTriples}
    Let $X$ be a separable Banach space, $\varepsilon, \delta > 0$, and $E \subseteq F \subseteq X$ be finite-dimensional subspaces.
    \begin{enumerate}
        \item $(E, F)$ is an $(X, \varepsilon, \delta)$-\ref{it:gFraisseThird} pair iff $(E, F, \Id_E)$ is an $(X, \varepsilon, \delta)$-\ref{it:gFraisseForPlayerI} triple.
        \item If $(E, F, \Id_E)$ is an $(X, \varepsilon, \delta)$-\ref{it:gFraisseFourth} triple, then $(E, F)$ is an $(X, 2\varepsilon, \delta)$-\ref{it:gFraisseAction} pair.
        \item If $(E, F)$ is an $(X, \varepsilon, \delta)$-\ref{it:gFraisseAction} pair, then $(E, F, \Id_E)$ is an $(X, \varepsilon, \delta)$-\ref{it:gFraisseFourth} triple.
    \end{enumerate}
\end{lemma}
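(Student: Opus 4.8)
The plan is to unwind the definitions in Definition \ref{def:gFpairs}, using two elementary observations that hold because $E \subseteq F$: first, the guard $\Id_E \colon E \to F$ is the isometric inclusion, so $\Id_E \in \Emb_0(E,F) \subseteq \Emb_{\varepsilon'}(E,F)$ holds automatically for every $\varepsilon' > 0$; second, $\psi \circ \Id_E = \psi\restriction_E$ for every $\psi$ defined on $F$. With these, part (1) is immediate: after the substitution $\iota\circ\psi\circ\Id_E = \iota\circ\psi\restriction_E$, the defining clause of an $(X,\varepsilon,\delta)$-\ref{it:gFraisseForPlayerI} triple for $(E,F,\Id_E)$ becomes word-for-word the defining clause of an $(X,\varepsilon,\delta)$-\ref{it:gFraisseThird} pair for $(E,F)$, the extra requirement on $\phi = \Id_E$ being vacuous; hence the same witnesses $(\varepsilon',\delta')$ serve in both directions.

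For part (2), I would take an $(X,\varepsilon,\delta)$-\ref{it:gFraisseFourth} triple $(E,F,\Id_E)$ witnessed by $\varepsilon' \in (0,\varepsilon)$ and $\delta' > \delta$, which (again via $\psi\circ\Id_E = \psi\restriction_E$) provides, for each $\psi \in \Emb_{\delta'}(F,X)$, an isometry $T_\psi \in \Iso(X)$ with $\|T_\psi\restriction_E - \psi\restriction_E\| < \varepsilon'$. Given $\iota, \iota' \in \Emb_{\delta'}(F,X)$, I would form $T_\iota, T_{\iota'}$ this way and set $S \coloneq T_{\iota'} \circ T_\iota^{-1} \in \Iso(X)$. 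A triangle inequality, together with the fact that composing with the isometries $T_{\iota'}$ and $T_\iota^{-1}$ preserves operator norms (so that $\|T_{\iota'}\restriction_E - S\circ\iota\restriction_E\| = \|\Id_E - T_\iota^{-1}\circ\iota\restriction_E\| = \|T_\iota\restriction_E - \iota\restriction_E\|$), gives
\[\|\iota'\restriction_E - S\circ\iota\restriction_E\| \leqslant \|\iota'\restriction_E - T_{\iota'}\restriction_E\| + \|\iota\restriction_E - T_\iota\restriction_E\| < 2\varepsilon'.\]
So $\Iso(X)$ acts $2\varepsilon'$-transitively on $\{\iota\restriction_E \setsep \iota \in \Emb_{\delta'}(F,X)\}$, and since $2\varepsilon' < 2\varepsilon$, the witnesses $(\varepsilon'', \delta'') = (2\varepsilon', \delta')$ show that $(E,F)$ is an $(X,2\varepsilon,\delta)$-\ref{it:gFraisseAction} pair.

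For part (3), I would start from an $(X,\varepsilon,\delta)$-\ref{it:gFraisseAction} pair $(E,F)$ witnessed by $\varepsilon' \in (0,\varepsilon)$ and $\delta' > \delta$, and verify that the same $(\varepsilon',\delta')$ witness the \ref{it:gFraisseFourth} triple. Fix $\psi \in \Emb_{\delta'}(F,X)$; the inclusion $\Incl_F \colon F \to X$ lies in $\Emb_0(F,X) \subseteq \Emb_{\delta'}(F,X)$, so applying $\varepsilon'$-transitivity to the two maps $\psi\restriction_E$ and $\Incl_F\restriction_E$ produces $T \in \Iso(X)$ with $\|\Incl_F\restriction_E - T\circ\psi\restriction_E\| < \varepsilon'$. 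Composing on the left with the isometry $T^{-1}$ and using $T^{-1}\circ \Incl_F\restriction_E = T^{-1}\restriction_E$ (since $\Incl_F\restriction_E$ is the inclusion $E \to X$) turns this into $\|T^{-1}\restriction_E - \psi\restriction_E\| < \varepsilon'$; then $S \coloneq T^{-1} \in \Iso(X)$ and $\psi\restriction_E = \psi\circ\Id_E$ give exactly $\|S\restriction_E - \psi\circ\Id_E\| < \varepsilon'$, as required.

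I do not expect any genuine obstacle here: all three parts are bookkeeping with the definitions, triangle inequalities, and the norm-preserving property of isometries. The only points deserving care are the identifications $\psi\circ\Id_E = \psi\restriction_E$ and $T^{-1}\circ \Incl_F\restriction_E = T^{-1}\restriction_E$, selecting the correct isometry in each direction (the composite $T_{\iota'}\circ T_\iota^{-1}$ in part (2), the inverse $T^{-1}$ in part (3)), and exploiting the strict inequality $\varepsilon' < \varepsilon$ to absorb the factor $2$ that appears in part (2).
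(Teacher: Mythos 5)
Your proposal is correct and follows essentially the same route as the paper: part (1) by direct unwinding of the definitions, part (2) by producing two isometries from the \ref{it:gFraisseFourth} condition and composing one with the inverse of the other (absorbing the resulting $2\varepsilon'$ into $2\varepsilon$), and part (3) by applying $\varepsilon'$-transitivity to $\psi$ together with the inclusion map $F \to X$. The only cosmetic difference is in part (3), where you apply transitivity with the roles of the two embeddings swapped and then invert $T$, whereas the paper orders them so that no inversion is needed; this changes nothing of substance.
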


\begin{proof}
    \begin{enumerate}
        \item This is immediate from the definitions.
        
        \smallskip\item Let $\varepsilon' \in (0, \varepsilon)$ and $\delta'>\delta$ be witnessing that $(E, F, \Id_E)$ is an $(X, \varepsilon, \delta)$-\ref{it:gFraisseFourth} triple. Let $\iota, \iota' \in \Emb_{\delta'}(F, X)$. Then one can find $T, T' \in \Iso(X)$ such that $\|T\restriction_E - \iota\restriction_E\| = \|T\restriction_E - \iota\circ \Id_E\| < \varepsilon'$, and similarly $\|T'\restriction_E - \iota'\restriction_E\| < \varepsilon'$. Thus,
        \begin{align*}
           \|\iota'\restriction_E - T'\circ T^{-1}\circ\iota\restriction_E\| &\leqslant \|\iota'\restriction_E - T'\restriction_E\| + \|T'\restriction_E - T'\circ T^{-1}\circ\iota\restriction_E\| \\
           & \leqslant \|\iota'\restriction_E - T'\restriction_E\| + \|T'\|\cdot \|T^{-1}\|\cdot\|T\restriction_E -\iota\restriction_E\| \leqslant 2\varepsilon',
        \end{align*}
        so $(E, F)$ is an $(X, 2\varepsilon, \delta)$-\ref{it:gFraisseAction} pair, witnessed by $2\varepsilon'$ and $\delta'$.
        
        \smallskip\item Let $\varepsilon' \in (0, \varepsilon)$ and $\delta'>\delta$ be witnessing that $(E, F)$ is an $(X, \varepsilon, \delta)$-\ref{it:gFraisseAction} pair. Let $\psi \in \Emb_{\delta'}(F, X)$. Denote by $\chi$ the inclusion map $F \to X$. By $\varepsilon'$-transitivity of the action $\Iso(X) \curvearrowright \{\iota\restriction_E \setsep \iota \in \Emb_{\delta'}(F, X)\}$, one can find $T \in \Iso(X)$ such that $\|\psi\restriction_E - T\circ \chi\restriction_E\| < \varepsilon'$, in other words $\|\psi\circ \Id_E - T\restriction_E\| < \varepsilon'$. Hence,  $(E, F, \Id_E)$ is an $(X, \varepsilon, \delta)$-\ref{it:gFraisseFourth} triple, witnessed by $\varepsilon'$ and $\delta'$.  \qedhere
    \end{enumerate}
\end{proof}

\begin{corollary}\label{cor:implicationsPairs}
    Let $X$ be a separable Banach space, $\varepsilon, \delta > 0$, and $E \subseteq F \subseteq X$ be finite-dimensional subspaces. 
    \begin{enumerate}
        \item If $(E, F)$ is an $(X, \varepsilon, \delta)$-\ref{it:gFraisseAction} pair, then $(E, F)$ is an $(X, \varepsilon, \delta)$-\ref{it:gFraisseThird} pair.
        \item If $X$ satisfies \ref{it:gFraisseForPlayerI} and $(E, F)$ is an $(X, \varepsilon, \delta)$-\ref{it:gFraisseThird} pair, then $(E, F)$ is an $(X, 2\varepsilon, \delta)$-\ref{it:gFraisseAction} pair.
    \end{enumerate}
\end{corollary}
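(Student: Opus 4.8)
The plan is to derive both implications by chaining the two preceding lemmas through the triple $(E, F, \Id_E)$, using the parametrized pair/triple notions of Definition \ref{def:gFpairs} purely as bookkeeping devices. The point is that Lemma \ref{lem:equivalencePairsTriples} lets us translate freely between statements about the pair $(E,F)$ and statements about the triple $(E, F, \Id_E)$, while Lemma \ref{lem:ImplicationTriples} provides the implications between the various triple conditions; composing them yields the corollary with no further estimates.

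For part (1), I would start from the assumption that $(E, F)$ is an $(X, \varepsilon, \delta)$-\ref{it:gFraisseAction} pair and pass to the \ref{it:gFraisseFourth} formulation via Lemma \ref{lem:equivalencePairsTriples}(3), obtaining that $(E, F, \Id_E)$ is an $(X, \varepsilon, \delta)$-\ref{it:gFraisseFourth} triple. I would then apply the chain $(2) \Rightarrow (3) \Rightarrow (1)$ of Lemma \ref{lem:ImplicationTriples}, which holds for an arbitrary separable Banach space and requires no extra hypothesis, to conclude that $(E, F, \Id_E)$ is an $(X, \varepsilon, \delta)$-\ref{it:gFraisseForPlayerI} triple. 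Finally, Lemma \ref{lem:equivalencePairsTriples}(1) translates this back into the statement that $(E, F)$ is an $(X, \varepsilon, \delta)$-\ref{it:gFraisseThird} pair, as desired. The parameters $\varepsilon$ and $\delta$ are preserved at each step, so no constant is lost.

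For part (2), the route is the mirror image, but now one must feed in the extra hypothesis that $X$ satisfies \ref{it:gFraisseForPlayerI}. Starting from $(E, F)$ an $(X, \varepsilon, \delta)$-\ref{it:gFraisseThird} pair, Lemma \ref{lem:equivalencePairsTriples}(1) gives that $(E, F, \Id_E)$ is an $(X, \varepsilon, \delta)$-\ref{it:gFraisseForPlayerI} triple. I would then invoke the ``moreover'' clause of Lemma \ref{lem:ImplicationTriples}, namely the implication $(1) \Rightarrow (2)$, to upgrade this to an $(X, \varepsilon, \delta)$-\ref{it:gFraisseFourth} triple; this is exactly the step that forces the assumption on $X$, since that implication rests on the back-and-forth Proposition \ref{prop:backAndForthConstruction}, which needs $X$ to be guarded Fra\"iss\'e. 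A last application of Lemma \ref{lem:equivalencePairsTriples}(2) then produces an $(X, 2\varepsilon, \delta)$-\ref{it:gFraisseAction} pair, the factor of $2$ on $\varepsilon$ being the one introduced there by the triangle-inequality argument composing two isometries.

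There is essentially no genuine obstacle: the argument is a pure diagram chase through the established lemmas. The only subtlety worth flagging explicitly is the asymmetry between the two parts, namely that part (1) is unconditional whereas part (2) requires $X$ to satisfy \ref{it:gFraisseForPlayerI}; this asymmetry is inherited verbatim from Lemma \ref{lem:ImplicationTriples}, where $(2) \Rightarrow (1)$ is trivial but $(1) \Rightarrow (2)$ depends on the back-and-forth construction. I would keep the write-up to a few sentences per part, simply naming the three lemma applications in order.
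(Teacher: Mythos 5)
Your proof is correct and is exactly the paper's argument: the paper's proof of this corollary simply states that both parts are immediate consequences of Lemmas \ref{lem:ImplicationTriples} and \ref{lem:equivalencePairsTriples}, and your diagram chase (including the placement of the \ref{it:gFraisseForPlayerI} hypothesis in part (2) and the factor $2$ from Lemma \ref{lem:equivalencePairsTriples}(2)) spells out precisely that chain.
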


\begin{proof}
    Those are immediate consequences of Lemmas \ref{lem:ImplicationTriples} and \ref{lem:equivalencePairsTriples}.
\end{proof}

\begin{lemma}\label{lem:stabilityGFTriple}
    Let $X$ be a Banach space, $\varepsilon, \delta > 0$, and $(E, F, \phi)$ be an $(X, \varepsilon, \delta)$-\ref{it:gFraisseForPlayerI} triple. Then for every $\beta \in [0, 1]$ and $\gamma \in [0, \delta)$ with $\gamma \leqslant \beta$, $E' \subseteq X$ finite-dimensional subspace, $F' \in \closedAge{X}$, $\sigma \in \Emb_\gamma(F, F')$, and $\rho \in \Emb_\beta(E', E)$ with $\|\Id_{E'} - \rho\| \leqslant \beta$, it holds that $(E', F', \sigma \circ \phi \circ \rho)$ is an $(X, \varepsilon + 3\beta, \delta - \gamma)$-\ref{it:gFraisseForPlayerI} triple.

    \smallskip

    Moreover, $X$, $\varepsilon$, $\delta$ and the triple $(E, F, \phi)$ being fixed, we can find $\beta = \gamma > 0$ so that for any choice of $E'$, $F'$, $\sigma$ and $\rho$ as above, $(E', F', \sigma \circ \phi \circ \rho)$ is still an $(X, \varepsilon, \delta)$-\ref{it:gFraisseForPlayerI} triple.
\end{lemma}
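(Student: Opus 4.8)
This lemma is a ``stability under perturbation'' statement inside a single space $X$, so, unlike Proposition \ref{prop:backAndForthConstruction}, no back-and-forth is needed: all the content lies in composing embeddings correctly and in tracking the three error budgets (the $\beta$ coming from $\rho$, the $\gamma$ coming from $\sigma$, and the slack $\varepsilon < \varepsilon'$, $\delta' > \delta$ of the original triple). The plan is to first unpack the definition of an $(X,\varepsilon,\delta)$-\ref{it:gFraisseForPlayerI} triple: fix $\varepsilon' \in (0,\varepsilon)$ and $\delta' > \delta$ witnessing that $(E,F,\phi)$ is such a triple, so that $\phi \in \Emb_{\varepsilon'}(E,F)$ and the approximate-extension property holds with constants $\varepsilon'$ and $\delta'$. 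The goal is then to exhibit a single witness $\tilde\varepsilon \in (0,\varepsilon + 3\beta)$ and a $\tilde\delta > \delta - \gamma$ for the new triple $(E',F',\phi')$, where $\phi' \coloneq \sigma\circ\phi\circ\rho$. The embedding half is immediate from the composition rule $\Emb_C \circ \Emb_{C'} \subseteq \Emb_{C+C'}$: since $\rho \in \Emb_\beta$, $\phi \in \Emb_{\varepsilon'}$ and $\sigma \in \Emb_\gamma$, we get $\phi' \in \Emb_{\beta + \gamma + \varepsilon'}(E',F')$, hence $\phi' \in \Emb_{2\beta + \varepsilon'}(E',F')$ using $\gamma \leqslant \beta$.

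Next I would set up the extension property. Fix $G \in \closedAge{X}$, $\psi \in \Emb_{\tilde\delta}(F',G)$ and $\eta > 0$. The key reduction is to feed the map $\psi\circ\sigma \colon F \to G$ into the property of the \emph{original} triple $(E,F,\phi)$. Since $\psi\circ\sigma \in \Emb_{\tilde\delta + \gamma}(F,G)$, this is legitimate provided $\tilde\delta + \gamma \leqslant \delta'$, i.e. $\tilde\delta \leqslant \delta' - \gamma$; as $\delta' > \delta$, the window $(\delta - \gamma,\, \delta' - \gamma]$ for $\tilde\delta$ is nonempty, and this is precisely where the loss ``$-\gamma$'' on the $\delta$-side originates. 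Applying the property then produces $\iota \in \Emb_\eta(G,X)$, for the same $\eta$, with $\|\Id_E - \iota\circ\psi\circ\sigma\circ\phi\| < \varepsilon'$.

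The main computation, and the step I expect to be the only delicate point, is the approximation estimate transported from $E$ to $E'$. Writing $\iota\circ\psi\circ\phi' = (\iota\circ\psi\circ\sigma\circ\phi)\circ\rho$ and using the identity
\[
\Id_{E'} - (\iota\circ\psi\circ\sigma\circ\phi)\circ\rho = (\Id_{E'} - \rho) + (\Id_E - \iota\circ\psi\circ\sigma\circ\phi)\circ\rho,
\]
the triangle inequality gives $\|\Id_{E'} - \iota\circ\psi\circ\phi'\| \leqslant \beta + \varepsilon'\|\rho\|$, where $\|\rho\| \leqslant 1+\beta$ follows from $\|\Id_{E'} - \rho\| \leqslant \beta$. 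The obstacle is purely the constant bookkeeping: one must verify that a single $\tilde\varepsilon$ below $\varepsilon + 3\beta$ can simultaneously dominate the embedding constant $2\beta + \varepsilon'$ and the (strict) approximation bound $\beta + \varepsilon'(1+\beta)$. Taking $\tilde\varepsilon \coloneq \max\!\big(2\beta + \varepsilon',\ \beta + \varepsilon'(1+\beta)\big)$ works, and the inequality $\tilde\varepsilon < \varepsilon + 3\beta$ then reduces to elementary estimates using $\gamma \leqslant \beta \leqslant 1$ and $\varepsilon' < \varepsilon$. This shows $(E',F',\phi')$ is an $(X, \varepsilon + 3\beta, \delta - \gamma)$-\ref{it:gFraisseForPlayerI} triple, witnessed by $\tilde\varepsilon$ and any $\tilde\delta \in (\delta - \gamma, \delta' - \gamma]$.

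Finally, for the ``moreover'' clause the data $X,\varepsilon,\delta$ and $(E,F,\phi)$ are fixed, so the gaps $\varepsilon - \varepsilon' > 0$ and $\delta' - \delta > 0$ are fixed and positive. Setting $\beta = \gamma =: t$ and letting $t \to 0$, both candidate constants $2t + \varepsilon'$ and $t + \varepsilon'(1+t)$ tend to $\varepsilon' < \varepsilon$, while the $\tilde\delta$-window $(\delta,\, \delta' - t]$ stays nonempty; hence there is a single threshold $t > 0$, depending only on the fixed data, for which one can choose $\tilde\varepsilon < \varepsilon$ and $\tilde\delta > \delta$ uniformly over all admissible $E', F', \sigma, \rho$. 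With this $t = \beta = \gamma$, the perturbed triple $(E',F',\sigma\circ\phi\circ\rho)$ is genuinely an $(X,\varepsilon,\delta)$-\ref{it:gFraisseForPlayerI} triple, as required.
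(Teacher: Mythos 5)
Your proof is correct and follows essentially the same route as the paper's: fix witnesses $\varepsilon' \in (0,\varepsilon)$, $\delta' > \delta$ for the original triple, feed $\psi\circ\sigma \in \Emb_{\delta'}(F,G)$ into its extension property, and transfer the estimate to $E'$ via the decomposition $\Id_{E'} - (\iota\circ\psi\circ\sigma\circ\phi)\circ\rho = (\Id_{E'}-\rho) + (\Id_E - \iota\circ\psi\circ\sigma\circ\phi)\circ\rho$. The only differences are cosmetic bookkeeping (you bound $\|\rho\| \leqslant 1+\beta$ where the paper uses $\|\rho\| \leqslant e^\beta$ together with $e^\beta \leqslant 1+2\beta$, and you take a maximum of two candidate constants where the paper uses the single witness $\varepsilon'+3\beta$), and both arguments share the same implicit smallness assumption on $\varepsilon$ in the final constant comparison.
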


\begin{proof}
    Let $\varepsilon' \in (0, \varepsilon)$ and $\delta'>\delta$ be witnessing that $(E, F, \phi)$ is an $(X, \varepsilon, \delta)$-\ref{it:gFraisseForPlayerI} triple. We show that $(E', F', \sigma \circ \phi \circ \rho)$ is an $(X, \varepsilon + 3\beta, \delta - \gamma)$-\ref{it:gFraisseForPlayerI} triple witnessed by $\varepsilon' + 3\beta$ and $\delta' - \gamma$; to get the ``moreover'' part, it will hence be enough to choose $\beta = \gamma > 0$ so that $\varepsilon' + 3\beta < \varepsilon$ and $\delta' - \gamma > \delta$.

    \smallskip
    
    First, observe that $\sigma\circ\phi\circ\rho \in \Emb_{\varepsilon' + 2\beta}(E', F')$. Now fix $G \in \closedAge{X}$, $\psi \in \Emb_{\delta'-\gamma}(F', G)$, and $\eta > 0$. Then $\psi \circ \sigma \in \Emb_{\delta'}(F, G)$, so there exists $\iota \in \Emb_\eta(G, X)$ such that $\|\Id_E - \iota \circ \psi \circ \sigma \circ \phi\| < \varepsilon'$. Thus,
    $$\|\Id_{E'} - \iota \circ \psi \circ \sigma \circ \phi\circ \rho\| \leqslant \|\Id_{E'} - \rho\| + \|\Id_E - \iota \circ \psi \circ \sigma \circ \phi\| \cdot \|\rho\| < \beta + \varepsilon'e^\beta \leqslant \varepsilon' + 3\beta,$$
the last inequality being consequence of the fact that $e^\beta \leqslant 1+2\beta$; this can be obtained by convexity of the exponential, remembering that $\beta \in [0, 1]$.
\end{proof}

We state below a version of Lemma \ref{lem:stabilityGFTriple} for \ref{it:gFraisseThird} pairs. Although it will not be used in the proof of Theorem \ref{thm:EquivalenceGFraGUH}, it will be useful in Section \ref{sec:GuardedFraisseCorrespondence}.

\begin{corollary}\label{cor:gFraPairIsomorph}
Let $X$ be a separable Banach space, $\varepsilon, \delta \in (0, 1)$, and $(E, F)$ be an $(X, \varepsilon, \delta)$-\ref{it:gFraisseThird} pair. Then for every $\gamma \in [0, \delta)$ and $E'\subseteq F'\subseteq X$ finite-dimensional subspaces, if there exists $\sigma \in \Emb_\gamma(F, F')$ with $E' \subseteq \sigma(E)$ and $\|\Id_{E'} - \sigma^{-1}\restriction_{E'}\| \leqslant \gamma$, then $(E', F')$ is an $(X, \varepsilon + 3 \gamma, \delta - \gamma)$-\ref{it:gFraisseThird} pair.
\end{corollary}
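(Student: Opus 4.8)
The plan is to translate everything into the language of triples via Lemma \ref{lem:equivalencePairsTriples}(1) and then invoke the stability result Lemma \ref{lem:stabilityGFTriple}. Indeed, by Lemma \ref{lem:equivalencePairsTriples}(1), saying that $(E, F)$ is an $(X, \varepsilon, \delta)$-\ref{it:gFraisseThird} pair is the same as saying that $(E, F, \Id_E)$ is an $(X, \varepsilon, \delta)$-\ref{it:gFraisseForPlayerI} triple; likewise, the desired conclusion that $(E', F')$ is an $(X, \varepsilon + 3\gamma, \delta - \gamma)$-\ref{it:gFraisseThird} pair is equivalent to $(E', F', \Id_{E'})$ being an $(X, \varepsilon + 3\gamma, \delta - \gamma)$-\ref{it:gFraisseForPlayerI} triple, where $\Id_{E'}$ denotes the inclusion map $E' \to F'$. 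So it will suffice to run Lemma \ref{lem:stabilityGFTriple} on the triple $(E, F, \Id_E)$ and to arrange that the output guard $\sigma \circ \phi \circ \rho$ is exactly this inclusion.

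The key observation, and the reason the hypotheses on $\sigma$ are phrased as they are, is the following choice of parameters. I would put $\phi = \Id_E$, take $\beta \coloneq \gamma$, and define $\rho \coloneq \sigma^{-1}\restriction_{E'}$. Since $E' \subseteq \sigma(E)$, the map $\sigma^{-1}$ is defined on $E'$ and sends it into $E$, so $\rho \colon E' \to E$ is well-defined. Because $\sigma \in \Emb_\gamma(F, F')$, its inverse restricted to $E'$ lies in $\Emb_\gamma(E', E) \subseteq \Emb_\beta(E', E)$, and the hypothesis $\|\Id_{E'} - \sigma^{-1}\restriction_{E'}\| \leqslant \gamma = \beta$ gives precisely $\|\Id_{E'} - \rho\| \leqslant \beta$. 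Noting that $\gamma \in [0, \delta)$ with $\delta < 1$ forces $\beta = \gamma \in [0,1)$ and $\gamma \leqslant \beta$, and that $F' \in \Age(X) \subseteq \closedAge{X}$, all hypotheses of Lemma \ref{lem:stabilityGFTriple} will be met. Crucially, $\sigma \circ \phi \circ \rho = \sigma \circ \sigma^{-1}\restriction_{E'}$ equals the inclusion map $E' \to F'$, i.e. $\Id_{E'}$.

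Feeding this into Lemma \ref{lem:stabilityGFTriple} then yields that $(E', F', \Id_{E'})$ is an $(X, \varepsilon + 3\beta, \delta - \gamma) = (X, \varepsilon + 3\gamma, \delta - \gamma)$-\ref{it:gFraisseForPlayerI} triple, and translating back through Lemma \ref{lem:equivalencePairsTriples}(1) gives exactly that $(E', F')$ is an $(X, \varepsilon + 3\gamma, \delta - \gamma)$-\ref{it:gFraisseThird} pair. The only genuinely delicate point is the recognition that the composite guard collapses to the inclusion of $E'$ into $F'$: this is what forces $\rho$ to be chosen as $\sigma^{-1}\restriction_{E'}$ rather than anything else, and it is precisely why the hypotheses demand both $E' \subseteq \sigma(E)$ (so that $\rho$ lands in $E$) and the near-isometry estimate on $\sigma^{-1}\restriction_{E'}$ (so that $\rho$ qualifies as an admissible perturbation in Lemma \ref{lem:stabilityGFTriple}). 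Everything else is bookkeeping of the parameters.
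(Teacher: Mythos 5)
Your proposal is correct and is exactly the paper's proof: the paper also converts the pair to the triple $(E,F,\Id_E)$ via Lemma \ref{lem:equivalencePairsTriples} and applies Lemma \ref{lem:stabilityGFTriple} with $\beta \coloneq \gamma$ and $\rho \coloneq \sigma^{-1}\restriction_{E'}$, so that the composite guard collapses to the inclusion $E' \to F'$. Your write-up just makes explicit the parameter checks that the paper leaves implicit.
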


\begin{proof}
    Use Lemma \ref{lem:equivalencePairsTriples} and apply Lemma \ref{lem:stabilityGFTriple} with $\beta \coloneq \gamma$ and $\rho \coloneq \sigma^{-1}\restriction_{E'}$.
\end{proof}

\begin{proof}[Proof of Theorem~\ref{thm:EquivalenceGFraGUH}]
The implications \ref{it:gFraisseFourth} $\Rightarrow$ \ref{it:gFraisseFifth} $\Rightarrow$ \ref{it:gFraisseForPlayerI} are immediate consequences of Remark \ref{rem:gFFromPairs} and Lemma \ref{lem:ImplicationTriples}. To see that \ref{it:gFraisseForPlayerI} $\Rightarrow$ \ref{it:gFraisseFourth} holds, use Remark \ref{rem:gFFromPairs} and fix $E \subseteq X$ and $\varepsilon \in (0, 1)$. Then by \ref{it:gFraisseForPlayerI}, we can find $F \in \closedAge{X}$, $\phi \colon E \to F$, and $\delta \in (0, 1)$ such that $(E, F, \phi)$ is an $(X, \varepsilon, \delta)$-\ref{it:gFraisseForPlayerI} triple. Let $\gamma$ be as given in the ``moreover'' part of Lemma \ref{lem:stabilityGFTriple} for this triple, and find $F' \in \Age(X)$ and $\sigma \in \Emb_\gamma(F, F')$. Then $(E, F', \sigma \circ \phi)$ in an $(X, \varepsilon, \delta)$-\ref{it:gFraisseForPlayerI} triple, so by Lemma \ref{lem:ImplicationTriples}, it is an $(X, \varepsilon, \delta)$-\ref{it:gFraisseFourth} triple. Hence, $X$ satsifies \ref{it:gFraisseFourth}.

\smallskip

The implication \ref{it:gFraisseThird} $\Rightarrow$ \ref{it:gFraisseForPlayerI} is an immediate consequence of Remark \ref{rem:gFFromPairs} and Lemma \ref{lem:equivalencePairsTriples}. We now prove that the converse implication holds, again using Remark \ref{rem:gFFromPairs}. Fix a finite-dimensional subspace $E \subseteq X$ and $\varepsilon \in (0, 1)$. One can find $F \in \closedAge{X}$, $\phi \colon E \to F$ and $\delta \in(0, 1)$ such that $(E, F, \phi)$ is an $(X, \varepsilon/23, 2\delta)$-\ref{it:gFraisseForPlayerI} triple. Fix $\gamma > 0$ as given in the ``moreover'' part of Lemma \ref{lem:stabilityGFTriple} for this triple. Applying the definition of a \ref{it:gFraisseForPlayerI} triple to $(E, F, \phi)$, $G = F$, $\psi = \Id_F$, and $\eta = \gamma$, one can find $\sigma \in \Emb_{\gamma}(F, X)$ such that $\|\Id_E - \sigma \circ \phi\| < \varepsilon/23$. Let $F' \subseteq X$ be a finite-dimensional subspace containing both $E$ and $\sigma(F)$, and define $\phi' \colon E \to F'$ by $\phi' \coloneq \sigma \circ \phi$. Then $\|\Id_E - \phi'\| < \varepsilon/23$, and by the choice of $\gamma$, $(E, F', \phi')$ is an $(X, \varepsilon/23, 2\delta)$-\ref{it:gFraisseForPlayerI} triple. We now show that $(E, F')$ is an $(X, \varepsilon, \delta)$-\ref{it:gFraisseThird} pair, witnessed by $22\varepsilon/23$ and $2\delta$. For this, fix $G \in \closedAge{X}$, $\psi \in \Emb_{2\delta}(F', G)$, and $\eta \in (0, 1)$. Knowing that $(E, F', \phi')$ is an $(X, \varepsilon/23, 2\delta)$-\ref{it:gFraisseForPlayerI} triple, one can find $\iota \in \Emb_\eta(G, X)$ such that $\|\Id_E - \iota \circ \psi \circ \phi'\| < \varepsilon/23$. Thus,
\begin{align*}
    \|\Id_E - \iota \circ \psi\restriction_E\| \leqslant \|\Id_E - \iota \circ \psi \circ \phi'\| + \|\iota\|\cdot \|\psi\| \cdot \|\phi' - \Id_E\| < \frac{\varepsilon}{23} + e^{3}\frac{\varepsilon}{23} \leqslant \frac{22\varepsilon}{23}.
\end{align*}

\smallskip

Finally, using the (already proved) fact that \ref{it:gFraisseThird} implies \ref{it:gFraisseForPlayerI}, the equivalence between \ref{it:gFraisseAction} and \ref{it:gFraisseThird} is a direct consequence of Remark \ref{rem:gFFromPairs} and Corollary \ref{cor:implicationsPairs}.
\end{proof}

\section{Guarded Fra\"iss\'e classes}\label{sec:gFClass}

In this section, we develop the basic theory of guarded Fra\"iss\'e classes. Our main goals are to prove stability of classical properties of subclasses of $\BM$ under taking closures (see Propositions \ref{prop:HPstableUnderClosures} and \ref{prop:ageAmalgThenItsClosureAlso}), to draw a first link with guarded Fra\"iss\'eness of Banach spaces (see Proposition \ref{prop:ageOfGuardedHasAmalg}), and to prove
several preliminary results to be reused in the next sections. We start with introducing notions of pairs and triples associated to the (GAP). Given $\KK \subseteq \BM$, we will denote by $\Incl(\KK)$ the class of all pairs $(E, F)$ of finite-dimensional Banach spaces such that $E, F \in \KK$ and $E \subseteq F$.

\begin{definition}\label{def:amalgPair}
Fix $\mathcal{K} \subseteq \BM$, and $\varepsilon, \delta > 0$.
    \begin{itemize}
         \item A triple $(E, F, \phi)$, where $E, F \in \mathcal{K}$ and $\phi \colon E \to F$, is called a \textit{$(\mathcal{K}, \varepsilon, \delta)$-amalgamation triple} if there exist $\varepsilon' \in (0, \varepsilon)$ and $\delta' > \delta$ such that $\phi \in \Emb_{\varepsilon'}(E, F)$ and for every $G, H \in \mathcal{K}$, $\psi_G \in \Emb_{\delta'}(F, G)$, $\psi_H \in \Emb_{\delta'}(F, H)$, there exist $K \in \mathcal{K}$, $\iota_G \in \Emb(G, K)$ and $\iota_H \in \Emb(H, K)$ for which $\|\iota_G \circ \psi_G \circ \phi - \iota_H\circ\psi_H \circ \phi\| < \varepsilon'$.
        \item A pair $(E, F) \in \Incl(\KK)$ is called a \textit{$(\mathcal{K}, \varepsilon, \delta)$-amalgamation pair} if $(E, F, \Id_E)$ is a $(\mathcal{K}, \varepsilon, \delta)$-amalgamation triple.
    \end{itemize}
\end{definition}

\begin{remark}\label{rem:equivAmalgPair}
    \begin{enumerate}
        \item As in the definition of pairs and triples associated to guarded Fra\"iss\'eness, the additional quantifications on $\varepsilon'$ and $\delta'$ in Definition \ref{def:amalgPair} have been added to ensure stability of the notions of amalgamation pairs and triples under sufficiently small perturbations of $\varepsilon$ and $\delta$.
        \item\label{it:GAPandPairs} A class $\KK \subseteq \BM$ satisfies (GAP) iff for every $E \in \KK$ and $\varepsilon > 0$, there exists $F \in \KK$, $\phi \colon E \to F$, and $\delta > 0$ such that $(E, F, \phi)$ is a $(\mathcal{K}, \varepsilon, \delta)$-amalgamation triple.
        \item\label{it:JEPandPairs} A class $\KK \subseteq \BM$ containing $\{0\}$ satisfies (JEP) iff $(\{0\}, \{0\})$ is a $(\mathcal{K}, 1, 1)$-amalgamation pair.
    \end{enumerate}
\end{remark}

We first prove some results of stability under small perturbations for amalgamation pairs and triples.

\begin{lemma}\label{lem:stabilityAmalgTriples}
    Let $\KK \subseteq \BM$, $\varepsilon, \delta \in (0, 1)$, and $(E, F, \phi)$ be a $(\KK, \varepsilon, \delta)$-amalgamation triple. Then for every $\beta \geqslant 0$, $\gamma \in [0, \delta)$ with $\gamma \leqslant \beta$, $E', F' \in \KK$, $\rho \in \Emb_\beta(E', E)$, and $\sigma \in \Emb_\gamma(F, F')$, it holds that $(E', F', \sigma \circ \phi \circ \rho)$ is a $(\KK, \varepsilon + 2\beta, \delta - \gamma)$-amalgamation triple. 
    
    \smallskip
    
    Moreover, $\KK$, $\varepsilon$, $\delta$ and the triple $(E, F, \phi)$ being fixed, one can find $\beta = \gamma > 0$ so that for any choice of $E'$, $F'$, $\rho$ and $\sigma$ as above, $(E', F', \sigma \circ \phi \circ \rho)$ is still a $(\KK, \varepsilon, \delta)$-amalgamation triple.
\end{lemma}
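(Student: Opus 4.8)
The plan is to mirror the proof of Lemma~\ref{lem:stabilityGFTriple} almost verbatim; the only structural differences are that the amalgamation clause involves a \emph{pair} of target spaces $G,H$ rather than a single one, and that there is no identity-comparison term, which is why the error grows by $2\beta$ rather than $3\beta$. First I would fix witnesses $\varepsilon' \in (0, \varepsilon)$ and $\delta' > \delta$ for $(E, F, \phi)$ being a $(\KK, \varepsilon, \delta)$-amalgamation triple, and claim that $(E', F', \sigma \circ \phi \circ \rho)$ is a $(\KK, \varepsilon + 2\beta, \delta - \gamma)$-amalgamation triple witnessed by $\varepsilon' + 2\beta$ and $\delta' - \gamma$. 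These are admissible witnesses, since $\varepsilon' + 2\beta < \varepsilon + 2\beta$ and $\delta' - \gamma > \delta - \gamma$.

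The guard is handled purely by the composition convention $\Emb_{C}(\cdot,\cdot) \circ \Emb_{C'}(\cdot,\cdot) \subseteq \Emb_{C+C'}(\cdot,\cdot)$ recorded in the footnote: from $\rho \in \Emb_\beta(E', E)$, $\phi \in \Emb_{\varepsilon'}(E, F)$ and $\sigma \in \Emb_\gamma(F, F')$ one gets $\sigma \circ \phi \circ \rho \in \Emb_{\gamma + \varepsilon' + \beta}(E', F')$, and since $\gamma \leqslant \beta$ this lies in $\Emb_{\varepsilon' + 2\beta}(E', F')$. For the amalgamation clause I take arbitrary $G, H \in \KK$ with $\psi_G \in \Emb_{\delta' - \gamma}(F', G)$ and $\psi_H \in \Emb_{\delta' - \gamma}(F', H)$. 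Precomposing with $\sigma$ and using $(\delta' - \gamma) + \gamma = \delta'$ gives $\psi_G \circ \sigma \in \Emb_{\delta'}(F, G)$ and $\psi_H \circ \sigma \in \Emb_{\delta'}(F, H)$, which are exactly the data to which the amalgamation property of $(E, F, \phi)$ applies, producing $K \in \KK$, $\iota_G \in \Emb(G, K)$ and $\iota_H \in \Emb(H, K)$ with $\|\iota_G \circ \psi_G \circ \sigma \circ \phi - \iota_H \circ \psi_H \circ \sigma \circ \phi\| < \varepsilon'$.

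The last step is to right-compose this estimate with $\rho$. Since $\|\rho\| \leqslant e^\beta$,
$$\|\iota_G \circ \psi_G \circ \sigma \circ \phi \circ \rho - \iota_H \circ \psi_H \circ \sigma \circ \phi \circ \rho\| \leqslant \varepsilon'\, e^\beta \leqslant \varepsilon' + 2\beta,$$
the last inequality following from $e^\beta \leqslant 1 + 2\beta$ (the same convexity bound used in Lemma~\ref{lem:stabilityGFTriple}) together with $\varepsilon' \leqslant 1$; the strict inequality $< \varepsilon' + 2\beta$ is inherited from the strict bound $< \varepsilon'$ coming from the amalgamation property. This is precisely the amalgamation clause for $(E', F', \sigma \circ \phi \circ \rho)$ with witnesses $\varepsilon' + 2\beta$ and $\delta' - \gamma$, proving the main assertion. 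For the ``moreover'' part, with $\KK$, $\varepsilon$, $\delta$ and $(E, F, \phi)$ fixed, the \emph{strict} inequalities $\varepsilon' < \varepsilon$ and $\delta' > \delta$ allow me to choose $\beta = \gamma > 0$ small enough that $\varepsilon' + 2\beta < \varepsilon$ and $\delta' - \gamma > \delta$; then for any admissible $E', F', \rho, \sigma$ the resulting triple, witnessed by $\varepsilon' + 2\beta$ and $\delta' - \gamma$, is still a $(\KK, \varepsilon, \delta)$-amalgamation triple.

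I do not expect a genuine obstacle here, as this is a bookkeeping argument on distortion constants. The only points requiring care are the consistent \emph{direction} of the perturbation---precomposition by $\sigma$ spends $\gamma$ of the $\delta$-budget, so the admissible tolerance on $\psi_G, \psi_H$ must be lowered from $\delta$ to $\delta - \gamma$ to compensate---and the convexity estimate absorbing the operator norm $\|\rho\| \leqslant e^\beta$ into the additive error $2\beta$, which is exactly the technique already used for \ref{it:gFraisseForPlayerI} triples in Lemma~\ref{lem:stabilityGFTriple}.
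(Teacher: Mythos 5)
Your proposal is correct and follows essentially the same route as the paper's own proof: the same witnesses $\varepsilon' + 2\beta$ and $\delta' - \gamma$, the same precomposition of $\psi_G, \psi_H$ by $\sigma$ to spend the $\gamma$-budget, and the same absorption of $\|\rho\| \leqslant e^\beta$ via the convexity bound $e^\beta \leqslant 1 + 2\beta$, with the ``moreover'' part handled identically by shrinking $\beta = \gamma$. Note only that this convexity bound (in your proof and in the paper's) tacitly requires $\beta$ bounded by $1$, a restriction present in the hypotheses of Lemma~\ref{lem:stabilityGFTriple} but not in the statement here; this is an imprecision you share with the paper rather than a gap of your own.
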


\begin{proof}
    Let $\varepsilon' \in (0, \varepsilon)$ and $\delta' > \delta$ be witnessing that $(E, F, \phi)$ is a $(\KK, \varepsilon, \delta)$-amalgamation triple. We show that $(E', F', \sigma \circ \phi \circ \rho)$ is an $(\KK, \varepsilon + 2\beta, \delta - \gamma)$-amalgamation triple witnessed by $\varepsilon' + 2\beta$ and $\delta' - \gamma$; to get the ``moreover'' part, it will hence be enough to choose $\beta = \gamma > 0$ so that $\varepsilon' + 2\beta < \varepsilon$ and $\delta' - \gamma > \delta$.
   
\smallskip   
   
    First, observe that $\sigma \circ \phi \circ \rho \in \Emb_{\varepsilon'+\beta+\gamma}(E', F') \subseteq \Emb_{\varepsilon'+2\beta}(E', F')$. Now pick $G, H \in \mathcal{K}$, $\psi_G \in \Emb_{\delta'-\gamma}(F', G)$, $\psi_H \in \Emb_{\delta'-\gamma}(F', H)$. Then $\psi_G \circ \sigma \in \Emb_{\delta'}(F, G)$ and $\psi_H \circ \sigma \in \Emb_{\delta'}(F, H)$, so one can find $K \in \KK$, $\iota_G \in \Emb(G, K)$ and $\iota_H \in \Emb(G, K)$ such that $\|\iota_G \circ \psi_G \circ \sigma \circ \phi - \iota_H \circ \psi_H \circ \sigma \circ \phi\| < \varepsilon'$. Thus,
    $$\|\iota_G \circ \psi_G \circ \sigma \circ \phi\circ\rho - \iota_H \circ \psi_H \circ \sigma \circ \phi\circ \rho\| < \varepsilon'\|\rho\| \leqslant e^\beta\varepsilon',$$
    so the result follows from the fact that $\varepsilon' < 1$ and $e^\beta \leqslant 1 + 2\beta$ (this can be obtained by convexity of the exponential, remembering that $\beta \in (0, 1)$).
\end{proof}

\begin{corollary}\label{cor:amalgPairUnderIsomorphisms}
Let $\KK \subseteq \BM$, $\varepsilon, \delta \in (0, 1)$, and $(E, F)$ be a $(\KK, \varepsilon, \delta)$-amalgamation pair. Then there exists $\gamma > 0$ with the following property: for every $(E', F') \in \Incl(\KK)$, if there exists $\sigma \in \Emb_\gamma(F, F')$ with $E' \subseteq \sigma(E)$, then $(E', F')$ is still a $(\KK, \varepsilon, \delta)$-amalgamation pair.
\end{corollary}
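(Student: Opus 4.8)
The plan is to derive this corollary from the ``moreover'' clause of Lemma~\ref{lem:stabilityAmalgTriples}, in direct analogy with the way Corollary~\ref{cor:gFraPairIsomorph} was obtained from Lemma~\ref{lem:stabilityGFTriple}. The starting point is the observation that, by Definition~\ref{def:amalgPair}, saying that $(E,F)$ is a $(\KK,\varepsilon,\delta)$-amalgamation pair is exactly the same as saying that the triple $(E,F,\Id_E)$ is a $(\KK,\varepsilon,\delta)$-amalgamation triple, and likewise $(E',F')$ is a $(\KK,\varepsilon,\delta)$-amalgamation pair if and only if $(E',F',\Id_{E'})$ is such a triple. So the whole task reduces to producing the triple for $(E',F')$ by perturbing the one for $(E,F)$.

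Concretely, I would first apply the ``moreover'' part of Lemma~\ref{lem:stabilityAmalgTriples} to the $(\KK,\varepsilon,\delta)$-amalgamation triple $(E,F,\Id_E)$, which yields a single number $\gamma>0$ (playing the role of both $\beta$ and $\gamma$ in the lemma) such that for all $E',F'\in\KK$, all $\rho\in\Emb_\gamma(E',E)$ and all $\sigma\in\Emb_\gamma(F,F')$, the triple $(E',F',\sigma\circ\Id_E\circ\rho)$ is again a $(\KK,\varepsilon,\delta)$-amalgamation triple. This is the $\gamma$ asserted by the corollary. Then, given $(E',F')\in\Incl(\KK)$ together with $\sigma\in\Emb_\gamma(F,F')$ satisfying $E'\subseteq\sigma(E)$, I would set $\rho\coloneq\sigma^{-1}\restriction_{E'}\colon E'\to E$; the inclusion $E'\subseteq\sigma(E)$ guarantees this is well defined with range in $E$ (recall that $\sigma$, being in $\Emb_\gamma(F,F')$, is injective). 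Since the two-sided estimate $e^{-\gamma}\|x\|\leqslant\|\sigma x\|\leqslant e^{\gamma}\|x\|$ transfers to $\sigma^{-1}$ on its range, one gets $\rho\in\Emb_\gamma(E',E)$, so the hypotheses of the lemma hold with $\beta=\gamma$.

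It then remains to compute the composition: for $x\in E'$ one has $\rho(x)=\sigma^{-1}(x)\in E$, hence $\sigma\bigl(\Id_E(\rho(x))\bigr)=\sigma(\sigma^{-1}(x))=x$, so $\sigma\circ\Id_E\circ\rho=\Id_{E'}$ (the inclusion $E'\hookrightarrow F'$). The lemma therefore gives that $(E',F',\Id_{E'})$ is a $(\KK,\varepsilon,\delta)$-amalgamation triple, that is, $(E',F')$ is a $(\KK,\varepsilon,\delta)$-amalgamation pair, as required. I do not expect any real obstacle here; the only point deserving attention is that, because Lemma~\ref{lem:stabilityAmalgTriples} (unlike Lemma~\ref{lem:stabilityGFTriple}) imposes no closeness-to-identity condition on $\rho$, no hypothesis analogous to the bound $\|\Id_{E'}-\sigma^{-1}\restriction_{E'}\|\leqslant\gamma$ from Corollary~\ref{cor:gFraPairIsomorph} is needed, and one must simply make sure the composition $\sigma\circ\Id_E\circ\rho$ collapses cleanly to the inclusion of $E'$ into $F'$.
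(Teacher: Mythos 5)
Your proposal is correct and is essentially identical to the paper's own proof: the paper likewise observes $\Id_{E'} = \sigma \circ \Id_E \circ (\sigma^{-1})\restriction_{E'}$ and applies the ``moreover'' part of Lemma~\ref{lem:stabilityAmalgTriples} with $\rho \coloneq (\sigma^{-1})\restriction_{E'}$. Your additional remarks (injectivity of $\sigma$, the two-sided bound transferring to $\sigma^{-1}$ on its range, and the absence of any closeness-to-identity hypothesis on $\rho$ in Lemma~\ref{lem:stabilityAmalgTriples}) are accurate and just make explicit what the paper leaves implicit.
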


\begin{proof}
Observe that $\Id_{E'} = \sigma \circ \Id_E \circ (\sigma^{-1})\restriction_{E'}$ and apply the ``moreover'' part of Lemma \ref{lem:stabilityAmalgTriples}, taking $\rho \coloneq  (\sigma^{-1})\restriction_{E'}$.
\end{proof}

\begin{lemma}\label{lem:approxAmalgPair}
    Let $\KK \subseteq \BM$, $\varepsilon, \delta \in (0, 1)$, and $E, E', F$ be such that $(E, F) \in \Incl(\KK)$ and $(E', F) \in \Incl(\KK)$. Suppose that there exists a linear map $\rho \colon E \to E'$ with $\|\Id_E - \rho\| \leqslant \varepsilon$, and $(E', F)$ is a $(\KK, \varepsilon, \delta)$-amalgamation pair. Then $(E, F)$ is a $(\KK, 8\varepsilon, \delta)$-amalgamation pair.
\end{lemma}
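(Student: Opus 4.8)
The plan is to unfold the definitions and verify directly that $(E, F, \Id_E)$ is a $(\KK, 8\varepsilon, \delta)$-amalgamation triple, where $\Id_E$ and $\Id_{E'}$ denote the inclusion maps into $F$, using as a black box that $(E', F, \Id_{E'})$ is a $(\KK, \varepsilon, \delta)$-amalgamation triple. So I would first fix witnesses $\varepsilon' \in (0, \varepsilon)$ and $\delta' > \delta$ for the $E'$-triple. The core idea is that, to amalgamate over $E$, I can simply re-use the amalgamation supplied by the $E'$-triple and then transfer the resulting estimate from $E'$ back to $E$ through the approximate guard $\rho$, paying two perturbation terms. Note that the tempting shortcut of invoking Lemma~\ref{lem:stabilityAmalgTriples} with $\sigma = \Id_F$ and the given $\rho$ produces the triple $(E, F, \rho)$ rather than $(E, F, \Id_E)$, hence does \emph{not} directly yield an amalgamation \emph{pair}; this is precisely why the transfer across the guard has to be carried out by hand.

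Concretely, I would fix the output witnesses first: choose $\delta'' \in (\delta, \min(\delta', 1))$ (nonempty since $\delta < 1$ and $\delta' > \delta$), so that $e^{\delta''} < e$ and $\Emb_{\delta''} \subseteq \Emb_{\delta'}$. Given any $G, H \in \KK$, $\psi_G \in \Emb_{\delta''}(F, G)$ and $\psi_H \in \Emb_{\delta''}(F, H)$, these also lie in the corresponding $\Emb_{\delta'}$-classes, so the $E'$-triple provides $K \in \KK$, $\iota_G \in \Emb(G, K)$ and $\iota_H \in \Emb(H, K)$ with $\|\iota_G \circ \psi_G\restriction_{E'} - \iota_H \circ \psi_H\restriction_{E'}\| < \varepsilon'$. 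For $x \in E$, I would split through the point $\rho x \in E'$:
$$\|\iota_G\psi_G x - \iota_H\psi_H x\| \leqslant \|\iota_G\psi_G(x - \rho x)\| + \|(\iota_G\psi_G - \iota_H\psi_H)(\rho x)\| + \|\iota_H\psi_H(\rho x - x)\|.$$
The two outer terms are bounded using $\|\iota_G\|, \|\iota_H\| \leqslant 1$ (isometric embeddings), $\|\psi_G\|, \|\psi_H\| \leqslant e^{\delta''}$, and $\|x - \rho x\| \leqslant \varepsilon\|x\|$ (the latter from $\|\Id_E - \rho\| \leqslant \varepsilon$), giving $e^{\delta''}\varepsilon\|x\|$ each. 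The central term uses that $\rho x \in E'$ together with the $E'$-amalgamation estimate and $\|\rho x\| \leqslant (1+\varepsilon)\|x\|$, yielding $\varepsilon'(1+\varepsilon)\|x\|$. Summing, $\|\iota_G\psi_G\restriction_E - \iota_H\psi_H\restriction_E\| \leqslant 2e^{\delta''}\varepsilon + \varepsilon'(1+\varepsilon)$.

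It then remains to check that this quantity is genuinely below the target constant. Since $e^{\delta''} < e$, $\varepsilon' < \varepsilon < 1$ and $1 + \varepsilon < 2$, the bound is strictly less than $(2e + 2)\varepsilon$, and the whole argument hinges on the numerical fact $2e + 2 \approx 7.44 < 8$. Choosing any $\varepsilon''$ strictly between $2e^{\delta''}\varepsilon + \varepsilon'(1+\varepsilon)$ and $8\varepsilon$ then witnesses, together with $\delta''$, that $(E, F, \Id_E)$ is a $(\KK, 8\varepsilon, \delta)$-amalgamation triple (the condition $\Id_E \in \Emb_{\varepsilon''}(E, F)$ being automatic), i.e.\ that $(E, F)$ is a $(\KK, 8\varepsilon, \delta)$-amalgamation pair. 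The only real care needed is the bookkeeping of the four parameters $\varepsilon', \delta', \varepsilon'', \delta''$ and the verification that $2e + 2 < 8$; there is no conceptual obstacle, and notably the lower distortion bound on $\rho$ is never used, so injectivity of $\rho$ plays no role.
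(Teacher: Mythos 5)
Your proof is correct and takes essentially the same route as the paper's: the paper likewise fixes witnesses $\varepsilon' \in (0,\varepsilon)$, $\delta' \in (\delta,1)$ for the $(E',F)$ pair, feeds the same $\psi_G, \psi_H$ into its amalgamation, and transfers the estimate across $\rho$ via the split $\|(\iota_G\circ\psi_G - \iota_H\circ\psi_H)(x)\| \leqslant \|(\iota_G\circ\psi_G - \iota_H\circ\psi_H)(\rho(x))\| + \|\iota_G\circ\psi_G - \iota_H\circ\psi_H\|\cdot\|x - \rho(x)\|$, which is the two-term form of your three-term split and yields the identical bound $\varepsilon'(1+\varepsilon) + 2e^{\delta'}\varepsilon < 8\varepsilon$. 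The only cosmetic differences are bookkeeping (the paper folds your $\delta''$ into the choice of $\delta'$, and closes with $\varepsilon' + 7\varepsilon$ rather than your $(2e+2)\varepsilon$ estimate).
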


\begin{proof}
    Let $\varepsilon' \in (0, \varepsilon)$ and $\delta' \in (\delta, 1)$ be witnessing that $(E', F)$ is a $(\KK, \varepsilon, \delta)$-amalgamation pair. Pick $G, H \in \mathcal{K}$, $\psi_G \in \Emb_{\delta'}(F, G)$, $\psi_H \in \Emb_{\delta'}(F, H)$. Then there exist $K \in \KK$, $\iota_G \in \Emb(G, K)$ and $\iota_H \in \Emb(H, K)$ such that $\|\iota_G\circ\psi_G\restriction_{E'} - \iota_H\circ\psi_H\restriction_{E'}\| < \varepsilon'$. Thus for $x \in S_E$, one has:
    \begin{multline*}
        \|\iota_G\circ\psi_G(x) - \iota_H\circ\psi_H(x)\| \\
        \begin{aligned}
            \leqslant & \|\iota_G\circ\psi_G(\rho(x)) - \iota_H\circ\psi_H(\rho(x))\| + \|\iota_G \circ \psi_G - \iota_H \circ\psi_H\| \cdot \|x - \rho(x)\| \\
        < & \varepsilon'\|\rho\| + 2e^{\delta'}\varepsilon < \varepsilon'(1+\varepsilon) + 6\varepsilon< \varepsilon' + 7\varepsilon.
        \end{aligned}
    \end{multline*}
    Thus $(E, F)$ is a $(\KK, 8\varepsilon, \delta)$-amalgamation pair, witnessed by $\delta'$ and $\varepsilon' + 7 \varepsilon$.
\end{proof}

The following lemma gives an alternative characterization of amalgamation triples (and hence, pairs) in the case of closed hereditary subclasses of $\BM$.

\begin{lemma}\label{lem:amalgableSelfImprovement}
 Let $\mathcal{K} \subseteq \BM$ be closed and hereditary, $\varepsilon, \delta > 0$, $E, F \in \KK$, and $\phi \colon E \to F$. The following are equivalent:
 \begin{enumerate}
    \item $(E, F, \phi)$ is a $(\KK, \varepsilon, \delta)$-amalgamation triple;
    \item\label{it:weakAmalgTriple} there exist $\varepsilon' \in (0, \varepsilon)$ and $\delta' > \delta$ such that $\phi \in \Emb_{\varepsilon'}(E, F)$ and for every $G, H \in \mathcal{K}$, $\psi_G \in \Emb_{\delta'}(F, G)$, $\psi_H \in \Emb_{\delta'}(F, H)$, and $\eta > 0$, there exist $K \in \mathcal{K}$, $\iota_G \in \Emb_\eta(G, K)$ and $\iota_H \in \Emb_\eta(H, K)$ for which $\|\iota_G \circ \psi_G \circ \phi - \iota_H\circ\psi_H \circ \phi\| < \varepsilon'$.
 \end{enumerate}
\end{lemma}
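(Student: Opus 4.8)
My plan is to treat the two implications separately, the implication $(1) \Rightarrow (2)$ being immediate and essentially all the work residing in $(2) \Rightarrow (1)$. For $(1) \Rightarrow (2)$, I would simply observe that any $\varepsilon', \delta'$ witnessing $(1)$ also witness $(2)$: the isometric embeddings $\iota_G \in \Emb(G, K)$, $\iota_H \in \Emb(H, K)$ produced by $(1)$ lie in $\Emb_\eta(G, K)$ and $\Emb_\eta(H, K)$ for every $\eta > 0$, so the weaker requirement of $(2)$ holds verbatim.

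The substance is the converse. Assume $(2)$, witnessed by some $\varepsilon' \in (0, \varepsilon)$ and $\delta' > \delta$. I would show that $(1)$ holds with the same $\delta'$ and with $\varepsilon'$ relaxed to any $\tilde\varepsilon \in (\varepsilon', \varepsilon)$. So fix $G, H \in \KK$, $\psi_G \in \Emb_{\delta'}(F, G)$ and $\psi_H \in \Emb_{\delta'}(F, H)$; the goal is to produce a single $K \in \KK$ with \emph{isometric} embeddings $j_G \colon G \to K$, $j_H \colon H \to K$ such that $\|j_G \circ \psi_G \circ \phi - j_H \circ \psi_H \circ \phi\| < \tilde\varepsilon$. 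First I would apply $(2)$ along a sequence $\eta_n \downarrow 0$, obtaining spaces $K_n \in \KK$ and maps $\iota_G^n \in \Emb_{\eta_n}(G, K_n)$, $\iota_H^n \in \Emb_{\eta_n}(H, K_n)$ with $\|\iota_G^n \circ \psi_G \circ \phi - \iota_H^n \circ \psi_H \circ \phi\| < \varepsilon'$; after replacing $K_n$ by the span of the two ranges (which stays in $\KK$ by heredity), I may assume $\dim K_n \leqslant \dim G + \dim H =: d$.

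The key step is a compactness-and-limit construction. On the fixed $d$-dimensional space $U := G \oplus H$ I would consider the surjections $q_n(g, h) := \iota_G^n(g) + \iota_H^n(h)$ and the pulled-back seminorms $\|u\|_n := \|q_n(u)\|_{K_n}$. These are uniformly bounded (since $\|\iota_G^n\|, \|\iota_H^n\| \leqslant e$) and hence uniformly Lipschitz for a fixed auxiliary norm on $U$, so by a standard compactness argument, passing to a subsequence, I may assume they converge pointwise to a seminorm $\|\cdot\|_\infty$ on $U$. Letting $K := U/\{u : \|u\|_\infty = 0\}$ with quotient map $q$, and setting $j_G(g) := q(g, 0)$ and $j_H(h) := q(0, h)$, the estimate $\|\iota_G^n(g)\| \to \|g\|$ forces $\|j_G(g)\| = \|g\|$, so $j_G$ and likewise $j_H$ are isometric. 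To see $K \in \KK$, I would pick $u_1, \ldots, u_r \in U$ projecting to a basis of $K$ and set $L := \Span\{u_1, \ldots, u_r\}$; then $\|\cdot\|_\infty$ is a norm on $L$ with $(L, \|\cdot\|_\infty) \cong K$, so, after realizing $U$ inside $c_{00}$, Lemma \ref{lemma:convPBM} gives that for large $n$ the seminorm $\|\cdot\|_n$ is a norm on $L$ and $(L, \|\cdot\|_n) \to K$ in $\BM$. For such $n$, $q_n\restriction_L$ isometrically embeds $(L, \|\cdot\|_n)$ into $K_n \in \KK$, so $(L, \|\cdot\|_n) \in \KK$ by heredity, and closedness of $\KK$ finally yields $K \in \KK$.

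Finally I would verify the amalgamation estimate: for $x \in E$ one has $j_G \psi_G \phi(x) - j_H \psi_H \phi(x) = q\bigl((\psi_G\phi(x), -\psi_H\phi(x))\bigr)$, whose norm equals $\lim_n \|\iota_G^n\psi_G\phi(x) - \iota_H^n\psi_H\phi(x)\| \leqslant \varepsilon'\|x\|$, so that $\|j_G \circ \psi_G \circ \phi - j_H \circ \psi_H \circ \phi\| \leqslant \varepsilon' < \tilde\varepsilon$, establishing $(1)$. I expect the main obstacle to be exactly this passage to the limit: converting the approximate embeddings into honest isometric ones landing in a single ambient space that can be certified to lie in $\KK$. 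The tools that make it work are the compactness of the bounded-dimensional strata of $\BM$ (to extract the limit seminorm) together with the closedness and heredity of $\KK$ (to place the limit back inside the class).
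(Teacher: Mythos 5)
Your proof is correct, and it follows the same opening skeleton as the paper's own proof --- the implication (1) $\Rightarrow$ (2) is dismissed as obvious, and for the converse you apply (2) along $\eta_n \downarrow 0$ and cut each $K_n$ down to the sum of the two ranges using heredity --- but the compactness step at the heart of the argument is implemented in a genuinely different way. The paper passes to a subsequence so that the spaces $K_n$ themselves converge in the Banach--Mazur compactum to some $K$, which then lies in $\KK$ immediately by closedness; the price is that the isometric embeddings must be manufactured afterwards, by transporting $\iota_{G,n}$, $\iota_{H,n}$ through surjective near-isometries $\rho_n \in \Emb_{d_{\BM}(K_n, K)}(K_n, K)$ and extracting operator-norm limits $\rho_n \circ \iota_{G,n} \to \iota_G \in \Emb(G, K)$ in the finite-dimensional space $\mathcal{L}(G, K)$. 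You instead extract a pointwise limit of the pulled-back seminorms on the fixed space $U = G \oplus H$ and define $K$ as the quotient by the null space, so that the isometric embeddings $j_G$, $j_H$ come for free from the construction; the price is that $K \in \KK$ is no longer automatic, and you need the approximation argument via the subspaces $(L, \|\cdot\|_n) \in \KK$ together with closedness of $\KK$. Both trades are sound. The only point where your write-up is looser than it should be is the literal invocation of Lemma \ref{lemma:convPBM}: that lemma is stated for pseudonorms in $\PP$ and for spans of \emph{rational} vectors, so to apply it you should either choose the $u_i$ with rational coordinates (possible, since projecting to a basis of $K$ is an open condition) and extend the seminorms $\|\cdot\|_n$ to $c_{00}$ by composing with the coordinate projection onto $U$, or simply observe that equi-Lipschitzness plus pointwise convergence to a norm gives uniform convergence on the unit sphere of $L$, which yields the needed conclusion (eventual norms and Banach--Mazur convergence) directly. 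This is a routine repair, not a gap.
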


\begin{proof}
    The implication (1) $\Rightarrow$ (2) is obvious; we prove (2) $\Rightarrow$ (1). Fix $\varepsilon', \delta' > 0$ as given by (2). Let $G, H \in \mathcal{K}$, $\psi_G \in \Emb_{\delta'}(F, G)$, $\psi_H \in \Emb_{\delta'}(F, H)$. Then by (2), for every $n \in \Nat$,  we can find $K_n \in \mathcal{K}$, $\iota_{G, n} \in \Emb_{1/n}(G, K_n)$ and $\iota_{H, n} \in \Emb_{1/n}(H, K_n)$ for which $\|\iota_{G, n} \circ \psi_G \circ \phi - \iota_{H, n}\circ\psi_H \circ \phi\| < \varepsilon'$.
 By replacing, for each $n$, $K_n$ with $\iota_{G, n}(G) + \iota_{H, n}(H)$ if necessary, we can assume that for every $n\in\Nat$, we have $\dim(K_n) \leqslant \dim(G) + \dim (H)$ and therefore, up to passing to a subsequence, we may assume that $\dim K_n$, $n\in\Nat$ is constant, and $(K_n)_{n \in \Nat}$ converges to some $K \in \KK$. For every $n\in\Nat$, pick a surjective isomorphism $\rho_n\in \Emb_{d_{\BM}(K_n, K)}(K_n,K)$. Since $\mathcal{L}(G,K)$ is finite-dimensional, passing to a subsequence, we may assume that $\rho_n\circ \iota_{G, n} \to \iota_G$; and since $d_{\BM}(K_n, K) \to 0$, we necessarily have $\iota_G \in \Emb(G, K)$. Similarly, we may assume that $\rho_n\circ \iota_{H, n}\to \iota_H\in \Emb(H,K)$. To conclude, observe that $\|\iota_G\circ \psi_H \circ\psi - \iota_H\circ \psi_H \circ \phi\|\leq \limsup_n \|\rho_n\|\cdot\|\iota_{G, n}\circ \psi_H \circ\psi - \iota_{H, n}\circ \psi_H \circ \phi\|\leq \varepsilon'$, so (1) holds, witnessed by $\delta'$ and, for instance, $(\varepsilon' + \varepsilon)/2$.
\end{proof}

We now pass to the study of stability under closures.

\begin{proposition}\label{prop:HPstableUnderClosures}
	The property (HP) is stable under taking closures.
\end{proposition}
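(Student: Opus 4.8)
The plan is to prove the contrapositive-free statement directly: assuming $\KK$ is hereditary, I will show $\overline\KK$ is hereditary. So I would fix $F \in \overline\KK$ together with some $E \in \BM$ that embeds isometrically into $F$, and aim to conclude $E \in \overline\KK$. Writing $d := \dim(F)$, the fact that the topology on $\BM$ is the disjoint union topology means that membership of $F$ in $\overline\KK$ produces a sequence $(F_n)$ of elements of $\KK \cap \BM_d$ with $d_{\BM}(F_n, F) \to 0$. The whole strategy is to ``pull back'' the given isometric embedding $\phi \colon E \to F$ along these Banach--Mazur approximations, producing finite-dimensional subspaces $E_n \subseteq F_n$ converging to $E$; heredity of $\KK$ applied at each finite stage, together with closedness under limits, then finishes the argument.

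The key step is to convert the Banach--Mazur witnesses into genuine small-distortion embeddings in the sense of $\Emb_C$. Since the infimum defining $d_{\BM}(F, F_n) = d_{\BM}(F_n, F) =: c_n$ is attained, for each $n$ I would pick an isomorphism $S_n \colon F \to F_n$ with $\log(\|S_n\|\cdot\|S_n^{-1}\|) = c_n$, and rescale by the scalar $\lambda_n := (\|S_n^{-1}\|/\|S_n\|)^{1/2}$ to obtain $T_n := \lambda_n S_n$ satisfying $\|T_n\| = \|T_n^{-1}\| = e^{c_n/2}$, i.e. $T_n \in \Emb_{c_n/2}(F, F_n)$ with $c_n \to 0$. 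Setting $E_n := (T_n \circ \phi)(E) \subseteq F_n$, endowed with the norm inherited from $F_n$, the composite $T_n \circ \phi \colon E \to E_n$ is a surjective isomorphism; since $\phi$ is isometric and $T_n \in \Emb_{c_n/2}$, one checks $\|T_n \circ \phi\| \leqslant e^{c_n/2}$ and $\|(T_n \circ \phi)^{-1}\| \leqslant e^{c_n/2}$, whence $d_{\BM}(E_n, E) \leqslant c_n \to 0$, so $E_n \to E$ in the space $\BM$.

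To conclude, I would observe that each $E_n$ is (isometric to) a subspace of $F_n \in \KK$, so (HP) gives $E_n \in \KK$; combined with $E_n \to E$, this yields $E \in \overline\KK$, as desired. I do not anticipate a serious obstacle: the only mildly delicate point is the rescaling of $S_n$ into $T_n$, which ensures the approximating maps are honest $\Emb_{c_n/2}$-embeddings (rather than merely low-distortion isomorphisms with unbalanced norms), after which the transfer $E \rightsquigarrow E_n$ and the sequential closedness of $\overline\KK$ make the argument routine.
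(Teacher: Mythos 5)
Your proof is correct and takes essentially the same route as the paper's: push the subspace $E$ forward along an isomorphism witnessing the Banach--Mazur approximation of $F$ by elements of $\KK$, apply (HP) inside $\KK$ to the image, and conclude that $E \in \overline{\KK}$. The only cosmetic difference is your rescaling of $S_n$ into the balanced maps $T_n$, which is harmless but unnecessary: the quantity $\|S_n\restriction_E\|\cdot\|(S_n\restriction_E)^{-1}\| \leqslant \|S_n\|\cdot\|S_n^{-1}\|$ already bounds $d_{\BM}(E, E_n)$ without any normalization, which is exactly how the paper argues.
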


\begin{proof}
Let $\KK \subseteq \BM$ satisfy (HP). Let $F \in \overline{\KK}$ and $E \subseteq F$ be a subspace. To show that $E \in \overline{\KK}$, we pick $\varepsilon > 0$ and we show the existence of $E' \in \KK$ with $d_{\BM}(E, E') \leqslant \varepsilon$. We can first pick $F' \in \KK$ with $d_{\BM}(F, F') \leqslant \varepsilon$, and a surjective isomorphism $\phi \colon F \to F'$ witnessing this last inequality. We let $E' \coloneq \phi(E)$; then $\phi\restriction_E$ witnesses that $d_{\BM}(E, E') \leqslant \varepsilon$.
\end{proof}

\begin{lemma}\label{lem:pairsUnderClosures}
    Let $\mathcal{K} \subseteq \BM$ be hereditary, and $\varepsilon, \delta > 0$. Every $(\KK, \varepsilon, \delta)$-amalgamation triple is also a $(\overline{\KK}, \varepsilon, \delta)$-amalgamation triple. The same result holds for amalgamation pairs.
\end{lemma}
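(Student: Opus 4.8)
The plan is to reduce everything to the ``self-improvement'' criterion of Lemma \ref{lem:amalgableSelfImprovement}, exploiting that $\overline{\KK}$ is both closed (trivially) and hereditary (Proposition \ref{prop:HPstableUnderClosures}), so that Lemma \ref{lem:amalgableSelfImprovement} is available for the class $\overline{\KK}$. I will first treat amalgamation triples; the statement for pairs is then immediate, since by definition $(E, F)$ is a $(\KK, \varepsilon, \delta)$-amalgamation pair exactly when $(E, F, \Id_E)$ is a $(\KK, \varepsilon, \delta)$-amalgamation triple, and $(E,F) \in \Incl(\KK) \subseteq \Incl(\overline{\KK})$.

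So fix a $(\KK, \varepsilon, \delta)$-amalgamation triple $(E, F, \phi)$, witnessed by some $\varepsilon' \in (0, \varepsilon)$ and $\delta' > \delta$, and note that $E, F \in \KK \subseteq \overline{\KK}$ and $\phi \in \Emb_{\varepsilon'}(E, F)$. To conclude that $(E, F, \phi)$ is a $(\overline{\KK}, \varepsilon, \delta)$-amalgamation triple, it suffices, by the implication (2) $\Rightarrow$ (1) of Lemma \ref{lem:amalgableSelfImprovement} applied to $\overline{\KK}$, to verify condition (2) of that lemma for the class $\overline{\KK}$, with the same witness $\varepsilon'$ and with any fixed $\delta'' \in (\delta, \delta')$.

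To do so, fix $G, H \in \overline{\KK}$, $\psi_G \in \Emb_{\delta''}(F, G)$, $\psi_H \in \Emb_{\delta''}(F, H)$, and $\eta > 0$; I must produce $K \in \overline{\KK}$ and $\iota_G \in \Emb_\eta(G, K)$, $\iota_H \in \Emb_\eta(H, K)$ with $\|\iota_G \circ \psi_G \circ \phi - \iota_H \circ \psi_H \circ \phi\| < \varepsilon'$. The key step is to replace $G$ and $H$, which lie only in $\overline{\KK}$, by genuine members of $\KK$: setting $\gamma \coloneq \min(\eta, \delta' - \delta'') > 0$ and using $G, H \in \overline{\KK}$, I can find $G', H' \in \KK$ together with surjective isomorphisms $\rho_G \in \Emb_\gamma(G, G')$ and $\rho_H \in \Emb_\gamma(H, H')$. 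Then $\rho_G \circ \psi_G \in \Emb_{\delta'' + \gamma}(F, G') \subseteq \Emb_{\delta'}(F, G')$ and likewise $\rho_H \circ \psi_H \in \Emb_{\delta'}(F, H')$ (using the composition convention of the footnote and $\delta'' + \gamma \leqslant \delta'$), so the amalgamation property of $(E, F, \phi)$ inside $\KK$ yields $K \in \KK$ and genuine isometric embeddings $\iota_{G'} \in \Emb(G', K)$, $\iota_{H'} \in \Emb(H', K)$ with $\|\iota_{G'} \circ \rho_G \circ \psi_G \circ \phi - \iota_{H'} \circ \rho_H \circ \psi_H \circ \phi\| < \varepsilon'$. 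Setting $\iota_G \coloneq \iota_{G'} \circ \rho_G$ and $\iota_H \coloneq \iota_{H'} \circ \rho_H$, these belong to $\Emb_\gamma(G, K) \subseteq \Emb_\eta(G, K)$ and $\Emb_\gamma(H, K) \subseteq \Emb_\eta(H, K)$ respectively, satisfy the required inequality verbatim, and $K \in \KK \subseteq \overline{\KK}$; this is exactly condition (2) of Lemma \ref{lem:amalgableSelfImprovement} for $\overline{\KK}$.

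The main obstacle is the one this argument is designed around: because $G$ and $H$ need not belong to $\KK$, one cannot amalgamate them directly and is forced to pass through the $\KK$-approximants $G', H'$, which introduces the distortions $\rho_G, \rho_H$ and hence produces only \emph{near}-isometric embeddings $\iota_G, \iota_H$ of $G, H$ into $K$. Upgrading these back to the isometric embeddings demanded by the definition of an amalgamation triple is precisely what Lemma \ref{lem:amalgableSelfImprovement} accomplishes, and the fact that its hypotheses (closed and hereditary) hold for $\overline{\KK}$—the latter thanks to Proposition \ref{prop:HPstableUnderClosures}, which is where heredity of $\KK$ is used—is what makes the whole reduction legitimate.
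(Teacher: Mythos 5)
Your proof is correct and follows essentially the same route as the paper's: reduce to the triples case, invoke the self-improvement criterion (Lemma \ref{lem:amalgableSelfImprovement}, valid for $\overline{\KK}$ since it is closed and hereditary by Proposition \ref{prop:HPstableUnderClosures}), and verify its condition (2) by approximating $G, H \in \overline{\KK}$ with $G', H' \in \KK$ via maps in $\Emb_{\min(\eta, \delta'-\delta'')}$ and amalgamating in $\KK$. The quantitative bookkeeping ($\delta'' + \gamma \leqslant \delta'$, and $\iota_{G'}\circ\rho_G \in \Emb_\gamma \subseteq \Emb_\eta$) matches the paper's argument exactly.
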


\begin{proof}
    The result for pairs is an immediate consequence of this for triples. Let $(E, F, \phi)$ be a $(\KK, \varepsilon, \delta)$-amalgamation triple, witnessed by $\varepsilon' \in (0,\varepsilon)$ and $\delta' > \delta$. Let $\delta'' \in (\delta, \delta')$; we show that $(E, F, \phi)$ is an $(\overline{\KK}, \varepsilon, \delta)$-amalgamation triple by showing that condition (\ref{it:weakAmalgTriple}) in Lemma \ref{lem:amalgableSelfImprovement} holds for $\varepsilon'$ and $\delta''$ (this lemma applies, since $\overline{\KK}$ is hereditary by Proposition \ref{prop:HPstableUnderClosures}). Let $G, H \in \overline{\mathcal{K}}$, $\psi_G \in \Emb_{\delta''}(F, G)$, $\psi_H \in \Emb_{\delta''}(F, H)$, and $\eta > 0$. Let $G', H' \in \KK$, and $\rho_G \in \Emb_{\min(\eta, \delta' - \delta'')}(G, G')$, $\rho_H\in \Emb_{\min(\eta, \delta' - \delta'')}(H, H')$. Then $\rho_G \circ \psi_G \in \Emb_{\delta'}(F, G')$ and $\rho_H \circ \psi_H \in \Emb_{\delta'}(F, H')$, so one can find $K \in \KK$, $\iota_G \in \Emb(G', K)$ and $\iota_H \in \Emb(H', K)$ such that $\|\iota_G\circ\rho_G\circ\psi_G\circ\phi - \iota_H\circ\rho_H\circ\psi_H\circ\phi\| < \varepsilon'$. This concludes the proof, since $\iota_G\circ\rho_G \in \Emb_\eta(G, K)$ and $\iota_H\circ\rho_H \in \Emb_\eta(H, K)$.
\end{proof}

\begin{proposition}\label{prop:ageAmalgThenItsClosureAlso}
Let $\KK \subseteq \BM$ be hereditary. If $\KK$ satisfies (JEP) (resp. (GAP)), then $\overline{\KK}$ does, too.
\end{proposition}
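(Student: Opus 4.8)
The plan is to reduce both claims to statements about amalgamation triples and pairs, and then to combine the transfer result of Lemma~\ref{lem:pairsUnderClosures} with the perturbation stability of Lemma~\ref{lem:stabilityAmalgTriples}. If $\KK = \varnothing$ both properties hold vacuously for $\overline\KK = \varnothing$, so I assume $\KK \neq \varnothing$; since $\KK$ is hereditary this forces $\{0\} \in \KK$, and by Proposition~\ref{prop:HPstableUnderClosures} the class $\overline\KK$ is again hereditary (so that, in particular, the hypotheses of Lemma~\ref{lem:pairsUnderClosures} and the characterizations of Remark~\ref{rem:equivAmalgPair} are available throughout).

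For (JEP), I would use the characterization of Remark~\ref{rem:equivAmalgPair}\ref{it:JEPandPairs}: since $\KK$ contains $\{0\}$ and satisfies (JEP), the pair $(\{0\}, \{0\})$ is a $(\KK, 1, 1)$-amalgamation pair. By the pair version of Lemma~\ref{lem:pairsUnderClosures} it is then a $(\overline\KK, 1, 1)$-amalgamation pair. As $\{0\} \in \overline\KK$, a second application of Remark~\ref{rem:equivAmalgPair}\ref{it:JEPandPairs}, now to $\overline\KK$, yields that $\overline\KK$ satisfies (JEP).

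The bulk of the work is (GAP), which by Remark~\ref{rem:equivAmalgPair}\ref{it:GAPandPairs} amounts to producing, for every $E \in \overline\KK$ and every $\varepsilon > 0$, a space $F \in \overline\KK$, a map $\phi\colon E \to F$ and a $\delta > 0$ making $(E, F, \phi)$ a $(\overline\KK, \varepsilon, \delta)$-amalgamation triple. Since a smaller $\varepsilon$ gives a stronger conclusion, I may assume $\varepsilon \in (0, 1)$. The point is that (GAP) of $\KK$ directly produces such triples only for base spaces lying in $\KK$, whereas $E$ is merely in the closure. So I would first use $E \in \overline\KK$ to pick a space $E' \in \KK$ of the same dimension with $d_{\BM}(E, E')$ as small as desired, together with an approximate isometry $\rho \in \Emb_\beta(E, E')$ for a small $\beta > 0$ to be fixed. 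Applying (GAP) of $\KK$ to $E'$ with parameter $\varepsilon - 2\beta$ gives $F \in \KK$, $\phi'\colon E' \to F$ and $\delta > 0$ (which I shrink below $1$) such that $(E', F, \phi')$ is a $(\KK, \varepsilon - 2\beta, \delta)$-amalgamation triple; by Lemma~\ref{lem:pairsUnderClosures} it is a $(\overline\KK, \varepsilon - 2\beta, \delta)$-amalgamation triple. Finally I transport the base point from $E'$ back to $E$ by invoking Lemma~\ref{lem:stabilityAmalgTriples} inside $\overline\KK$, with $\sigma = \Id_F$ (so $\gamma = 0$) and the above $\rho$: this makes $(E, F, \phi' \circ \rho)$ a $(\overline\KK, (\varepsilon - 2\beta) + 2\beta, \delta) = (\overline\KK, \varepsilon, \delta)$-amalgamation triple with $F \in \KK \subseteq \overline\KK$, which is exactly what Remark~\ref{rem:equivAmalgPair}\ref{it:GAPandPairs} requires.

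The one genuinely non-formal step is this last transport of the base point; everything else is a direct invocation of the characterizations and of Lemma~\ref{lem:pairsUnderClosures}. The care needed there is purely bookkeeping of parameters: choosing $\beta$ small enough that $\varepsilon - 2\beta > 0$, ensuring $\delta < 1$ and $\varepsilon - 2\beta < 1$ so that Lemma~\ref{lem:stabilityAmalgTriples} applies, and matching the direction of $\rho$ as an approximate isometry $E \to E'$ (which exists because $\dim E = \dim E'$ and $d_{\BM}(E, E')$ is small, after rescaling an optimal isomorphism). No amalgamation needs to be carried out by hand, since the amalgamating space is supplied abstractly by the triple inherited from $\KK$.
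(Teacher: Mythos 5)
Your proof is correct and follows essentially the same route as the paper's: (JEP) via Remark~\ref{rem:equivAmalgPair}\ref{it:JEPandPairs} and Lemma~\ref{lem:pairsUnderClosures}, and (GAP) by approximating $E \in \overline{\KK}$ by some $E' \in \KK$, invoking (GAP) of $\KK$ at $E'$, transferring the resulting triple to $\overline{\KK}$ by Lemma~\ref{lem:pairsUnderClosures}, and transporting the base point back to $E$ via Lemma~\ref{lem:stabilityAmalgTriples} with $\gamma = 0$, $\sigma = \Id_F$. The only difference is bookkeeping (the paper fixes $\beta = \varepsilon/3$ and applies (GAP) with parameter $\varepsilon/3$, whereas you keep $\beta$ free and use parameter $\varepsilon - 2\beta$), which is immaterial.
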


\begin{proof}
In the case of (JEP), this is an immediate consequence of Remark \ref{rem:equivAmalgPair} (\ref{it:JEPandPairs}) and Lemma \ref{lem:pairsUnderClosures}. We now show the result for (GAP) using Remark \ref{rem:equivAmalgPair} (\ref{it:GAPandPairs}). Let $E \in \overline{\KK}$ and $\varepsilon \in (0, 1)$. Pick $E' \in \KK$ and an onto map $\rho \in \Emb_{\varepsilon/3}(E, E')$. By the (GAP) for $\KK$, one can find $F \in \KK$, $\phi \colon E' \to F$ and $\delta \in (0, 1)$ such that $(E', F, \phi)$ is a $(\KK, \varepsilon/3, \delta)$-amalgamation triple. By Lemma \ref{lem:pairsUnderClosures}, this is also a $(\overline{\KK}, \varepsilon/3, \delta)$-amalgamation triple. So by Lemma \ref{lem:stabilityAmalgTriples} (applied with $\beta = \varepsilon/3$, $\gamma = 0$ and $\sigma = \Id_F$), $(E, F, \phi \circ \rho)$ is a $(\overline{\KK}, \varepsilon, \delta)$-amalgamation triple.
\end{proof}

The last result of this section establishes a first link between guarded Fra\"isséness of Banach spaces and of subclasses of $\BM$.

\begin{proposition}\label{prop:ageOfGuardedHasAmalg}
Let $X$ be a separable Banach space and $\varepsilon, \delta > 0$. Then every $(X, \varepsilon, \delta)$-\ref{it:gFraisseAction} pair is an $(\Age(X), \varepsilon, \delta)$-amalgamation pair. In particular, if $X$ is guarded Fra\"iss\'e, then $\Age(X)$ satisfies (GAP), and $\closedAge{X}$ is a guarded Fra\"iss\'e class. 
\end{proposition}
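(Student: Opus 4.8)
The plan is to establish the main assertion by directly unwinding the definitions and reusing the witnesses provided by the gF-action pair. Suppose $(E,F)$ is an $(X,\varepsilon,\delta)$-\ref{it:gFraisseAction} pair, witnessed by $\varepsilon' \in (0,\varepsilon)$ and $\delta' > \delta$; I claim the same $\varepsilon'$ and $\delta'$ witness that $(E,F,\Id_E)$ is an $(\Age(X),\varepsilon,\delta)$-amalgamation triple. The requirement $\Id_E \in \Emb_{\varepsilon'}(E,F)$ is automatic, since $E \subseteq F$ makes the inclusion an isometric embedding. So I would fix $G,H \in \Age(X)$, $\psi_G \in \Emb_{\delta'}(F,G)$ and $\psi_H \in \Emb_{\delta'}(F,H)$, choose isometric embeddings $j_G \in \Emb(G,X)$ and $j_H \in \Emb(H,X)$ (possible as $G,H \in \Age(X)$), and note, using the composition property of the $\Emb_C$'s, that $j_G \circ \psi_G,\, j_H \circ \psi_H \in \Emb_{\delta'}(F,X)$.

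The key move is to feed these two members of $\Emb_{\delta'}(F,X)$ into the $\varepsilon'$-transitivity of the action $\Iso(X) \curvearrowright \{\iota\restriction_E \setsep \iota \in \Emb_{\delta'}(F,X)\}$, obtaining $T \in \Iso(X)$ with $\|(j_H\circ\psi_H)\restriction_E - T \circ (j_G\circ\psi_G)\restriction_E\| < \varepsilon'$. I would then set $\iota_G \coloneq T\circ j_G \in \Emb(G,X)$ (an isometric embedding, as $T$ is a surjective isometry and $j_G$ is isometric) and $\iota_H \coloneq j_H \in \Emb(H,X)$, and take $K$ to be a finite-dimensional subspace of $X$ containing $\iota_G(G)\cup\iota_H(H)$, so that $K \in \Age(X)$, $\iota_G \in \Emb(G,K)$ and $\iota_H \in \Emb(H,K)$. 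With these choices $\iota_G\circ\psi_G\circ\Id_E - \iota_H\circ\psi_H\circ\Id_E$ equals $T\circ(j_G\circ\psi_G)\restriction_E - (j_H\circ\psi_H)\restriction_E$, whose norm is $<\varepsilon'$; this is exactly the amalgamation inequality. The conceptual point, and the only thing needing care, is that the single isometry $T$ supplied by transitivity performs the amalgamation: it routes $G$ through $X$ so that its image lands $\varepsilon'$-close, over $E$, to the image of $H$, and everything takes place inside $X$.

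For the ``in particular'' clause I would first derive (GAP) for $\Age(X)$. Assuming $X$ guarded Fra\"iss\'e and fixing $E \in \Age(X)$ (realised as an actual subspace of $X$) and $\varepsilon > 0$, Remark \ref{rem:gFFromPairs} furnishes $F$ with $E \subseteq F \subseteq X$ and $\delta > 0$ such that $(E,F)$ is an $(X,\varepsilon,\delta)$-\ref{it:gFraisseAction} pair; by the first part $(E,F,\Id_E)$ is an $(\Age(X),\varepsilon,\delta)$-amalgamation triple, which is precisely what Remark \ref{rem:equivAmalgPair}(\ref{it:GAPandPairs}) requires for (GAP), with $\phi = \Id_E$.

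Finally I would assemble the five defining properties of a guarded Fra\"iss\'e class for $\closedAge{X}$. Nonemptiness and closedness are immediate, since $\{0\} \in \Age(X)$ and $\closedAge{X}$ is by definition a closure. Because $\Age(X)$ is hereditary and satisfies (JEP), Proposition \ref{prop:HPstableUnderClosures} gives that $\closedAge{X}$ is hereditary, and Proposition \ref{prop:ageAmalgThenItsClosureAlso} gives (JEP) for $\closedAge{X}$; applying Proposition \ref{prop:ageAmalgThenItsClosureAlso} once more to the (GAP) for $\Age(X)$ just established yields (GAP) for $\closedAge{X}$. Hence $\closedAge{X}$ is a guarded Fra\"iss\'e class. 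Since this last part merely cites already-proved stability results, the only genuine content lies in the first two paragraphs, and I anticipate no real obstacle beyond the bookkeeping of which maps are isometric and whether a given embedding is viewed with codomain $K$ or $X$.
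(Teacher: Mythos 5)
Your proof is correct and follows essentially the same route as the paper's: the same witnesses $\varepsilon'$, $\delta'$, the same trick of pushing $G$ and $H$ into $X$ via isometric embeddings, applying the $\varepsilon'$-transitivity of $\Iso(X)$ on $\{\iota\restriction_E \setsep \iota \in \Emb_{\delta'}(F,X)\}$ to the two composites, and taking $K$ to be the span of the two images inside $X$ (the paper takes $K = T\circ\zeta_G(G)+\zeta_H(H)$, which is your choice of $K$), followed by the identical citations for the ``in particular'' clause.
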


\begin{proof}
Let $(E, F)$ be an $(X, \varepsilon, \delta)$-\ref{it:gFraisseAction} pair, witnessed by $\varepsilon' \in (0, \varepsilon)$ and $\delta' > \delta$. Pick $G, H \in \Age(X)$, $\psi_G \in \Emb_{\delta'}(F, G)$, and $\psi_H \in \Emb_{\delta'}(F, H)$. Let $\zeta_G \in \Emb(G, X)$ and $\zeta_H \in \Emb(H, X)$.  Both $\zeta_G \circ \psi_G\restriction_E$ and $\zeta_H \circ \psi_H\restriction_E$ belong to $\{\iota\restriction_E \setsep \iota \in \Emb_{\delta'}(F, X)\}$, on which $\Iso(X)$ acts $\varepsilon'$-transitively, so one can find $T \in \Iso(X)$ such that $\|\zeta_H \circ \psi_H\restriction_E - T \circ \zeta_G \circ \psi_G\restriction_E\| < \varepsilon'$. Let $K \coloneq T \circ \zeta_G(G) + \zeta_H(H)$, $\iota_G \coloneq T \circ \zeta_G$ and $\iota_H \coloneq \zeta_H$, both seen as elements of $\Emb(G, K)$ and $\Emb(H, K)$, respectively; then $K \in \Age(X)$ and $\|\iota_H \circ \psi_H \restriction_E - \iota_G \circ \psi_G \restriction_E\| < \varepsilon'$, hence showing that $(E, F)$ is an $(\Age(X), \varepsilon, \delta)$-amalgamation pair, witnessed by $\varepsilon'$ and $\delta'$.

\smallskip

If $X$ is guarded Fra\"iss\'e, it now follows from Remarks \ref{rem:gFFromPairs} and \ref{rem:equivAmalgPair} that $\Age(X)$ satisfies (GAP). Since it also satisfies (HP) and (JEP), it follows from Propositions \ref{prop:HPstableUnderClosures} and \ref{prop:ageAmalgThenItsClosureAlso} that $\closedAge{X}$ is a guarded Fra\"iss\'e class.
\end{proof}

\section{Proofs of the main theorems}\label{sec:GuardedFraisseCorrespondence}

This section contains the proofs of Theorems \ref{thm:GuardedFraisseCorrespondence} and \ref{thm:IvanovBanach}, and implication (5) $\Rightarrow$ (1) in Theorem \ref{thm:BMgameForIsometryClasses}. All will be consequences of a common technical result, Proposition \ref{prop:AmalgamationsAndGurarii}, that we will state and prove first. We will also see, at the end of the section, how this technical result can be used to deduce other Fra\"iss\'e correspondences; we will in particular state and prove one for cofinally Fra\"iss\'e Banach spaces (see Definition \ref{def:CofFraisse} and Theorem \ref{thm:CofFraisseCorresp}). Recall that the notation $\sigma\KK$, essential in this section, has been introduced in Definition~\ref{def:sigmaK}, and that $\sigma\KK$ is closed in $\PP$ whenever $\KK$ is closed in $\BM$ (see Proposition \ref{prop:closedSigmaK}).

\begin{proposition}\label{prop:AmalgamationsAndGurarii}
Let $\KK\subseteq \BM$ be a guarded Fra\"iss\'e class. Then there exists a unique (up to isometry) guarded Fra\"iss\'e Banach space $X$ satisfying $\KK = \closedAge{X}$. The space $X$ moreover satisfies the following.
\begin{enumerate}[label=(P-\arabic*), series=gAmalgGFra]  
    \item\label{it:gFraIsGDelta} $\isomtrclass{X}$ is a dense $G_\delta$ subset of $\sigma\KK$.
    
    \item \label{it:gAmalgThird} For every finite-dimensional subspace $E\subseteq X$ and every $\varepsilon>0$, there exist a finite-dimensional subspace $F \subseteq X$ with $E \subseteq F$ and $\delta>0$ for which $(E, F)$ is a $(\KK,\varepsilon,\delta)$-amalgamation pair.
    \item \label{it:gAmalgImpliesGFra} For every finite-dimensional subspaces $E\subseteq F\subseteq X$ and every $\varepsilon,\delta \in (0, 1)$, if $(E, F)$ is $(\KK,\varepsilon,\delta)$-amalgamation pair, then it  is an $(X,\varepsilon,\delta)$-\ref{it:gFraisseThird} pair.
\end{enumerate}
\end{proposition}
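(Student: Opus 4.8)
The plan is to settle uniqueness immediately, then produce $X$ by a genericity (Baire category) argument in a suitable Polish space, reading off \ref{it:gFraIsGDelta}, \ref{it:gAmalgThird} and \ref{it:gAmalgImpliesGFra} from the construction. Uniqueness is free: any two guarded Fra\"iss\'e spaces with closed age $\KK$ satisfy \ref{it:gFraisseForPlayerI} and have equal closed ages, so Theorem \ref{thm:guardedUniqueByAge} makes them isometric. Thus the content lies in existence and the three listed properties. Set $\mathcal{I} \coloneq \BB$ if $\KK$ is infinite-dimensional and $\mathcal{I} \coloneq \PP$ otherwise; by Proposition \ref{prop:closedSigmaK} the space $\Sigma \coloneq \sigma\KK \cap \mathcal{I}$ is a nonempty Polish space, and this is where the generic $\mu$ will live.

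First I would run the generic construction by enumerating countably many \emph{tasks}, each attached to a subset $D_\tau \subseteq \Sigma$ that I claim is open and dense. There are three kinds. \emph{Realizing $\KK$}: for each $G$ in a fixed countable $d_{\BM}$-dense family of $\KK$ and each rational tolerance, $D_\tau$ asks that $G$ embed almost isometrically into $X_\mu$; density follows from (JEP) applied to a finite piece $W \subseteq X_\mu$ (with $W \in \KK$ since $\mu \in \sigma\KK$) and $G$ inside some $K \in \KK$, and openness from Lemma \ref{lem:openEquivalence}. \emph{Creating guards}: for each rational finite-dimensional piece $W$ and tolerance, apply (GAP) to $W$ to obtain a guard $\phi \colon W \to \hat W$ with $\hat W \in \KK$, and let $D_\tau$ ask that the triple $(W, \hat W, \phi)$ be realized over $W$ in $X_\mu$; density holds because one may replace the finite piece $W$ \emph{wholesale} by $\hat W$ through the near-isometry $\phi$ (so the new pseudonorm stays close to $\mu$ on the relevant coordinates, and since $\hat W \in \KK$ the age stays in $\KK$), padding the tail by (JEP) to remain in $\mathcal{I}$. \emph{Amalgamating over guards}: for each rational amalgamation pair $(E, F)$ in $\KK$, each rational $\psi \in \Emb_\delta(F, G)$ with $G \in \KK$, and rational $\eta, \varepsilon$, $D_\tau$ asks for $\iota \in \Emb_\eta(G, X_\mu)$ with $\|\Id_E - \iota\circ\psi\restriction_E\| < \varepsilon$; density uses the amalgamation-pair property with the \emph{second} factor taken to be the finite piece $W \supseteq F$ of $X_\mu$ and $\psi_W$ the inclusion, so that the resulting amalgam $K$ lies in $\KK$ and contains an isometric copy of $W$, onto which it is glued. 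Openness throughout comes from Proposition \ref{prop:equivalentNormOpen} and Lemma \ref{lem:openEquivalence}. Put $\mathcal{G} \coloneq \bigcap_\tau D_\tau$, a dense $G_\delta$ subset of $\Sigma$.

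Next I would verify that every $\mu \in \mathcal{G}$ gives the same space. By the realizing and amalgamating tasks, $\closedAge{X_\mu} = \KK$; and combining the guard-creating tasks (which, via restriction and Lemma \ref{lem:stabilityAmalgTriples}, yield amalgamation pairs for every finite-dimensional $E \subseteq X_\mu$) with the amalgamating tasks (which turn those pairs into \ref{it:gFraisseThird} pairs) shows that $X_\mu$ satisfies \ref{it:gFraisseThird}, i.e. is guarded Fra\"iss\'e; the passage from rational to arbitrary data uses Corollary \ref{cor:amalgPairUnderIsomorphisms} and the perturbation lemmas. Hence any two elements of $\mathcal{G}$ are guarded Fra\"iss\'e with closed age $\KK$, so by Theorem \ref{thm:guardedUniqueByAge} they are isometric; fix $X \coloneq X_{\mu_0}$, giving existence. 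For \ref{it:gFraIsGDelta}, I would show $\isomtrclass[\mathcal{I}]{X}$ equals the set of $\mu \in \Sigma$ for which $\closedAge{X_\mu} = \KK$ and $X_\mu$ has the extension property of \ref{it:gFraisseThird}: one inclusion is the back-and-forth of Proposition \ref{prop:backAndForthConstruction} (such an $X_\mu$ is guarded Fra\"iss\'e with age $\KK$, hence isometric to $X$), the other is clear. Both defining conditions are countable intersections of open or $G_\delta$ sets by Proposition \ref{prop:equivalentNormOpen} and Lemma \ref{lem:openEquivalence}, so $\isomtrclass[\mathcal{I}]{X}$ is $G_\delta$, and it is dense since it contains $\mathcal{G}$; as $\overline{\isomtrclass[\mathcal{I}]{X}} \cap \mathcal{I} = \sigma\KK \cap \mathcal{I}$ by Proposition \ref{prop:closureOfIsomClass}, the transfer from $\mathcal{I}$ to all of $\sigma\KK$ via Lemmas \ref{lem:equalClosures} and \ref{lem:reflectingCategory} yields \ref{it:gFraIsGDelta}. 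Then \ref{it:gAmalgThird} is a consequence of guarded Fra\"iss\'eness of $X$ together with Proposition \ref{prop:ageOfGuardedHasAmalg} and Lemma \ref{lem:pairsUnderClosures} (each $E$ sits in an \ref{it:gFraisseAction} pair, which is an $\Age(X)$-amalgamation pair, hence a $\KK$-amalgamation pair), while \ref{it:gAmalgImpliesGFra} is exactly the extension property guaranteed by the amalgamating tasks, now read for arbitrary data via the perturbation lemmas.

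The main obstacle is the density of the guard-creating and amalgamating tasks \emph{while staying inside $\sigma\KK$}: one must perform a finite amalgamation and then define a nearby pseudonorm whose \emph{entire} age remains in $\KK$. This is precisely where the weakness of (GAP) bites, since arbitrary amalgams need not lie in $\KK$; the resolution is that the amalgamation-pair property delivers an amalgam $K \in \KK$ over the guard with the current finite piece as one factor, and that guards are realized by replacing a finite piece wholesale through the near-isometry $\phi$ rather than by amalgamating over a proper subspace. Secondary difficulties are the infinite-dimensional padding needed to stay in $\mathcal{I}$ and the verification that the two defining conditions of $\isomtrclass[\mathcal{I}]{X}$ are genuinely $G_\delta$.
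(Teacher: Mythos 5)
Your overall architecture is the same as the paper's: a Baire-category/genericity argument in $\sigma\KK$ built from three families of dense conditions (realizing $\KK$ via (JEP), creating guards/amalgamation pairs via (GAP), and extension over guards), with uniqueness and the identification of the generic isometry class handled by the back-and-forth Theorem~\ref{thm:guardedUniqueByAge}; your three task families correspond closely to the paper's Lemmas~\ref{lemma:gdeltaSigmaK}, \ref{lem:manygAmalgPairs} and \ref{lem:thirdBaireCatFirstPart}--\ref{lem:thirdBaireCatSecondPart}.

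However, there is one step that fails as written: your decision to run the argument in $\mathcal{I} = \BB$ and then ``transfer'' to $\sigma\KK$ at the end. Property~\ref{it:gFraIsGDelta} asserts that $\isomtrclass[\PP]{X}$ is a dense $G_\delta$ subset of $\sigma\KK \subseteq \PP$, and the tools you cite do not give this. Lemma~\ref{lem:reflectingCategory} transfers comeagerness \emph{from} $\PP$ or $\PP_\infty$ \emph{to} $\BB$, not the reverse, and while density does transfer back (via Lemma~\ref{lem:equalClosures}, $\sigma\KK\cap\BB$ is dense in $\sigma\KK$ when $\KK$ is infinite-dimensional), $G_\delta$-ness does not: what your construction yields is that $\isomtrclass[\BB]{X} = \isomtrclass[\PP]{X}\cap\BB$ is $G_\delta$ in $\sigma\KK$, but $\isomtrclass[\PP]{X}$ is a strictly larger set (it contains pseudonorms that vanish on nonzero rational vectors yet still code $X$), and a superset of a $G_\delta$ set need not be $G_\delta$. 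You also cannot appeal to ``comeager in its closure $\Rightarrow$ $G_\delta$'', because that is precisely implication (2)$\Rightarrow$(1) of Theorem~\ref{thm:BMgameForIsometryClasses}, which the paper obtains as a \emph{consequence} of this very proposition; invoking it here would be circular. The repair is simple and is exactly what the paper does: run the identical genericity argument directly in $\sigma\KK\subseteq\PP$. All the ingredients you use (Proposition~\ref{prop:equivalentNormOpen}, Lemma~\ref{lem:openEquivalence}, Lemma~\ref{lem:simpleReflection}) work in $\PP$, the isometry-invariant $G_\delta$ characterization of the generic class then lives where it is needed, and your ``secondary difficulty'' of infinite-dimensional padding to stay in $\mathcal{I}$ evaporates, since in $\PP$ one pads by the zero pseudonorm on a complementary subspace. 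One further point to watch when you phrase the characterization of $\isomtrclass[\PP]{X}$: the condition ``every finite-dimensional subspace of $X_\mu$ sits in an amalgamation pair'' has an existential quantifier over $F$, and making it $G_\delta$ requires showing the witness persists on an open neighborhood (this is the role of the openness clause in the paper's notion of wide families, Lemma~\ref{lem:manyWidePairs}, and of Corollary~\ref{cor:amalgPairUnderIsomorphisms}, which you do cite); a raw $\forall\exists\forall\exists$ reading of condition \ref{it:gFraisseThird} is not obviously $G_\delta$.
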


The proof of Proposition \ref{prop:AmalgamationsAndGurarii} relies on a Baire category argument; for more clarity, we divide it into a few technical lemmas.

\begin{lemma}\label{lem:sigmaKnonempty}
    Suppose that $\KK \subseteq \BM$ is nonempty and hereditary. Then $\sigma\KK \neq \varnothing$.
\end{lemma}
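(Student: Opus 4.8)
The plan is to exhibit an explicit element of $\sigma\KK$, namely the degenerate pseudonorm. Recall from Definition \ref{def:sigmaK} that $\sigma\KK = \{\mu \in \PP : \Age(X_\mu) \subseteq \KK\}$, so it suffices to produce a single $\mu \in \PP$ all of whose finite-dimensional subspaces (up to isometry) lie in $\KK$. The obvious candidate is the pseudonorm $\mathbf{0} \colon V \to [0, \infty)$ that is identically zero: it is trivially subadditive and positively homogeneous, hence a genuine pseudonorm, so $\mathbf{0} \in \PP$. For this choice the null space $N = \{x : \mathbf{0}(x) = 0\}$ is all of $c_{00}$, whence $X_{\mathbf{0}} = \{0\}$ is the trivial Banach space, and therefore $\Age(X_{\mathbf{0}})$ consists of the single isometry class of the zero-dimensional space.

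It thus remains to check that $\{0\} \in \KK$. Since $\KK$ is nonempty, I would fix some $E \in \KK$; the zero subspace of $E$ is isometric to the trivial space and embeds isometrically into $E$, so the hereditary property (Definition \ref{def:HP-JEP}) gives $\{0\} \in \KK$. Consequently $\Age(X_{\mathbf{0}}) = \{\{0\}\} \subseteq \KK$, that is, $\mathbf{0} \in \sigma\KK$, so $\sigma\KK \neq \varnothing$.

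There is no real obstacle here; the only points to verify are the (entirely routine) facts that the identically-zero function is an element of $\PP$, that it codes the trivial space, and that heredity forces the trivial space into $\KK$. Note that the hypothesis that $\KK$ is hereditary is used exactly once, precisely to pass from an arbitrary member of the nonempty class $\KK$ down to the zero-dimensional space; without heredity the statement could fail, since a nonempty $\KK$ need not contain $\{0\}$.
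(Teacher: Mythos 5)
Your proof is correct, but it takes a different route from the paper's. The paper argues ``upward'': pick any $E \in \KK$ (nonemptiness), note that heredity gives $\Age(E) \subseteq \KK$, and then any $\mu \in \PP$ with $X_\mu$ isometric to $E$ lies in $\sigma\KK$ (such $\mu$ exists: take a linear surjection $T \colon c_{00} \to E$ and set $\mu(x) := \|T(x)\|_E$). You instead argue ``downward'': code the trivial space by the zero pseudonorm and use heredity to force $\{0\}$ into $\KK$. Both are one-line arguments, but they buy slightly different things. The paper's version shows the stronger fact that \emph{every} member of $\KK$ admits a coding inside $\sigma\KK$, a richness that is implicitly reused in the density arguments nearby (e.g.\ in Lemma \ref{lemma:gdeltaSigmaK}). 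Your version is maximally economical, but note that it is sensitive to a convention the paper leaves implicit: whether the zero-dimensional space is an element of $\BM$ (i.e.\ whether $\BM_0$ is included in $\BM = \bigcup_{n \in \Nat}\BM_n$). If it is, your identity $\Age(X_{\mathbf{0}}) = \{\{0\}\}$ and your heredity step are exactly right. If one takes $\Nat = \{1, 2, \ldots\}$, then $\Age(X_{\mathbf{0}}) = \varnothing$ and the inclusion $\Age(X_{\mathbf{0}}) \subseteq \KK$ holds vacuously — your conclusion survives, but then neither heredity nor nonemptiness is actually needed, and your closing remark that heredity is indispensable fails under that convention. So the argument is sound either way; just be aware that its logical anatomy (and the necessity claim at the end) depends on this convention, whereas the paper's proof does not.
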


\begin{proof}
    Pick $E \in \KK$. Then by heredity, $\Age(E) \subseteq \KK$, so any $\mu \in \PP$ with $X_\mu$ isometric to $E$ belongs to $\sigma\KK$.
\end{proof}

Recall that, by Propositions \ref{prop:HPstableUnderClosures} and \ref{prop:ageAmalgThenItsClosureAlso}, if $\KK \subseteq \BM$ is of the form $\closedAge{X}$ for a separable Banach space $X$, then $\KK$ is  closed, hereditary, and satisfies (JEP). The following lemma, which constitutes the first part of our Baire category argument, provides a strong converse to this.

\begin{lemma}\label{lemma:gdeltaSigmaK}
Suppose $\KK \subseteq \BM$ is closed, hereditary, and satisfies (JEP). Then $\{\mu\in\sigma\KK \setsep \closedAge{X_\mu} = \KK\}$ is a dense $G_\delta$ subset of $\sigma\KK$.
\end{lemma}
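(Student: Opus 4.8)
The plan is to realize the target set as a countable intersection of dense open subsets of the Polish space $\sigma\KK$ and conclude by the Baire category theorem. First I would record the ambient facts: $\sigma\KK$ is closed in $\PP$ by Proposition \ref{prop:closedSigmaK}, hence Polish, and is nonempty by Lemma \ref{lem:sigmaKnonempty} (we may assume $\KK\neq\varnothing$, else $\sigma\KK=\varnothing$ and the statement is vacuous). For any $\mu\in\sigma\KK$ we have $\Age(X_\mu)\subseteq\KK$, and since $\KK$ is closed this gives $\closedAge{X_\mu}\subseteq\KK$; thus the condition $\closedAge{X_\mu}=\KK$ is equivalent to the reverse inclusion $\KK\subseteq\closedAge{X_\mu}$. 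Fixing a countable dense subset $\{E_k\setsep k\in\Nat\}$ of $\KK$ (which exists as $\BM$ is second countable), and using that $\closedAge{X_\mu}$ is closed in $\BM$, this reverse inclusion holds iff $E_k\in\closedAge{X_\mu}$ for every $k$. Writing $D_k\coloneq\{\mu\in\sigma\KK\setsep E_k\in\closedAge{X_\mu}\}$, the target set is exactly $\bigcap_k D_k$, so it suffices to show each $D_k$ is dense $G_\delta$ in $\sigma\KK$.

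For the $G_\delta$ part, fix a basis $(e_1,\dots,e_{d_k})$ of $E_k$. For a tuple $\bar v=(v_1,\dots,v_{d_k})\in V^{d_k}$ and $K>1$, Lemma \ref{lem:openEquivalence} (applied with $X=E_k$) shows that the set $O_{\bar v,K}$ of all $\mu\in\PP$ for which $(e_1,\dots,e_{d_k})$ in $E_k$ and $(v_1,\dots,v_{d_k})$ in $X_\mu$ are $K$-equivalent is open. Since $E_k\in\closedAge{X_\mu}$ means precisely that $E_k$ is finitely representable in $X_\mu$, and since rational vectors have dense span in $X_\mu$ so that the $(<K)$-equivalences can be witnessed by tuples from $V$ (the condition being open), I would verify the identity $\{\mu\in\PP\setsep E_k\in\closedAge{X_\mu}\}=\bigcap_{m\in\Nat}\bigcup_{\bar v\in V^{d_k}}O_{\bar v,\,e^{1/m}}$. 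The right-hand side is a $G_\delta$ subset of $\PP$, so $D_k$ is $G_\delta$ in $\sigma\KK$.

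The density of $D_k$ is the main obstacle, and this is where (JEP) enters. Given $\mu\in\sigma\KK$ and a basic neighborhood $U_\PP[\mu,A,\varepsilon]$ with $A\subseteq V$ finite, let $W\coloneq\Span_\Rea(A)$ and let $E'$ be the finite-dimensional normed space $(W,\mu\restriction_W)/\{x\setsep\mu(x)=0\}$, with quotient map $q\colon W\to E'$; then $E'\in\Age(X_\mu)\subseteq\KK$. By (JEP) applied to $E'$ and $E_k$ there is $H\in\KK$ together with isometric embeddings $j_{E'}\colon E'\to H$ and $j_{E_k}\colon E_k\to H$. Choosing an algebraic complement $Y$ with $c_{00}=W\oplus Y$, I would define a surjective linear map $T\colon c_{00}\to H$ by $T\restriction_W\coloneq j_{E'}\circ q$ and by sending $Y$ linearly onto $H$, and set $\nu(x)\coloneq\|T(x)\|_H$, so $\nu\in\PP$ with $X_\nu$ isometric to $H$. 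Heredity of $\KK$ gives $\Age(X_\nu)=\Age(H)\subseteq\KK$, so $\nu\in\sigma\KK$; moreover $E_k\in\Age(H)=\Age(X_\nu)$, whence $\nu\in D_k$. Finally, for every $v\in A$ one has $\nu(v)=\|j_{E'}(q(v))\|_H=\mu(v)$, so $\nu\in U_\PP[\mu,A,\varepsilon]$. This shows $D_k$ is dense. Consequently $\bigcap_k D_k$ is a countable intersection of dense $G_\delta$ sets in the Polish space $\sigma\KK$, hence dense $G_\delta$ by the Baire category theorem, which is the desired conclusion.
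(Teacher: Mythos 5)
Your proposal is correct and takes essentially the same route as the paper: the paper writes the target set as $\bigcap_{E \in \mathcal{D},\, n \in \Nat} U_{E,n}$ with $U_{E,n} := \{\mu \in \sigma\KK \setsep \Emb_{<1/n}(E, X_\mu) \neq \varnothing\}$ over a countable dense $\mathcal{D} \subseteq \KK$ (your $D_k$ is exactly $\bigcap_n U_{E_k,n}$), proves openness via the same mechanism (Lemma \ref{lem:openEquivalence}, through Proposition \ref{prop:equivalentNormOpen}), and proves density via (JEP) plus heredity by constructing a pseudonorm whose space is the finite-dimensional amalgam, concluding by Baire category. The only cosmetic differences are that the paper invokes Lemma \ref{lem:simpleReflection} and a zero extension where you build the surjection $T \colon c_{00} \to H$ by hand, and your explicit passage to the quotient $(W,\mu\restriction_W)/\ker\mu$ handles the degenerate-pseudonorm case a bit more carefully than the paper's phrasing.
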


\begin{proof}
Pick a countable dense subset $\mathcal{D} \subseteq \KK$. For each $E \in \mathcal{D}$ and $n \in \Nat$, let $U_{E, n} \coloneq \{\mu \in \sigma\KK \setsep \Emb_{<1/n}(E, X_\mu) \neq \varnothing\}$. Then:
$$\{\mu\in\sigma\KK \setsep \closedAge{X_\mu} = \KK\} = \bigcap_{E \in \mathcal{D}, \, n \in \Nat} U_{E, n},$$
so it is enough to show that each $U_{E, n}$ is a dense open subset of $\sigma\KK$.

\smallskip

So fix $E \in \mathcal{D}$ and $n \in \Nat$. To see that $U_{E, n}$ is open, fix $\mu \in U_{E, n}$ and $\phi \in \Emb_{< 1/n}(E, X_\mu)$; by Lemma \ref{lem:perturbationArgument}, left-composing $\phi$ with an isomorphism if necessary, one can assume that $\phi(E) = F_\Rea$ for some vector subspace $F \subseteq V$ such that $\mu\restriction_F \in \Norm(F)$. Define $\nu \in \Norm(F)$ by $\nu(\phi(x)) \coloneq \|x\|$ for every $x \in E$. Then given $\mu' \in \PP$, we have $\mu' \in V_\PP[\nu, F, 1/n]$ iff $\phi \in \Emb_{<1/n}(E, X_{\mu'})$; thus $V_\PP[\nu, F, 1/n] \cap \sigma\KK \subseteq U_{E, n}$. We conclude by observing that $V_\PP[\nu, F, 1/n]$ is a neighborhood of $\mu$ in $\PP$, by Proposition \ref{prop:equivalentNormOpen}.

\smallskip

To see that $U_{E, n}$ is dense in $\sigma\KK$, pick $\mu \in \sigma\KK$, a finite subset $A \subseteq V$, and $\varepsilon > 0$, and show that $U_\PP[\mu, A, \varepsilon] \cap U_{E, n} \neq \varnothing$. Let $F \coloneq \Span_\Rat(A)$. Then $E, (F_\Rea, \mu) \in \KK$, so by (JEP), one can find $G \in \KK$, $\phi \in \Emb(E, G)$ and $\psi \in \Emb((F_\Rea, \mu), G)$. Use Lemma \ref{lem:simpleReflection} to find a finite-dimensional vector subspace $H \subseteq V$ containing $F$, $\nu \in \PNorm(H)$, and an onto isometry $\chi \colon (H_\Rea, \nu) \to G$ extending $\psi$. Fix a vector subspace $W \subseteq V$ with $V = H \oplus W$ and extend $\nu$ to $\lambda \in \PP$ by letting $\lambda\restriction_W = 0$. Then for every $x \in F$, one has $\mu(x) = \|\psi(x)\| = \|\chi(x)\| = \nu(x) = \lambda(x)$, so $\lambda \in U_\PP[\mu, A, \varepsilon]$. The choice of the extension $\lambda$ of $\nu$ shows that $X_\lambda$ is isometric to $(H_\Rea, \nu)$, itself isometric to $G$. In particular, $X_\lambda \in \KK$, and since $\KK$ is hereditary, $\Age(X_\lambda) \subseteq \KK$ so $\lambda \in \sigma\KK$. Finally, $E$ isometrically embeds into $G$, so also in $X_\lambda$, thus $\lambda \in U_{E, n}$.
\end{proof}

Before going further, let us state a version of Lemma \ref{lemma:gdeltaSigmaK} for the codings $\PP_\infty$ and $\BB$; it will be useful in the proof of Theorem \ref{thm:IvanovBanach}.

\begin{corollary}\label{cor:gDeltaSigmaK}
    Suppose $\KK \subseteq \BM$ is closed, infinite-dimensional, hereditary, and satisfies (JEP). Let $\mathcal{I} \in \{\PP_\infty, \BB\}$. Then $\{\mu \in \sigma\KK \cap \mathcal{I} \setsep \closedAge{X_\mu} = \KK\}$ is a dense $G_\delta$ subset of $\sigma\KK \cap \mathcal{I}$.
\end{corollary}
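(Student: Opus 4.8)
The plan is to transfer Lemma \ref{lemma:gdeltaSigmaK} from $\PP$ to $\mathcal{I}$ by an argument in the spirit of Lemma \ref{lem:reflectingCategory}. Set $\mathcal{G} \coloneq \{\mu\in\sigma\KK \setsep \closedAge{X_\mu} = \KK\}$, which by Lemma \ref{lemma:gdeltaSigmaK} is a dense $G_\delta$ subset of $\sigma\KK$; recall moreover that $\sigma\KK$ is closed in $\PP$ by Proposition \ref{prop:closedSigmaK}, hence is a Baire space. The set appearing in the statement is precisely $\mathcal{G} \cap \mathcal{I}$, so it is enough to prove that $\sigma\KK \cap \mathcal{I}$ is itself a dense $G_\delta$ subset of $\sigma\KK$. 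Indeed, granting this, $\mathcal{G}$ and $\sigma\KK \cap \mathcal{I}$ are two dense $G_\delta$ subsets of the Baire space $\sigma\KK$, so their intersection $\mathcal{G} \cap \mathcal{I}$ is again dense $G_\delta$ in $\sigma\KK$; being contained in the subspace $\sigma\KK \cap \mathcal{I}$, it is then dense $G_\delta$ in $\sigma\KK \cap \mathcal{I}$ as well.

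For the $G_\delta$ part, $\BB$ is $G_\delta$ in $\PP$ by \cite[Lemma 2.5]{CDDK}, while $\PP_\infty = \bigcap_{n \in \Nat}\{\mu \in \PP \setsep \dim X_\mu \geqslant n\}$ is $G_\delta$ as well, each of these sets being open by Lemma \ref{lem:openEquivalence} (fixing a basis of $\Rea^n$, the condition $\dim X_\mu \geqslant n$ amounts to the existence of $v_1, \dots, v_n \in V$ and $K > 1$ making $(v_1, \dots, v_n)$ in $X_\mu$ $K$-equivalent to that basis). Thus $\sigma\KK \cap \mathcal{I}$ is $G_\delta$ in $\sigma\KK$. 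Since $\BB \subseteq \PP_\infty$, for the density part it suffices to treat $\mathcal{I} = \BB$, i.e. to show that $\sigma\KK \cap \BB$ is dense in $\sigma\KK$.

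So fix $\mu \in \sigma\KK$, a finite set $A \subseteq V$, and $\varepsilon > 0$; the goal is to find $\nu \in \sigma\KK \cap \BB$ lying in $U_\PP[\mu, A, \varepsilon]$. As in the proof of Lemma \ref{lem:equalClosures}, decompose $\Span_\Rea(A) = P \oplus Q$ with $\mu$ a norm on $P$ and $\mu\restriction_Q = 0$, so that $(P, \mu\restriction_P) \in \Age(X_\mu) \subseteq \KK$. Using infinite-dimensionality of $\KK$ together with (JEP), I would build an increasing chain $(P, \mu\restriction_P) = G_0 \subseteq G_1 \subseteq \cdots$ in $\KK$ with $\dim G_n \to \infty$, amalgamating at each step $G_{n-1}$ with a member of $\KK$ of dimension at least $n$; letting $Z$ be the completion of $\bigcup_n G_n$, heredity and closedness of $\KK$ guarantee that every finite-dimensional subspace of $Z$ lies in $\KK$, so $Z$ is an infinite-dimensional space with $\Age(Z) \subseteq \KK$ containing an isometric copy of $(P, \mu\restriction_P)$. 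One then repeats verbatim the perturbation construction of Lemma \ref{lem:equalClosures} with $Z$ in place of $X$: extend the isometric inclusion $P \hookrightarrow Z$ to an injective map $T_\delta \colon c_{00} \to Z$ sending a basis of a complement of $P$ to small multiples $\delta f_n$ of a linearly independent sequence with dense span in a complement of $P$ in $Z$, and put $\nu_\delta \coloneq \|T_\delta(\cdot)\|$. Then $\nu_\delta \in \BB$, $X_{\nu_\delta}$ is isometric to $Z$ (whence $\nu_\delta \in \sigma\KK$), and $\nu_\delta \to \mu$ pointwise on $P \oplus Q \supseteq A$ as $\delta \to 0$, so $\nu \coloneq \nu_\delta$ works for $\delta$ small enough.

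The main obstacle is the density statement of the last paragraph, and within it the construction of an infinite-dimensional $Z \in \sigma\KK$ isometrically containing the prescribed finite-dimensional space: this is the only point where the hypothesis that $\KK$ is infinite-dimensional is genuinely used, and the rest of the argument is a routine transfer combined with the perturbation trick already established in Lemma \ref{lem:equalClosures}.
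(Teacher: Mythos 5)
Your proposal is correct, but it is assembled differently from the paper's proof. The paper never proves (or needs) your intermediate claim that $\sigma\KK \cap \mathcal{I}$ is dense $G_\delta$ in $\sigma\KK$: instead it only shows that $\mathcal{G} \cap \mathcal{I}$ is dense in $\mathcal{G} \coloneq \{\mu \in \sigma\KK \setsep \closedAge{X_\mu} = \KK\}$, which suffices since $\mathcal{G}$ is dense in $\sigma\KK$ by Lemma \ref{lemma:gdeltaSigmaK}. The key point there is that for $\mu \in \mathcal{G}$ the space $X_\mu$ is \emph{automatically} infinite-dimensional (because $\closedAge{X_\mu} = \KK$ meets every $\BM_n$), so Lemma \ref{lem:equalClosures} applies as a black box and gives $\mu \in \isomtrclass[\PP]{\mu} \subseteq \closureOfIsomtrclass[\mathcal{I}]{\mu} \subseteq \overline{\mathcal{G} \cap \mathcal{I}}$, the last inclusion using that $\mathcal{G}$ is isometry-invariant; no Baire category and no construction of new spaces is needed. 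Your route handles arbitrary $\mu \in \sigma\KK$, including finite-dimensional codes, and this is exactly why you must build the auxiliary space $Z$ by a JEP-chain with growing dimensions and then redo the perturbation argument of Lemma \ref{lem:equalClosures} with $Z$ as the target (note that heredity plus closedness of $\KK$ do give $\Age(Z) \subseteq \KK$, via the same Banach--Mazur approximation as in Proposition \ref{prop:closedSigmaK}); afterwards you intersect two dense $G_\delta$ sets in the Baire space $\sigma\KK$. What your approach buys is a more general intermediate statement — for any closed, hereditary, infinite-dimensional $\KK$ with (JEP), the norm codes $\sigma\KK \cap \BB$ are dense $G_\delta$ in $\sigma\KK$ — which is reusable independently of $\mathcal{G}$; what the paper's approach buys is brevity, by restricting attention to the set $\mathcal{G}$ where infinite-dimensionality comes for free and Lemma \ref{lem:equalClosures} can be cited verbatim.
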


\begin{proof}
Let $\mathcal{G} \coloneq \{\mu \in \sigma\KK \setsep \closedAge{X_\mu} = \KK\}$. We know by Lemma \ref{lemma:gdeltaSigmaK} that $\mathcal{G}\cap \mathcal{I}$ is a $G_\delta$ subset of $\sigma\KK \cap \mathcal{I}$, so what we need to show is that it is dense in $\sigma\KK \cap \mathcal{I}$. Knowing, again by Lemma \ref{lemma:gdeltaSigmaK}, that $\mathcal{G}$ is dense in $\sigma\KK$, it is enough to show that $\mathcal{G} \cap \mathcal{I}$  is dense in $\mathcal{G}$. But if $\mu \in \mathcal{G}$, then $X_\mu$ is infinite-dimensional, so by Lemma \ref{lem:equalClosures}, one has $\mu \in \isomtrclass[\PP]{\mu} \subseteq \closureOfIsomtrclass[\mathcal{I}]{\mu} \subseteq \overline{\mathcal{G} \cap \mathcal{I}}$, the last inclusion coming from the fact that $\mathcal{G}$ is invariant under isomorphism.
\end{proof}

The second part of out Baire category argument consists in proving that condition \ref{it:gAmalgThird} in Proposition \ref{prop:AmalgamationsAndGurarii} is satisfied by $X = X_\mu$  for comeagerly many $\mu \in \sigma\KK$. This will be done in Lemma \ref{lem:manygAmalgPairs}. We will first prove an abstract version of this lemma (Lemma \ref{lem:manyWidePairs}), which will also be useful later for obtaining our Fra\"iss\'e correspondence for cofinally Fra\"iss\'e Banach spaces. We start with a definition.

\begin{definition}\label{def:wide}
Let $\KK \subseteq \BM$. A family $\mathcal{W} \subseteq \Incl(\KK)$ is said to be \textit{wide} if it satisfies the three following conditions:
\begin{enumerate}[label=(\arabic*)]
\item\label{it:wideFirst} for every $(E, F) \in \mathcal{W}$ and every subspace $E' \subseteq E$ with $E' \in \KK$, one has $(E', F) \in \mathcal{W}$;
\item\label{it:wideSecond} the family $\mathcal{W}$ is open in $\Incl(\KK)$ in the following sense: for every $(E, F) \in \mathcal{W}$, there exists $\gamma > 0$ such that for every $(E', F') \in \Incl(\KK)$, if there exists an onto map $\sigma \in \Emb_{\gamma}(F, F')$ such that $\sigma(E) = E'$, then $(E', F') \in \mathcal{W}$;
\item\label{it:wideThird} the set $\Proj_1(\mathcal{W}) \coloneq \{E \in \KK \setsep (\exists F \in \KK) ((E, F) \in \mathcal{W})\}$ is dense in $\KK$.
\end{enumerate}
\end{definition}

\begin{lemma}\label{lem:amWide}
If $\KK \subseteq \BM$ satisfies (HP) and (GAP), then for every $\varepsilon \in (0, 1)$, the family $\Am_\varepsilon(\KK)$ of all $(E, F) \in \Incl(\KK)$ that are $(\KK, \varepsilon, \delta)$-amalgamation pairs for some $\delta > 0$ is wide.
\end{lemma}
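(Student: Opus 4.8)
The plan is to verify directly the three conditions of Definition \ref{def:wide} for the family $\mathcal{W} \coloneq \Am_\varepsilon(\KK)$. Before doing so, I would record two harmless reductions that follow from the quantifier structure in Definition \ref{def:amalgPair}: being a $(\KK, \varepsilon, \delta)$-amalgamation pair (witnessed by some $\delta' > \delta$) persists when $\delta$ is decreased, and it also persists when the amalgamation error is increased from some $\varepsilon' \le \varepsilon$ up to $\varepsilon$. Hence, for any $(E,F) \in \Am_\varepsilon(\KK)$ I may freely assume it is a $(\KK, \varepsilon, \delta)$-amalgamation pair for some $\delta \in (0,1)$, which is exactly the regime in which Corollary \ref{cor:amalgPairUnderIsomorphisms} and Lemma \ref{lem:stabilityAmalgTriples} apply (recall $\varepsilon \in (0,1)$ is given).

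For condition \ref{it:wideFirst} I would use a plain restriction argument. If $(E,F)$ is a $(\KK,\varepsilon,\delta)$-amalgamation pair witnessed by $\varepsilon' \in (0,\varepsilon)$ and $\delta' > \delta$, and $E' \subseteq E$ lies in $\KK$, then $(E',F) \in \Incl(\KK)$, and for any $G, H, \psi_G, \psi_H$ as in Definition \ref{def:amalgPair} the data $K, \iota_G, \iota_H$ obtained for $(E,F,\Id_E)$ satisfy $\|(\iota_G\circ\psi_G - \iota_H\circ\psi_H)\restriction_{E'}\| \le \|(\iota_G\circ\psi_G - \iota_H\circ\psi_H)\restriction_E\| < \varepsilon'$ simply because $E' \subseteq E$; thus $(E',F,\Id_{E'})$ is a $(\KK,\varepsilon,\delta)$-amalgamation triple with the same witnesses, so $(E',F)\in\Am_\varepsilon(\KK)$. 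Condition \ref{it:wideSecond} is essentially Corollary \ref{cor:amalgPairUnderIsomorphisms}: representing $(E,F) \in \Am_\varepsilon(\KK)$ as a $(\KK,\varepsilon,\delta)$-amalgamation pair with $\varepsilon,\delta \in (0,1)$, that corollary yields a $\gamma > 0$ such that any $(E',F') \in \Incl(\KK)$ admitting $\sigma \in \Emb_\gamma(F,F')$ with $E' \subseteq \sigma(E)$ is again a $(\KK,\varepsilon,\delta)$-amalgamation pair; since wideness only requires an \emph{onto} $\sigma$ with $\sigma(E) = E'$ (so in particular $E' \subseteq \sigma(E)$), this is exactly \ref{it:wideSecond}.

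The real content is condition \ref{it:wideThird}, the density of $\Proj_1(\mathcal{W})$ in $\KK$, and this is where (GAP) and (HP) are needed. Fixing $E_0 \in \KK$ and $\eta > 0$, I would set $\varepsilon_0 \coloneq \min(\varepsilon/3, \eta/2)$ and apply the reformulation of (GAP) from Remark \ref{rem:equivAmalgPair} (\ref{it:GAPandPairs}) to obtain $F \in \KK$, a map $\phi \colon E_0 \to F$, and $\delta > 0$ (shrunk into $(0,1)$) such that $(E_0,F,\phi)$ is a $(\KK,\varepsilon_0,\delta)$-amalgamation triple, so in particular $\phi \in \Emb_{\varepsilon_0}(E_0,F)$. \textbf{The main obstacle} is that the guard $\phi$ supplied by (GAP) is merely an approximate embedding, while an amalgamation \emph{pair} demands an honest inclusion. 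I would resolve this by setting $E \coloneq \phi(E_0)$, which lies in $\KK$ by (HP) and satisfies $E \subseteq F$; viewing $\phi$ as an isomorphism $E_0 \to E$ gives $\|\phi\|,\|\phi^{-1}\| \le e^{\varepsilon_0}$, whence $d_{\BM}(E_0,E) \le 2\varepsilon_0 \le \eta$. To upgrade the triple to a pair I would invoke Lemma \ref{lem:stabilityAmalgTriples} on $(E_0,F,\phi)$ with $E' = E$, $F' = F$, $\sigma = \Id_F$ (so $\gamma = 0$) and $\rho = \phi^{-1} \in \Emb_{\varepsilon_0}(E,E_0)$ (so $\beta = \varepsilon_0$): then $\sigma\circ\phi\circ\rho$ is precisely the inclusion $E \hookrightarrow F$, and the lemma returns that $(E,F,\Id_E)$ is a $(\KK, 3\varepsilon_0, \delta)$-amalgamation triple. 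Since $3\varepsilon_0 \le \varepsilon$, this exhibits $(E,F)$ as a $(\KK,\varepsilon,\delta)$-amalgamation pair, so $E \in \Proj_1(\Am_\varepsilon(\KK))$ with $d_{\BM}(E_0,E) \le \eta$, establishing density and completing the verification that $\Am_\varepsilon(\KK)$ is wide.
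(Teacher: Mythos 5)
Your proof is correct and follows essentially the same route as the paper's: conditions \ref{it:wideFirst} and \ref{it:wideSecond} via (the content of) Corollary \ref{cor:amalgPairUnderIsomorphisms}, and density via (GAP) through Remark \ref{rem:equivAmalgPair}, setting $E := \phi(E_0)$ (which lies in $\KK$ by (HP)) and upgrading the guard to an inclusion by applying Lemma \ref{lem:stabilityAmalgTriples} with $\rho = \phi^{-1}$, $\sigma = \Id_F$, exactly as the paper does. The only cosmetic difference is that you check condition \ref{it:wideFirst} by a direct restriction-of-norm argument, whereas the paper cites Corollary \ref{cor:amalgPairUnderIsomorphisms} with $F' = F$ and $\sigma = \Id_F$; both are equally valid one-line verifications.
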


\begin{proof}
Condition \ref{it:wideFirst} follows from Corollary \ref{cor:amalgPairUnderIsomorphisms} applied to $F' \coloneq F$ and $\sigma \coloneq \Id_F$. Condition \ref{it:wideSecond} also immediately follows from Corollary \ref{cor:amalgPairUnderIsomorphisms}. We now fix $E \in \KK$ and $\beta \in (0, \varepsilon/3)$ and, in order to check condition (\ref{it:wideThird}), we look for $(E', F) \in \Am_\varepsilon(\KK)$ such that $d_{\BM}(E, E') \leqslant 2\beta$. By Remark \ref{rem:equivAmalgPair}, we can find $F \in \KK$, $\phi \colon E \to F$ and $\delta \in (0, 1)$ such that $(E, F, \phi)$ is a $(\KK, \beta, \delta)$-amalgamation triple. Let $E' \coloneq \phi(E)$. Since $\phi \in \Emb_\beta(E, F)$, we have $d_{\BM}(E, E') \leqslant 2\beta$. Moreover, by Lemma \ref{lem:stabilityAmalgTriples} applied to $\varepsilon \coloneq \beta$, $\gamma \coloneq 0$, $F' \coloneq F$, $\rho \coloneq \phi^{-1}$ and $\sigma \coloneq \Id_F$, one gets that $(E', F, \Id_{E'})$ is a $(\KK, \varepsilon, \delta)$-amalgamation triple, so $(E', F) \in \Am_\varepsilon(\KK)$.
\end{proof}

\begin{lemma}\label{lem:manyWidePairs}
Let $\KK\subseteq \BM$ be closed and hereditary, $\mathcal{W} \subseteq \Incl(\KK)$ be wide, and $E \subseteq V$ be a finite-dimensional subspace. Denote by $\mathcal{G}$ the set of all $\mu\in\sigma\KK$ satisfying the following property: either $\mu\restriction_E \notin \Norm(E)$, or there exists a finite-dimensional subspace $F \subseteq V$ with $E \subseteq F$ such that $((E_\Rea, \mu\restriction_E), (F_\Rea, \mu\restriction_F)) \in \mathcal{W}$. Then $\mathcal{G}$ is a dense $G_\delta$ subset of $\sigma\KK$.
\end{lemma}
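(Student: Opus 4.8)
The plan is to establish the two assertions—that $\mathcal{G}$ is $G_\delta$ and that it is dense—separately, with density being the substantial part. For the $G_\delta$ assertion I would split $\mathcal{G}$ along the disjunction in its definition as $\mathcal{G} = \mathcal{A} \cup \mathcal{B}$, where $\mathcal{A} = \{\mu \in \sigma\KK : \mu\restriction_E \notin \Norm(E)\}$ and $\mathcal{B}$ is the set of $\mu \in \sigma\KK$ with $\mu\restriction_E \in \Norm(E)$ admitting a witnessing $F$. The set $\mathcal{A}$ is closed in $\sigma\KK$, since its complement $\{\mu : \mu\restriction_E \in \Norm(E)\}$ is open by Proposition \ref{prop:equivalentNormOpen} (any $V_\PP[\mu\restriction_E, E, \varepsilon]$ is a neighbourhood contained in it). The set $\mathcal{B}$ is open: given $\mu \in \mathcal{B}$ with witness $F$ and the constant $\gamma > 0$ furnished by condition \ref{it:wideSecond} of Definition \ref{def:wide} for the pair $((E_\Rea,\mu\restriction_E),(F_\Rea,\mu\restriction_F))$, every $\mu' \in \sigma\KK$ sufficiently close to $\mu$ on a basis of $F$ makes the identity map an onto member of $\Emb_\gamma$ between the $\mu$- and $\mu'$-versions of $F_\Rea$ fixing $E_\Rea$, and since $(F_\Rea, \mu'\restriction_F) \in \Age(X_{\mu'}) \subseteq \KK$, condition \ref{it:wideSecond} keeps the pair in $\mathcal{W}$, so $\mu' \in \mathcal{B}$. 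As closed and open sets are $G_\delta$ and finite unions of $G_\delta$ sets are $G_\delta$, it follows that $\mathcal{G}$ is $G_\delta$.

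For density, fix $\mu_0 \in \sigma\KK$ and a basic neighbourhood $U_\PP[\mu_0, A, \varepsilon]$. If $\mu_0\restriction_E \notin \Norm(E)$ then $\mu_0 \in \mathcal{G}$ and there is nothing to do, so I may assume $\mu_0\restriction_E \in \Norm(E)$; enlarging $A$ and shrinking $\varepsilon$, I may also assume $E \subseteq F_0 := \Span_\Rat(A)$. Let $\mathbf{D} \in \KK$ be the normed space obtained from $(F_0)_\Rea$ by quotienting out the null space of $\mu_0\restriction_{F_0}$ (it belongs to $\Age(X_{\mu_0}) \subseteq \KK$), and let $\mathbf{E} \subseteq \mathbf{D}$ be the image of $E_\Rea$, which is faithful because $\mu_0\restriction_E$ is a norm. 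Using density of $\Proj_1(\mathcal{W})$ in $\KK$ (condition \ref{it:wideThird}), I would pick $\mathbf{D}' \in \Proj_1(\mathcal{W})$ with $d_{\BM}(\mathbf{D},\mathbf{D}')$ as small as needed and $\mathbf{F}'$ with $(\mathbf{D}',\mathbf{F}') \in \mathcal{W}$; transporting $\mathbf{E}$ along a small-distortion isomorphism $\mathbf{D} \to \mathbf{D}'$ gives $\mathbf{E}' \subseteq \mathbf{D}' \subseteq \mathbf{F}'$, and condition \ref{it:wideFirst} upgrades this to $(\mathbf{E}',\mathbf{F}') \in \mathcal{W}$.

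The reason for routing through all of $\mathbf{D}$, rather than just $\mathbf{E}$, is that $\mathbf{F}'$ now contains a copy of (something close to) the entire datum $\mu_0\restriction_{F_0}$, so the realization will automatically fall inside $U_\PP[\mu_0,A,\varepsilon]$. I would apply Lemma \ref{lem:simpleReflection} to the composite $(F_0)_\Rea \to \mathbf{D} \to \mathbf{D}' \hookrightarrow \mathbf{F}'$ to get a rational $H \supseteq F_0$, a pseudonorm $\nu$ on $H$, and an onto isometry $\psi \colon (H_\Rea,\nu) \to \mathbf{F}'$ extending it, then extend $\nu$ by $0$ off $H$ to obtain $\lambda \in \PP$. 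Then $X_\lambda$ is isometric to $\mathbf{F}' \in \KK$, so $\lambda \in \sigma\KK$ by heredity, and the distortion estimate places $\lambda$ within $\varepsilon$ of $\mu_0$ on $A$. Moreover $\psi$ maps $E_\Rea$ isometrically onto $\mathbf{E}'$, so $\lambda\restriction_E \in \Norm(E)$; and if I choose a rational subspace $F$ with $E \subseteq F \subseteq H$ whose realification is a complement of $\ker\psi$ in $H_\Rea$, then $\psi\restriction_{F_\Rea}$ is an onto isometry carrying $E_\Rea$ to $\mathbf{E}'$, so $((E_\Rea,\lambda\restriction_E),(F_\Rea,\lambda\restriction_F))$ is isometric to $(\mathbf{E}',\mathbf{F}')$. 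Since $\mathcal{W}$ is stable under pair-isometries (the instance of condition \ref{it:wideSecond} in which the connecting map is an isometry), this last pair lies in $\mathcal{W}$, whence $\lambda \in \mathcal{G}$.

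The step I expect to be the main obstacle is the existence of the rational subspace $F$ in the degenerate case, when $\mu_0\restriction_{F_0}$ has a non-trivial null space: then $\ker\psi \subseteq H_\Rea$ is non-trivial and need not be a rational subspace, yet one needs a rational $F \supseteq E$ with $\dim F = \dim\mathbf{F}'$ whose realification is transverse to $\ker\psi$. This can be arranged because transversality to $\ker\psi$ is an open and dense condition on subspaces while rational subspaces are dense, but it must be done while keeping $E$ inside $F$; the identity $E_\Rea \cap \ker\psi = \{0\}$, which holds precisely because $\mu_0\restriction_E$ is a norm, is what guarantees that $E_\Rea$ extends to such a rational transverse complement.
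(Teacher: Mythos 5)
Your proof is correct and follows essentially the same route as the paper's: the same closed-union-open decomposition for the $G_\delta$ part, and for density the same scheme of approximating the (quotient of the) restricted pseudonorm by the first coordinate of a pair in $\mathcal{W}$ via condition \ref{it:wideThird}, realizing that pair inside $\PP$ through Lemma \ref{lem:simpleReflection} and extension by zero, and concluding with conditions \ref{it:wideFirst} and \ref{it:wideSecond} (an onto isometry lies in $\Emb_\gamma$ for every $\gamma>0$, so your ``pair-isometry'' use of \ref{it:wideSecond} is legitimate). The only divergence is your explicit treatment of the degenerate case where $\mu_0\restriction_{F_0}$ is merely a pseudonorm — quotienting first and then producing a rational complement of $\ker\psi$ containing $E$ by a transversality-plus-density-of-rational-subspaces argument — whereas the paper absorbs this case by tacitly identifying pseudonormed spaces with their quotients; your extra step is valid and yields a witness $F$ on which $\lambda$ is genuinely a norm, as the statement strictly requires.
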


\begin{proof}
Write $\mathcal{G} = \mathcal{F} \cup \mathcal{U}$, where $\mathcal{F} \coloneq \{\mu \in \sigma\KK \setsep \mu\restriction_E \notin \Norm(E)\}$ and $\mathcal{U}$ is the set of all $\mu \in \sigma \KK$ such that $\mu\restriction_E \in \Norm(E)$ and there exists a finite-dimensional subspace $F \subseteq V$ with $E \subseteq F$ such that $((E_\Rea, \mu\restriction_E), (F_\Rea, \mu\restriction_F)) \in \mathcal{W}$. For each $\mu \in \sigma\KK \setminus \mathcal{F}$, we have $V_\PP[\mu\restriction_E, E, 1] \cap \mathcal{F} = \varnothing$, so by Proposition \ref{prop:equivalentNormOpen}, $\mathcal{F}$ is a closed subset of $\sigma\KK$. We now show that $\mathcal{U}$ is open in $\sigma\KK$; it will follow that $\mathcal{G}$ is $G_\delta$ in $\sigma\KK$. Let $\mu \in \mathcal{U}$ and $F \subseteq V$ be witnessing this membership; replacing if necessary $F$ with some subspace $F' \subseteq F$ containing $E$ such that $F_\Rea = (F')_\Rea \oplus (F_\Rea \cap \ker \mu)$, one can assume that $\mu\restriction_F \in \Norm(F)$. Let $\gamma$ be as given by condition \ref{it:wideSecond} in Definition \ref{def:wide} for the pair $((E_\Rea, \mu\restriction_E), (F_\Rea, \mu\restriction_F))$. We show that $\sigma\KK \cap V_{\PP}[\mu\restriction_F, F, \gamma] \subseteq \mathcal{U}$; by Proposition \ref{prop:equivalentNormOpen}, it is enough to conclude. Let $\nu \in \sigma\KK \cap V_{\PP}[\mu\restriction_F, F, \gamma]$. Then condition \ref{it:wideSecond} in Definition \ref{def:wide} applied to $(E', F') \coloneq ((E_\Rea, \nu\restriction_E), (F_\Rea, \nu\restriction_F))$ and $\sigma \coloneq \Id_{F_\Rea}$ ensures that $((E_\Rea, \nu\restriction_E), (F_\Rea, \nu\restriction_F)) \in \mathcal{W}$, and hence $\nu \in \mathcal{U}$.

\smallskip

We now show that $\mathcal{G}$ is dense in $\sigma\KK$. Let $U \subseteq \sigma\KK$ be a nonempty open subset, and fix $\widetilde{\mu} \in U$. Then by Proposition \ref{prop:alternativeBasisPrelim}, one can find a finite-dimensional subspace $F \subseteq V$ and $\varepsilon > 0$ for which $\sigma\KK \cap V_\PP[\mu, F, \varepsilon] \subseteq U$, where $\mu \coloneq \widetilde{\mu}\restriction_F$. One can even assume that $E \subseteq F$. Note that since $\widetilde{\mu} \in \sigma\KK$, we have $(F_\Rea, \mu) \in \KK$, so by condition \ref{it:wideThird} in Definition \ref{def:wide}, one can find $(F', G') \in \mathcal{W}$ with $d_{\BM}((F_\Rea, \mu), F') < \varepsilon$. Let $\phi \in \Emb_{<\varepsilon}((F_\Rea, \mu), G')$ be such that $\phi(F_\Rea) = F'$. Applying Lemma \ref{lem:simpleReflection} to $\phi$, one can find a vector subspace $G \subseteq V$ containing $F$, $\nu \in \PNorm(G)$, and an onto isometry $\psi \colon (G_\Rea, \nu) \to G'$ extending $\phi$. Fix a vector subspace $W \subseteq V$ with $V = G \oplus W$ and extend $\nu$ to $\lambda \in \PP$ by letting $\lambda\restriction_W = 0$. Similarly as in the proof of Lemma \ref{lemma:gdeltaSigmaK}, we see that $\lambda \in \sigma\KK$. The inclusion map $(F_\Rea, \mu) \to X_\lambda$ is equal to $\psi^{-1}\circ \phi$ which belongs to $\Emb_{< \varepsilon}((F_\Rea, \mu), X_\lambda)$, so $\lambda \in V_\PP[\mu, F, \varepsilon]$. We deduce that $\lambda \in U$. Finally, since $\psi$ is an onto isometry $(G_\Rea, \lambda\restriction_G) \to G'$ and induces an onto isometry $(F_\Rea, \lambda\restriction_F) \to F'$, and since $(F', G') \in \mathcal{W}$, we deduce that $((F_\Rea, \lambda\restriction_F), (G_\Rea, \lambda\restriction_G)) \in \mathcal{W}$; thus, by condition \ref{it:wideFirst} in Definition \ref{def:wide}, we have $((E_\Rea, \lambda\restriction_E), (G_\Rea, \lambda\restriction_G)) \in \mathcal{W}$, so $\lambda \in \mathcal{G}$.
\end{proof}

\begin{lemma}\label{lem:manygAmalgPairs}
Let $\KK\subseteq \BM$ be a guarded Fraiss\'e class. Then the set of all $\mu\in\sigma\KK$ 
such that $X = X_\mu$ satisfies condition \ref{it:gAmalgThird} in Proposition \ref{prop:AmalgamationsAndGurarii} is a dense $G_\delta$ subset of $\sigma\KK$.
\end{lemma}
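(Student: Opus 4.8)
The plan is to exhibit condition \ref{it:gAmalgThird} as a property that holds on a countable intersection of dense $G_\delta$ sets coming from Lemma \ref{lem:manyWidePairs}. Since $\KK$ is a guarded Fra\"iss\'e class, it is hereditary and satisfies (GAP), so by Lemma \ref{lem:amWide} the family $\Am_{1/m}(\KK)$ of all $(E, F) \in \Incl(\KK)$ that are $(\KK, 1/m, \delta)$-amalgamation pairs for some $\delta > 0$ is wide, for every integer $m \geqslant 2$. I would enumerate the countably many finite-dimensional $\Rat$-subspaces $E_k \subseteq V$, and for each such $k$ and each $m \geqslant 2$ apply Lemma \ref{lem:manyWidePairs} to $\mathcal{W} = \Am_{1/m}(\KK)$ and $E = E_k$, obtaining a dense $G_\delta$ set $\mathcal{G}_{k,m} \subseteq \sigma\KK$. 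As $\sigma\KK$ is closed in the Polish space $\PP$ (Proposition \ref{prop:closedSigmaK}), hence Polish, and nonempty (Lemma \ref{lem:sigmaKnonempty}), the Baire category theorem guarantees that $\mathcal{G} \coloneq \bigcap_{k,m} \mathcal{G}_{k,m}$ is again a dense $G_\delta$ subset of $\sigma\KK$. It then remains to verify that $X_\mu$ satisfies \ref{it:gAmalgThird} for every $\mu \in \mathcal{G}$.

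So I would fix $\mu \in \mathcal{G}$, a finite-dimensional subspace $\hat E \subseteq X_\mu$, and $\varepsilon \in (0,1)$; set $\varepsilon_0 \coloneq \varepsilon/8$ and choose $m \geqslant \max(2, 1/\varepsilon_0)$. Since the image of $V$ is dense in $X_\mu$, I would use Lemma \ref{lem:perturbationArgument} to pick $v_1, \ldots, v_d \in V$ whose classes $[v_i] \in X_\mu$ are so close to a fixed basis $f_1, \ldots, f_d$ of $\hat E$ that: (i) the $[v_i]$ are linearly independent, so that $E_k \coloneq \Span_\Rat\{v_1, \ldots, v_d\}$ satisfies $\mu\restriction_{E_k} \in \Norm(E_k)$; and (ii) the linear map $\rho \colon \hat E \to E'' \coloneq (E_k)_\Rea = \Span_\Rea\{[v_i]\}$ sending $f_i$ to $[v_i]$ satisfies $\|\Id_{\hat E} - \rho\| \leqslant \varepsilon_0$. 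Because $\mu \in \mathcal{G}_{k,m}$ and the first disjunct in the definition of $\mathcal{G}_{k,m}$ now fails, there is a finite-dimensional $F \subseteq V$ with $E_k \subseteq F$ for which $(E'', F'') \in \Am_{1/m}(\KK)$, where $F'' \coloneq (F)_\Rea \subseteq X_\mu$; that is, $(E'', F'')$ is a $(\KK, 1/m, \delta)$-amalgamation pair for some $\delta > 0$ (which I shrink if needed so that $\delta \in (0,1)$), hence, by monotonicity in the parameter $\varepsilon$ in Definition \ref{def:amalgPair} and $1/m \leqslant \varepsilon_0$, a $(\KK, \varepsilon_0, \delta)$-amalgamation pair.

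To conclude I would insert $\hat E$ in place of $E''$. First I record the elementary monotonicity fact that if $(E'', F'')$ is a $(\KK, \varepsilon_0, \delta)$-amalgamation pair and $F'' \subseteq \tilde F \in \KK$, then $(E'', \tilde F)$ is one too: the amalgamation condition only ever involves $\psi_G \circ \Id_{E''}$, and every $\psi_G \in \Emb_{\delta'}(\tilde F, G)$ restricts to an element of $\Emb_{\delta'}(F'', G)$ agreeing with it on $E'' \subseteq F''$, so the witnesses for $(E'', F'', \Id_{E''})$ serve for $(E'', \tilde F, \Id_{E''})$. I apply this with $\tilde F \coloneq F'' + \hat E$, which lies in $\KK$ because every finite-dimensional subspace of $X_\mu$ belongs to $\Age(X_\mu) \subseteq \KK$. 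Now $\hat E, E'' \subseteq \tilde F$, the map $\rho$ satisfies $\|\Id_{\hat E} - \rho\| \leqslant \varepsilon_0$, and $(E'', \tilde F)$ is a $(\KK, \varepsilon_0, \delta)$-amalgamation pair, so Lemma \ref{lem:approxAmalgPair} yields that $(\hat E, \tilde F)$ is a $(\KK, 8\varepsilon_0, \delta) = (\KK, \varepsilon, \delta)$-amalgamation pair, with $\hat E \subseteq \tilde F$, which is exactly condition \ref{it:gAmalgThird}.

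The main obstacle is precisely this last bridging step. Lemma \ref{lem:manyWidePairs} only controls amalgamation pairs built from rational subspaces of $V$, whereas \ref{it:gAmalgThird} quantifies over an arbitrary finite-dimensional $\hat E \subseteq X_\mu$ that is neither a priori spanned by rational vectors nor contained in the ambient space $F''$ produced by the lemma. Overcoming it requires combining a perturbation (to replace $\hat E$ by the nearby rational $E''$, absorbing the error through the factor $8$ in Lemma \ref{lem:approxAmalgPair}) with the monotonicity of amalgamation pairs under enlarging the second coordinate (to force $\hat E \subseteq \tilde F$). This is why the quality of the rational approximation and the index $m$ must both be fixed in terms of $\varepsilon$ at the very start, so that all the constants close up at the end.
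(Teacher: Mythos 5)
Your argument for the inclusion ``every $\mu$ in the countable intersection $\mathcal{G}$ satisfies \ref{it:gAmalgThird}'' is essentially correct, and it is the same mechanism the paper uses (rational perturbation of $\hat E$ via Lemma \ref{lem:perturbationArgument}, enlargement of the second coordinate, and Lemma \ref{lem:approxAmalgPair} absorbing the error through the factor $8$; your direct restriction argument for enlarging $F''$ to $\tilde F$ is a legitimate substitute for the paper's appeal to Corollary \ref{cor:amalgPairUnderIsomorphisms}). However, there is a genuine gap: this only shows that the set $\mathcal{H}$ of all $\mu \in \sigma\KK$ for which $X_\mu$ satisfies \ref{it:gAmalgThird} \emph{contains} a dense $G_\delta$ subset of $\sigma\KK$, i.e.\ that $\mathcal{H}$ is comeager and dense. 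The lemma asserts that $\mathcal{H}$ \emph{is} a dense $G_\delta$ set, and comeagerness does not imply $G_\delta$-ness. Since \ref{it:gAmalgThird} quantifies over all finite-dimensional subspaces $E \subseteq X_\mu$ and all $\varepsilon > 0$ (an uncountable family of conditions), there is no a priori reason for $\mathcal{H}$ to be $G_\delta$; the paper obtains this by proving the \emph{equality} $\mathcal{H} = \mathcal{G}$, which forces $\mathcal{H}$ to coincide with a countable intersection of $G_\delta$ sets.

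The missing half is the reverse inclusion $\mathcal{H} \subseteq \mathcal{G}$, and it requires its own argument: given $\mu \in \mathcal{H}$, a rational subspace $E \subseteq V$ with $\mu\restriction_E \in \Norm(E)$, and $\varepsilon$, condition \ref{it:gAmalgThird} produces a finite-dimensional $F \subseteq X_\mu$ containing $E_\Rea$ and $\delta$ with $(E_\Rea, F)$ a $(\KK, \varepsilon, \delta)$-amalgamation pair, but this $F$ need not be of the form $F'_\Rea$ for a rational $F' \subseteq V$, so membership in $\mathcal{G}_{E,\varepsilon}$ does not follow directly. The paper fixes this by taking $\gamma$ from Corollary \ref{cor:amalgPairUnderIsomorphisms} for the pair $(E_\Rea, F)$, using Lemma \ref{lem:openEquivalence} to find a rational $F' \subseteq V$ containing $E$ and an onto map $\sigma \in \Emb_\gamma(F, F'_\Rea)$ fixing $E_\Rea$ pointwise, and concluding that $(E_\Rea, F'_\Rea)$ is still a $(\KK, \varepsilon, \delta)$-amalgamation pair. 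Without this step your proof does not establish the stated lemma; and the $G_\delta$-ness (not mere comeagerness) is genuinely needed downstream, since Proposition \ref{prop:AmalgamationsAndGurarii} derives condition \ref{it:gFraIsGDelta} — that $\isomtrclass{X}$ is $G_\delta$ in $\sigma\KK$ — from the $G_\delta$-ness of this set, and that is what drives implication (5) $\Rightarrow$ (1) of Theorem \ref{thm:BMgameForIsometryClasses}.
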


\begin{proof}
For every finite-dimensional subspace $E \subseteq V$ and every $\varepsilon > 0$, denote by $\mathcal{G}_{E, \varepsilon}$ the set denoted by $\mathcal{G}$ in the statement of Lemma \ref{lem:manyWidePairs} relative to $E$ and $\mathcal{W} \coloneq \Am_\varepsilon(\KK)$ defined in Lemma~\ref{lem:amWide}. Let $\mathcal{G} \coloneq \bigcap_{E \subseteq V, \, \varepsilon \in \Rat \cap (0, 1)} \mathcal{G}_{E, \varepsilon}$, where $E$ varies among all finite-dimensional subspaces of $V$. Finally, denote by $\mathcal{H}$ the set of all $\mu\in\sigma\KK$ 
such that $X = X_\mu$ satisfies condition \ref{it:gAmalgThird} in Proposition \ref{prop:AmalgamationsAndGurarii}. It follows from Lemmas \ref{lem:amWide} and \ref{lem:manyWidePairs} and the Baire category theorem that $\mathcal{G}$ is a dense $G_\delta$ subset of $\sigma\KK$. Thus, to prove the lemma it is enough to prove that $\mathcal{G} = \mathcal{H}$.

\smallskip

Fix $\mu \in \sigma\KK$; in the rest of this proof, all real finite-dimensional spaces we consider will be subspaces of $X_\mu$, so in order to save notation, the norm will not be specified each time. First suppose that $\mu \in \mathcal{H}$. Fix a finite-dimensional subspace $E \subseteq V$ and $\varepsilon \in \Rat \cap (0, 1)$. Let $F \subseteq X_\mu$ be a finite-dimensional subspace with $E_\Rea \subseteq F$ and $\delta \in (0, 1)$ be such that $(E_\Rea, F)$ is a $(\KK, \varepsilon, \delta)$-amalgamation pair. Let $\gamma > 0$ be as given by Corollary \ref{cor:amalgPairUnderIsomorphisms} for the pair $(E_\Rea, F)$. By Lemma \ref{lem:openEquivalence}, we can find a finite-dimensional subspace $F'\subseteq V$ containing $E$ and an onto map $\sigma \in \Emb_\gamma(F, F'_\Rea)$ with $\sigma\restriction_{E_\Rea} = \Id_{(E_\Rea)}$. Thus, the choice of $\gamma$ ensures that $(E_\Rea, F'_\Rea)$ is a $(\KK, \varepsilon, \delta)$-amalgamation pair, and $\mu \in \mathcal{G}_{E, \varepsilon}$.

\smallskip

Conversely, suppose $\mu \in \mathcal{G}$. Fix a finite-dimensional subspace $E \subseteq X$ and $\varepsilon > 0$. Let $\varepsilon' \in \Rat \cap (0, 1)$ be such that $8\varepsilon' \leqslant \varepsilon$. By Lemma \ref{lem:openEquivalence}, one can find a finite-dimensional subspace $E' \subseteq V$ such that $\mu\restriction_{E'} \in \Norm(E')$ and an onto linear map $\rho \colon E \to E'_\Rea$ satisfying $\|\Id_E - \rho\| \leqslant \varepsilon'$. Since $\mu \in \mathcal{G}_{E', \varepsilon'}$, one can find a finite-dimensional subspace $F' \subseteq V$ containing $E'$ and $\delta \in (0, 1)$ such that $(E'_\Rea, F'_\Rea)$ is a $(\KK, \varepsilon', \delta)$-amalgamation pair. Let $F \coloneq E + F'_\Rea$. It follows from Corollary \ref{cor:amalgPairUnderIsomorphisms} that $(E'_\Rea, F)$ is also a $(\KK, \varepsilon', \delta)$-amalgamation pair, and it follows in turn from Lemma \ref{lem:approxAmalgPair} applied to $\rho$ that $(E, F)$ is an $(\KK, \varepsilon, \delta)$-amalgamation pair. Hence, $\mu \in \mathcal{H}$.
\end{proof}

We now turn to the third and last part of our Baire category argument, dealing with condition \ref{it:gAmalgImpliesGFra} in Proposition \ref{prop:AmalgamationsAndGurarii}. It is contained in the two next lemmas.

\smallskip

\begin{lemma}\label{lem:thirdBaireCatFirstPart}
   Let $\KK\subseteq\BM$ be a guarded Fra\" iss\' e class, $\varepsilon > 0$, $\delta > 0$, and $E\subseteq F \subseteq V$ be finite-dimensional vector subspaces. Then the set of pseudonorms $\mu\in\sigma\KK$ satisfying the following condition:
\begin{enumerate}[label=(BC-\arabic*),series=gAmalg]
       \item\label{cond:thirdBaireCatFirstPartA} if $\mu$ is a norm on $F_\Rea$ and $((E_\Rea, \mu), (F_\Rea, \mu))$ is a $(\KK, \varepsilon, \delta)$-amalgamation pair, then for every $G \in \KK$, $\psi \in \Emb_{<\delta} ((F_\Rea,\mu), G)$ and $\eta > 0$, there exists $\iota \in \Emb_{<\eta}(G, X_\mu)$ with $\|\Id_{E_\Rea} - \iota\circ\psi\restriction_{E_\Rea}\|_{\mathcal{L}((E_\Rea, \mu), X_\mu)} < \varepsilon$
   \end{enumerate}
   is a dense $G_\delta$ subset of $\sigma\KK$.
\end{lemma}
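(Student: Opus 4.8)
I would write $\mathcal{S}$ for the set in question, and split the defining implication into hypothesis and conclusion: let
$$P \coloneq \{\mu\in\sigma\KK \setsep \mu\restriction_F\in\Norm(F) \text{ and } ((E_\Rea,\mu),(F_\Rea,\mu)) \text{ is a }(\KK,\varepsilon,\delta)\text{-amalgamation pair}\},$$
and let $Q$ be the set of $\mu\in\sigma\KK$ satisfying the conclusion of \ref{cond:thirdBaireCatFirstPartA}; then $\mathcal{S}=(\sigma\KK\setminus P)\cup Q$. The first thing I would check is that $P$ is \emph{open} in $\sigma\KK$: if $\mu_0\in P$, then by Corollary \ref{cor:amalgPairUnderIsomorphisms} there is $\gamma>0$ such that any $\nu$ with $\Id_{F_\Rea}\in\Emb_{<\gamma}((F_\Rea,\mu_0),(F_\Rea,\nu))$ keeps the pair an amalgamation pair; the set of such $\nu$ is exactly the neighborhood $V_\PP[\mu_0\restriction_F,F,\gamma]$, which is open by Proposition \ref{prop:equivalentNormOpen}. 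Consequently $\sigma\KK\setminus P$ is closed, hence $G_\delta$.

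\textbf{The $G_\delta$ part.} I would show $Q$ is $G_\delta$ by reducing the universal quantifiers over $G$ and $\psi$ to a countable family. Represent test data concretely as tuples $c=(G,\nu,\psi,\eta)$, where $G\subseteq V$ is finite-dimensional, $\nu\in\Norm(G)$ with $(G_\Rea,\nu)\in\KK$, $\psi\colon F_\Rea\to G_\Rea$ is linear, and $\eta\in\Rat\cap(0,1)$; a countable dense such family $\mathcal{C}$ can be fixed using compactness of the Banach--Mazur data together with the perturbation lemmas. For fixed $c$, the set $\{\mu : \psi\in\Emb_{<\delta}((F_\Rea,\mu),(G_\Rea,\nu))\}$ is open in $\mu$ by Lemma \ref{lem:openEquivalence}, and the set $\{\mu : \exists\,\iota\in\Emb_{<\eta}((G_\Rea,\nu),X_\mu)\text{ with }\|\Id_{E_\Rea}-\iota\circ\psi\restriction_{E_\Rea}\|<\varepsilon\}$ is also open (approximate the values $\iota(g_i)$ of a witness by elements of $V$ and apply Lemma \ref{lem:openEquivalence} again). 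Hence each $D_c\coloneq\{\psi\notin\Emb_{<\delta}\}\cup\{\exists\,\iota\dots\}$ is a union of a closed and an open set, so $G_\delta$, and $Q=\bigcap_{c\in\mathcal{C}}D_c$; a finite union of $G_\delta$ sets being $G_\delta$, this gives that $\mathcal{S}$ is $G_\delta$. The equality $Q=\bigcap_c D_c$ is where one transfers the conclusion from the dense family to arbitrary $(G,\psi,\eta)$ via the stability lemmas, and the strict inequality $<\varepsilon$ must be absorbed using the slack $\varepsilon'<\varepsilon$ furnished by the amalgamation witness (uniform on small neighborhoods by Lemma \ref{lem:stabilityAmalgTriples}).

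\textbf{Density.} Since $\mathcal{S}\supseteq\sigma\KK\setminus P$, it suffices to prove $Q$ is dense in the open (hence Baire) subspace $P$. Working in $P$, each $D_c$ is relatively dense: on the part of $P$ where $\psi\notin\Emb_{<\delta}$ it trivially contains the points, and on the relatively open set where $\psi$ \emph{is} a genuine $\delta$-embedding I would run a single amalgamation step. Fix $\mu_0$ there and a basic neighborhood $V_\PP[\mu_0\restriction_A,A,\beta]$. Choose a finite-dimensional $H\subseteq X_{\mu_0}$ containing $A_\Rea\cup F_\Rea$ (so $H\in\Age(X_{\mu_0})\subseteq\KK$), and apply the amalgamation-pair property of $((E_\Rea,\mu_0),(F_\Rea,\mu_0))$ with $\psi_G\coloneq\psi$ and $\psi_H$ the isometric inclusion $F_\Rea\to H$: this yields $K\in\KK$, $\iota_G\in\Emb(G,K)$, $\iota_H\in\Emb(H,K)$ with $\|\iota_G\circ\psi\restriction_{E_\Rea}-\iota_H\restriction_{E_\Rea}\|<\varepsilon'<\varepsilon$. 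Then, exactly as in the density argument of Lemma \ref{lemma:gdeltaSigmaK}, I realize $K$ as $X_\lambda$: apply Lemma \ref{lem:simpleReflection} to the injective map $\iota_H$ on the $\Rat$-subspace $\Span_\Rat(A)+F$ to obtain $\kappa\in\Norm(G^*)$ and an onto isometry $\chi\colon((G^*)_\Rea,\kappa)\to K$ extending $\iota_H$, then extend $\kappa$ by zero to $\lambda\in\PP$, so that $\lambda\in\sigma\KK$ (heredity), $\lambda$ agrees with $\mu_0$ on $A$, and $\iota\coloneq\chi^{-1}\circ\iota_G\in\Emb(G,X_\lambda)$ satisfies $\iota\circ\psi\restriction_{E_\Rea}\approx\chi^{-1}\circ\iota_H\restriction_{E_\Rea}=\Id_{E_\Rea}$ within $\varepsilon'<\varepsilon$. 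Thus $\lambda\in D_c\cap V_\PP[\mu_0\restriction_A,A,\beta]$, giving relative density of $D_c$; the Baire category theorem in $P$ then yields density of $\bigcap_c D_c=Q$, hence of $\mathcal{S}$.

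\textbf{Main obstacle.} The conceptual core is the single amalgamation step together with its realization as a pseudonorm $\lambda$ on $V$ lying in a prescribed neighborhood of $\mu_0$ while remaining in $\sigma\KK$; this is where (GAP) and Lemma \ref{lem:simpleReflection} are combined. The most delicate bookkeeping, however, is the countable reduction in the $G_\delta$ step: because the quantified embeddings $\psi$ live in $\Emb_{<\delta}((F_\Rea,\mu),G)$ and thus depend on $\mu$, one must organize the test family and exploit the built-in slack $\varepsilon'<\varepsilon$ to preserve the strict inequality when passing from dense data to arbitrary data.
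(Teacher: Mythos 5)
Your architecture coincides with the paper's own proof: the same decomposition into an open ``hypothesis'' set $P$ (openness via Corollary \ref{cor:amalgPairUnderIsomorphisms} plus Proposition \ref{prop:equivalentNormOpen}), the same presentation of each countable test set as (closed)\,$\cup$\,(open), the same density argument consisting of one amalgamation step with $\psi_G=\psi$ and $\psi_H$ an isometric inclusion, realized as a pseudonorm through Lemma \ref{lem:simpleReflection} exactly as in Lemma \ref{lemma:gdeltaSigmaK}, and a final Baire category argument. The openness and density steps, as you sketch them, are correct and match the paper's Steps 1 and 2.

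There is, however, a genuine gap in the countable-to-arbitrary transfer, i.e.\ the inclusion $\bigcap_{c}D_c\cap P\subseteq Q$, which your argument needs both for the $G_\delta$ claim and to turn comeagerness of $\bigcap_c D_c$ into density of $Q$. The mechanism you propose --- absorbing the approximation error ``using the slack $\varepsilon'<\varepsilon$ furnished by the amalgamation witness'' --- does not work: that $\varepsilon'$ witnesses the amalgamation property of the pair (it bounds $\|\iota_G\circ\psi_G\restriction_{E}-\iota_H\circ\psi_H\restriction_{E}\|$), it depends on $\mu$, and it provides no room in the conclusion's estimate for a test map $\psi$ that is merely \emph{approximated} by the countable family. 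Concretely, if you test with $\psi'$ close to $\psi$ and obtain $\iota$ with $\|\Id_{E_\Rea}-\iota\circ\psi'\restriction_{E_\Rea}\|<\varepsilon$, you can only conclude $\|\Id_{E_\Rea}-\iota\circ\psi\restriction_{E_\Rea}\|<\varepsilon+\|\iota\|\cdot\|\psi-\psi'\|$; since the bound coming from the dense data may itself be arbitrarily close to $\varepsilon$, and since $\Emb_{<\eta}(G,X_\mu)$ has no compactness allowing one to extract a better witness, the strict inequality $<\varepsilon$ is lost. The paper's Step 3 avoids any additive loss by an exact-composition device: given arbitrary $G$, $\psi$, $\eta$, it chooses $G'$ in a countable dense subset of $\KK$ and a surjective $\theta\in\Emb_{<\eta/2}(G,G')$, then perturbs $\theta$ by a small automorphism of $G'$ (Lemma \ref{lem:perturbationArgument}) so that $\theta\circ\psi$ lands \emph{exactly} in the countable set of test maps (maps sending a fixed basis of $F_\Rea$ into a fixed countable dense subset of $G'$). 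The witness $\iota'\in\Emb_{<\eta/2}(G',X_\mu)$ for $(G',\theta\circ\psi,\eta/2)$ then gives $\iota:=\iota'\circ\theta\in\Emb_{<\eta}(G,X_\mu)$ with $\iota\circ\psi=\iota'\circ(\theta\circ\psi)$, so the bound $<\varepsilon$ transfers verbatim: all errors are absorbed multiplicatively into the embedding constants (the $\eta$-slack), never additively into $\varepsilon$. Your countable family must therefore be organized so that arbitrary test maps can be hit exactly after composition with a near-identity isomorphism, not merely approximated in norm; as stated, your transfer step would fail.
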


\begin{proof}Pick a Hamel basis $\mathfrak{b}_1$ of $E$ and extend it to a Hamel basis $\mathfrak{b}_2$ of $F$. For every $G\in\KK$, pick a countable dense subset $D_G\subseteq G$ and denote by $\Phi(G)$ the countable set of all mappings $\psi:\mathfrak{b}_2\rightarrow D_G$ having linearly independent range; we shall identify each $\psi\in\Phi(G)$ with its extension to an injective linear map $F_\Rea\to G$. We shall use the following sets:
\begin{itemize}
    \item the set $S$ of all $\mu \in \sigma\KK$ such that $\mu$ restricts to a norm on $F_\Rea$ and $((E_\Rea, \mu), (F_\Rea, \mu))$ is a $(\KK, \varepsilon, \delta)$-amalgamation pair;
    \item given $G\in\KK$, $\psi\in\Phi(G)$ and $\eta>0$, the set $T(G,\psi,\eta)$ of all $\mu\in S$ such that if $\psi\in \Emb_{<\delta}((F_\Rea,\mu),G)$ then there exists $\iota \in \Emb_{< \eta}(G, X_\mu)$ with $\|\Id_{E_\Rea} - \iota \circ \psi\restriction_{E_\Rea}\nolinebreak\|_{\mathcal{L}((E_\Rea, \mu\restriction_E), X_\mu)} < \varepsilon$.
\end{itemize}
Our general strategy will be to show that each $(\sigma\KK\setminus S) \cup T(G,\psi,\eta)$ is a dense $G_\delta$ subset of $\sigma\KK$, and then to intersect a countable family of those.

\medskip

\textbf{Step 1: }Given $G\in\KK$, $\psi\in\Phi(G)$, and $\eta > 0$, the set $(\sigma\KK\setminus S) \cup T(G,\psi,\eta)$ is $G_\delta$ in $\sigma\KK$.
\begin{proof}[Proof of Step 1] Denote by $\mu_G$ the unique element of $\Norm(F)$ such that \linebreak $\psi \in \Emb((F_\Rea, \mu_G), G)$. Given $\mu\in S$, it follows from Corollary~\ref{cor:amalgPairUnderIsomorphisms} that $V_\PP[\mu\restriction_{F}\nolinebreak,F,\gamma] \cap \sigma \KK \subseteq S$ for a sufficiently small $\gamma>0$, so by Proposition \ref{prop:equivalentNormOpen}, $S$ is an open subset of $\sigma\KK$. Also observe that $T(G,\psi,\eta) = (S\setminus V_\PP[\mu_G, F, \delta]) \cup U(G,\psi,\eta)$, where the set $U(G,\psi,\eta)$ consists of those $\mu\in S$ for which there exists $\iota \in \Emb_{< \eta}(G, X_\mu)$ with $\|\Id_{E_\Rea} - \iota \circ \psi\restriction_{E_\Rea}\nolinebreak\|_{\mathcal{L}((E_\Rea, \mu\restriction_E), X_\mu)} < \varepsilon$. Since $V_\PP[\mu_G, F, \delta]$ is open in $\PP$ by Proposition \ref{prop:equivalentNormOpen}, to achieve our first step it suffices to show $U(G,\psi,\eta)$ is open in $S$. For this, let $\mu \in U(G,\psi,\eta)$ and a witnessing map $\iota$. Using the perturbation Lemma~\ref{lem:perturbationArgument} we may assume that $\iota(G) = H_\Rea$ and $(\Id_{E_\Rea} - \iota \circ \psi\restriction_{E_\Rea})(E_\Rea) = K_\Rea$ for vector subspaces $H, K \subseteq V$ on which $\mu$ restricts to a norm. Fix $\gamma \in (0, \eta)$ such that $\iota \in \Emb_{< \eta - \gamma}(G, X_\mu)$ and $e^{2\gamma} \cdot \|\Id_{E_\Rea} - \iota \circ \psi\restriction_{E_\Rea}\nolinebreak\|_{\mathcal{L}((E_\Rea, \mu\restriction_E), X_\mu)} < \varepsilon$. We show that $S \cap V_\PP[\mu\restriction_{E}\nolinebreak, E, \gamma] \cap V_\PP[\mu\restriction_{H}\nolinebreak, H, \gamma] \cap V_\PP[\mu\restriction_{K}\nolinebreak, K, \gamma] \subseteq U(G,\psi,\eta)$, and the desired result will follow from Proposition~\ref{prop:equivalentNormOpen}. Let $\nu \in S \cap V_\PP[\mu\restriction_{E}\nolinebreak, E, \gamma] \cap V_\PP[\mu\restriction_{H}\nolinebreak, H, \gamma] \cap V_\PP[\mu\restriction_{K}, K, \gamma]$. Then the map $\iota \colon G \to X_\nu$ can be written as the composition
$$G \xrightarrow{\iota} (H_\Rea, \mu\restriction_H) \xrightarrow{\Id_{H_\Rea}} X_\nu,$$
where the first map belongs to $\Emb_{< \eta - \gamma}(G, (H_\Rea, \mu\restriction_H))$ and the second to \linebreak $\Emb_{< \gamma}((H_\Rea, \mu\restriction_H\nolinebreak), X_\nu)$, so $\iota \in \Emb_{< \eta}(G, X_\nu)$. Similarly, the map $\Id_{E_\Rea} - \iota \circ \psi\restriction_{E_\Rea}$, seen as a map $(E_\Rea, \nu\restriction_E) \to X_\nu$, can be written as the composition
$$(E_\Rea, \nu\restriction_E) \xrightarrow{\Id_{E_\Rea}} (E_\Rea, \mu\restriction_E) \xrightarrow{\Id_{E_\Rea} - \iota \circ \psi\restriction_{E_\Rea}} (K_\Rea, \mu\restriction_{K}) \xrightarrow{\Id_{K_\Rea}} X_\nu,$$
where the first and last map have norm at most $e^\gamma$. Thus, $\|\Id_{E_\Rea} - \iota \circ \psi\restriction_{E_\Rea}\nolinebreak\|_{\mathcal{L}((E_\Rea, \nu\restriction_E), X_\nu)} < \varepsilon$, and $\nu \in U(G, \psi, \eta)$.
\end{proof}

\textbf{Step 2: }Given $G\in \KK$, $\psi\in\Phi(G)$, and $\eta > 0$, the set $(\sigma\KK\setminus S)\cup T(G,\psi,\eta)$ is dense in $\sigma\KK$.
\begin{proof}[Proof of Step 2] It suffices to prove that $S$ is contained in the closure of $T(G,\psi,\eta)$. Pick $\widetilde{\nu}\in S$. If $\psi\notin\Emb_{<\delta}((F_\Rea,\widetilde{\nu}),G)$ then $\widetilde{\nu} \in T(G,\psi,\eta)$ and we are done, so from now on, we will assume that $\psi\in\Emb_{<\delta}((F_\Rea,\widetilde{\nu}),G)$. We now pick a neighborhood $U$ of $\widetilde{\nu}$ in $\PP$ and show that $U \cap T(G,\psi,\eta) \neq \varnothing$. By Proposition \ref{prop:alternativeBasisPrelim}, there exists a finite-dimensional subspace $H \subseteq V$ and $\gamma > 0$ such that $V_\PP[\nu, H_\Rea, \gamma] \subseteq U$, where $\nu \coloneq \widetilde{\nu}\restriction_H$. Extending $H$ if necessary, we can even assume that $F \subseteq H$. Since $\widetilde{\nu} \in S$, it follows that $(H_\Rea, \nu) \in \KK$, $\nu$ restricts to a norm on $F_\Rea$ and $((E_\Rea, \nu\restriction_E), (F_\Rea, \nu\restriction_E))$ is a $(\KK, \varepsilon, \delta)$-amalgamation pair. Applying this latter fact to $\psi_G:=\psi$ and $\psi_H$ being the inclusion map $(F_\Rea,\nu\restriction_F) \to (H_\Rea,\nu)$ we obtain $K\in\KK$, $\iota_G\in \Emb(G,K)$ and $\iota_H\in \Emb((H_\Rea,\nu),K)$ such that 
\begin{equation}\label{Eq:10}
    \|\iota_G\circ \psi_G\restriction_{E_\Rea} - \iota_H\circ\psi_H\restriction_{E_\Rea}\|_{\mathcal{L}((E_\Rea,\nu\restriction_E),K)}<\varepsilon.
\end{equation}
By Lemma~\ref{lem:simpleReflection} there exists
a finite-dimensional vector subspace $L \subseteq V$ containing $H$, $\mu \in \PNorm(L)$, and an onto isometry $\chi \colon (L_\Rea, \mu) \to K$ extending $\iota_H$. Fix a vector subspace $W \subseteq V$ such that $V = L \oplus W$, and extend $\mu$ to a pseudonorm (still denoted by $\mu$) on the whole $c_{00}$ by letting $\mu\restriction_W = 0$. Since $\chi$ extends $\iota_H\in \Emb((H_\Rea,\nu),K)$, we have $\mu\restriction_{H} = \nu$. In particular,  $\mu \in V_\PP[\nu, H_\Rea, \gamma] \subseteq U$. We now conclude by proving that $\mu \in T(G, \psi, \eta)$. For this, first observe that $\chi$ induces an onto isometry $X_\mu \to K$ (that we will still denote by $\chi$); in particular, $X_\mu \in \KK$, thus $\mu \in \sigma\KK$. From this plus the fact that $\mu$ extends $\nu$, it follows that $\mu \in S$. Finally, let $\iota \coloneq \chi^{-1} \circ \iota_G \in \Emb(G, X_\mu)$. Left-composing with $\chi^{-1}$ in inequality (\ref{Eq:10}), we get $\|\iota \circ \psi\restriction_{E_\Rea} - \Id_{E_\Rea}\nolinebreak\|_{\mathcal{L}((E_\Rea, \mu\restriction_E), X_\mu)} < \varepsilon$, so $\mu \in T(G, \psi, \eta)$.
\end{proof}

\medskip

\textbf{Step 3: }The last step is now to prove that $\mu\in\sigma\KK$ satisfies the condition \ref{cond:thirdBaireCatFirstPartA} if and only if it belongs to the set
\[
T:=\bigcap_{G\in D_\KK}\bigcap_{\psi\in \Phi(G)}\bigcap_{\eta>0, \eta\in\Rat} \left[(\sigma\KK\setminus S)\cup T(G,\psi,\eta)\right],
\]
where $D_\KK\subseteq \KK$ is a countable dense subset of $\KK$.
\begin{proof}[Proof of Step 3]Obviously, if condition \ref{cond:thirdBaireCatFirstPartA} holds for $\mu$, then $\mu\in T$, because elements of $T$ are those satisfying the version of condition \ref{cond:thirdBaireCatFirstPartA} where only $G$'s in $D_\KK$, $\psi$'s in $\Phi(G)$, and $\eta$'s in $\Rat$ are considered. Let us prove the other inclusion. Pick $\mu\in T$, and suppose $\mu$ is restricts to a norm on $F_\Rea$ and $((E_\Rea, \mu), (F_\Rea, \mu))$ is a $(\KK, \varepsilon, \delta)$-amalgamation pair. Further, pick $G\in\KK$, $\eta>0$ and $\psi\in\Emb_{<\delta}((F_\Rea,\mu),G)$. Find $G'\in D_\KK$ and a surjective isomorphism $\theta:G\to G'$ such that $\theta\in\Emb_{<\eta/2}(G,G')$ and $\theta\circ \psi\in \Emb_{<\delta}((F_\Rea,\mu),G')$. Left-composing $\theta$ with an isomorphism $G' \to G'$ if necessary, by the perturbation Lemma~\ref{lem:perturbationArgument} we may assume that $(\theta\circ \psi)(\mathfrak{b}_2)\subseteq D_{G'}$. Thus we have $(\theta\circ \psi)\in \Phi(G')$. Using that $\mu\in T$ we then find $\iota\in\Emb_{<\eta/2}(G',X_\mu)$ such that $\|\Id_{E_\Rea} - \iota\circ \theta\circ \psi\restriction_{E_\Rea}\|_{\mathcal{L}((E_\Rea, \mu\restriction_E), X_\mu)}<\varepsilon$. Since $\iota\circ \theta\in \Emb_{<\eta}(G,X_\mu)$ this shows that the \ref{cond:thirdBaireCatFirstPartA} is satisfied for $G$, $\psi$ and $\eta$.
\end{proof}
The proof of Lemma~\ref{lem:thirdBaireCatFirstPart} follows now immediately from Steps 1-3.
\end{proof}

\begin{lemma}\label{lem:thirdBaireCatSecondPart}
    Let $\KK\subseteq\BM$ be a guarded Fra\" iss\' e class. Then the set of all $\mu\in\sigma\KK$ 
such that $X = X_\mu$ satisfies condition \ref{it:gAmalgImpliesGFra} in Proposition \ref{prop:AmalgamationsAndGurarii}
    is a dense $G_\delta$ subset of $\sigma\KK$.
\end{lemma}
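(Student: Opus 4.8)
The plan is to follow the scheme of Lemma~\ref{lem:manygAmalgPairs}: build a dense $G_\delta$ set as a countable intersection of the dense $G_\delta$ sets furnished by Lemma~\ref{lem:thirdBaireCatFirstPart}, and then match it with condition \ref{it:gAmalgImpliesGFra} by a perturbation argument. Concretely, for every pair of finite-dimensional $\Rat$-subspaces $E\subseteq F\subseteq V$ and all $\varepsilon,\delta\in\Rat\cap(0,1)$, let $\mathcal{G}_{E,F,\varepsilon,\delta}\subseteq\sigma\KK$ be the set of $\mu$ satisfying condition \ref{cond:thirdBaireCatFirstPartA} relative to $E,F,\varepsilon,\delta$. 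By Lemma~\ref{lem:thirdBaireCatFirstPart} each is dense $G_\delta$, so $\mathcal{G}:=\bigcap \mathcal{G}_{E,F,\varepsilon,\delta}$ is a dense $G_\delta$ subset of $\sigma\KK$ by the Baire category theorem. Writing $\mathcal{H}$ for the set named in the statement, the goal is then to compare $\mathcal{G}$ with $\mathcal{H}$.

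The main step is the inclusion $\mathcal{G}\subseteq\mathcal{H}$. Fix $\mu\in\mathcal{G}$, put $X=X_\mu$, and let $E\subseteq F\subseteq X$ and $\varepsilon,\delta\in(0,1)$ be such that $(E,F)$ is a $(\KK,\varepsilon,\delta)$-amalgamation pair, witnessed by some $\bar\varepsilon\in(0,\varepsilon)$ and $\bar\delta>\delta$. Choosing rational $\varepsilon_1\in(\bar\varepsilon,\varepsilon)$ and $\delta_1\in(\delta,\bar\delta)$, the pair $(E,F)$ is still a $(\KK,\varepsilon_1,\delta_1)$-amalgamation pair; let $\gamma>0$ be the constant given by Corollary~\ref{cor:amalgPairUnderIsomorphisms} for it, shrunk so that $2\gamma<\delta_1-\delta$. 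Using Lemma~\ref{lem:openEquivalence} together with the perturbation Lemma~\ref{lem:perturbationArgument}, I approximate a basis of $E$, extended to a basis of $F$, by rational vectors, producing finite-dimensional $\Rat$-subspaces $E'\subseteq F'\subseteq V$ on which $\mu$ restricts to a norm and an onto map $\sigma\in\Emb_\gamma(F,(F')_\Rea)$ with $\sigma(E)=(E')_\Rea$ and $\|\Id_F-\sigma\|$ as small as desired; by the choice of $\gamma$, $((E')_\Rea,(F')_\Rea)$ is again a $(\KK,\varepsilon_1,\delta_1)$-amalgamation pair. Now set $\delta':=\delta_1-2\gamma>\delta$. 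For any $G\in\closedAge{X}\subseteq\KK$, any $\psi\in\Emb_{\delta'}(F,G)$, and any $\eta>0$, the map $\psi\circ\sigma^{-1}$ lies in $\Emb_{<\delta_1}((F')_\Rea,G)$; since $\mu\in\mathcal{G}_{E',F',\varepsilon_1,\delta_1}$, condition \ref{cond:thirdBaireCatFirstPartA} produces $\iota\in\Emb_{<\eta}(G,X)$ with $\|\Id_{(E')_\Rea}-\iota\circ\psi\circ\sigma^{-1}\restriction_{(E')_\Rea}\|<\varepsilon_1$. A triangle-inequality estimate using that $\|\Id_F-\sigma\|$ is small and $\sigma\in\Emb_\gamma$ then bounds $\|\Id_E-\iota\circ\psi\restriction_E\|$ by a single $\varepsilon'<\varepsilon$ independent of $G,\psi,\eta$, exhibiting $(E,F)$ as an $(X,\varepsilon,\delta)$-\ref{it:gFraisseThird} pair witnessed by $\varepsilon'$ and $\delta'$. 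Hence $\mu\in\mathcal{H}$.

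To turn this into the full statement I record that $\mathcal{G}\subseteq\mathcal{G}_0:=\{\mu\in\sigma\KK\setsep\closedAge{X_\mu}=\KK\}$: applying condition \ref{cond:thirdBaireCatFirstPartA} to the trivial pair $E=F=\{0\}$, which is a $(\KK,\varepsilon,\delta)$-amalgamation pair by (JEP) (see Remark~\ref{rem:equivAmalgPair}), shows that for every $G\in\KK$ and every $\eta>0$ there is $\iota\in\Emb_{<\eta}(G,X_\mu)$, i.e. $\KK\subseteq\closedAge{X_\mu}$, so equality holds. On $\mathcal{G}_0$ the quantifier ``$G\in\KK$'' of \ref{cond:thirdBaireCatFirstPartA} and the quantifier ``$G\in\closedAge{X}$'' appearing in a \ref{it:gFraisseThird} pair coincide, so the argument of the previous paragraph reverses (given $\mu\in\mathcal{H}\cap\mathcal{G}_0$, feed the witnesses of the \ref{it:gFraisseThird} pair supplied by \ref{it:gAmalgImpliesGFra} back into \ref{cond:thirdBaireCatFirstPartA}), yielding $\mathcal{H}\cap\mathcal{G}_0\subseteq\mathcal{G}$ and therefore $\mathcal{G}=\mathcal{H}\cap\mathcal{G}_0$. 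Since $\mathcal{G}_0$ is a dense $G_\delta$ subset of $\sigma\KK$ by Lemma~\ref{lemma:gdeltaSigmaK}, the set $\mathcal{G}=\mathcal{H}\cap\mathcal{G}_0$ is a dense $G_\delta$ subset of $\sigma\KK$ contained in $\mathcal{H}$; in particular $\mathcal{H}$ is comeager, which is the form in which the lemma is used in Proposition~\ref{prop:AmalgamationsAndGurarii}.

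The hard part is the transfer argument of the second paragraph: one must simultaneously (i) pass from arbitrary finite-dimensional $E\subseteq F\subseteq X_\mu$ and real parameters to rational data, spending part of the amalgamation ``budget'' $\bar\varepsilon,\bar\delta$ on the approximation, and (ii) keep the open thresholds ($\Emb_{<\delta_1}$, $\Emb_{<\eta}$) and the closed thresholds ($\Emb_{\delta'}$, $\Emb_\eta$) consistent when pre- and post-composing with $\sigma^{\pm1}$, so that condition \ref{cond:thirdBaireCatFirstPartA} applies and the final error remains strictly below $\varepsilon$. A secondary subtlety, and the reason $\mathcal{G}$ rather than $\mathcal{H}$ is the clean $G_\delta$ object, is the mismatch between the class $\KK$ over which $G$ ranges in \ref{cond:thirdBaireCatFirstPartA} and the a priori smaller class $\closedAge{X_\mu}$ relevant to \ref{it:gAmalgImpliesGFra}; this is exactly what the restriction to the generic set $\mathcal{G}_0$ resolves.
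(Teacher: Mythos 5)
Your set $\mathcal{G}$ is exactly the set $T$ in the paper's proof, and your argument for $\mathcal{G}\subseteq\mathcal{H}$ (rational approximation of $E\subseteq F$ via Lemma \ref{lem:perturbationArgument}, stability of amalgamation pairs via Corollary \ref{cor:amalgPairUnderIsomorphisms}, transfer back via Corollary \ref{cor:gFraPairIsomorph}, with the thresholds split essentially as the paper splits them) is the paper's argument. The divergence is in the converse. The paper asserts the exact equality $T=\mathcal{H}$ --- the inclusion $\mathcal{H}\subseteq T$ is dispatched there as something ``we easily observe'' --- and it is this equality that makes $\mathcal{H}$ itself a dense $G_\delta$ set, which is what the statement claims. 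You prove only $T=\mathcal{H}\cap\mathcal{G}_0$, where $\mathcal{G}_0=\{\mu\in\sigma\KK\setsep \closedAge{X_\mu}=\KK\}$, and conclude that $\mathcal{H}$ is comeager. Relative to the literal statement this is a gap: comeagerness is strictly weaker than being a dense $G_\delta$, and nothing in your argument shows that $\mathcal{H}$ is $G_\delta$.

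However, the quantifier mismatch you isolate ($G\in\KK$ in \ref{cond:thirdBaireCatFirstPartA} versus $G\in\closedAge{X_\mu}$ in the definition of an \ref{it:gFraisseThird} pair) is a real obstruction, and the paper's ``easy'' inclusion $\mathcal{H}\subseteq T$ fails in general. Take $\KK=\BM$, the age of the Gurari\u{\i} space, which is a Fra\"iss\'e class by \cite{FLT} and hence a guarded Fra\"iss\'e class (so $\sigma\KK=\PP$), and take $\mu$ with $X_\mu=\ell_2$. Then $X_\mu$ satisfies \ref{it:gAmalgImpliesGFra}: if $E\subseteq F\subseteq\ell_2$ with $E\neq\{0\}$ and $(E,F)$ is a $(\BM,\varepsilon,\delta)$-amalgamation pair witnessed by $\varepsilon',\delta'$, then testing the amalgamation property with $G=H=F$, $\psi_G=\Id_F$, $\psi_H=e^{\delta'}\Id_F$ forces $e^{\delta'}-1<\varepsilon'$; and then, for any Euclidean $G\in\closedAge{\ell_2}$ and $\psi\in\Emb_{\delta''}(F,G)$ with $\delta<\delta''<\delta'$, the polar decomposition $\psi=U|\psi|$ together with exact ultrahomogeneity of Hilbert space yields an isometric $\iota\colon G\to\ell_2$ with $\iota\circ U\restriction_E=\Id_E$, whence $\|\Id_E-\iota\circ\psi\restriction_E\|\leqslant e^{\delta''}-1<\varepsilon'$ (the case $E=\{0\}$ is trivial). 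Yet $\mu\notin T$: for a one-dimensional rational $E=F$ on which $\mu$ is a norm, and for suitable rational $\varepsilon,\delta$, the pair is a $(\BM,\varepsilon,\delta)$-amalgamation pair (since $\BM$ is an amalgamation class, see the observation in Lemma \ref{lem:FraisseCorrespFLMT}), but condition \ref{cond:thirdBaireCatFirstPartA} applied to $G=\ell_1^2\in\KK$, the isometric $\psi\colon E_\Rea\to\ell_1^2$, and small $\eta$ would require an element of $\Emb_{<\eta}(\ell_1^2,\ell_2)$, which is empty as soon as $2\eta<d_{\BM}(\ell_1^2,\ell_2^2)=\log\sqrt{2}$, since every two-dimensional subspace of $\ell_2$ is isometric to $\ell_2^2$. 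So $\mathcal{H}\neq T$, the paper's own proof breaks precisely at the step you declined to take, and your identity $T=\mathcal{H}\cap\mathcal{G}_0$ is the correct substitute. Your weaker conclusion does suffice downstream: in Proposition \ref{prop:AmalgamationsAndGurarii} the lemma is only ever used intersected with $\mathcal{G}_0$ (Lemma \ref{lemma:gdeltaSigmaK}) and with the set of Lemma \ref{lem:manygAmalgPairs}, and the dense $G_\delta$ set needed there can be taken to be your $T$. But the lemma as literally stated --- that $\mathcal{H}$ itself is a dense $G_\delta$ subset of $\sigma\KK$ --- is established neither by your argument nor by the paper's.
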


\begin{proof}Given $\varepsilon,\delta \in (0, 1)$ and finite-dimensional subspaces $E \subseteq F \subseteq V$ we denote by $T(\varepsilon,\delta,E,F)$ those $\mu\in \sigma\KK$ which satisfy \ref{cond:thirdBaireCatFirstPartA} for those $\varepsilon$, $\delta$, $E$ and $F$. Using Lemma~\ref{lem:thirdBaireCatFirstPart} it suffices to prove that given $\mu\in \sigma\KK$, $X=X_\mu$ satisfies condition \ref{it:gAmalgImpliesGFra} in Proposition \ref{prop:AmalgamationsAndGurarii} if and only if $\mu$ belongs to the set
\[
T:=\bigcap_{\varepsilon,\delta\in (0,1)\cap \Rat} \bigcap_{E\subseteq F \subseteq V} T(\varepsilon,\delta,E,F).
\]
Pick $\mu\in \sigma\KK$. First, assume that $\mu \in T$ and, in order to check that $X = X_\mu$ satisfies condition \ref{it:gAmalgImpliesGFra} in Proposition \ref{prop:AmalgamationsAndGurarii}, pick $\varepsilon,\delta\in (0,1)$ and finite-dimensional spaces $E\subseteq F\subseteq X$ such that $(E,F)$ is a $(\KK,\varepsilon,\delta)$-amalgamation pair. Fix $\varepsilon' \in (0, \varepsilon)$ and $\delta' > \delta$, both in $\Rat$, such that $(E,F)$ is still a $(\KK,\varepsilon',\delta')$-amalgamation pair. Using Corollary~\ref{cor:amalgPairUnderIsomorphisms} we find $\gamma>0$ with $\gamma < \min\{\tfrac{\varepsilon-\varepsilon'}{4}, \tfrac{\delta'-\delta}{3}\}$ such that whenever $(E',F')\in \Incl(\KK)$ satisfy that there exists $\sigma\in\Emb_\gamma(F,F')$ with $E' \subseteq \sigma(E)$ then $(E',F')$ is a $(\KK,\varepsilon',\delta')$-amalgamation pair. By the perturbation Lemma~\ref{lem:perturbationArgument} we find finite-dimensional subspaces $E'\subseteq F'\subseteq V$ such that $\mu$ is a norm on $F'_\Rea$ and there exists a surjective map $\sigma\in\Emb_\gamma(F,F'_\Rea)$ with $E'_\Rea = \sigma(E)$ and $\|\Id_{E} - \sigma\restriction_{E}\|<\gamma$. It follows that $(E'_\Rea,F'_\Rea)$ is a $(\KK,\varepsilon',\delta')$-amalgamation pair. So since $\mu \in T$, it is also an $(X,\varepsilon' + \gamma,\delta' - 2\gamma)$-\ref{it:gFraisseThird} pair, witnessed by $\varepsilon'$ and $\delta' - \gamma$. Thus, by Corollary~\ref{cor:gFraPairIsomorph} we obtain that $(E,F)$ is $(X,\varepsilon'+4\gamma,\delta'-3\gamma)$-\ref{it:gFraisseThird} pair and therefore it is also an $(X,\varepsilon,\delta)$-\ref{it:gFraisseThird} pair. Since $\varepsilon$, $\delta$, $E$ and $F$ were arbitrary, we obtain that $X=X_\mu$ satisfies condition \ref{it:gAmalgImpliesGFra} in Proposition \ref{prop:AmalgamationsAndGurarii}.

On the other hand, if $X=X_\mu$ satisfies condition \ref{it:gAmalgImpliesGFra} in Proposition \ref{prop:AmalgamationsAndGurarii}, then we easily observe that $\mu\in T$, so the above mentioned equivalence holds and this finishes the proof.
\end{proof}

We are now ready to prove Proposition~\ref{prop:AmalgamationsAndGurarii}.

\begin{proof}[Proof of Proposition~\ref{prop:AmalgamationsAndGurarii}]
Let $\mathcal{G}$ be the set of all $\mu \in \sigma\KK$ such that $\closedAge{X_\mu} = \KK$ and $X = X_\mu$ satisfies conditions 
\ref{it:gAmalgThird} and \ref{it:gAmalgImpliesGFra}. Let $\mathcal{H}$ be the set of all $\mu \in \sigma\KK$ for which $\closedAge{X_\mu} = \KK$ and $X_\mu$ is a guarded Fra\"iss\'e Banach space. By Remark \ref{rem:gFFromPairs}, we have $\mathcal{G} \subseteq \mathcal{H}$. By Lemmas \ref{lemma:gdeltaSigmaK}, \ref{lem:manygAmalgPairs} and \ref{lem:thirdBaireCatSecondPart}, $\mathcal{G}$ is a dense $G_\delta$ subset of $\sigma \KK$. Remembering that $\sigma\KK \neq \varnothing$ (see Lemma \ref{lem:sigmaKnonempty}), it follows that $\mathcal{G} \neq \varnothing$, so $\mathcal{H} \neq \varnothing$, too. It follows from Theorem \ref{thm:guardedUniqueByAge} that $\mathcal{H}$ is of the form $\isomtrclass{X}$ for some separable Banach space $X$. Since $\mathcal{G} \subseteq \mathcal{H} = \isomtrclass{X}$ and $\mathcal{G}$ is nonempty and obviously invariant under isometry, it follows that $\mathcal{G} = \mathcal{H} = \isomtrclass{X}$. The proposition follows.
\end{proof}

We now prove the main theorems of this first part.

\begin{proof}[Proof of Theorem \ref{thm:BMgameForIsometryClasses}]
    Implications (1) $\Rightarrow$ (2) and (3) $\Rightarrow$ (4) are obvious. Implications (2) $\Rightarrow$ (3) and (4) $\Rightarrow$ (5) are respectively Proposition \ref{prop:comeagerAndBMGame} and \ref{prop:winningI}. We now prove (5) $\Rightarrow$ (1). Let $X$ be a guarded Fra\"iss\'e Banach space. Let $\KK \coloneq \closedAge{X}$. Then by Proposition \ref{prop:ageOfGuardedHasAmalg}, $\KK$ is a guarded Fra\"iss\'e class, so by Proposition \ref{prop:AmalgamationsAndGurarii}, $X$ is the unique guarded Fra\"iss\'e Banach space whose closed age is $\KK$, and $\isomtrclass[\PP]{X}$ is a $G_\delta$ subset of $\sigma\KK$, so it is also a $G_\delta$ subset of $\PP$. Hence $\isomtrclass[\mathcal{I}]{X}$ is a $G_\delta$ subset of $\mathcal{I}$.
\end{proof}

\begin{proof}[Proof of Theorem \ref{thm:GuardedFraisseCorrespondence}]
    This theorem immediately follows from Propositions \ref{prop:ageOfGuardedHasAmalg} and \ref{prop:AmalgamationsAndGurarii}.
\end{proof}

Now that Theorem \ref{thm:GuardedFraisseCorrespondence} has been proved, the notion of the Fra\"iss\'e limit of a guarded Fra\"iss\'e class is correctly defined, and we will freely use it in the rest of this paper.

\begin{proof}[Proof of Theorem \ref{thm:IvanovBanach}]
    Suppose (1) holds and let $X \coloneq \Flim(\KK)$.
    If $X$ is infinite-dimensional, then one can pick $\mu \in \sigma\KK \cap \mathcal{I}$ such that $X_\mu \equiv X$; if $X$ is finite-dimensional, then $\KK = \closedAge{X}$ cannot be infinite-dimensional, so $\mathcal{I} = \PP$ and such a choice of $\mu$ can also be made. The space $X_\mu$ is guarded Fra\"iss\'e, so $\isomtrclass[\mathcal{I}]{\mu}$ is comeager in its closure $\mathcal{I}$ by Theorem \ref{thm:BMgameForIsometryClasses}, this closure being $\sigma\KK \cap \mathcal{I}$ by Proposition \ref{prop:closureOfIsomClass}.

    \smallskip

    Now suppose (2) holds, and fix $\mu$ witnessing it.
    We know, by Lemma \ref{lemma:gdeltaSigmaK} when $\mathcal{I} = \PP$ and Corollary \ref{cor:gDeltaSigmaK} when $\mathcal{I} \in \{\PP_\infty, \BB\}$, that  $\mathcal{G} \coloneq \{\nu \in \sigma\KK \cap \mathcal{I} \setsep \closedAge{X_\nu} = \KK\}$ is comeager in $\sigma\KK \cap \mathcal{I}$. Since $\isomtrclass[\mathcal{I}]{\mu}$ is also comeager in $\sigma\KK \cap \mathcal{I}$, we deduce that $\mathcal{G} \cap \isomtrclass[\mathcal{I}]{\mu} \neq \varnothing$, thus $\closedAge{X_\mu} = \KK$. Moreover, since $\sigma\KK \cap \mathcal{I}$ is closed in $\mathcal{I}$, it follows that $\isomtrclass[\mathcal{I}]{\mu}$ is comeager in its closure in $\mathcal{I}$, so by Theorem \ref{thm:BMgameForIsometryClasses}, $X_\mu$ is a guarded Fra\"iss\'e Banach space.  It follows by Proposition \ref{prop:ageOfGuardedHasAmalg} that $\KK$ is a guarded Fra\"iss\'e class.
\end{proof}

Proposition \ref{prop:AmalgamationsAndGurarii} also has the following useful byproduct.

\begin{corollary}\label{cor:equivAmalgGFPairs}
Let $X$ be a guarded Fra\"iss\'e Banach space, $\varepsilon, \delta \in (0, 1)$, and $E \subseteq F \subseteq X$ be finite-dimensional subspaces.
\begin{enumerate}
    \item If $(E, F)$ is an $(X, \varepsilon, \delta)$-\ref{it:gFraisseAction} pair, then it is an $(\closedAge{X}, \varepsilon, \delta)$-amalgamation pair.
    \item If $(E, F)$ is an $(\closedAge{X}, \varepsilon, \delta)$-amalgamation pair, then it is an $(X, 2\varepsilon, \delta)$-\ref{it:gFraisseAction} pair.
\end{enumerate}
\end{corollary}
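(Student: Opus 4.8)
The plan is to deduce both implications by chaining together results already established in this section and the previous one; no new construction is needed, so the proof amounts to tracking which earlier lemma converts a pair of one type into a pair of another type.

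For part (1), I would start from the hypothesis that $(E, F)$ is an $(X, \varepsilon, \delta)$-\ref{it:gFraisseAction} pair and invoke Proposition \ref{prop:ageOfGuardedHasAmalg}, which gives directly that $(E, F)$ is an $(\Age(X), \varepsilon, \delta)$-amalgamation pair. Since $\Age(X)$ is hereditary, Lemma \ref{lem:pairsUnderClosures} then upgrades this to an $(\overline{\Age(X)}, \varepsilon, \delta)$-amalgamation pair; as $\overline{\Age(X)} = \closedAge{X}$ by definition, this is exactly the desired conclusion.

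For part (2), the key input is Proposition \ref{prop:AmalgamationsAndGurarii}. First I would record that, since $X$ is guarded Fra\"iss\'e, the class $\KK \coloneq \closedAge{X}$ is a guarded Fra\"iss\'e class by Proposition \ref{prop:ageOfGuardedHasAmalg}, and that by the uniqueness part of Theorem \ref{thm:GuardedFraisseCorrespondence} (equivalently, of Proposition \ref{prop:AmalgamationsAndGurarii}), $X$ is isometric to $\Flim(\KK)$. Hence the conclusions of Proposition \ref{prop:AmalgamationsAndGurarii}, being isometry-invariant properties of the space, hold for our $X$. In particular, condition \ref{it:gAmalgImpliesGFra} applies: from the hypothesis that $(E, F)$ is a $(\KK, \varepsilon, \delta)$-amalgamation pair with $\varepsilon, \delta \in (0, 1)$, we obtain that $(E, F)$ is an $(X, \varepsilon, \delta)$-\ref{it:gFraisseThird} pair. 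Finally, since $X$ is guarded Fra\"iss\'e it satisfies \ref{it:gFraisseForPlayerI}, so Corollary \ref{cor:implicationsPairs}(2) converts this into the desired $(X, 2\varepsilon, \delta)$-\ref{it:gFraisseAction} pair.

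The statement is thus essentially a repackaging of earlier results, and I expect no genuine obstacle. The one point requiring a little care is the justification, in part (2), that Proposition \ref{prop:AmalgamationsAndGurarii} may legitimately be applied to the \emph{given} space $X$ rather than to some a priori different Fra\"iss\'e limit of $\closedAge{X}$; this is handled by the uniqueness of the guarded Fra\"iss\'e space with a prescribed closed age, combined with the observation that all the notions of pairs occurring here depend only on the isometry type of $X$.
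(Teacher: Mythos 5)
Your proposal is correct and follows essentially the same route as the paper's proof: part (1) via Proposition \ref{prop:ageOfGuardedHasAmalg} and Lemma \ref{lem:pairsUnderClosures}, and part (2) via condition \ref{it:gAmalgImpliesGFra} of Proposition \ref{prop:AmalgamationsAndGurarii} followed by Corollary \ref{cor:implicationsPairs}. The only difference is that you spell out the uniqueness argument justifying that Proposition \ref{prop:AmalgamationsAndGurarii} applies to the given $X$, a step the paper leaves implicit.
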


\begin{proof}
(1) follows from Proposition \ref{prop:ageOfGuardedHasAmalg} and Lemma \ref{lem:pairsUnderClosures}. For (2), condition \ref{it:gAmalgImpliesGFra} in Proposition \ref{prop:AmalgamationsAndGurarii} applied to $\KK \coloneq \closedAge{X}$ shows that $(E, F)$ is an $(X, \varepsilon, \delta)$-\ref{it:gFraisseThird} pair, so by Corollary \ref{cor:implicationsPairs}, it is an $(X, 2\varepsilon, \delta)$-\ref{it:gFraisseAction} pair.
\end{proof}

Corollary \ref{cor:equivAmalgGFPairs} can be used as a tool for proving restricted versions of our Fra\"iss\'e correspondence, for subcalsses of the class of all guarded Fra\"iss\'e Banach spaces. For instance, Ferenczi--Lopez-Abad--Mbombo--Todorcevic's Fra\"iss\'e correspondence from \cite{FLT} (i.e. Theorem \ref{thm:FraisseCorrespFLMT}), can be recovered in this way, as shown below.

\begin{lemma}\label{lem:FraisseCorrespFLMT}
    Let $X$ be a guarded Fra\"iss\'e Banach space. The following are equivalent:
    \begin{enumerate}
        \item $X$ is Fra\"iss\'e;
        \item $\Age(X)$ is an amalgamation class;
        \item $\closedAge{X}$ is a Fra\"iss\'e class.
    \end{enumerate}
\end{lemma}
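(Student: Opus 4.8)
The plan is to prove the cycle (1) $\Rightarrow$ (2) $\Rightarrow$ (3) $\Rightarrow$ (1), exploiting that the only difference between the ``weak''/``guarded'' notions already developed and the ``Fra\"iss\'e''/``amalgamation'' notions appearing here is the \emph{uniformity} of $\delta$ in the dimension. The first step is to reformulate both target notions in the language of pairs from Definitions \ref{def:gFpairs} and \ref{def:amalgPair}. Unwinding the definitions, and absorbing (as in Remarks \ref{rem:gFFromPairs} and \ref{rem:equivAmalgPair}) the harmless slack between the open parameters $\varepsilon', \delta'$ and the plain ones, one checks: $X$ is Fra\"iss\'e iff for every $\varepsilon > 0$ and $n \in \Nat$ there is a single $\delta > 0$ making $(E, E)$ an $(X, \varepsilon, \delta)$-\ref{it:gFraisseAction} pair for \emph{every} $n$-dimensional $E \subseteq X$ (here $\{\iota\restriction_E \setsep \iota \in \Emb_{\delta'}(E, X)\} = \Emb_{\delta'}(E, X)$); and a class $\KK$ is an amalgamation class iff for every $\varepsilon > 0$ and $n$ there is a single $\delta > 0$ making $(F, F)$ a $(\KK, \varepsilon, \delta)$-amalgamation pair for every $n$-dimensional $F \in \KK$.

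For (1) $\Rightarrow$ (2): fixing $\varepsilon$ and $n$, Fra\"iss\'eness yields a uniform $\delta$ as above, and Proposition \ref{prop:ageOfGuardedHasAmalg} turns each $(E, E)$ into an $(\Age(X), \varepsilon, \delta)$-amalgamation pair. Since every $n$-dimensional member of $\Age(X)$ is isometric to some subspace $E \subseteq X$, and the amalgamation-pair property is an isometry invariant, this single $\delta$ witnesses that $\Age(X)$ is an amalgamation class. For (3) $\Rightarrow$ (1): fixing $\varepsilon \in (0,1)$ and $n$, the amalgamation-class property of $\closedAge{X}$ applied at $\varepsilon/2$ gives a uniform $\delta \in (0,1)$ making $(E, E)$ a $(\closedAge{X}, \varepsilon/2, \delta)$-amalgamation pair for every $E \subseteq X$ of dimension $n$ (as $E \in \Age(X) \subseteq \closedAge{X}$); Corollary \ref{cor:equivAmalgGFPairs}(2) upgrades this to $(E, E)$ being an $(X, \varepsilon, \delta)$-\ref{it:gFraisseAction} pair, whence $\Iso(X)$ acts $\varepsilon$-transitively on $\Emb_\delta(E, X)$ with $\delta$ uniform in $n$, so $X$ is Fra\"iss\'e (values $\varepsilon \geq 1$ being covered by the witness for $\varepsilon = 1/2$).

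The implication I expect to be delicate is (2) $\Rightarrow$ (3), where uniformity must survive the passage from $\Age(X)$ to its closure. The naive tool, Corollary \ref{cor:amalgPairUnderIsomorphisms}, produces a perturbation radius $\gamma$ depending on the individual pair, which would destroy uniformity. Instead I would use Lemma \ref{lem:stabilityAmalgTriples} with its \emph{explicit} bounds. Fixing $\varepsilon \in (0,1)$ and $n$, condition (2) at $(\varepsilon/2, n)$ gives a uniform $\delta \in (0,1)$ so that every $n$-dimensional $F \in \Age(X)$ makes $(F, F)$ a $(\Age(X), \varepsilon/2, \delta)$-amalgamation pair, hence a $(\closedAge{X}, \varepsilon/2, \delta)$-amalgamation pair by Lemma \ref{lem:pairsUnderClosures}. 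Setting $\gamma := \min(\varepsilon/4, \delta/2)$ \emph{once and for all}, any $F' \in \closedAge{X}$ of dimension $n$ lies within $d_{\BM}$-distance $\gamma$ of some $F \in \Age(X)$ of the same dimension, by density of $\Age(X)$ in $\closedAge{X}$ (dimension being locally constant in $\BM$); choosing an isomorphism $\theta \in \Emb_\gamma(F', F)$ and applying Lemma \ref{lem:stabilityAmalgTriples} to the triple $(F, F, \Id_F)$ with $\beta = \gamma$, $\rho = \theta$ and $\sigma = \theta^{-1}$ (so that $\sigma \circ \Id_F \circ \rho = \Id_{F'}$) shows that $(F', F')$ is a $(\closedAge{X}, \varepsilon/2 + 2\gamma, \delta - \gamma)$-amalgamation pair, and thus a $(\closedAge{X}, \varepsilon, \delta/2)$-amalgamation pair. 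As $\delta/2$ is uniform in $n$, $\closedAge{X}$ is an amalgamation class; being moreover nonempty, closed and hereditary (it is a guarded Fra\"iss\'e class by Proposition \ref{prop:ageOfGuardedHasAmalg}, since $X$ is guarded Fra\"iss\'e), it is a Fra\"iss\'e class.
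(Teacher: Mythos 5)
Your proof is correct, and its overall skeleton is exactly the paper's: the same cycle (1) $\Rightarrow$ (2) $\Rightarrow$ (3) $\Rightarrow$ (1), the same two reformulations of Fra\"iss\'eness and of being an amalgamation class as uniform-$\delta$ statements about pairs $(E,E)$, Proposition \ref{prop:ageOfGuardedHasAmalg} for (1) $\Rightarrow$ (2), and Corollary \ref{cor:equivAmalgGFPairs} for (3) $\Rightarrow$ (1). The one place you genuinely diverge is (2) $\Rightarrow$ (3): the paper disposes of it in one line by citing \cite[Proposition 2.24]{FLT}, which supplies the fact that passing to the closure preserves the amalgamation-class property, whereas you prove this internally by combining Lemma \ref{lem:pairsUnderClosures} with Lemma \ref{lem:stabilityAmalgTriples}, choosing the perturbation radius $\gamma = \min(\varepsilon/4, \delta/2)$ once and for all so that the uniformity of $\delta$ in the dimension survives the approximation of elements of $\closedAge{X}$ by elements of $\Age(X)$. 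Your concern that the natural tool, Corollary \ref{cor:amalgPairUnderIsomorphisms}, would destroy this uniformity (its $\gamma$ depends on the individual pair) is well-founded, and your substitution of the explicit bounds of Lemma \ref{lem:stabilityAmalgTriples} handles it correctly; the dimension-preservation point (each $\BM_n$ is clopen in $\BM$) is also used correctly. What your route buys is self-containedness — the lemma then depends only on results proved in the paper — at the cost of a paragraph of extra work; the paper's route is shorter but imports the closure argument from \cite{FLT}.
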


\begin{proof}
    Observe that:
    \begin{itemize}
        \item A Banach space $X$ is Fra\"iss\'e iff for every $\varepsilon > 0$ and $d \in \Nat$, there exists $\delta > 0$ such that for every subspace $E \subseteq X$ of dimension $d$, the pair $(E, E)$ is an $(X, \varepsilon, \delta)$-\ref{it:gFraisseAction} pair.
        \item A class $\KK \subseteq \BM$ is an amalgamation class iff for every $\varepsilon > 0$ and $d \in \Nat$, there exists $\delta > 0$ such that for every $E \in \KK$ of dimension $d$, the pair $(E, E)$ is a $(\KK, \varepsilon, \delta)$-amalgamation pair.
    \end{itemize}
    Using those observations, (1) $\Rightarrow $ (2) follows from Proposition \ref{prop:ageOfGuardedHasAmalg}, (2) $\Rightarrow$ (3) follows from \cite[Proposition 2.24]{FLT}, and (3) $\Rightarrow$ (1) follows from Corollary \ref{cor:equivAmalgGFPairs}.
\end{proof}

\begin{proof}[Proof of Theorem \ref{thm:FraisseCorrespFLMT}]
    This is an immediate consequence of our Fra\"iss\'e correspondence (Theorem \ref{thm:GuardedFraisseCorrespondence}), Lemma \ref{lem:FraisseCorrespFLMT}, and the fact that the age of a Fra\"iss\'e Banach space is always closed (see \cite[Theorem 2.12]{FLT}).
\end{proof}

Another interesting Fra\"iss\'e correspondence concerns \textit{cofinally Fra\"iss\'e Banach spaces}, which we define below (along with the associated notion for classes). Here, if $E$ and $F$ are two subspaces of the same Banach space, and $\varepsilon > 0$, we write $E \subseteq_\varepsilon F$ if for every $x \in S_E$, $d(x, S_F) \leqslant \varepsilon$.

\begin{definition}\label{def:CofFraisse}
    Let $X$ be a separable Banach space.
    \begin{itemize}
        \item A class $\mathcal{F}$ of finite-dimensional subspaces of $X$ is said to be \textit{cofinal} in $X$ if for every finite-dimensional subspace $E \subseteq X$ and every $\varepsilon > 0$, there exists $F \in \mathcal{F}$ such that $E \subseteq_\varepsilon F$.
        \item The space $X$ is said to be \textit{cofinally Fra\"iss\'e} if for every $\varepsilon>0$ the set \[\Fr_\varepsilon(X):=\big\{F\in\Age(X)\setsep \exists\delta>0\;\big(\Iso(X)\curvearrowright \Emb_\delta(F,X)\text{ is }\varepsilon\text{-transitive}\big)\big\}\] is cofinal in $X$. Recall that $\Iso(X)\curvearrowright\Emb_\delta(F,X)$ being $\varepsilon$-transitive means that for every $\phi,\psi\in\Emb_\delta(F,X)$ there is $T\in\Iso(X)$ such that $\|\psi-T\circ\phi\|<\varepsilon$. Note that $(F,F)$ is $(X,\varepsilon,\delta)$-\ref{it:gFraisseAction} pair for some $\delta>0$ if and only if there exists $\varepsilon'\in (0,\varepsilon)$ such that $F\in \Fr_{\varepsilon'}(X)$. This observation will be used quite frequently in proofs below.
    \end{itemize}
\end{definition}

\begin{definition}\label{def:CofFraisseClass}
    Let $\KK \subseteq \BM$.
    \begin{itemize}
        \item A subclass $\mathcal{L} \subseteq \KK$ is said to be \textit{cofinal} in $\KK$ if for every $E \in \KK$ and every $\varepsilon > 0$, there exists $F \in \mathcal{L}$ such that $\Emb_\varepsilon(E, F) \neq \varnothing$.
        \item Say that $\KK$ satisfies the \textit{cofinal amalgamation property (CAP)} if for every $\varepsilon > 0$, the class $\Fr_\varepsilon(\KK)$ of all $F \in \KK$ such that $(F, F)$ is a $(\KK, \varepsilon, \delta)$-amalgamation pair for some $\delta > 0$ is cofinal in $\KK$.
        \item Say that $\KK$ is a \textit{cofinally Fra\"iss\'e class} if it is nonempty, closed, satisfies (HP), (JEP) and (CAP).
    \end{itemize}
\end{definition}

\begin{proposition}\label{prop:cofinallySpaces}
    Let $X$ be a separable Banach space. Then the following implications hold: $X$ is weak Fra\"iss\'e $\Rightarrow$ $X$ is cofinally Fra\"iss\'e $\Rightarrow$ $X$ is guarded Fra\"iss\'e.
\end{proposition}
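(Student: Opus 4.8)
The plan is to treat the two implications separately; the first is essentially immediate from the definitions, while the second requires a perturbation argument combined with the $\varepsilon$-transitivity packaged into the definition of $\Fr_\varepsilon(X)$. For \emph{weak Fra\"iss\'e $\Rightarrow$ cofinally Fra\"iss\'e}, I would simply note that Definition \ref{def:FraisseFLMT} says precisely that for every finite-dimensional subspace $E\subseteq X$ and every $\varepsilon>0$ the isometry type of $E$ lies in $\Fr_\varepsilon(X)$. Fixing $\varepsilon>0$, to check that $\Fr_\varepsilon(X)$ is cofinal in $X$ it then suffices, given any finite-dimensional $E\subseteq X$ and any $\eta>0$, to take $F\coloneq E$ itself: one has $E\in\Fr_\varepsilon(X)$ and trivially $E\subseteq_\eta E$. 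Hence $\Fr_\varepsilon(X)$ is cofinal for every $\varepsilon$, i.e.\ $X$ is cofinally Fra\"iss\'e.

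For \emph{cofinally Fra\"iss\'e $\Rightarrow$ guarded Fra\"iss\'e}, I would verify condition \ref{it:gFraisseFourth}, which by Theorem \ref{thm:EquivalenceGFraGUH} is equivalent to guarded Fra\"iss\'eness. Targeting \ref{it:gFraisseFourth} rather than \ref{it:gFraisseAction} is the crucial point, since there the guard $\phi$ is allowed to be a mere $\varepsilon$-embedding and need not be an inclusion. Fix a finite-dimensional $E\subseteq X$ and $\varepsilon>0$, together with small auxiliary parameters $\varepsilon',\varepsilon''>0$ to be adjusted at the end. Using cofinality of $\Fr_{\varepsilon'}(X)$, I would produce a finite-dimensional subspace $F\subseteq X$ whose isometry type lies in $\Fr_{\varepsilon'}(X)$ --- with a witnessing $\delta>0$ --- and such that $E\subseteq_{\varepsilon''}F$. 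Choosing a normalized basis $(e_i)$ of $E$ and, for each $i$, a vector $y_i\in S_F$ with $\|e_i-y_i\|$ of order $\varepsilon''$, the perturbation Lemma \ref{lem:perturbationArgument} yields the linear map $\phi\colon E\to F$, $\phi(e_i)=y_i$, lying in $\Emb_\beta(E,F)$ with $\beta\coloneq\log(1+\varphi(2\varepsilon''))\to 0$, and satisfying $\|\Id_E-\phi\|\leqslant\varphi(2\varepsilon'')$ when $\phi$ is regarded as a map into $X$; this $\phi$ serves as the guard, and $\phi\in\Emb_\varepsilon(E,F)$ as soon as $\beta\leqslant\varepsilon$.

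It then remains to check the transitivity clause of \ref{it:gFraisseFourth}. Given an arbitrary $\psi\in\Emb_\delta(F,X)$, I would apply the $\varepsilon'$-transitivity of $\Iso(X)\curvearrowright\Emb_\delta(F,X)$ to the pair formed by the isometric inclusion $\chi\colon F\hookrightarrow X$ (which belongs to $\Emb_\delta(F,X)$) and by $\psi$, obtaining $T\in\Iso(X)$ with $\|\psi-T\circ\chi\|<\varepsilon'$. Writing $\Id_E$ for the inclusion $E\hookrightarrow X$ and inserting $T\circ\chi\circ\phi$, the triangle inequality gives
\begin{align*}
\|T\restriction_E-\psi\circ\phi\| &\leqslant \|T\circ(\Id_E-\chi\circ\phi)\|+\|(T\circ\chi-\psi)\circ\phi\|\\
&\leqslant \varphi(2\varepsilon'')+\varepsilon' e^\beta,
\end{align*}
using $\|T\|=1$ and $\|\phi\|\leqslant e^\beta$. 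Choosing first $\varepsilon''$ small enough that $\varphi(2\varepsilon'')<\varepsilon/2$ and $\beta\leqslant\varepsilon$, and then $\varepsilon'$ small enough that $\varepsilon' e^\beta<\varepsilon/2$, makes the right-hand side $<\varepsilon$. This verifies \ref{it:gFraisseFourth} and hence that $X$ is guarded Fra\"iss\'e.

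The only real obstacle I anticipate is bookkeeping: reconciling the \emph{approximate} containment $E\subseteq_{\varepsilon''}F$ supplied by cofinality with the \emph{exact} containment demanded by the inclusion-based formulations of guarded Fra\"iss\'eness. This is precisely what the passage through \ref{it:gFraisseFourth} together with the perturbation lemma circumvents, so that no estimate beyond controlling the two error terms $\varphi(2\varepsilon'')$ and $\varepsilon' e^\beta$ is needed.
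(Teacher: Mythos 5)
Your proof is correct, and both implications rest on the same underlying mechanism as the paper's: the first implication is verbatim the paper's observation that weak Fra\"iss\'eness means $\Fr_\varepsilon(X)$ contains every finite-dimensional subspace, and for the second both you and the paper combine cofinality with the perturbation Lemma \ref{lem:perturbationArgument} to produce a nearly-inclusion guard $\phi$ into some $F$ whose transitivity is witnessed by $\Fr_{\varepsilon'}(X)$. The execution differs, though: the paper targets condition \ref{it:gFraisseForPlayerI}, converting ``$(F,F)$ is an $(X,\varepsilon/4,\delta)$-\ref{it:gFraisseAction} pair'' into ``$(F,F,\Id_F)$ is an $(X,\varepsilon/4,\delta)$-\ref{it:gFraisseForPlayerI} triple'' via Corollary \ref{cor:implicationsPairs} and Lemma \ref{lem:equivalencePairsTriples}, and then absorbing the perturbation $\rho$ abstractly through the stability Lemma \ref{lem:stabilityGFTriple}, so that no explicit estimate is needed and guarded Fra\"iss\'eness follows from Remark \ref{rem:gFFromPairs} without invoking Theorem \ref{thm:EquivalenceGFraGUH}. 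You instead target condition \ref{it:gFraisseFourth} and run the transitivity-plus-triangle-inequality computation by hand, which makes the proof self-contained modulo Theorem \ref{thm:EquivalenceGFraGUH} (whose relevant direction, \ref{it:gFraisseFourth} $\Rightarrow$ \ref{it:gFraisseForPlayerI}, is the elementary Lemma \ref{lem:ImplicationTriples} rather than the back-and-forth argument, so there is no hidden circularity or heavy machinery). What the paper's route buys is brevity and reuse of its pair/triple calculus; what yours buys is an explicit quantitative argument whose only inputs are the perturbation lemma and the definition of $\Fr_\varepsilon(X)$, with the error split $\varphi(2\varepsilon'') + \varepsilon' e^\beta < \varepsilon$ and the order of choices ($\varepsilon''$ first, then $\varepsilon'$, then $F$ and $\delta$ from cofinality) correctly arranged so that nothing is circular.
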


\begin{proof}
    The space $X$ is weak Fra\"iss\'e iff for every $\varepsilon > 0$, the class $\Fr_\varepsilon(X)$ contains all finite-dimensional subspaces of $X$. In this case, this class is in particular cofinal, so $X$ is cofinally Fra\"iss\'e. Now suppose that $X$ is cofinally Fra\"iss\'e and fix a finite-dimensional subspace $E \subseteq X$ and $\varepsilon \in (0, 1)$.  By Lemma \ref{lem:perturbationArgument}, we can find $F \in \Fr_{\varepsilon/5}(X)$ and $\rho \in \Emb_{\varepsilon/4}(E, F)$ such that $\|\Id_E - \rho\| \leqslant \varepsilon/4$. Since $F\in \Fr_{\varepsilon/5}(X)$, there exists $\delta>0$ such that $(F,F)$ is $(X,\varepsilon/4,\delta)$-\ref{it:gFraisseAction} pair and so by Corollary \ref{cor:implicationsPairs} and Lemma \ref{lem:equivalencePairsTriples}, $(F, F, \Id_F)$ is an $(X, \varepsilon/4, \delta)$-\ref{it:gFraisseForPlayerI} triple, so by Lemma \ref{lem:stabilityGFTriple}, $(E, F, \rho)$ is an $(X, \varepsilon, \delta)$-\ref{it:gFraisseForPlayerI} triple. We conclude that $X$ is guarded Fra\"iss\'e using Remark \ref{rem:gFFromPairs}.
\end{proof}

Note that in the proof above it was important that our notion of cofinality involves ``$E\subseteq_\varepsilon F$'' and not only the weaker condition ``$\Emb_\varepsilon(E,F)\neq \emptyset$''. In the following proof however, the latter (weaker) notion is sufficient. This is of the reasons why the conditions on cofinality in Definitions~\ref{def:CofFraisse} and \ref{def:CofFraisseClass} are different (the other reason, more intuitive but less direct, is that this is how analogical notions are defined for discrete structures).

\begin{proposition}
    Let $\KK \subseteq \BM$. Then the following implications hold: $\KK$ is a weak amalgamation class $\Rightarrow$ $\KK$ satisfies (CAP) $\Rightarrow$ $\KK$ satisfies (GAP).
\end{proposition}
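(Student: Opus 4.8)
The plan is to treat the two implications separately, using the pair/triple reformulations from Definition~\ref{def:amalgPair} together with the perturbation stability provided by Lemma~\ref{lem:stabilityAmalgTriples}; the conceptual content is light, as both implications reduce to careful bookkeeping of the $\varepsilon$--$\delta$ witnesses.

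For the first implication, I would prove the stronger fact that if $\KK$ is a weak amalgamation class, then $\Fr_\varepsilon(\KK) = \KK$ for every $\varepsilon > 0$, whence (CAP) holds trivially (the whole class is cofinal in itself, witnessed by $\Id_E \in \Emb_\varepsilon(E,E)$ for each $E$). To see $\Fr_\varepsilon(\KK)=\KK$, I fix $F \in \KK$ and check that $(F,F)$ is a $(\KK,\varepsilon,\delta')$-amalgamation pair for a suitable $\delta'$, i.e. that $(F,F,\Id_F)$ is a $(\KK,\varepsilon,\delta')$-amalgamation triple. Applying the weak amalgamation property to $F$ with tolerance $\varepsilon/2$ yields a $\delta > 0$; then $\varepsilon' := \varepsilon/2 \in (0,\varepsilon)$ together with the witnesses $\delta'' := \delta$ and $\delta' := \delta/2$ verify the triple condition directly (note $\Id_F \in \Emb_{\varepsilon'}(F,F)$ is automatic and $\psi_G\circ\Id_F = \psi_G$).

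For the second implication, I would fix $E \in \KK$ and, without loss of generality, $\varepsilon \in (0,1)$, and construct the ``guard'' $\phi$ by cofinality. Applying (CAP) with parameter $\varepsilon/2$, the class $\Fr_{\varepsilon/2}(\KK)$ is cofinal, so there exist $F \in \Fr_{\varepsilon/2}(\KK)$ and $\phi \in \Emb_{\varepsilon/4}(E,F)$. By definition of $\Fr_{\varepsilon/2}$, the triple $(F,F,\Id_F)$ is a $(\KK,\varepsilon/2,\delta_0)$-amalgamation triple for some $\delta_0>0$, which I may take in $(0,1)$ after shrinking, since amalgamation triples are stable under decreasing $\delta$. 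I then apply Lemma~\ref{lem:stabilityAmalgTriples} to this triple with $\beta := \varepsilon/4$, $\gamma := 0$, $\rho := \phi$ and $\sigma := \Id_F$: as $\sigma \circ \Id_F \circ \rho = \phi$, this transports $(F,F,\Id_F)$ into the triple $(E,F,\phi)$, now a $(\KK,\ \varepsilon/2 + 2\cdot\varepsilon/4,\ \delta_0)=(\KK,\varepsilon,\delta_0)$-amalgamation triple. Finally, the characterization of (GAP) in terms of amalgamation triples recorded in Remark~\ref{rem:equivAmalgPair} yields at once that $\KK$ satisfies (GAP).

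The step requiring the most care---and the only genuine obstacle---is the correct instantiation of Lemma~\ref{lem:stabilityAmalgTriples} in the second implication: one must recognize that the guard $\phi$ appearing in (GAP) is exactly the map $\rho$ pre-composed in the stability lemma, and check that the hypotheses $\gamma \leqslant \beta$, $\gamma \in [0,\delta_0)$ and $\phi \in \Emb_\beta(E,F)$ are all met so that the resulting tolerance is precisely $\varepsilon/2 + 2\beta = \varepsilon$. Everything else is routine tracking of the inclusions $\Emb_{\delta_0} \subseteq \Emb_{\delta'}$ and of the strict inequalities hidden in the auxiliary witnesses $\varepsilon'$ and $\delta'$.
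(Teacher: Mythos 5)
Your proof is correct and follows essentially the same route as the paper's: weak amalgamation gives $\Fr_\varepsilon(\KK)=\KK$ for every $\varepsilon>0$ (hence (CAP) trivially), and (CAP) gives (GAP) by transporting the amalgamation triple $(F,F,\Id_F)$ along a cofinality embedding via Lemma~\ref{lem:stabilityAmalgTriples} and concluding with Remark~\ref{rem:equivAmalgPair}. The only differences are cosmetic: your constants $\varepsilon/2$, $\varepsilon/4$ in place of the paper's $\varepsilon/3$, $\varepsilon/3$, and a more detailed write-up of the first implication, which the paper merely asserts.
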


\begin{proof}
    The class $\KK$ is a weak amalgamation class iff for every $\varepsilon > 0$, $\Fr_\varepsilon(\KK) = \KK$. From this condition it follows that $\KK$ satisfies (CAP). Now suppose that $\KK$ satisfies (CAP), and fix $E \in \KK$ and $\varepsilon \in (0, 1)$. Then we can find $F \in \Fr_{\varepsilon/3}(\KK)$ and $\rho \in \Emb_{\varepsilon/3}(E, F)$; from Lemma \ref{lem:stabilityAmalgTriples}, it follows that $(E, F, \rho)$ is an $(\KK, \varepsilon, \delta)$-amalgamation triple for some $\delta > 0$. We conclude using Remark \ref{rem:equivAmalgPair}.
\end{proof}

\begin{example}
    Although not all $L_p$'s, $1 \leqslant p < \infty$, are weak Fra\"iss\'e (see \cite[Proposition 2.10]{FLT}), all of them are cofinally Fra\"iss\'e. This immediately follows from \cite[Proposition 3.7(2)]{FLT}, which states that \textit{$L_p$ is the Fra\"iss\'e limit of the class $\{\ell_p^n \setsep n \in \Nat\}$}; the notion of \textit{Fra\"iss\'e limit} used in \cite{FLT} is valid for non-hereditary classes, and in our language, the latter statement can be translated as the conjunction of the two following results:
    \begin{itemize}
        \item for every $\varepsilon > 0$ and every $n \in \Nat$, there exists $\delta > 0$ such that $\Iso(L_p)\curvearrowright\Emb_\delta(\ell_p^n,L_p)$ is $\varepsilon$-transitive.
        \item the class of all subspaces $E \subseteq \ell_p$ that are isometric copies of an $\ell_p^n$, $n \in \Nat$, is cofinal in $L_p$.
    \end{itemize}
\end{example}

We now state and prove our Fra\"iss\'e correspondance for cofinally Fra\"iss\'e Banach spaces; we start with a lemma playing a similar role as Lemma \ref{lem:FraisseCorrespFLMT}.

\begin{lemma}\label{lem:CofFraisseCorresp}
    Let $X$ be a guarded Fra\"iss\'e Banach space. The following are equivalent:
    \begin{enumerate}[label=(\arabic*)]
        \item\label{it:cofinallyFraisseSpace} $X$ is cofinally Fra\"iss\'e;
        \item\label{it:cofinallyFraisseClass} $\closedAge{X}$ is a cofinally Fra\"iss\'e class.
    \end{enumerate}
\end{lemma}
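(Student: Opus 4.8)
The plan is to reduce the whole statement to a single equivalence concerning (CAP). Since $X$ is guarded Fra\"iss\'e, Proposition \ref{prop:ageOfGuardedHasAmalg} shows that $\KK:=\closedAge{X}$ is already a guarded Fra\"iss\'e class, hence nonempty, closed, hereditary, and satisfying (JEP) and (GAP). As (CAP) implies (GAP), adjoining (CAP) creates no conflict, so $\KK$ is a cofinally Fra\"iss\'e class if and only if it satisfies (CAP). Thus it suffices to prove that $X$ is cofinally Fra\"iss\'e iff $\KK$ satisfies (CAP). The main tool will be a dictionary between $\Fr_\varepsilon(X)$ and $\Fr_\varepsilon(\KK)$ coming from Corollary \ref{cor:equivAmalgGFPairs} and the observation in Definition \ref{def:CofFraisse}: for a finite-dimensional subspace $F\subseteq X$ and $\varepsilon,\delta\in(0,1)$, I record (A) if $F\in\Fr_{\varepsilon'}(X)$ for some $\varepsilon'<\varepsilon$, then by that observation $(F,F)$ is an $(X,\varepsilon,\delta)$-\ref{it:gFraisseAction} pair for some $\delta$, so by Corollary \ref{cor:equivAmalgGFPairs}(1) it is a $(\KK,\varepsilon,\delta)$-amalgamation pair, i.e. $F\in\Fr_\varepsilon(\KK)$; and (B) conversely, if $F\in\Fr_\varepsilon(\KK)$ then by Corollary \ref{cor:equivAmalgGFPairs}(2) $(F,F)$ is an $(X,2\varepsilon,\delta)$-\ref{it:gFraisseAction} pair, whence $F\in\Fr_{2\varepsilon}(X)$. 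I will use monotonicity $\Fr_\alpha(X)\subseteq\Fr_\beta(X)$ for $\alpha\leqslant\beta$ freely; and when applying (B) to an abstract $F_0\in\KK$, I first replace it by a near-isometric copy inside $X$, which remains in $\Fr_\varepsilon(\KK)$ by the stability Corollary \ref{cor:amalgPairUnderIsomorphisms}.

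For \ref{it:cofinallyFraisseSpace} $\Rightarrow$ \ref{it:cofinallyFraisseClass}, fix $E\in\KK$, $\eta>0$ and $\varepsilon\in(0,1)$, and realize $E$ as a near-isometric copy $E'\subseteq X$. Using that $\Fr_{\varepsilon/2}(X)$ is cofinal in $X$, I find $F\in\Fr_{\varepsilon/2}(X)$ with $E'\subseteq_\kappa F$ for $\kappa$ as small as needed. A perturbation argument (Lemma \ref{lem:perturbationArgument}) turns $E'\subseteq_\kappa F$ into a map in $\Emb_{\kappa'}(E',F)$ with $\kappa'$ small; composing with the embedding $E\to E'$ gives $\Emb_\eta(E,F)\neq\varnothing$, while fact (A) gives $F\in\Fr_\varepsilon(\KK)$. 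Hence $\Fr_\varepsilon(\KK)$ is cofinal in $\KK$, which is (CAP).

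For \ref{it:cofinallyFraisseClass} $\Rightarrow$ \ref{it:cofinallyFraisseSpace}, fix a finite-dimensional $E\subseteq X$ and $\varepsilon\in(0,1)$, and set $\varepsilon_1:=\varepsilon/3$. Since $X$ is guarded Fra\"iss\'e, property \ref{it:gFraisseAction} yields a superspace $E\subseteq F_1\subseteq X$ and $\delta_1>0$ such that $\Iso(X)$ acts $\varepsilon_1$-transitively on $\{\iota\restriction_E\setsep\iota\in\Emb_{\delta_1}(F_1,X)\}$. Then (CAP) applied to $F_1\in\KK$ gives $F_0\in\Fr_{\varepsilon/2}(\KK)$ and $\beta\in\Emb_\eta(F_1,F_0)$ with $\eta$ small; replacing $F_0$ by a near copy inside $X$ (keeping $\Fr_{\varepsilon/2}(\KK)$-membership via Corollary \ref{cor:amalgPairUnderIsomorphisms}), fact (B) gives $F_0\in\Fr_\varepsilon(X)$, and for $\eta$ and the perturbation small enough $\beta$ becomes an element of $\Emb_{\delta_1}(F_1,X)$. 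Comparing $\beta$ with the inclusion $F_1\hookrightarrow X$ through the $\varepsilon_1$-transitivity produces $T\in\Iso(X)$ with $\|\Id_E - T\circ\beta\restriction_E\|<\varepsilon_1$; since $T\circ\beta(E)\subseteq T(F_0)$, every unit vector of $E$ lies within $\varepsilon_1$ of $T(F_0)$, hence within $2\varepsilon_1<\varepsilon$ of its unit sphere, so $E\subseteq_\varepsilon T(F_0)$ with $T(F_0)\in\Fr_\varepsilon(X)$, giving cofinality of $\Fr_\varepsilon(X)$ in $X$. The main obstacle is exactly this last repositioning: (CAP) only yields, abstractly, a homogeneous space near-embedding $F_1$, and the real work is converting this abstract near-embedding of $E$ into the geometric near-containment $E\subseteq_\varepsilon(\cdot)$ demanded by Definition \ref{def:CofFraisse}. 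This is what forces the use of the guard $F_1$ together with the transitivity of $\Iso(X)$ on restrictions to $E$, and reflects precisely the mismatch between the two cofinality notions ($\subseteq_\varepsilon$ versus $\Emb_\varepsilon\neq\varnothing$) flagged in the remark following Proposition \ref{prop:cofinallySpaces}.
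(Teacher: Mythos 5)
Your proof is correct, and while your direction \ref{it:cofinallyFraisseSpace} $\Rightarrow$ \ref{it:cofinallyFraisseClass} matches the paper's (cofinality in $X$ transfers to $\Age(X)$ by Lemma \ref{lem:perturbationArgument}, then to $\closedAge{X}$, and Corollary \ref{cor:equivAmalgGFPairs}(1) converts $\Fr$-membership), your direction \ref{it:cofinallyFraisseClass} $\Rightarrow$ \ref{it:cofinallyFraisseSpace} takes a genuinely different route. The paper runs a fresh Baire-category argument: it checks that $\mathcal{W} = \{(E,F) \in \Incl(\KK) \setsep F \in \Fr_\varepsilon(\KK)\}$ is a wide family, invokes Lemma \ref{lem:manyWidePairs} to get that comeagerly many $\mu \in \sigma\KK$ have cofinally many subspaces in $\Fr_\varepsilon(\KK)$, and then intersects with the comeager isometry class $\isomtrclass[\PP]{X}$ supplied by Proposition \ref{prop:AmalgamationsAndGurarii}. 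You instead argue directly inside $X$: the guard $F_1 \supseteq E$ from \ref{it:gFraisseAction} plus (CAP) produces a near-embedding of $F_1$ into some $F_0 \in \Fr_{\varepsilon/2}(\KK)$, you realize $F_0$ in $X$ (Corollary \ref{cor:amalgPairUnderIsomorphisms} preserving amalgamation-pair membership, Corollary \ref{cor:equivAmalgGFPairs}(2) transferring it to $\Fr_\varepsilon(X)$), and the $\varepsilon_1$-transitivity on restrictions to $E$ supplies the isometry $T$ that converts the abstract near-embedding into the geometric containment $E \subseteq_\varepsilon T(F_0')$ — which is exactly the point where the paper needed genericity. Your approach is shorter and conceptually cleaner at this spot (the category machinery enters only once, hidden inside Corollary \ref{cor:equivAmalgGFPairs}(2), which rests on Proposition \ref{prop:AmalgamationsAndGurarii}); the paper's approach buys reuse of the wide-family infrastructure already built for Lemma \ref{lem:manygAmalgPairs} and the extra genericity information. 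Two small points you should make explicit: $\Fr_\varepsilon(X)$ is invariant under the action of $\Iso(X)$ on subspaces (routine, but used when passing to $T(F_0')$), and cofinality as defined requires $E \subseteq_\gamma F$ for an arbitrary second parameter $\gamma$, which your one-parameter conclusion yields by rerunning the argument with $\min(\varepsilon,\gamma)$ and the monotonicity you invoke.
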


\begin{proof}
    Suppose \ref{it:cofinallyFraisseSpace}, and let $\varepsilon > 0$. Since $\Fr_{\varepsilon}(X)$ is cofinal in $X$, it follows from Lemma \ref{lem:perturbationArgument} that $\Fr_\varepsilon(X)$, when seen as a subclass of $\Age(X)$, is cofinal in $\Age(X)$. Thus, it is also cofinal in $\closedAge{X}$. To conclude that \ref{it:cofinallyFraisseClass} holds, just observe that, by Corollary \ref{cor:equivAmalgGFPairs}, $\Fr_\varepsilon(X) \subseteq \Fr_{2\varepsilon}(\closedAge{X})$.

    \smallskip

    Now suppose \ref{it:cofinallyFraisseClass} holds. Let $\KK \coloneq \closedAge{X}$, and fix $\varepsilon \in (0, 1)$. Let $\mathcal{W} \coloneq \{(E, F)  \in \Incl(\KK) \setsep F \in \Fr_\varepsilon(\KK)\}$. Then the family $\mathcal{W}$ is wide: indeed, in Definition \ref{def:wide}, condition \ref{it:wideFirst} is obviouisly satisfied, condition \ref{it:wideSecond} comes from Corollary \ref{cor:amalgPairUnderIsomorphisms}, and condition \ref{it:wideThird} comes from the fact that $\Fr_\varepsilon(\KK)$ is cofinal in $\KK$. Let $\mathcal{G}$ be the set of all $\mu \in \sigma\KK$ satisfying the following property: for every finite-dimensional subspace $E \subseteq V$, either $\mu\restriction_E \notin \Norm(E)$, or there exists a finite-dimensional subspace $F \subseteq V$ with $E \subseteq F$ such that $((E_\Rea, \mu\restriction_E\nolinebreak), (F_\Rea, \mu\restriction_F\nolinebreak)) \in \mathcal{W}$. Then by Lemma \ref{lem:manyWidePairs}, $\mathcal{G}$ is comeager in $\sigma\KK$. It follows from Lemma \ref{lem:perturbationArgument} that for every $\mu \in \PP$ and every finite-dimensional subspace $E \subseteq X_\mu$ and $\gamma > 0$, there exists a finite-dimensional subspace $E' \subseteq V$ such that  $\mu\restriction_{E'} \in \Norm(E')$ and $E \subseteq_\gamma E'_\Rea$; we deduce that for every $\mu \in \mathcal{G}$, the set $\{F \subseteq X_\mu \setsep F \in \Fr_\varepsilon(\KK)\}$ is cofinal in $X_\mu$.

    \smallskip

    By Proposition \ref{prop:AmalgamationsAndGurarii}, $\isomtrclass[\PP]{X}$ is comeager in $\sigma\KK$. Since $\mathcal{G}$ is also comeager, there exists $\mu \in \mathcal{G}$ such that $X_\mu \equiv X$. Thus, the set $\{F \subseteq X \setsep F\in \Fr_\varepsilon(\KK)\}$ is cofinal in $X$. But by Corollary \ref{cor:equivAmalgGFPairs}, the latter set is contained in $\Fr_{2\varepsilon}(X)$, which is hence also cofinal in $X$.
\end{proof}

\begin{theorem}\label{thm:CofFraisseCorresp}
    $X \mapsto \closedAge{X}$ induces a bijection between the collection of all cofinally Fra\"iss\'e Banach spaces (considered up to isometry) and the collection of all cofinally Fra\"iss\'e classes of finite-dimensional Banach spaces.
\end{theorem}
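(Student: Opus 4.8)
The plan is to obtain Theorem \ref{thm:CofFraisseCorresp} as a restriction of the guarded Fra\"iss\'e correspondence already established in Theorem \ref{thm:GuardedFraisseCorrespondence}. That correspondence provides a bijection $\Phi \colon X \mapsto \closedAge{X}$ between guarded Fra\"iss\'e Banach spaces (up to isometry) and guarded Fra\"iss\'e classes, so it suffices to check that $\Phi$ and its inverse carry the cofinal subcollections onto each other. Concretely, I would verify the following four points: (i) every cofinally Fra\"iss\'e Banach space lies in the domain of $\Phi$; (ii) $\Phi$ sends cofinally Fra\"iss\'e spaces to cofinally Fra\"iss\'e classes; (iii) every cofinally Fra\"iss\'e class lies in the range of $\Phi$; and (iv) the preimage under $\Phi$ of a cofinally Fra\"iss\'e class is cofinally Fra\"iss\'e.

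Point (i) is exactly Proposition \ref{prop:cofinallySpaces}, which gives that cofinally Fra\"iss\'e implies guarded Fra\"iss\'e for separable Banach spaces. Point (iii) follows from the class-level implication (CAP) $\Rightarrow$ (GAP): a cofinally Fra\"iss\'e class is by definition nonempty, closed, hereditary, and satisfies (JEP) and (CAP), hence it satisfies (GAP) and is therefore a guarded Fra\"iss\'e class; surjectivity of $\Phi$ on guarded Fra\"iss\'e classes then furnishes a guarded Fra\"iss\'e space $X$ with $\closedAge{X} = \KK$. Points (ii) and (iv) are precisely the two implications of Lemma \ref{lem:CofFraisseCorresp}, which asserts that for a guarded Fra\"iss\'e Banach space $X$, the space $X$ is cofinally Fra\"iss\'e if and only if $\closedAge{X}$ is a cofinally Fra\"iss\'e class.

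Putting these together, $\Phi$ restricts to a well-defined map from cofinally Fra\"iss\'e spaces to cofinally Fra\"iss\'e classes by (i) and (ii); it is injective since $\Phi$ is already injective on guarded Fra\"iss\'e spaces; and it is surjective onto cofinally Fra\"iss\'e classes by (iii) together with (iv), since the guarded Fra\"iss\'e space $X$ produced in (iii) satisfies $\closedAge{X}=\KK$ with $\KK$ cofinally Fra\"iss\'e, hence is itself cofinally Fra\"iss\'e by Lemma \ref{lem:CofFraisseCorresp}. This yields the desired bijection.

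Since all the substantive work has been carried out in the cited results, the only real obstacle is the equivalence of Lemma \ref{lem:CofFraisseCorresp}, and in particular its harder direction \ref{it:cofinallyFraisseClass} $\Rightarrow$ \ref{it:cofinallyFraisseSpace}: there one must pass from cofinality of the amalgamation-witnessing subclass $\Fr_\varepsilon(\KK)$ at the level of the class to cofinality of $\Fr_{2\varepsilon}(X)$ inside a single generic model $X$, which relies on the wideness machinery (Lemma \ref{lem:manyWidePairs}), on the genericity statement of Proposition \ref{prop:AmalgamationsAndGurarii}, and on Corollary \ref{cor:equivAmalgGFPairs} to convert amalgamation pairs into \ref{it:gFraisseAction} pairs. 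Once Lemma \ref{lem:CofFraisseCorresp} is in hand, the proof of the theorem itself is pure bookkeeping.
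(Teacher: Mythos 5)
Your proposal is correct and takes essentially the same route as the paper: the paper's proof of Theorem \ref{thm:CofFraisseCorresp} is precisely to combine the guarded Fra\"iss\'e correspondence (Theorem \ref{thm:GuardedFraisseCorrespondence}) with Lemma \ref{lem:CofFraisseCorresp}, and the bookkeeping you spell out in points (i)--(iv) — including Proposition \ref{prop:cofinallySpaces} and the class-level implication (CAP) $\Rightarrow$ (GAP) — is exactly what the paper leaves implicit in calling the result ``immediate.''
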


\begin{proof}
    This is an immediate consequence of Theorem \ref{thm:GuardedFraisseCorrespondence} and Lemma \ref{lem:CofFraisseCorresp}.
\end{proof}

\part{Examples and connections with \texorpdfstring{$\omega$}{omega}-categorical Banach spaces}
The second part complements the first part where the general theory of guarded Fra\"iss\' e Banach spaces was developed. We present here new examples and methods how to produce them. This is another place where the notion of guarded Fra\"iss\' e Banach spaces can be appreciated by showing the potential richness of this new class which is demonstrated by particular examples which are well studied spaces in Banach space theory. We remark this is in contrast to the stronger notion of Fra\"iss\' e Banach spaces from \cite{FLT} where it is conjectured that the Gurari\u{\i} space and $L_p([0,1])$, for $p\neq 4,6,8,\ldots$ are the only separable Fra\"iss\' e Banach spaces.

The key idea is the connection of guarded Fra\"iss\' e Banach spaces with the so-called $\omega$-categorical Banach spaces. The latter is a class of Banach spaces studied in model theory which however admits many equivalent definitions and can be presented quite simply without using notions from logic. We take into our advantage the existence of rich literature and results on this subject to obtain our new examples.

This part consists of two sections. Section~\ref{sec:categoricity} introduces $\omega$-categorical Banach spaces and their relation to guarded Fra\"iss\'e Banach spaces and Section~\ref{sec:L_pL_q} completely characterizes for which values $p,q\in\left[1,\infty\right)$ $L_p(L_q)$ is a guarded Fra\"iss\'e Banach space, in particular it provides the new examples.
\section{Basic definitions and preliminary facts}\label{sec:categoricity}

In this section, we shall work with $\omega$-categorical Banach spaces since they are, as mentioned above, an important source of guarded Fra\"iss\' e Banach spaces, or more precisely, their ages are sources of guarded Fra\" iss\' e classes. As the informed reader may guess, these are separable Banach spaces whose theory in a certain metric logic has a unique, up to linear isometry, separable model. Since we do not assume any knowledge of logic, which is beyond the scope of this paper, we provide an alternative definition of these spaces.
The interested reader is referred to \cite{ModelTheoryForMS} and \cite{Ya09} for information about the model theory of Banach spaces.

\begin{definition}Given a metric space $M$ and an action of a group $G$ on $M$ by isometries, for $x\in M$ its \emph{orbit closure} is the set $q(x):=\overline{\{gx\setsep g\in G\}}$. By $M\sslash G$ we denote the set $\{q(x)\setsep x\in M\}$ endowed with the metric $\rho(q(x),q(y)):=\inf\{d(z,z')\setsep z\in q(x), z'\in q(y)\}$. Then it is well-known and easy to prove that $M\sslash G$ is a metric space and the quotient map $q:M\to M\sslash G$ is open.
\end{definition}

The notion below does not depend on the choice of the distance on $X^n$, but for the purpose of this paper when the distance on $X^n$ is not specified we understand the $\ell_\infty$ distance given as $d(x,y):=\max\{\|x_i-y_i\|\setsep i = 1,\ldots,n\}$ for $x,y\in X^n$.

\begin{definition}\label{def:categorical}
Let $X$ be an infinite-dimensional separable Banach space. Then $X$ is \emph{$\omega$-categorical} if the action $\Iso(X)\curvearrowright B_X$ is \emph{approximately oligomorphic}, that is, the quotient space $(B_X)^n\sslash \Iso(X)$ is compact for every $n\in\Nat$.
\end{definition}

 See e.g. \cite{BYU07}, where this is attributed to Henson, for the proof that Definition~\ref{def:categorical} is equivalent with the standard definition of $\omega$-categorical Banach spaces. In fact, the proof follows from \cite[Theorem 12.10 and Corollary 12.11]{ModelTheoryForMS}. For the notion of approximately oligomorphic action we refer the interested reader to \cite{YaTs16} where more details may be found.

 We note here that given a Banach space $X$ and $n\in\Nat$, the quotient space $(B_X)^n\sslash \Iso(X)$ is complete (see e.g. \cite[comment below Definition 2.1]{YaTs16}) and so $(B_X)^n\sslash \Iso(X)$ is compact if and only if it is totally bounded.

\begin{example}\label{ex:omegaCat}Examples of $\omega$-categorical Banach spaces are the following:
\begin{itemize}
    \item $\ell_2$ (this is folklore and there are many possible ways of seeing this: using the model-theoretic equivalent definition mentioned above, it easily follows from the parallelogram law, below we mention even the more general well-known fact that any $L_p([0,1])$ space is $\omega$-categorical; it also follows from the fact that $\ell_2$ is Fra\"iss\'e, see \cite[Example 2.3]{FLT} and the fact that Fra\"iss\'e Banach spaces are $\omega$-categorical, see \cite[Theorem 5.13]{FeRV23})
    \item the Gurari\u{\i} space (see \cite[Theorem 2.3]{YaHen17})
    \item $L_p([0,1])$ for $p\in[1,\infty)$ (see \cite[Proposition 17.4]{ModelTheoryForMS} due to which it is $\omega$-categorical as a Banach lattice and \cite[Corollary 12.13]{ModelTheoryForMS} by which this implies it is also $\omega$-categorical Banach space)
    \item $L_p([0,1]; L_q([0,1]))$ for any $1\leq p,q <\infty$ (if $p\neq q$, by \cite[Proposition 2.6]{HenRay11} it is $\omega$-categorical as a Banach lattice and so using \cite[Corollary 12.13]{ModelTheoryForMS} it is also $\omega$-categorical Banach space; if $p=q$ then it is easy to see that $L_p([0,1]; L_p([0,1]))$ is isometric to $L_p([0,1]^2)$ which in turn is isometric to $L_p([0,1])$ and $L_p([0,1])$ is $\omega$-categorical as noticed above)
    
    \item $C(2^\Nat)$ (this was proved by Henson, but since no direct reference is available, we present (a sketch of) a different simple proof): We fix $n \in \Nat$ and $\varepsilon > 0$ and show that there exists a finite $\varepsilon$-net in $(B_X)^n\sslash \Iso(X)$, where $X = C(2^\Nat)$. In this argument, elements of $(B_X)^n$ will be identified with continuous maps $2^\Nat \to [-1, 1]^n$. Fix a finite $\varepsilon$-net $D \subseteq [-1, 1]^n$. For every $\varnothing \neq A  \subseteq D$, fix a continuous map $f_A \colon 2^\Nat \to [-1, 1]^n$ having range exactly $A$. We will show that classes of $f_A$'s in $(B_X)^n\sslash \Iso(X)$ form an $\varepsilon$-net, and for this, we fix an arbitrary $f \in (B_X)^n$. Then $2^\Nat = \bigcup_{a \in D} f^{-1}(\Ball(a, \varepsilon))$ (here, $\Ball(a, \varepsilon)$ denotes the open ball for the $\ell_\infty$ metric on $[-1, 1]^n$), so using compactness and total zero-dimensionality, we may find clopen sets $C_a \subseteq f^{-1}(\Ball(a, \varepsilon))$, $a \in D$, such that $2^\Nat = \bigsqcup_{a \in D} C_a$. Let $A \coloneq \{a \in D \mid C_a \neq \varnothing\}$. Since $(C_a)_{a \in A}$ and $(f_A^{-1}(\{a\}))_{a \in A}$ are partitions of $2^\Nat$ into nonempty clopen sets, we can find a homeomorphism $\phi \colon 2^\Nat \to 2^\Nat$ such that for every $a \in A$, $\phi(C_a) = f_A^{-1}(\{a\})$. For every $x \in C_a$ we have $f_A\circ \phi(x) = a$ and $f(x) \in \Ball(a, \varepsilon)$; thus, defining $T \in \Iso(X)$ by $T(g) = g\circ \phi$ for all $g \in X$, we have $d(T(f_A), f) < \varepsilon$.
\end{itemize}
\end{example}

The main result of this section is the following.

\begin{theorem}\label{thm:omegaCategoricalIsFraisse}
If a separable Banach space $X$ is $\omega$-categorical, then $\Age(X)$ is guarded Fra\"iss\'e (and in particular, closed) and  $\Flim(X)$ is an $\omega$-categorical space.
\end{theorem}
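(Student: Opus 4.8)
The plan is to verify that $\KK := \Age(X)$ is a guarded Fra\"iss\'e class and then invoke the Fra\"iss\'e correspondence. Since the age of any separable Banach space is hereditary and satisfies (JEP), and $\{0\} \in \Age(X)$, the only clauses requiring work are that $\KK$ is \emph{closed} in $\BM$ (so that $\Age(X) = \closedAge{X}$) and that $\KK$ satisfies (GAP). Once these hold, Proposition \ref{prop:AmalgamationsAndGurarii} (equivalently Theorem \ref{thm:GuardedFraisseCorrespondence}) produces the unique guarded Fra\"iss\'e space $Y = \Flim(\KK) = \Flim(X)$ with $\closedAge{Y} = \KK$, and it then remains to check that $Y$ is $\omega$-categorical. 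Throughout, I fix a finite-dimensional $E \subseteq X$ with a basis $\bar e = (e_1, \dots, e_m) \in (B_X)^m$, and write $q$ for the open continuous orbit map onto the compact space $(B_X)^m \sslash \Iso(X)$, with $\rho$ its quotient metric.

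\emph{Closedness.} First I would show $\Age(X)$ is closed. Let $E_k \in \Age(X)$ with $E_k \to E'$ in $\BM$. Choosing bases, I obtain tuples $\bar a^k \in (B_X)^m$ spanning isometric copies of $E_k$ inside $X$ that are $K_k$-equivalent to a fixed basis of $E'$, with $K_k \to 1$. By oligomorphy, $(B_X)^m \sslash \Iso(X)$ is compact, so after passing to a subsequence and composing with suitable isometries $T_j \in \Iso(X)$, the tuples $T_j \bar a^{k_j}$ converge in $(B_X)^m$ to some $\bar a$. Since each $T_j \bar a^{k_j}$ is $K_{k_j}$-equivalent to the basis of $E'$ and $K_{k_j} \to 1$, the limit $\bar a$ is $1$-equivalent to it, i.e. $e_i \mapsto a_i$ is an isometry of $E'$ onto $\Span \bar a \subseteq X$; hence $E' \in \Age(X)$ (Lemmas \ref{lem:perturbationArgument} and \ref{lem:openEquivalence} govern the passage from approximate to exact equivalence).

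\emph{(GAP).} The crux is an \emph{approximate isolation} property — the metric Ryll--Nardzewski phenomenon, and the place where $\omega$-categoricity is used essentially: for every finite-dimensional $E \subseteq X$ and every $\varepsilon > 0$ there exist a finite-dimensional $F$ with $E \subseteq F \subseteq X$ and $\delta > 0$ such that every $\iota \in \Emb_\delta(F, X)$ satisfies $\rho(q(\iota(\bar e)), q(\bar e)) < \varepsilon$; informally, the orbit-type of $E$ in $X$ is pinned down, up to $\varepsilon$, by the isometric type of a finite enlargement $F$. I would extract this from compactness of the quotient spaces, writing the realizable restricted types $R(F) := \{q(\iota(\bar e)) \setsep \iota \in \Emb(F, X)\}$ as a downward-directed family of compact sets whose intersection along a chain $F \nearrow X$ collapses to $\{q(\bar e)\}$; this is the main obstacle and coincides with the isolation of types for $\omega$-categorical structures (see \cite{ModelTheoryForMS}). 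Granting it, (GAP) follows by amalgamating \emph{inside} $X$: with the guard $F$ (and $\phi$ the inclusion $E \to F$), given $G, H \in \KK$, $\psi_G \in \Emb_\delta(F,G)$ and $\psi_H \in \Emb_\delta(F,H)$, pick isometric embeddings $\zeta_G : G \to X$ and $\zeta_H : H \to X$, so that $\zeta_G \psi_G, \zeta_H \psi_H \in \Emb_\delta(F,X)$. By isolation both restrict $E$ to an orbit within $\varepsilon/2$ of $q(\bar e)$, hence within $\varepsilon$ of each other, so there are $S, T \in \Iso(X)$ with $\|T\zeta_G \psi_G|_E - S\zeta_H \psi_H|_E\| < \varepsilon$ (after the usual renormalization). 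Taking $K := T\zeta_G(G) + S\zeta_H(H) \subseteq X$ together with $\iota_G := T\zeta_G$ and $\iota_H := S\zeta_H$ witnesses (GAP); note that applying isometries to \emph{both} $G$ and $H$ is what replaces the homogeneity that $X$ need not possess.

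\emph{$\omega$-categoricity of the limit.} Finally I would show $Y = \Flim(X)$ is $\omega$-categorical, i.e. $(B_Y)^n \sslash \Iso(Y)$ is compact; since it is complete, it suffices to prove it totally bounded. The idea is that in the guarded Fra\"iss\'e space $Y$ the orbit of an $n$-tuple is, by condition \ref{it:gFraisseAction} (approximate ultrahomogeneity), determined up to $\varepsilon$ by its \emph{guarded type} — the isometric type of the pair consisting of the span of the tuple inside a guard $F$ — which is data intrinsic to $\KK$. Oligomorphy of $X$ yields, for each $n$ and $\varepsilon$, only finitely many $n$-types up to $\varepsilon$, each isolated (by the previous step) through a guard of bounded dimension; since $\Age(Y) = \Age(X) = \KK$, the same finitely many guarded types are available in $Y$, and by approximate ultrahomogeneity of $Y$ they $\varepsilon$-cover all $\Iso(Y)$-orbits of $n$-tuples, whence total boundedness. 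The delicate point — the second main obstacle — is the bookkeeping identifying the orbit-types realized in $Y$ with those controlled by $X$; for this I would attach an intrinsic type-space to $\KK$ and argue that oligomorphy of $X$ forces this type-space to be compact, a property then inherited by $Y$ through the common age and its guarded homogeneity.
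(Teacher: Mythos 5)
Your closedness step is essentially sound (it is the finite-dimensional case of the paper's Theorem \ref{thm:CatImpliesSRU}, which proves the stronger statement that $X$ is relatively universal), but the core of your proof of (GAP) rests on a claim that is false. Your ``approximate isolation'' property --- for every finite-dimensional $E \subseteq X$ and $\varepsilon>0$ there are $F \supseteq E$ and $\delta>0$ such that \emph{every} $\iota \in \Emb_\delta(F,X)$ satisfies $\rho(q(\iota(\bar e)), q(\bar e)) < \varepsilon$ --- is, up to constants depending only on the basis $\bar e$, exactly condition \ref{it:gFraisseAction} for $X$: if all restrictions $\iota\restriction_E$ land $\varepsilon$-close to $q(\bar e)$ in the orbit space, then any two of them are $2\varepsilon$-close modulo $\Iso(X)$, i.e.\ $\Iso(X)$ acts almost transitively on $\{\iota\restriction_E \setsep \iota \in \Emb_\delta(F,X)\}$, and conversely. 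So your argument, if correct, would prove that every $\omega$-categorical space is itself guarded Fra\"iss\'e. The paper itself refutes this: $L_p(L_2)$ with $p \neq 2$ is $\omega$-categorical (Example \ref{ex:omegaCat}) but is \emph{not} guarded Fra\"iss\'e (Corollary \ref{cor:LpLq}); indeed $\Flim(L_p(L_2)) = L_p \neq L_p(L_2)$. This is precisely why the theorem is stated about $\Age(X)$ and $\Flim(X)$ rather than about $X$. Concretely, your directed-intersection argument fails at the last step: an orbit-type in $\bigcap_F R(F)$ only gives a tuple $\bar a$ that extends to isometric copies of arbitrarily large finite-dimensional pieces of $X$ (equivalently, by relative universality, to the image of $\bar e$ under a possibly \emph{non-surjective} isometric embedding $X \to X$); since orbits are taken under surjective isometries, such $\bar a$ need not lie in the orbit closure of $\bar e$, so the intersection does not collapse to $\{q(\bar e)\}$.

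What $\omega$-categoricity actually yields, through compactness of $\Emb_C(E,X)\sslash\Iso(X)$ (Proposition \ref{prop:CompacityEmb}), is much weaker: among any \emph{infinite} family of embeddings of $E$ into members of the age, some \emph{two} (not all) can be amalgamated $\varepsilon$-closely --- this is the $\omega$-amalgamation property of $\closedAge{X}$ (Lemma \ref{lem:catImpliesCountableAmalg}). The genuine work in the paper is bridging the gap between ``two among infinitely many'' and the full (GAP), which is done by contradiction via the binary-tree construction of Lemma \ref{lem:treeConstruction} and Proposition \ref{prop:omegaAmalg} (with relative universality plus an ultraproduct argument, Theorem \ref{thm:gASufficient}, as an alternative route); your proposal contains no substitute for this step. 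Your final step inherits the same flaw, since it again invokes isolation of types through guards in $X$; the paper's Proposition \ref{prop:flimSepCat} instead proves total boundedness of $\Emb_C(E,\Flim(X))\sslash\Iso(\Flim(X))$ by combining the $\omega$-amalgamation property of the common age with the guarded homogeneity (condition \ref{it:gFraisseFourth}) of the \emph{limit} space, never asserting any homogeneity-type property of $X$ itself.
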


Actually, we shall prove even a slightly more general results. Namely,
\begin{itemize}
    \item we identify two different conditions guaranteeing that $\closedAge{X}$ is guarded Fra\"iss\'e (see Theorem~\ref{thm:gASufficient}),
    \item we observe that both of those two conditions are satisfied by $\omega$-categorical spaces (see Theorem~\ref{thm:CatImpliesSRU} and Lemma~\ref{lem:catImpliesCountableAmalg}),
    \item we observe that one of those two conditions imply that $\Flim(X)$ is $\omega$-categorical (see Proposition~\ref{prop:flimSepCat}).
\end{itemize}

Apart from the main result mentioned above we also find a sufficient condition for an $\omega$-categorical space to be guarded Fra\"iss\'e, which we shall use in Section~\ref{sec:L_pL_q}.  We remark that a similar result to Theorem~\ref{thm:sufficent_general} independently appeared in \cite[Theorem 5.13]{FeRV23}, where the idea is attributed to Ben Yaacov.

\begin{theorem}\label{thm:sufficent_general}
    Let $X$ be an $\omega$-categorical separable Banach space such that there is a cofinal family $\mathcal{F}$ of finite-dimensional subspaces of $X$ (see Definition~\ref{def:CofFraisse}) satisfying that for any $F\in\mathcal{F}$ and $\eta>0$, $\Iso(X)\curvearrowright\Emb(F,X)$ is $\eta$-transitive.
    Then $X$ is cofinally Fra\"iss\'e (and therefore also guarded Fra\"iss\'e).
\end{theorem}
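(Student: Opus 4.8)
The plan is to show that for every $\varepsilon > 0$ one has $\mathcal{F} \subseteq \Fr_\varepsilon(X)$; since $\mathcal{F}$ is cofinal in $X$ and cofinality is preserved under enlarging a family, this gives that each $\Fr_\varepsilon(X)$ is cofinal, i.e. that $X$ is cofinally Fra\"iss\'e, and the final ``guarded Fra\"iss\'e'' clause then follows from Proposition~\ref{prop:cofinallySpaces}. So fix $F \in \mathcal{F}$ and $\varepsilon > 0$; the goal is to produce $\delta > 0$ such that $\Iso(X) \curvearrowright \Emb_\delta(F, X)$ is $\varepsilon$-transitive. Note that the hypothesis only provides $\eta$-transitivity on the \emph{exact} embeddings $\Emb(F, X)$; the whole difficulty is to upgrade this to approximate embeddings, and this is precisely where $\omega$-categoricity enters.

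I would argue by contradiction. If no $\delta$ works, then taking $\delta = 1/n$ yields, for each $n$, maps $\phi_n, \psi_n \in \Emb_{1/n}(F, X)$ with $\inf_{T \in \Iso(X)} \|\psi_n - T \circ \phi_n\| \geq \varepsilon$. Fix a basis $(u_1, \ldots, u_d)$ of $F$ consisting of unit vectors and encode each pair by the tuple $c_n := (\phi_n(u_1), \ldots, \phi_n(u_d), \psi_n(u_1), \ldots, \psi_n(u_d))$, whose coordinates all lie in $e^{1/n} B_X \subseteq e B_X$. Since $\Iso(X)$ commutes with scalar multiplication, $\omega$-categoricity gives that $(eB_X)^{2d} \sslash \Iso(X)$ is compact; hence, after passing to a subsequence indexed by $n_k$, the classes $q(c_{n_k})$ converge to some $q(c)$ with $c = (a_1, \ldots, a_d, b_1, \ldots, b_d) \in (eB_X)^{2d}$. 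Convergence in the quotient metric lets me choose isometries $T_k$ such that $T_k^{-1}\phi_{n_k}(u_i) \to a_i$ and $T_k^{-1}\psi_{n_k}(u_i) \to b_i$ for every $i$; equivalently, the operators $T_k^{-1}\phi_{n_k}$ and $T_k^{-1}\psi_{n_k}$ converge in operator norm to the linear maps $\hat\phi, \hat\psi \colon F \to X$ determined by $\hat\phi(u_i) = a_i$ and $\hat\psi(u_i) = b_i$.

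The next step is to check $\hat\phi, \hat\psi \in \Emb(F, X)$: for fixed $x \in F$ one has $T_k^{-1}\phi_{n_k}(x) \to \hat\phi(x)$ while $e^{-1/n_k}\|x\| \leq \|T_k^{-1}\phi_{n_k}(x)\| \leq e^{1/n_k}\|x\|$, so $\|\hat\phi(x)\| = \|x\|$ (this is also a direct instance of Lemma~\ref{lem:perturbationArgument}), and likewise for $\hat\psi$. Finally I would transfer the lower bound to the limit to obtain the contradiction. For any $S \in \Iso(X)$, since $T_k^{-1}$ is an isometry, $\|T_k^{-1}\psi_{n_k} - S \circ T_k^{-1}\phi_{n_k}\| = \|\psi_{n_k} - (T_k S T_k^{-1}) \circ \phi_{n_k}\| \geq \varepsilon$, because $T_k S T_k^{-1} \in \Iso(X)$; letting $k \to \infty$ gives $\|\hat\psi - S \circ \hat\phi\| \geq \varepsilon$. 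As $S$ was arbitrary, $\inf_{S \in \Iso(X)} \|\hat\psi - S \circ \hat\phi\| \geq \varepsilon$, which contradicts the assumed $\eta$-transitivity of $\Iso(X) \curvearrowright \Emb(F, X)$ applied to $\hat\phi, \hat\psi$ with any $\eta < \varepsilon$.

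I expect the main obstacle to be the compactness bookkeeping in the middle step: turning abstract convergence of the classes $q(c_{n_k})$ in $(eB_X)^{2d}\sslash\Iso(X)$ into honest isometries $T_k$ that \emph{simultaneously} align both halves of the tuple, and verifying that the two resulting limits $\hat\phi, \hat\psi$ are genuine exact embeddings of $F$. The remaining ingredients — the reduction to $\mathcal{F} \subseteq \Fr_\varepsilon(X)$, the scaling remark deducing compactness of $(eB_X)^{2d}\sslash\Iso(X)$ from $\omega$-categoricity, and the transfer of the lower bound through the conjugation $T_k S T_k^{-1}$ — are routine.
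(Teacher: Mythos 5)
Your proof is correct. It relies on the same underlying mechanism as the paper's — $\omega$-categoricity provides compactness of quotients by $\Iso(X)$, which is then played against the transitivity hypothesis on exact embeddings — but it is organized along a genuinely different route. The paper first isolates a standalone perturbation lemma (Lemma~\ref{lemma:PerturbationIsometry}): for every $\varepsilon$ there is $\delta$ such that every $f \in \Emb_\delta(F,X)$ lies within $\varepsilon/3$, \emph{in operator norm}, of some exact embedding; this is proved by a nested-closed-sets argument inside the compact quotient $\Emb_2(F,X)\sslash\Iso(X)$ of Proposition~\ref{prop:CompacityEmb}, using that the ``bad set'' of embeddings far from all exact embeddings is $\Iso(X)$-invariant. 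The theorem then follows by approximating $\phi,\psi \in \Emb_\delta(F,X)$ by exact embeddings and a three-term triangle inequality. You instead run a single sequential-compactness contradiction on \emph{pairs}: your tuple encoding $c_n$ and the scaling remark essentially re-derive, at the level of $(B_X)^{2d}$, the same compactness transfer that the paper uses to prove (Cat1) $\Rightarrow$ (Cat2) in Proposition~\ref{prop:CompacityEmb}, and the limit pair $(\hat\phi,\hat\psi)$ consists of exact embeddings that remain $\varepsilon$-separated modulo $\Iso(X)$, contradicting the hypothesis. The step you flag as the main obstacle does go through: since the action is by isometries, the quotient distance between the orbit closures of $c_{n_k}$ and $c$ equals the infimum over $g \in \Iso(X)$ of the distance from $g\,c_{n_k}$ to $c$, so the aligning isometries $T_k$ exist and act on both halves of the tuple simultaneously; convergence on a basis upgrades to operator-norm convergence because $F$ is finite-dimensional; and the conjugation $T_k S T_k^{-1}$ transfers the lower bound to the limit. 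As for what each approach buys: the paper's decomposition yields a stronger, reusable intermediate fact (closeness to an exact embedding \emph{without} composing with any isometry), while your argument is self-contained, avoids formulating a separate lemma, and only produces closeness up to an isometry — which is all the theorem needs.
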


Let us now proceed to our arguments leading to the proofs of the above mentioned results. We start with the following characterization of $\omega$-categorical spaces.

\begin{proposition}\label{prop:CompacityEmb}
Let $X$ be an infinite-dimensional separable Banach space. Then the following conditions are equivalent.
\begin{enumerate}[label=(Cat\arabic*)]
    \item\label{it:Cat} $X$ is $\omega$-categorical.
    \item\label{it:EmbCpct} For every finite-dimensional normed space $E$ and every $C \geqslant 0$, the quotient $\Emb_C(E, X)\sslash \Iso(X)$ is compact.
    \item\label{it:EmbCpctRelaxed} There is $C'>0$ such that for every $C\in(0,C')$ and $E\in\closedAge{X}$, the quotient $\Emb_C(E, X)\sslash \Iso(X)$ is totally bounded.
\end{enumerate}
\end{proposition}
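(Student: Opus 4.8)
The plan is to establish the cyclic chain of implications \ref{it:Cat} $\Rightarrow$ \ref{it:EmbCpct} $\Rightarrow$ \ref{it:EmbCpctRelaxed} $\Rightarrow$ \ref{it:Cat}. The middle implication \ref{it:EmbCpct} $\Rightarrow$ \ref{it:EmbCpctRelaxed} is a triviality: taking $C' = 1$, for every $C \in (0, 1)$ and every $E \in \closedAge{X}$ (which is in particular a finite-dimensional normed space), condition \ref{it:EmbCpct} asserts that $\Emb_C(E, X) \sslash \Iso(X)$ is compact, hence totally bounded.

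For \ref{it:Cat} $\Rightarrow$ \ref{it:EmbCpct}, I would fix a finite-dimensional normed space $E$ of dimension $n$ and $C \geqslant 0$, choose a basis $(e_1, \dots, e_n)$ of $E$ with $\|e_i\|_E = 1$, and consider $\Psi \colon \Emb_C(E, X) \to (B_X)^n$, $T \mapsto (e^{-C}Te_1, \dots, e^{-C}Te_n)$. Writing $q$ for the quotient map $(B_X)^n \to (B_X)^n \sslash \Iso(X)$, the point is that $\Psi$ is an $\Iso(X)$-equivariant bi-Lipschitz homeomorphism onto its image $K$ (using equivalence of the operator norm and the norm $\max_i\|\cdot\, e_i\|$ on the finite-dimensional space $\mathcal{L}(E, X)$), and that $K$ is cut out inside $(B_X)^n$ by the closed conditions $e^{-2C}\|u\|_E \leqslant \|\sum_i u_i y_i\| \leqslant \|u\|_E$ for $u = \sum_i u_i e_i \in E$. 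Thus $K$ is closed and $\Iso(X)$-invariant, so $q^{-1}(q(K)) = K$ is closed and $q(K)$ is closed in the compact space $(B_X)^n \sslash \Iso(X)$, hence compact; since an equivariant bi-Lipschitz map descends to a bi-Lipschitz map of orbit spaces, $\Emb_C(E, X) \sslash \Iso(X)$ is bi-Lipschitz equivalent to $q(K)$ and therefore compact.

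The heart of the matter is \ref{it:EmbCpctRelaxed} $\Rightarrow$ \ref{it:Cat}. As each $(B_X)^n \sslash \Iso(X)$ is complete, it suffices to prove total boundedness for every $n$, which I would do by contradiction: assume there are $\varepsilon_0 > 0$ and tuples $(\bar{x}^{(k)})_k$ in $(B_X)^n$ pairwise more than $\varepsilon_0$ apart for the orbit metric $\rho(q(\bar x), q(\bar y)) = \inf_{U \in \Iso(X)} \max_i \|U x_i - y_i\|$. To each $\bar{x}^{(k)}$ I attach the seminorm $\mu_k$ on $\mathbb{R}^n$ given by $\mu_k(v) = \|\sum_i v_i x_i^{(k)}\|$; these satisfy $\mu_k(e_i) \leqslant 1$, whence $0 \leqslant \mu_k \leqslant \|\cdot\|_1$ and $\mu_k$ is $1$-Lipschitz for $\|\cdot\|_1$, so by Arzel\`a--Ascoli I pass to a subsequence along which $\mu_k \to \mu_\infty$ pointwise, $\mu_\infty$ again such a seminorm. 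Writing $\mathbb{R}^n = N \oplus P$ with $N = \ker \mu_\infty$ and $\mu_\infty\restriction_P$ a norm, and letting $T_k \colon \mathbb{R}^n \to X$ be the linear map with $T_k e_i = x_i^{(k)}$, the two key facts are: (a) for $v \in N$, $\|T_k v\| = \mu_k(v) \to \mu_\infty(v) = 0$, so $T_k\restriction_N \to 0$; and (b) on $P$, $\|T_k w\| = \mu_k(w) \to \mu_\infty(w)$ uniformly on the sphere of $(P, \mu_\infty\restriction_P)$, so $T_k\restriction_P \in \Emb_C(E, X)$ for all large $k$, where $E := (P, \mu_\infty\restriction_P)$ lies in $\closedAge{X}$, being finitely representable in $X$ by (b).

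Applying \ref{it:EmbCpctRelaxed} to this $E$ and a fixed $C \in (0, C')$, the quotient $\Emb_C(E, X) \sslash \Iso(X)$ is totally bounded, so infinitely many of the orbits of the $T_k\restriction_P$ lie within $\delta := \varepsilon_0/2$ of one another; choosing two such indices $k, l$ also large enough that $\|T_k v_i\|$ and $\|T_l v_i\|$ are tiny (where $e_i = v_i + w_i$ with $v_i \in N$, $w_i \in P$), I obtain $U, U' \in \Iso(X)$ with $\|U T_k\restriction_P - U' T_l\restriction_P\| < \delta$ and conclude, using (a) and $\mu_\infty(w_i) = \mu_\infty(e_i) \leqslant 1$, that $\max_i \|U x_i^{(k)} - U' x_i^{(l)}\| < \varepsilon_0$, contradicting the separation. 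The main obstacle — and the reason (a)–(b) are arranged exactly this way — is the degenerate case $\operatorname{rank}\mu_\infty < n$: there the spaces $(\mathbb{R}^n, \mu_k)$ need not converge in $\BM$ at all (the dimension drops in the limit, precisely the phenomenon flagged after Lemma \ref{lemma:convPBM}), so one cannot feed an $n$-dimensional space into \ref{it:EmbCpctRelaxed}. The collapse $T_k\restriction_N \to 0$ is exactly what lets me discard the kernel directions and reduce to the genuine lower-dimensional embedding space $E$, to which \ref{it:EmbCpctRelaxed} does apply.
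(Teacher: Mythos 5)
Your proposal is correct: all three implications go through as written, and the middle one and the step \ref{it:Cat} $\Rightarrow$ \ref{it:EmbCpct} coincide in substance with the paper's proof (the paper runs the identification in the opposite direction, via the map $f \mapsto T_f$ from a closed invariant subset $A \subseteq (B_X)^n$ onto $\Emb_C(E,X)$, and concludes by continuous surjection rather than bi-Lipschitz equivalence, but the core idea --- realize $\Emb_C(E,X)$, rescaled by $e^{-C}$, as a closed $\Iso(X)$-invariant subset of $(B_X)^n$ and use openness of the quotient map --- is identical).

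Where you genuinely diverge from the paper is in the key implication \ref{it:EmbCpctRelaxed} $\Rightarrow$ \ref{it:Cat}. The paper works inside $X$: it passes to a subsequence along which the spans $E_k = \Span\{y_k(i) \setsep i \leq n\}$ have constant dimension $d$ and converge in the Banach--Mazur compactum $\BM_d$ to some $E \in \closedAge{X}$, then uses surjective $e^\delta$-isomorphisms $\eta_k \colon E_k \to E$ together with a (normalized) basis of $E$ and its dual functionals to compare tuples, applying \ref{it:EmbCpctRelaxed} to the maps $\eta_k^{-1} \in \Emb_\delta(E,X)$. You instead pull the data back to $\Rea^n$ as seminorms $\mu_k$, extract a pointwise limit seminorm $\mu_\infty$ by Arzel\`a--Ascoli, and split $\Rea^n = N \oplus P$ with $N = \ker \mu_\infty$; degeneracy (near linear dependence of the tuples, or dimension drop in the limit --- exactly the phenomenon after Lemma \ref{lemma:convPBM}) is absorbed by the collapse $T_k\restriction_N \to 0$, while \ref{it:EmbCpctRelaxed} is applied to $E = (P, \mu_\infty\restriction_P)$ via the restrictions $T_k\restriction_P$. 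The two mechanisms buy different things: the paper's route avoids seminorms and kernel decompositions entirely, since spans carry honest norms and constant dimension along a subsequence, but it needs the compactness of $\BM_d$ and dual-functional estimates; your route is more elementary (equi-Lipschitz functions on $\Rea^n$ plus Arzel\`a--Ascoli replace the Banach--Mazur compactum) and makes the treatment of the degenerate directions explicit rather than implicit, at the cost of having to verify that $(P,\mu_\infty\restriction_P)$ is finitely representable in $X$ and to track the kernel terms $\|T_k v_i\|$, $\|T_l v_i\|$ in the final estimate. Both correctly reduce two isometries $U, U'$ to one in the quotient metric, and both reach the same contradiction with $\varepsilon$-separation.
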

\begin{proof}
\ref{it:Cat}$\Rightarrow$\ref{it:EmbCpct} Fix $C \geqslant 0$, $E$ be a finite-dimensional normed space, and $e=(e_1, \ldots, e_n)$ an Auerbach basis of $E$. For every family $f = (f_1, \ldots, f_n) \in X^n$, denote by $T_f \colon E \to X$ the unique operator such that for all $1\leqslant i \leqslant n$, $T_f(e_i) = e^C f_i$. The mapping $X^n \to \mathcal{L}(E, X)$, $f \mapsto T_f$ is linear, and it is also continuous; indeed, we have, for all $(a_1, \ldots, a_n) \in \mathbb{R}^n$,
$$\left\|T_f\left(\sum_{i=1}^n a_i e_i\right)\right\| = e^C\cdot\left\|\sum_{i=1}^n a_i f_i\right\| \leqslant e^C\cdot\sum_{i=1}^n |a_i| \cdot \|f_i\|,$$
and since $e$ is Auerbach, we have for every $1 \leqslant i \leqslant n$ that $|a_i| \leqslant \|\sum_{j=1}^n a_j e_j\|$, so:
$$\left\|T_f\left(\sum_{i=1}^n a_i e_i\right)\right\| \leqslant e^C \cdot \left(\sum_{i=1}^n \|f_i\|\right)\left\|\sum_{i=1}^n a_i e_i\right\|,$$
showing that $\|T_f\|\leqslant e^C \cdot \sum_{i=1^n}\|f_i\|$. Now let $A:=\{f \in X^n \setsep T_f \in \Emb_C(E, X)\}$. Since $\Emb_C(E, X)$ is closed in $\mathcal{L}(E, X)$, $A$ is closed in $X^n$; moreover, it is clearly $\Iso(X)$-invariant and contained in $(\Ball_X)^n$. Denote by $q$ the quotient map $\Ball_X^n\to\Ball_X^n\sslash\Iso(X)$. Since $q$ is open and $A\sslash\Iso(X)=q[A]$ and $q[(B_X)^n\setminus A]$ are disjoint since $A$ is $\Iso(X)$-invariant, it follows that $A \sslash \Iso(X)$ is a closed subset of $\Ball_X^n\sslash\Iso(X)$, and thus it is compact. The surjective continuous mapping $A \to \Emb_C(E, X)$, $f \mapsto T_f$ induces a surjective continuous mapping $A \sslash \Iso(X) \to \Emb_C(E, X) \sslash \Iso(X)$; hence $\Emb_C(E, X) \sslash \Iso(X)$ is the continuous image of a compact set, so it is compact.

\ref{it:EmbCpct}$\Rightarrow$\ref{it:EmbCpctRelaxed} is obvious.

\ref{it:EmbCpctRelaxed}$\Rightarrow$\ref{it:Cat} Pick $n\in\Nat$. Since $(B_X)^n\sslash \Iso(X)$ is complete, it suffices to show it is totally bounded, that is, it does not contain an infinite $\varepsilon$-separated sequence. Pick $\varepsilon>0$ and an infinite sequence $(y_k)_{k\in\Nat}\in \big((B_X)^n\big)^\Nat$. In order to get a contradiction, assume that $([y_k])$ is $\varepsilon$-separated in $(B_X)^n\sslash \Iso(X)$. Passing to a subsequence we may assume that the sequence of spaces $E_k:=\Span\{y_k(i)\setsep i\leq n\}$ converges in the Banach-Mazur distance to some $E\in\closedAge{X}$. Pick a basis $(e_i)_{i\leq \dim E}$ of the space $E$ with dual functionals $(f_i)_{i\leq \dim E}$. Fix some $\delta\in\big(0,\min\{\tfrac{C'}{2},1, \frac{\varepsilon}{e(1+\dim E)}\}\big)$. Passing to a subsequence, we may assume there are surjective $e^\delta$-isomorphisms $\eta_k:E_k\to E$, $k\in\Nat$. Since for each $i\in\{1,\ldots,\dim E\}$ and $j\in\{1,\ldots,n\}$ the sequence $\big(f_i(\eta_k(y_k(j)))\big)_{k\in\Nat}$ is bounded, passing to a subsequence we may assume that for each $i\in\{1,\ldots,\dim E\}$, $j\in\{1,\ldots,n\}$ and $k,k'\in\Nat$ we have
\[
|f_i(\eta_k(y_k(j))) - f_i(\eta_{k'}(y_{k'}(j)))| < \delta.
\]
Since $(\eta_k)^{-1}\in\Emb_{\delta}(E,X)$ for every $k\in\Nat$ and $\Emb_{\delta}(E,X)\sslash \Iso(X)$ is totally bounded by the assumption, there are $k,k'\in\Nat$, $k\neq k'$ and $T\in\Iso(X)$ with $\|T\eta_k^{-1} - \eta_{k'}^{-1}\|<\delta$. But then for every $j\in\{1,\ldots,n\}$ we obtain
\[\begin{aligned}
\|Ty_k(j) - y_{k'}(j)\| & \leq \big\|T\big(\eta_k^{-1}(\eta_k(y_k(j)))\big) - \eta_{k'}^{-1}(\eta_k(y_k(j)))\big\| + \|\eta_{k'}^{-1}\|\cdot \|\eta_k(y_k(j)) - \eta_{k'}(y_{k'}(j))\|\\
& \leq \delta\|\eta_k(y_k(j))\| + e\cdot\Big\|\sum_{i=1}^{\dim E} \big(f_i(\eta_k(y_k(j))) - f_i(\eta_{k'}(y_{k'}(j)))\big) e_i\Big\|\\
& \leq e \delta + e \cdot \dim E \cdot \max_{i=1,\ldots,\dim E} |f_i(\eta_k(y_k(j))) - f_i(\eta_{k'}(y_{k'}(j)))|\\
& \leq e\delta(1 + \dim E) < \varepsilon,
\end{aligned}\]
which contradicts that the sequence $([y_k])$ was $\varepsilon$-separated in $(B_X)^n\sslash \Iso(X)$.
\end{proof}

Now, we aim at proving that whenever $X$ is separable and $\omega$-categorical, then $\closedAge{X} = \Age(X)$. Actually, something more general holds.

\begin{definition}
Let $X$ be a separable Banach space. We say that $X$ is \textit{relatively universal} if for every separable Banach space $Y$, if $Y$ is finitely representable in $X$, then $Y$ can be isometrically embedded into $X$.
\end{definition}

\begin{lemma}\label{lem:RUSImpliesClosedAge}
Let $X$ be a relatively universal separable Banach space. Then $\Age(X)=\closedAge{X}$.
\end{lemma}
\begin{proof}
 This is an immediate consequence of the definition of relatively universal separability and of $\closedAge{X}$.
\end{proof}

The following was more-or-less observed also in \cite[Remark 5.16 (i)]{Kha21} (where however the complete proof is missing and additional assumptions are mentioned).

\begin{theorem}\label{thm:CatImpliesSRU}
Every infinite-dimensional $\omega$-categorical separable Banach space $X$ is relatively universal.
\end{theorem}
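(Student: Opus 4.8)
The plan is to use the compactness furnished by Proposition~\ref{prop:CompacityEmb} to extract, via an inverse limit, a coherent family of isometric embeddings of an exhausting chain of finite-dimensional subspaces of a given $Y$. First I would fix a separable Banach space $Y$ that is finitely representable in $X$, so that $\closedAge{Y} \subseteq \closedAge{X}$, and choose an increasing chain $E_1 \subseteq E_2 \subseteq \cdots$ of finite-dimensional subspaces of $Y$ whose union is dense in $Y$. Each $E_n$ lies in $\closedAge{X}$, that is, $\Emb_C(E_n, X) \neq \varnothing$ for every $C > 0$. The goal is to produce isometric embeddings $u_n \colon E_n \to X$ with $u_{n+1}\restriction_{E_n} = u_n$; their common extension is an isometry on the dense subspace $\bigcup_n E_n$ and hence extends to an isometric embedding $Y \to X$. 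Throughout I would exploit the $\Iso(X)$-invariant \emph{distortion functional} $\Delta(\phi) := \sup_{\|x\|=1}\big|\log\|\phi(x)\|\big|$ defined on each $\Emb_C(E, X)$: it is Lipschitz there (since $\|\phi(x)\| \in [e^{-C}, e^C]$), $\Iso(X)$-invariant, and vanishes exactly on the isometric embeddings $\Emb_0(E, X)$; being invariant and Lipschitz, it descends to a Lipschitz function $\bar\Delta$ on each quotient $\Emb_C(E,X)\sslash\Iso(X)$.

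The first step is to check that $\Emb_0(E_n, X) \neq \varnothing$, i.e.\ $E_n \in \Age(X)$. I would pick $\phi_k \in \Emb_{1/k}(E_n, X)$ and consider the classes $q(\phi_k)$ in the compact space $\Emb_1(E_n, X)\sslash \Iso(X)$ (compact by Proposition~\ref{prop:CompacityEmb}~\ref{it:EmbCpct}). Since $\bar\Delta(q(\phi_k)) = \Delta(\phi_k) \leqslant 1/k$, any cluster point $q = q(\phi)$ of the sequence satisfies $\bar\Delta(q) = 0$, whence $\phi \in \Emb_0(E_n, X)$. Consequently each $P_n := \Emb_0(E_n, X)\sslash \Iso(X)$ is a nonempty compact metric space, being the zero set of $\bar\Delta$ and hence closed in a compact quotient. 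The restriction map $\phi \mapsto \phi\restriction_{E_n}$ is continuous, $\Iso(X)$-equivariant, and sends isometric embeddings to isometric embeddings, so it induces $1$-Lipschitz bonding maps $P_{n+1} \to P_n$, making $(P_n)_n$ an inverse system of nonempty compact spaces indexed by $\Nat$. The crucial observation is that $\varprojlim P_n$ is then automatically nonempty: replacing each $P_n$ by the (nonempty, compact) intersection of the decreasing images of the $P_m$, $m \geqslant n$, yields a cofinal subsystem with \emph{surjective} bonding maps whose inverse limit is nonempty and sits inside $\varprojlim P_n$. Fixing a point of this limit gives classes $q_n = q(\psi_n) \in P_n$ with $\psi_{n+1}\restriction_{E_n} \in q(\psi_n)$ for every $n$.

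Finally I would convert these coherent classes into honest coherent embeddings by an inductive correction. Setting $u_1 := \psi_1$, and assuming an isometric $u_n$ with $q(u_n) = q_n$ has been built, I would use that $u_n$ and $\psi_{n+1}\restriction_{E_n}$ lie in the same orbit closure to find $T_{n+1} \in \Iso(X)$ with $\|T_{n+1}\circ\psi_{n+1}\restriction_{E_n} - u_n\| < 2^{-n-1}$, and put $u_{n+1} := T_{n+1}\circ \psi_{n+1}$. This keeps $u_{n+1}$ isometric with $q(u_{n+1}) = q_{n+1}$ while ensuring $\|u_{n+1}\restriction_{E_n} - u_n\| < 2^{-n-1}$. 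Then for each fixed $k$ the sequence $(u_n\restriction_{E_k})_{n \geqslant k}$ is Cauchy in $\mathcal{L}(E_k, X)$ and converges to an isometric embedding; the limits are coherent, so they assemble into a single isometry on $\bigcup_n E_n$, which extends to the sought isometric embedding $Y \to X$.

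The main obstacle, and the heart of the argument, is precisely the nonemptiness of the inverse limit. The naive approach would demand that every isometric embedding of $E_n$ extend to one of $E_{n+1}$—an extension/homogeneity property of $X$ that need not hold—and the point is to bypass this entirely by observing that, over a countable index set, an inverse limit of nonempty compact spaces is nonempty irrespective of whether the bonding maps are onto. The remaining verifications (continuity and invariance of $\Delta$, compactness of the quotients via Proposition~\ref{prop:CompacityEmb}, and the routine Cauchy estimates) are standard; note also that the finite-dimensional case of this theorem already yields $\Age(X) = \closedAge{X}$, recovering Lemma~\ref{lem:RUSImpliesClosedAge}'s conclusion for $\omega$-categorical spaces.
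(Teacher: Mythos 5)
Your proof is correct, and it runs on the same engine as the paper's, namely the compactness of the quotients $\Emb_C(E,X)\sslash\Iso(X)$ from Proposition~\ref{prop:CompacityEmb}, but it organizes the argument in a genuinely different way. The paper never produces exact embeddings before the very end: it picks approximate embeddings $S_i\in\Emb_{1/i}(E_i,X)$, uses total boundedness of the quotients plus a diagonal subsequence extraction to make the classes $[S_i\restriction_{E_k}]$ pairwise $2^{-k}$-close for $i,j\geqslant k$, then corrects by isometries $g_k\in\Iso(X)$ and takes pointwise limits; isometricity of the limit maps comes for free because the distortion constants $1/i$ tend to $0$, and the equality $\Age(X)=\closedAge{X}$ is obtained only afterwards, as a corollary of the theorem via Lemma~\ref{lem:RUSImpliesClosedAge}. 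You instead prove $\Age(X)=\closedAge{X}$ up front (the distortion functional $\Delta$ and a cluster-point argument), which lets you work with the nonempty compact quotients $P_n=\Emb(E_n,X)\sslash\Iso(X)$ and replace the diagonal argument by the standard fact that a countable inverse limit of nonempty compact metric spaces is nonempty; your final correction-by-isometries and Cauchy-limit step is then essentially identical to the paper's, and your use of ``same orbit closure'' there is legitimate, since equality $q(u_n)=q(\psi_{n+1}\restriction_{E_n})$ in the quotient is equality of the orbit closures themselves, so $u_n$ is approximable by maps $T\circ\psi_{n+1}\restriction_{E_n}$ with $T\in\Iso(X)$. Your route buys modularity and conceptual transparency: compactness is consumed through two clean, reusable facts (the zero set of $\bar\Delta$ is a nonempty compact quotient; inverse limits of nonempty compacta are nonempty), with no subsequence bookkeeping, and it recovers Lemma~\ref{lem:RUSImpliesClosedAge} for $\omega$-categorical spaces as a byproduct. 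The price is a handful of extra verifications the paper's shorter, more entangled diagonal argument avoids --- invariance and Lipschitzness of $\Delta$, and the well-definedness, equivariance and $1$-Lipschitzness of the induced restriction maps $P_{n+1}\to P_n$ --- all of which do hold, so the proposal stands as a complete alternative proof.
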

\begin{proof}
Let $Y$ be a Banach space which is finitely-representable in $X$. We choose a sequence $(E_n)_{n\in\Nat}$ of strictly increasing finite-dimensional subspaces of $Y$ such that $E_1:=E$ and $Y=\overline{\bigcup_{n\in\Nat} E_n}$. Pick a sequence $S_i\in \Emb_{1/i}(E_i,X)$, $i\in\Nat$. By Proposition~\ref{prop:CompacityEmb}, $\Emb_{1/k}(E_k,X)\sslash \Iso(X)$ is totally bounded for every $k\in\Nat$, so using a standard diagonal argument and passing to a subsequence we may assume that for every $k$ and $i,j\geq k$ we have \[d([S_i\upharpoonright E_k], [S_j\upharpoonright E_k])<2^{-k},\] where $d$ is the quotient metric. Then, by definition, we can find $g_2\in\Iso(X)$ such that $\|S_1-g_2\circ S_2\upharpoonright E\|<1/2$. In general, at the $k$-th step, we may find $g_k\in\Iso(X)$ so that $\|g_{k-1}\circ S_{k-1}-g_k\circ S_k\upharpoonright E_{k-1}\|<2^{-k+1}$. We claim that for every $k$ the map $T_k: E_k\to X$ defined by \[T_k(x):=\lim_{k\leq n\to\infty} g_n\circ S_n(x)\] is a linear isometric embedding, and for $i<j$ we have $T_i\subseteq T_j$. Indeed, for each $k$ and $x\in E_k$, the sequence $(g_n\circ S_n(x))_{n\geq k}$ is Cauchy, so $T_k(x)$ is well-defined. Moreover, for every $x,y\in E_k$ we have \[\|T_k(x)-T_k(y)\|=\lim_{n\to\infty}\|g_n\circ S_n(x)-g_n\circ S_n(y)\|=\|x-y\|\] since $g_n\circ S_n\upharpoonright E_k\in \Emb_{1/n}(E_k,X)$. The linearity of $T_k$ and the inclusion $T_k\subseteq T_l$, for $l\geq k$, are clear. Thus the linear isometric embeddings $(T_n)_{n\in\Nat}$ induce a linear isometric embedding $T:Y\to X$.
\end{proof}

Another interesting property shared by $\omega$-categorical spaces is the one, which we call ``the $\omega$-amalgamation property''.

\begin{definition}
We say $\KK\subseteq\BM$ has the \textit{$\kappa$-amalgamation property} if for every $E\in\KK$, $C > 0$, $\varepsilon>0$ and sequences $(F_\alpha)_{\alpha<\kappa}\in \KK^\kappa$ and $(f_\alpha)_{\alpha<\kappa}\in \big(\Emb_C(E,F_\alpha)\big)^\kappa$ there are $\alpha\neq \beta\in \kappa$, $G\in\KK$ and $g_\alpha\in \Emb(F_\alpha,G)$, $g_{\beta}\in\Emb(F_{\beta},G)$ satisfying $\|g_\alpha\circ f_\alpha - g_\beta\circ f_\beta\|<\varepsilon$.

We say $\KK\subseteq\BM$ is \emph{$\kappa$-Fra\"iss\'e class} if it is hereditary, closed, has the joint embedding property and the $\kappa$-amalgamation property.
\end{definition}

\begin{lemma}\label{lem:catImpliesCountableAmalg}
Let $X$ be an $\omega$-categorical separable Banach space. Then $\closedAge{X}$ has the $\omega$-amalgamation property.
\end{lemma}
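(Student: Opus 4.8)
The plan is to reduce the statement to the compactness of $\Emb_C(E,X)\sslash\Iso(X)$ furnished by Proposition~\ref{prop:CompacityEmb}. First I would record the key preliminary reduction: since $X$ is $\omega$-categorical, Theorem~\ref{thm:CatImpliesSRU} shows it is relatively universal, so by Lemma~\ref{lem:RUSImpliesClosedAge} we have $\closedAge{X}=\Age(X)$. Consequently, each of the spaces $F_n$ occurring in the $\omega$-amalgamation property, being an element of $\closedAge{X}=\Age(X)$, is isometric to a subspace of $X$; I fix isometric embeddings $j_n\in\Emb(F_n,X)$ for all $n\in\Nat$. (No separate treatment of $E$ is needed.)

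Next I would set $\phi_n:=j_n\circ f_n$ for every $n$. Using the composition convention $\Emb(F_n,X)\circ\Emb_C(E,F_n)\subseteq\Emb_C(E,X)$, each $\phi_n$ lies in $\Emb_C(E,X)$. Here $\Iso(X)$ acts on $\Emb_C(E,X)\subseteq\mathcal{L}(E,X)$ by post-composition, which is isometric for the operator norm, so $\rho$ denotes the associated quotient metric. By Proposition~\ref{prop:CompacityEmb} ((\ref{it:Cat})$\Rightarrow$(\ref{it:EmbCpct})) the quotient $\Emb_C(E,X)\sslash\Iso(X)$ is compact, hence totally bounded. A totally bounded metric space contains no infinite $\varepsilon$-separated set, so the infinite family $(q(\phi_n))_{n\in\Nat}$ must have two distinct members within distance $\varepsilon$: there exist $n\neq m$ with $\rho(q(\phi_n),q(\phi_m))<\varepsilon$.

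It then remains to unwind the quotient metric and build the amalgam. Since $\rho(q(\phi_n),q(\phi_m))<\varepsilon$, I would approximate points of the two orbit closures realizing the infimum by genuine orbit elements to obtain $T,S\in\Iso(X)$ with $\|T\circ\phi_n-S\circ\phi_m\|<\varepsilon$; left-composing by the isometry $S^{-1}$ and writing $R:=S^{-1}T\in\Iso(X)$ yields $\|R\circ\phi_n-\phi_m\|<\varepsilon$. Finally I would put $G:=R\circ j_n(F_n)+j_m(F_m)$, a finite-dimensional subspace of $X$ and hence an element of $\Age(X)=\closedAge{X}$, and set $g_n:=R\circ j_n\in\Emb(F_n,G)$ and $g_m:=j_m\in\Emb(F_m,G)$. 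Then $\|g_n\circ f_n-g_m\circ f_m\|=\|R\circ\phi_n-\phi_m\|<\varepsilon$, which is exactly the desired conclusion. The only genuinely delicate point is this last passage from the quotient-metric inequality to an honest isometry $R$ realizing it up to $\varepsilon$, which requires a little care because the orbit \emph{closures}, rather than the orbits themselves, enter the definition of $\rho$; the rest is bookkeeping with the $\Emb_C$ composition convention.
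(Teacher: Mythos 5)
Your proposal is correct and follows essentially the same route as the paper's proof: use relative universality (via Theorem~\ref{thm:CatImpliesSRU} and Lemma~\ref{lem:RUSImpliesClosedAge}) to realize each $F_n$ isometrically inside $X$, apply Proposition~\ref{prop:CompacityEmb} to find two indices whose composed embeddings are $\varepsilon$-close modulo $\Iso(X)$, and amalgamate inside $X$ by taking the span of the two images. The only difference is that you spell out the unwinding of the quotient metric (passing from closeness of orbit closures to an honest isometry $R$ with $\|R\circ\phi_n-\phi_m\|<\varepsilon$), which the paper leaves implicit; this step is handled correctly.
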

\begin{proof}Pick $E\in\closedAge{X}$, $C > 0$,  $\varepsilon>0$ and sequences $(F_n)_{n\in\Nat}\in \big(\closedAge{X}\big)^{\Nat}$ and $(f_n)_{n\in\Nat}\in \big(\Emb_{C}(E,F_n)\big)^{\Nat}$.
Note that it follows from Theorem~\ref{thm:CatImpliesSRU} and Lemma~\ref{lem:RUSImpliesClosedAge} that we have $\Age(X)=\closedAge{X}$.
Thus, for every $n\in\Nat$ we may pick $h_n\in \Emb(F_n,X)$. Since $([h_n\circ f_n])_{n\in\Nat}$ is an infinite sequence in $\Emb_{C}(E,X)\sslash \Iso(X)$ which is a totally bounded space by Proposition~\ref{prop:CompacityEmb}, there exists $n\neq m$ and an isometry $T\in\Iso(X)$ satisfying $\|T\circ h_n\circ f_n - h_m\circ f_m\|<\varepsilon$. Put $G=\closedSpan{h_m(F_m)\cup (T\circ h_n)(F_n)}$ and consider $g_n:=T\circ h_n\in \Emb(F_n,G)$ and $g_m:=h_m\in\Emb(F_m,G)$. Then we have
\[
\|g_n\circ f_n - g_m\circ f_m\| = \|T\circ h_n\circ f_n - h_m\circ f_m\| <\varepsilon.
\]\qedhere
\end{proof}

Now, we shall prove that both properties introduced above, and shared by $\omega$-categorical spaces, give us already examples of guarded Fr\"iss\'e Banach spaces, see Theorem~\ref{thm:sufficent_general}. 
The main technical argument is based on an inductive tree-like construction summarized in the following Lemma. We shall use the set-theoretical equality $n = \{0,\ldots,n-1\}$ and for $\sigma\in 2^n$ and $k\leq n$ we put $\sigma|k:=\sigma\restriction_{\{0,\ldots,k-1\}}$ and use the the convention that $\sigma|0:=2^0=\{\emptyset\}$.

\begin{lemma}\label{lem:treeConstruction}
Suppose that $\KK\subseteq\BM$ is closed, hereditary and does not have (GAP). Then there are $A\in \KK$, $\varepsilon>0$, $\{A_\sigma\setsep \sigma\in 2^{<\omega}\}\subseteq\KK$, $\varepsilon':2^{<\omega}\to (0,1)$, $\{C_{\sigma,\tau}\in (0,1)\setsep \tau,\sigma\in 2^{<\omega}, \sigma\subseteq\tau\}$ and $\{\phi^\tau_\sigma\in\Emb_{C_{\sigma,\tau}}(A_\sigma,A_\tau)\setsep \tau,\sigma\in 2^{<\omega}, \sigma\subseteq\tau\}$ such that
\begin{enumerate}[label=(t-\arabic*)]
    \item\label{t:initial} $A_{\{\emptyset\}} = A$
    \item\label{t:commute} for $\sigma\subset\tau\subset\mu\in 2^{<\omega}$ we have $\phi^\mu_\sigma = \phi^\mu_\tau\circ \phi^\tau_\sigma$,
    \item\label{t:constants} for $\sigma\subset\tau\in 2^{<\omega}$ we have $C_{\sigma,\tau}<\tfrac{\varepsilon}{2}$. Moreover, if $i=|\sigma|\geq 1$ then also $C_{\sigma,\tau} < \tfrac{\varepsilon'(\sigma|i-1)}{2}$.
    \item\label{t:keyPart} For every $n\in\omega$, $\sigma\in 2^n$, $E\in\KK$, $\iota_1\in\Emb_{\varepsilon'(\sigma)}(A_{\sigma\smallfrown 0},E)$ and $\iota_2\in\Emb_{\varepsilon'(\sigma)}(A_{\sigma\smallfrown 1},E)$ we have
    \[
    \|\iota_1\circ\phi^{\sigma\smallfrown 0}_{\{\emptyset\}}  - \iota_2\circ\phi^{\sigma\smallfrown 1}_{\{\emptyset\}}\|>\varepsilon.
    \]
\end{enumerate}
\end{lemma}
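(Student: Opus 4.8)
The plan is to iterate the failure of (GAP) along a binary tree, using the self-improvement Lemma~\ref{lem:amalgableSelfImprovement} at each split so that non-amalgamability survives the passage from isometric to approximate embeddings. First I would fix the data witnessing the failure of (GAP): by Remark~\ref{rem:equivAmalgPair} there are $A\in\KK$ and $\varepsilon_0>0$ such that for every $F\in\KK$, every linear map $\phi\colon A\to F$ and every $\delta>0$, the triple $(A,F,\phi)$ is \emph{not} a $(\KK,\varepsilon_0,\delta)$-amalgamation triple. I set $\varepsilon\in(0,\varepsilon_0)$ (say $\varepsilon=\varepsilon_0/2$), this being the $\varepsilon$ of the statement, and fix once and for all some $\bar\varepsilon\in(\varepsilon,\varepsilon_0)$. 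The tree is built by recursion on $2^{<\omega}$ ordered by length, maintaining the invariant that $\phi^\sigma_{\{\emptyset\}}\in\Emb_{C_{\{\emptyset\},\sigma}}(A,A_\sigma)$ with $C_{\{\emptyset\},\sigma}<\varepsilon$, so that in particular $\phi^\sigma_{\{\emptyset\}}\in\Emb_{\bar\varepsilon}(A,A_\sigma)$.

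For the recursion step, having built $A_\sigma$ and $\phi^\sigma_{\{\emptyset\}}$, I choose a small parameter $\delta_\sigma>0$ and apply the failure of (GAP) to $F=A_\sigma$, $\phi=\phi^\sigma_{\{\emptyset\}}$: the triple is not a $(\KK,\varepsilon_0,\delta_\sigma)$-amalgamation triple, so by the equivalence $(1)\Leftrightarrow(2)$ of Lemma~\ref{lem:amalgableSelfImprovement} (applicable since $\KK$ is closed and hereditary, with its ``$\varepsilon$'' read as $\varepsilon_0$ and its ``$\delta$'' as $\delta_\sigma$) condition~(2) fails. Taking the witness $\bar\varepsilon$, for which $\phi^\sigma_{\{\emptyset\}}\in\Emb_{\bar\varepsilon}$ holds, and any $\delta'>\delta_\sigma$, this failure yields $G,H\in\KK$, maps $\psi_G\in\Emb_{\delta'}(A_\sigma,G)$, $\psi_H\in\Emb_{\delta'}(A_\sigma,H)$ and a number $\eta_\sigma>0$ such that for \emph{every} $K\in\KK$ and all $\iota_G\in\Emb_{\eta_\sigma}(G,K)$, $\iota_H\in\Emb_{\eta_\sigma}(H,K)$ one has $\|\iota_G\circ\psi_G\circ\phi^\sigma_{\{\emptyset\}}-\iota_H\circ\psi_H\circ\phi^\sigma_{\{\emptyset\}}\|\geq\bar\varepsilon$. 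I then set $A_{\sigma\smallfrown 0}:=G$, $A_{\sigma\smallfrown 1}:=H$, $\phi^{\sigma\smallfrown 0}_\sigma:=\psi_G$, $\phi^{\sigma\smallfrown 1}_\sigma:=\psi_H$, define $\phi^\tau_\rho$ for all $\rho\subseteq\tau$ by composition (which forces \ref{t:commute}), record the edge distortion $\delta'$ as the new constants, and put $\varepsilon'(\sigma):=\min(\eta_\sigma,1/2)$. Condition \ref{t:keyPart} is then immediate: given $E\in\KK$ and $\iota_1\in\Emb_{\varepsilon'(\sigma)}(A_{\sigma\smallfrown 0},E)$, $\iota_2\in\Emb_{\varepsilon'(\sigma)}(A_{\sigma\smallfrown 1},E)$, we have $\iota_1\in\Emb_{\eta_\sigma}(G,E)$ and $\iota_2\in\Emb_{\eta_\sigma}(H,E)$, so applying the displayed inequality with $K:=E$ and using $\phi^{\sigma\smallfrown i}_{\{\emptyset\}}=\phi^{\sigma\smallfrown i}_\sigma\circ\phi^\sigma_{\{\emptyset\}}$ gives $\|\iota_1\circ\phi^{\sigma\smallfrown 0}_{\{\emptyset\}}-\iota_2\circ\phi^{\sigma\smallfrown 1}_{\{\emptyset\}}\|\geq\bar\varepsilon>\varepsilon$.

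It remains to arrange the constants so that \ref{t:constants} holds and the invariant $C_{\{\emptyset\},\sigma}<\varepsilon$ is preserved. Since $C_{\rho,\tau}$ is the sum of the edge distortions along the path from $\rho$ to $\tau$ (as $\phi^\tau_\rho$ is the corresponding composition and $\Emb$ adds distortions), and since the edge distortion $\delta'$ at each node is a free parameter that I may take as small as I wish (by feeding a small $\delta_\sigma$ into the failure of (GAP) and choosing $\delta'$ just above it), I would choose, when processing a node of length $d$, the edge distortion below $2^{-(d+3)}$ times the minimum of $\varepsilon$ and of the finitely many already-fixed values $\varepsilon'(\text{parent}(\rho))$ for ancestors $\rho$ with $|\rho|\geq 1$. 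Geometric summation then keeps every path-sum $C_{\rho,\tau}$ below $\varepsilon/2$ and below $\varepsilon'(\text{parent}(\rho))/2$, which is exactly \ref{t:constants}; in particular $C_{\{\emptyset\},\sigma}<\varepsilon/2<\varepsilon$, preserving the invariant. Condition \ref{t:initial} holds by initialising $A_{\{\emptyset\}}:=A$.

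I expect the main obstacle to be the step turning the abstract failure of (GAP) into \ref{t:keyPart} for \emph{approximate} embeddings $\iota_1,\iota_2$ rather than isometric ones: this is precisely what Lemma~\ref{lem:amalgableSelfImprovement} provides, and the choice $\bar\varepsilon\in(\varepsilon,\varepsilon_0)$ is what converts the non-strict bound $\geq\bar\varepsilon$ coming from the negation into the strict bound $>\varepsilon$ required in the statement. The only other delicate point is the simultaneous, recursion-long bookkeeping of the distortion constants against the moreover-clause of \ref{t:constants}, which becomes routine once the edge distortions are taken to decrease geometrically.
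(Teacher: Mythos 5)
Your proposal is correct and follows essentially the same route as the paper's proof: both extract, via Remark~\ref{rem:equivAmalgPair} and Lemma~\ref{lem:amalgableSelfImprovement}, a root $A$ and threshold so that every sufficiently-isometric map $\phi\in\Emb_{\varepsilon}(A,B)$ admits two extensions that cannot be $\eta$-approximately amalgamated, then iterate this splitting along $2^{<\omega}$ applied to the root-to-node composites, defining $\phi^\tau_\sigma$ by composition and setting $\varepsilon'(\sigma)$ to be the resulting $\eta$. The only (cosmetic) difference is in the bookkeeping of \ref{t:constants}: you force geometric decay $2^{-(d+3)}$ against the minimum of $\varepsilon$ and the already-fixed ancestral $\varepsilon'$-values, whereas the paper fixes a summable sequence $(\varepsilon_n)$ with $\sum_n\varepsilon_n<\varepsilon/2$ in advance and imposes the ancestral constraints inductively on each new $\delta'$ — both achieve exactly the same path-sum bounds.
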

\begin{proof}
Since $\KK$ does not have (GAP),  using Remark~\ref{rem:equivAmalgPair} and Lemma~\ref{lem:amalgableSelfImprovement}, there are $A\in\KK$ and $\varepsilon>0$ such that for every $B\in\KK$, $\phi\in\Emb_{\varepsilon}(A,B)$ and $\delta>0$, there are $C,D\in\KK$, $\psi_C\in\Emb_{\delta}(B,C)$ and $\psi_D\in\Emb_{\delta}(B,D)$, and $\varepsilon'>0$ such that for every $E\in\KK$, $\iota_C\in\Emb_{\varepsilon'}(C,E)$ and $\iota_D\in\Emb_{\varepsilon'}(D,E)$ we have $\|\iota_C\circ \psi_C\circ \phi-\iota_D\circ \psi_D\circ \phi\|>\varepsilon$.
 
 Pick some $(\varepsilon_n)_{n=1}^\infty\in (0,1)^\Nat$ with $\sum_{n=1}^\infty \varepsilon_n<\tfrac{\varepsilon}{2}$. Now, we shall inductively for every $n\in\omega$ and $\sigma\in 2^n$ construct sets $A_\sigma\in\KK$ together with mappings $\psi_\sigma$ and $\varepsilon',\delta:2^{<\omega}\to (0,1)$ such that 
\begin{enumerate}[label=(c\arabic*), series=constructList]
    \item\label{it:initialStep} $A_{\{\emptyset\}} = A $, $\psi_{\{\emptyset\}} = \Id_A$ and $\delta(\{\emptyset\})=\tfrac{\varepsilon_1}{10}$,
    \item\label{it:deltaNext} for every $n,m\in\omega$ with $1\leq n\leq m$ and $\sigma\in 2^\omega$ we have \[C_{\sigma,n,m}:=\sum_{k=n}^m \delta(\sigma|k) < \tfrac{\varepsilon'(\sigma|n-1)}{2},\]
    and $\delta(\sigma|n) < \varepsilon_{n+1}$,
    \item\label{it:boundPsi} for every $n\in\omega$ and $\sigma\in 2^{n+1}$, $\psi_\sigma\in\Emb_{\delta(\sigma|n)}(A_{\sigma|n}, A_\sigma)$,
    \item\label{it:iotaCondition} for every $n\in\omega$, $\sigma\in 2^n$, $E\in\KK$, $\iota_1\in\Emb_{\varepsilon'(\sigma)}(A_{\sigma\smallfrown 0},E)$ and $\iota_2\in\Emb_{\varepsilon'(\sigma)}(A_{\sigma\smallfrown 1},E)$ we have
    \[
    \|\iota_1\circ\psi_{\sigma\smallfrown 0}\circ\psi_{\sigma}\circ \psi_{\sigma|n-1}\circ\ldots\circ \psi_{\sigma|0}  - \iota_2\circ\psi_{\sigma\smallfrown 1}\circ\psi_{\sigma}\circ\psi_{\sigma|n-1}\circ\ldots\circ \psi_{\sigma|0}\|>\varepsilon.
    \]
\end{enumerate}
Let us describe the construction. We put $A_{\{\emptyset\}}:=A$, $\psi_{\{\emptyset\}}:=\Id_A$ and $\delta(\{\emptyset\})=\tfrac{\varepsilon}{10}$. For the inductive step assume that we have defined for every $\sigma\in \bigcup_{j\leq n} 2^j$ set $A_\sigma$, mapping $\psi_\sigma$, $\delta(\sigma|i)$ for $i=0,\ldots,n$ and $\varepsilon'(\sigma|j)$ for $j=0,\ldots,n-1$. Given $\sigma \in 2^n$, note that the mapping $\phi = \psi_\sigma\circ \psi_{\sigma|n-1}\circ\ldots\circ \psi_{\sigma|0}:A\to A_\sigma$ is isometric embedding for $n=0$ and $\exp(\sum_{i=0}^{n} \delta(\sigma|i))$-isomorphic embedding for $n\geq 1$. Thus, since $\sum_{i=0}^{n} \delta(\sigma|i)\leq \sum_{i=1}^\infty \varepsilon_i<\varepsilon$, we may apply the assumption above with $B=A_{\sigma}$, $\phi$ and $\delta = \delta(\sigma)$ in order to obtain $\varepsilon'(\sigma)>0$, $A_{\sigma\smallfrown 0}, A_{\sigma\smallfrown 1}\in\KK$, $\psi_{\sigma\smallfrown 0}\in\Emb_{\delta(\sigma)}(A_\sigma, A_{\sigma\smallfrown 0})$ and $\psi_{\sigma\smallfrown 1}\in \Emb_{\delta(\sigma)}(A_\sigma, A_{\sigma\smallfrown 1})$ such that \ref{it:iotaCondition} holds. Then \ref{it:boundPsi} is satisfied as well. Finally, we pick $\delta'\in \big(0,\min\{\varepsilon_{n+2},\tfrac{\varepsilon'(\sigma)}{2}\}\big)$ satisfying also that
\[
\forall j=1,\ldots,n:\quad \delta' + \sum_{k=j}^{n}\delta(\sigma|k) < \tfrac{\varepsilon'(\sigma|j-1)}{2},
\]
and put $\delta(\sigma\smallfrown 0) = \delta(\sigma\smallfrown 1) = \delta'$, so \ref{it:deltaNext} holds as well. This finishes the inductive step of the construction.

For $\sigma,\tau\in 2^{<\omega}$ with $\sigma\subseteq\tau$ let us denote by $\phi^\tau_\sigma$ the canonical embedding of $A_\sigma$ into $A_\tau$ given by the formula $\phi^\tau_\sigma:=\psi_{\tau}\circ \psi_{\tau|j-1}\circ\ldots\circ \psi_{\tau|i+1}$, where $i = |\sigma| < j = |\tau|$. Note that then we have $\phi^\tau_\sigma\in\Emb_{C_{\sigma,\tau}}(A_\sigma,A_\tau)$ where $C_{\sigma,\tau}:=\sum_{k=|\sigma|}^{|\tau|-1} \delta(\tau|k)$. By \ref{it:deltaNext}, we obtain that \ref{t:constants} holds and by the construction it is obvious that \ref{t:initial} and \ref{t:commute} hold as well. Finally, note that the condition \ref{it:iotaCondition} may be now reformulated as \ref{t:keyPart}
\end{proof}

\begin{proposition}\label{prop:omegaAmalg}
If $\KK\subseteq\BM$ is closed, hereditary and has $\omega$-amalgamation property, then it has (GAP). 
\end{proposition}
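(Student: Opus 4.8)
The plan is to prove the contrapositive, feeding on the tree produced by Lemma~\ref{lem:treeConstruction}. So I will assume that $\KK$ is closed and hereditary but fails (GAP), and manufacture data witnessing that $\KK$ also fails the $\omega$-amalgamation property, contradicting the hypothesis of the proposition. Since $\KK$ is closed and hereditary, Lemma~\ref{lem:treeConstruction} applies and yields a root space $A \in \KK$, a constant $\varepsilon > 0$, the tree of spaces $(A_\sigma)_{\sigma \in 2^{<\omega}}$, the tolerance function $\varepsilon'$, the constants $C_{\sigma,\tau}$, and the coherent family of embeddings $\phi^\tau_\sigma$ satisfying \ref{t:initial}--\ref{t:keyPart}. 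The guiding idea is that property \ref{t:keyPart} forbids amalgamating the two children of any node over the root, so I will spread a sequence across the tree in such a way that every pair of its terms is separated by such a splitting node.

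Concretely, I will fix the infinite antichain $\sigma_n := 0^n \smallfrown 1 \in 2^{<\omega}$ for $n \in \Nat$, and set $E := A$, $C := \varepsilon/2$, $F_n := A_{\sigma_n} \in \KK$, and $f_n := \phi^{\sigma_n}_{\{\emptyset\}} \colon A \to A_{\sigma_n}$. By the first half of \ref{t:constants} we have $C_{\{\emptyset\}, \sigma_n} < \varepsilon/2$, so indeed $f_n \in \Emb_C(E, F_n)$, and all the data required by the definition of the $\omega$-amalgamation property are in place. It then remains to show that no pair of these can be amalgamated: for every $n \neq m$, every $G \in \KK$, and all isometric embeddings $g_n \in \Emb(F_n, G)$ and $g_m \in \Emb(F_m, G)$, I will prove $\|g_n \circ f_n - g_m \circ f_m\| > \varepsilon$. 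This directly contradicts the $\omega$-amalgamation property, which would provide at least one such pair with distance $< \varepsilon$.

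For the separation, fix $n \neq m$ and let $\rho := \sigma_n \wedge \sigma_m$ be their longest common initial segment; since $(\sigma_n)$ is an antichain, one of the two terms, say $F_a$, extends $\rho \smallfrown 0$ while the other, $F_b$, extends $\rho \smallfrown 1$ (with $a,b \in \{n,m\}$). Using the coherence \ref{t:commute}, I factor the two maps through the children of $\rho$ as $g_a \circ f_a = \iota \circ \phi^{\rho \smallfrown 0}_{\{\emptyset\}}$ and $g_b \circ f_b = \iota' \circ \phi^{\rho \smallfrown 1}_{\{\emptyset\}}$, where $\iota := g_a \circ \phi^{\sigma_a}_{\rho \smallfrown 0}$ and $\iota' := g_b \circ \phi^{\sigma_b}_{\rho \smallfrown 1}$. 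The crux is to check that $\iota$ and $\iota'$ are close enough to isometries to be legal inputs to \ref{t:keyPart}: since the $g$'s are isometric embeddings (hence lie in $\Emb_0$) and the composition rule $\Emb_C \circ \Emb_{C'} \subseteq \Emb_{C+C'}$ holds, one gets $\iota \in \Emb_{C_{\rho\smallfrown 0, \sigma_a}}(A_{\rho\smallfrown 0}, G)$ and $\iota' \in \Emb_{C_{\rho\smallfrown 1, \sigma_b}}(A_{\rho\smallfrown 1}, G)$; and the second half of \ref{t:constants}, applied to the node $\rho\smallfrown 0$ (resp.\ $\rho\smallfrown 1$), whose length-$|\rho|$ restriction is exactly $\rho$, bounds each of these constants by $\varepsilon'(\rho)/2 < \varepsilon'(\rho)$. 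Hence $\iota \in \Emb_{\varepsilon'(\rho)}(A_{\rho\smallfrown 0}, G)$ and $\iota' \in \Emb_{\varepsilon'(\rho)}(A_{\rho\smallfrown 1}, G)$, so \ref{t:keyPart} with $\sigma := \rho$ gives exactly $\|g_a\circ f_a - g_b\circ f_b\| = \|\iota\circ\phi^{\rho\smallfrown 0}_{\{\emptyset\}} - \iota'\circ\phi^{\rho\smallfrown 1}_{\{\emptyset\}}\| > \varepsilon$, closing the contradiction. I expect this last constant-tracking step — confirming that post-composing the tree embeddings with genuine isometries does not push them outside the tolerance $\varepsilon'(\rho)$ demanded by \ref{t:keyPart} — to be the only delicate point, the remainder being routine bookkeeping on the binary tree.
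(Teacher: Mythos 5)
Your proposal is correct and follows essentially the same route as the paper's own proof: the paper likewise takes the antichain $\sigma_i\smallfrown 1$ along the branch $0^\infty$, factors the two amalgamated maps through the children of the splitting node using \ref{t:commute}, checks the tolerances via \ref{t:constants}, and contradicts \ref{t:keyPart}; your only deviations (proving the separation for \emph{all} pairs before invoking the $\omega$-amalgamation property, and using $C=\varepsilon/2$ instead of $\varepsilon$) are immaterial. One small point of bookkeeping: for a pair $\sigma_n,\sigma_m$ with $n<m$ the node $\sigma_b$ \emph{equals} $\rho\smallfrown 1$, so $\iota'$ is just the isometric embedding $g_b$ and lies in $\Emb_{\varepsilon'(\rho)}$ trivially — condition \ref{t:constants}, which requires strict inclusion, is neither applicable nor needed on that side (this is exactly how the paper handles it, setting $\iota_2=g_i$).
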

\begin{proof}
If $\KK$ does not have (GAP), let $A$, $\varepsilon>0$, $\varepsilon'$, $\{A_\sigma\setsep \sigma\in 2^{<\omega}\}$, $\{C_{\sigma,\tau}\setsep \sigma\subseteq\tau\in 2^{<\omega}\}$ and $\{\phi^\tau_\sigma\setsep \sigma\subseteq\tau\in 2^{<\omega}\}$ be as in Lemma~\ref{lem:treeConstruction}.

Pick $\sigma\equiv 0 \in 2^\omega$ (that is $\sigma(i) = 0$ for $i\in\omega$) and consider the mappings $\sigma_i:=\sigma|i$ and $f_i := \phi^{\sigma_i\smallfrown 1}_{\{\emptyset\}}\in \Emb_{\varepsilon}(A,A_{\sigma_i\smallfrown 1})$ for every $i\in \omega$. By the $\omega$-amalgamation property of $\KK$, we obtain $i<j$, $E\in\KK$ and $g_i\in \Emb(A_{\sigma_i\smallfrown 1},E)$, $g_j\in\Emb(A_{\sigma_j\smallfrown 1},E)$ satisfying that $\|g_i \circ f_i - g_j  \circ f_j\|<\varepsilon$. Consider now $\iota_2=g_i$ and $\iota_1=g_j\circ \phi^{\sigma_j\smallfrown 1}_{\sigma_i\smallfrown 0}\in\Emb_{C_{\sigma_i\smallfrown 0,\sigma_j\smallfrown 1}}(A_{\sigma_i\smallfrown 0}, E)$. Then, using \ref{t:commute} we obtain 
\[
\|\iota_2\circ \phi^{\sigma_i\smallfrown 1}_{\{\emptyset\}} - \iota_1 \circ \phi^{\sigma_i\smallfrown 0}_{\{\emptyset\}}\| = \|g_i \circ f_i - g_j  \circ \phi^{\sigma_j\smallfrown 1}_{\{\emptyset\}}\|<\varepsilon,
\]
which is in contradiction with \ref{t:keyPart}, because   \ref{t:constants} implies $C_{\sigma_i\smallfrown 0,\sigma_j\smallfrown 1} < \varepsilon'(\sigma_i)$, and therefore we have $\iota_2\in \Emb(A_{\sigma_i\smallfrown 1},E)$ and $\iota_1\in \Emb_{\varepsilon'(\sigma_i)}(A_{\sigma_i\smallfrown 0}, E)$.
\end{proof}

The last ingredient is the following Lemma, which is essentially well-known.

\begin{lemma}\label{lem:finitelyReprUltraPower}
Let $X$ be a Banach space, $I$ a set, $(Y_i)_{i\in I}$ Banach spaces finitely representable in $X$ and $\UU$ an ultrafilter on $I$. Then $(\prod_{i\in I} Y_i)_\UU$ is finitely representable in $X$
\end{lemma}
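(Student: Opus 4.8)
The plan is to prove the lemma directly from the definition of finite representability: I will show that every finite-dimensional subspace $E \subseteq Z \coloneq \big(\prod_{i \in I} Y_i\big)_\UU$ belongs to $\closedAge{X}$. Since (as recalled after Definition~\ref{def:age}) $\closedAge{X}$ is exactly the class of finite-dimensional normed spaces finitely representable in $X$, and since finite representability only refers to finite-dimensional subspaces, this gives the conclusion even though $Z$ itself need not be separable. Recall also that finite representability of each $Y_i$ in $X$ means precisely that every finite-dimensional subspace of $Y_i$ lies in $\closedAge{X}$.

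First I would fix a basis $(e_1, \ldots, e_n)$ of $E$ and choose, for each $k$, a bounded representative $(y_i^k)_{i \in I} \in \ell_\infty$ of $e_k$, so that $M_k \coloneq \sup_{i} \|y_i^k\| < \infty$. For each $i \in I$, let $T_i \colon E \to Y_i$ be the linear map determined by $T_i(e_k) = y_i^k$. Writing $(e_k^*)$ for the coordinate functionals of the basis, one gets the uniform bound $\|T_i\| \leqslant \sum_k \|e_k^*\| M_k =: C$, independent of $i$. The defining property of the ultraproduct norm yields, for each fixed $x = \sum_k a_k e_k \in E$, the identity $\|x\|_Z = \lim_\UU \|T_i x\|_{Y_i}$.

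The core step is to upgrade this vector-by-vector convergence along $\UU$ into a single index at which $T_i$ is uniformly near-isometric. Given $\varepsilon > 0$, I would pick $\delta > 0$ small (depending on $C$ and $\varepsilon$) and a finite $\delta$-net $\{x_1, \ldots, x_m\}$ of the compact sphere $S_E$, which we may normalize so that $\|x_j\|_Z = 1$. For each $j$ the set $A_j \coloneq \{i \in I \setsep |\,\|T_i x_j\|_{Y_i} - 1\,| < \delta\}$ belongs to $\UU$, hence so does the finite intersection $A \coloneq \bigcap_{j=1}^m A_j$; in particular $A \neq \varnothing$. For $i \in A$ and arbitrary $x \in S_E$, choosing $x_j$ with $\|x - x_j\| < \delta$ and using $\|T_i(x - x_j)\| \leqslant C\delta$ gives $|\,\|T_i x\| - 1\,| < (C+1)\delta$, so for $\delta$ small enough $T_i$ is a $(<1+\varepsilon)$-isomorphic embedding; in particular it is injective and $d_{\BM}(E, T_i(E))$ is at most of order $\varepsilon$.

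Finally I would close the argument using the hypothesis together with the closedness of $\closedAge{X}$ in $\BM$. Fixing any $i \in A$, the space $T_i(E)$ is a finite-dimensional subspace of $Y_i$, which is finitely representable in $X$, so $T_i(E) \in \closedAge{X}$; as $d_{\BM}(E, T_i(E))$ can be made arbitrarily small and $\closedAge{X}$ is closed, we conclude $E \in \closedAge{X}$. The main obstacle is precisely the uniformization in the third paragraph: the ultrafilter identity holds for each vector separately, and one must convert it into a single index $i$ at which $T_i$ is simultaneously near-isometric on \emph{all} of $S_E$. This is exactly where compactness of $S_E$, the uniform bound $\|T_i\| \leqslant C$, and the finite-intersection property of $\UU$ (closure of $\UU$ under finite intersections) combine; everything else is routine.
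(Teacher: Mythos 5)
Your proof is correct, but it takes a genuinely different route from the paper's. The paper disposes of the lemma in two lines by citing Heinrich: each $Y_i$, being finitely representable in $X$, embeds isometrically into some ultrapower $X_i$ of $X$ (Heinrich's ultrapower characterization of finite representability); then $\big(\prod_{i\in I} Y_i\big)_\UU$ embeds isometrically into $\big(\prod_{i\in I} X_i\big)_\UU$, and an ultraproduct of ultrapowers of $X$ is finitely representable in $X$ (again by a cited result of Heinrich). You instead argue directly from the definition of the ultraproduct norm: given a finite-dimensional $E \subseteq \big(\prod_{i\in I} Y_i\big)_\UU$, you choose bounded representatives of a basis, get coordinate maps $T_i \colon E \to Y_i$ with a uniform bound $\|T_i\| \leqslant C$, and use a finite $\delta$-net of $S_E$ together with closure of $\UU$ under finite intersections to find a single index $i$ at which $T_i$ is a $(1+\varepsilon)$-embedding; then $T_i(E) \in \closedAge{X}$ since $Y_i$ is finitely representable in $X$, and closedness of $\closedAge{X}$ in $\BM$ finishes the proof. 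The uniformization step you flag as the crux is handled correctly (vector-by-vector $\UU$-convergence plus compactness of $S_E$ and the uniform bound $C$ do exactly what you say), and your identification of finite representability with membership of all finite-dimensional subspaces in $\closedAge{X}$ matches the paper's conventions. What each approach buys: yours is self-contained and elementary, in particular avoiding the nontrivial theorem that finite representability implies isometric embedding into an ultrapower; the paper's is much shorter and delegates all the work to standard literature, at the cost of invoking machinery (iterated ultrapowers) strictly stronger than what the lemma needs.
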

\begin{proof}
Each $Y_i$ isometrically embeds into $X_i$, where $X_i$ is some ultrapower of $X$ (see e.g. \cite[Theorem 6.3]{Hein80}). Then it follows that $\prod_\UU Y_i$ isometrically embeds into $\prod_\UU X_i$, which is finitely representable in $X$ (see e.g. \cite[Proposition 6.1]{Hein80}).
\end{proof}

\begin{theorem}\label{thm:gASufficient}
Let $X$ be a separable Banach space and let us assume that at least one of the following conditions is satisfied.

\begin{enumerate}
    \item $\closedAge{X}$ has the $\omega$-amalgamation property.
    \item $X$ is relatively universal.
\end{enumerate}

Then $\closedAge{X}$ is guarded Fra\" iss\' e.
\end{theorem}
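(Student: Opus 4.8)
The plan is to prove the two implications separately, showing in each case that $\closedAge{X}$ is a guarded Fra\"iss\'e class, i.e., that it satisfies (GAP) (it is automatically nonempty, closed, hereditary, and satisfies (JEP), the latter following from the more general $\omega$-amalgamation or from relative universality together with the fact that $X$ is a single space into which everything embeds).

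\textbf{Case (1): $\closedAge{X}$ has the $\omega$-amalgamation property.} This is the easy case, and it is exactly what Proposition~\ref{prop:omegaAmalg} delivers. Since $\closedAge{X}$ is closed and hereditary (it is the closed age of a separable Banach space), and has the $\omega$-amalgamation property by assumption, Proposition~\ref{prop:omegaAmalg} immediately yields that $\closedAge{X}$ satisfies (GAP). Combined with the standing facts that $\closedAge{X}$ is nonempty, closed, hereditary and satisfies (JEP) (the latter being a trivial consequence of $\omega$-amalgamation applied to $E = \{0\}$), we conclude that $\closedAge{X}$ is a guarded Fra\"iss\'e class, hence guarded Fra\"iss\'e.

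\textbf{Case (2): $X$ is relatively universal.} Here the first step is to invoke Lemma~\ref{lem:RUSImpliesClosedAge} to deduce $\Age(X) = \closedAge{X}$, so that every member of $\closedAge{X}$ literally embeds isometrically into $X$. The natural route is then to reduce this case to Case (1) by establishing that $\closedAge{X}$ has the $\omega$-amalgamation property, which in turn reduces to (GAP) by Proposition~\ref{prop:omegaAmalg}. To prove $\omega$-amalgamation directly, I would take $E \in \closedAge{X}$, $C, \varepsilon > 0$, and sequences $(F_n)_n$ in $\closedAge{X}$ and $(f_n)_n$ with $f_n \in \Emb_C(E, F_n)$. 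Using $\Age(X) = \closedAge{X}$, each $F_n$ embeds into $X$ via some $h_n \in \Emb(F_n, X)$, giving a sequence $h_n \circ f_n \in \Emb_C(E, X)$. The crucial move is to pass to an ultrapower: by Lemma~\ref{lem:finitelyReprUltraPower}, an ultraproduct of the $F_n$'s (or of copies of $X$) is finitely representable in $X$, and one can amalgamate the maps $f_n$ inside such an ultrapower, then pull the amalgamation back down into a finite-dimensional piece of $X$ using relative universality. The key technical obstacle will be extracting, from the ultrapower amalgam, an \emph{actual} finite-dimensional $G \in \closedAge{X}$ together with genuine embeddings $g_\alpha, g_\beta$ satisfying the $\varepsilon$-closeness condition for two distinct indices: one must pick two indices from the ultrafilter's perspective where the relevant finite-dimensional data are sufficiently close, and then transfer an approximate amalgamation realized in the (non-separable) ultrapower back into $X$ itself via finite representability and a perturbation argument (Lemma~\ref{lem:perturbationArgument}).

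\textbf{Main obstacle.} The genuinely delicate step is Case (2): relative universality alone does not obviously give amalgamation, so the ultrapower argument of Lemma~\ref{lem:finitelyReprUltraPower} must be leveraged carefully to convert ``finite representability of the ultraproduct'' into an honest finite-dimensional amalgam living in $X$. In fact, a cleaner path may be to prove directly that relative universality implies $\omega$-amalgamation of $\closedAge{X}$: embed all $F_n$ into $X$, form $(\prod_n F_n)_\UU$ which is finitely representable in $X$ hence relatively universally embeddable, amalgamate the $f_n$'s canonically along the diagonal inside this ultraproduct, and then use that any finite-dimensional subspace of the ultraproduct is (up to $\varepsilon$) finitely representable in $X$ to land the amalgam in $\closedAge{X}$; distinctness of the two indices is forced by the infinite pigeonhole on a totally bounded quotient or simply by choosing any two indices large in $\UU$. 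Once $\omega$-amalgamation is secured, Proposition~\ref{prop:omegaAmalg} finishes the argument exactly as in Case (1). I would therefore organize the write-up so that both cases funnel through Proposition~\ref{prop:omegaAmalg}, isolating the ultrapower amalgamation lemma as the only new ingredient needed for Case (2).
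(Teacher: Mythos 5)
Your Case (1) is correct and is exactly the paper's route: Proposition~\ref{prop:omegaAmalg} applied to the closed, hereditary class $\closedAge{X}$ (one small inaccuracy: (JEP) does not follow from $\omega$-amalgamation applied to $E=\{0\}$, since that could amalgamate two copies of the \emph{same} space in the sequence; (JEP) holds simply because $\closedAge{X}$ is a closed age, by Propositions~\ref{prop:HPstableUnderClosures} and~\ref{prop:ageAmalgThenItsClosureAlso}). The genuine gap is Case (2), where you propose to reduce relative universality to Case (1) by showing that $\closedAge{X}$ then has the $\omega$-amalgamation property. Neither ingredient of your sketch works. First, ``amalgamating the $f_n$'s canonically along the diagonal inside $(\prod_n F_n)_\UU$'' is not a defined operation: the ultraproduct contains the diagonal image of $E$ via $x\mapsto[(f_n(x))_n]$, but it contains no canonical copies of the individual spaces $F_n$, let alone isometric embeddings of two specific $F_n, F_m$ into a common space whose compositions with $f_n, f_m$ nearly agree, which is what $\omega$-amalgamation demands. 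Second, your pigeonhole is unavailable: total boundedness of $\Emb_C(E,X)\sslash\Iso(X)$ is precisely $\omega$-categoricity (Proposition~\ref{prop:CompacityEmb}), which relative universality does not imply ($C([0,1])$ is isometrically universal, hence relatively universal, but not $\omega$-categorical), while ``choosing two indices large in $\UU$'' forces nothing, since an ultrafilter imposes no relation between any two fixed coordinates. Whether relative universality implies $\omega$-amalgamation at all is not established anywhere in the paper, which carefully avoids needing it.

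The paper's actual proof of Case (2) shows why a countable-index reduction is structurally problematic. It argues by contradiction: if (GAP) fails, Lemma~\ref{lem:treeConstruction} produces a binary tree $(A_\sigma)_{\sigma\in 2^{<\omega}}$ of spaces in $\KK=\closedAge{X}$ whose two successors of any node can never be approximately amalgamated over the root (condition~\ref{t:keyPart}). For each of the \emph{continuum} many branches $\sigma\in 2^\omega$ one forms the ultraproduct $\ZZ_\sigma$ along the branch, which is finitely representable in $X$ by Lemma~\ref{lem:finitelyReprUltraPower}, and relative universality is used to embed the increasing chain $(E_{\sigma,m})_m\subseteq\ZZ_\sigma$ isometrically and compatibly into $X$. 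Since $\Emb_{\varepsilon}(A,X)$ is separable and there are uncountably many branches, two \emph{distinct} branches must induce $\tfrac{\varepsilon}{10}$-close embeddings of the root $A$, and at their first splitting node this contradicts~\ref{t:keyPart}. The pigeonhole used here — uncountably many points of a separable metric space, two of which must lie in a common small ball — genuinely fails for countably many points, which is all that an instance of $\omega$-amalgamation supplies. This is exactly why the paper keeps (1) and (2) as two distinct sufficient conditions, each funneling into (GAP) by a different argument, rather than reducing (2) to (1) as you propose.
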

\begin{proof}
Recall that, by Propositions \ref{prop:HPstableUnderClosures} and \ref{prop:ageAmalgThenItsClosureAlso}, $\closedAge{X}$ is always closed, hereditary and has (JEP), so it remains to prove that is has (GAP).

We have already proved (1) implies the guarded amalgamation property in Proposition~\ref{prop:omegaAmalg}, so we only prove here that (2) implies it as well.

We shall prove something formally stronger. We prove that if there are separable Banach spaces $(Z_n)_{n\in\omega}$ with $\Age(Z_n)\subseteq\closedAge{X}$, $n\in\omega$ such that for every $\varepsilon>0$ and every increasing sequence $(E_k)_{k\in\omega}\in \big(\closedAge{X}\big)^\omega$ there exists $n\in\omega$ and isometric embeddings $(\iota_k)_{k\in\omega}\in\Emb(E_k,Z_n)$ satisfying $\sup_k \|\iota_k\restriction_{E_0} - \iota_0\|\leq \varepsilon$, then $\closedAge{X}$ has the guarded amalgamation property.

Assume, in order to reach a contradiction, that $\closedAge{X}$ does not have the guarded amalgamation property. Let $A$, $\varepsilon\in (0,1)$, $\varepsilon'$, $\{A_\sigma\setsep \sigma\in 2^{<\omega}\}$, $\{C_{\sigma,\tau}\setsep \sigma\subseteq\tau\in 2^{<\omega}\}$ and $\{\phi^\tau_\sigma\setsep \sigma\subseteq\tau\in 2^{<\omega}\}$ be as in Lemma~\ref{lem:treeConstruction}.
 
Fix a non-principal ultrafilter $\UU$ on $\omega$ and for every $\sigma\in 2^\omega$ we let $\ZZ_\sigma$ be the ultraproduct of the sequence $(A_{\sigma|n})_{n\in\omega}$ with respect to $\UU$. Further, for $m\in\omega$ and $z\in A_{\sigma|m}$ put $z_{\sigma,m}:=[\{0\}^{m}\smallfrown (\phi^{\sigma|m+i}_{\sigma|m}(z))_{i=1}^\infty]\in \ZZ_\sigma$ and we let $E_{\sigma,m}\subseteq\ZZ_\sigma$ be a separable subspace defined as $E_{\sigma,m}:=\closedSpan{z_{\sigma, m}\setsep z\in A_{\sigma|m}}$. Note that $E_{\sigma,m}$ is a subspace of $\ZZ_\sigma$, which is finitely representable in $X$ by Lemma~\ref{lem:finitelyReprUltraPower}, so we have $E_{\sigma,m}\in\closedAge{X}$.
 
 Finally, consider mappings $\xi_{\sigma|m}:A_{\sigma|m}\to E_{\sigma|m}$, $m\in\omega$ given by
 \[
 A_{\sigma|m}\ni z\mapsto \xi_{\sigma|m}(z):=[\{0\}^{m}\smallfrown (\phi^{\sigma|m+i}_{\sigma|m}(z))_{i=1}^\infty]\in E_{\sigma,m}
 \]
 and note that those are surjections and since
 by \ref{t:constants} we have
  \[
   (\limsup_{i\to\infty} C_{\sigma|m,\sigma|m+i})\leq \frac{\varepsilon}{2}< \varepsilon,
 \]
 we obtain
 $\xi_{\sigma|m}\in\Emb_{\varepsilon}(A_{\sigma|m},E_{\sigma|m})$. Moreover, by \ref{t:commute} for every $m,i\in\omega$, $i\geq 1$ we have $\xi_{\sigma|m+i} \circ \phi^{\sigma|m+i}_{\sigma|m}= \xi_{\sigma|m}$ and in particular, we obtain
 \[E_{\sigma,m} = \xi_{\sigma|m}(A_{\sigma|m}) = (\xi_{\sigma|m+i} \circ \phi^{\sigma|m+i}_{\sigma|m}) (A_{\sigma|m}) \subseteq\xi_{\sigma|m+i} (A_{\sigma|m+i}) = E_{\sigma|m+i}.
 \]
 
 By the assumption, for every $\sigma\in 2^\omega$ there exists $n(\sigma)\in\omega$ and isometries $\iota_{\sigma,m}\in \Emb(E_{\sigma,m},Z_{n(\sigma)})$, $m\in\omega$ satisfying $\sup_m \|\iota_{\sigma,0} - \iota_{\sigma,m}\restriction_{E_{\sigma,0}}\| \leq \tfrac{\varepsilon}{10}$. Pick $N\in\omega$ such that the set $A_N:=\{\sigma\in 2^\omega\setsep n(\sigma) = N\}$ has cardinality continuum.
 
Since $A_N$ is uncountable and $\Emb_{\varepsilon}(A,Z_N)$ is separable, there must exist $\sigma\neq \sigma'\in A_N$ such that $\|\iota_{\sigma,0}\circ \xi_{\sigma|0}-\iota_{\sigma',0}\circ\xi_{\sigma'|0}\|\leq \tfrac{\varepsilon}{10}$. Let $n\in\omega$ be minimal such that $\sigma(n)\neq \sigma'(n)$ and put $\tau:=\sigma|n$, note that $\{\sigma|n+1,\sigma'|n+1\} = \{\tau^\smallfrown 0,\tau^\smallfrown 1\}$. Let $E$ be the linear hull of the set $(\iota_{\sigma,n+1}\circ \xi_{\sigma|n+1})(A_{\sigma|n+1})\cup (\iota_{\sigma',n+1}\circ \xi_{\sigma'|n+1})(A_{\sigma'|n+1})$. Then we have $E\in\Age(Z_N)\subseteq\closedAge{X}$ and using \ref{t:constants} also $\iota_{\sigma,n+1}\circ \xi_{\sigma|n+1}\in \Emb_{\varepsilon'(\tau)}(A_{\sigma|n+1},E)$ and $\iota_{\sigma',n+1}\circ \xi_{\sigma'|n+1}\in \Emb_{\varepsilon'(\tau)}(A_{\sigma'|n+1},E)$. But we also have
 \[\begin{split}
 \|\iota_{\sigma,n+1}\circ & \xi_{\sigma|n+1}\circ\phi^{\sigma,n+1}_{\{\emptyset\}} -  \iota_{\sigma',n+1}\circ \xi_{\sigma'|n+1}\circ \phi^{\sigma'|n+1}_{\{\emptyset\}}\|  = \|\iota_{\sigma,n+1}\circ \xi_{\sigma|0}-\iota_{\sigma',n+1}\circ\xi_{\sigma'|0}\|\\
 & \leq \|\iota_{\sigma,n+1}\restriction_{E_{\sigma,0}} - \iota_{\sigma,0}\|\cdot \|\xi\restriction_{\sigma,0}\| + \tfrac{\varepsilon}{10} + \|\iota_{\sigma',n+1}\restriction_{E_{\sigma,0}} - \iota_{\sigma',0}\|\cdot \|\xi\restriction_{\sigma',0}\|\\
 & \leq \tfrac{\varepsilon}{10}\cdot 3 + \tfrac{\varepsilon}{10} + \tfrac{3\varepsilon}{10} <\varepsilon,
 \end{split}\]
 which is in contradiction with \ref{t:keyPart}.
\end{proof}

As explained above, Theorem~\ref{thm:gASufficient} implies that $\closedAge{X}$ is guarded Fra\"iss\'e whenever $X$ is $\omega$-categorical. In order to prove Theorem~\ref{thm:omegaCategoricalIsFraisse}, it remains to show that $\Flim(X)$ is $\omega$-categorical.

\begin{proposition}\label{prop:flimSepCat}
Let $X$ be a separable Banach space such that $\closedAge{X}$ is $\omega$-Fra\"iss\'e class. Then $\closedAge{X}$ is guarded Fra\"iss\'e and $\Flim(X)$ is $\omega$-categorical.
\end{proposition}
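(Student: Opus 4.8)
The plan is to split the statement into its two assertions and dispatch the first one at once. Since $\closedAge{X}$ is an $\omega$-Fra\"iss\'e class, it is in particular closed, hereditary and has the $\omega$-amalgamation property, so Theorem~\ref{thm:gASufficient}(1) immediately gives that $\closedAge{X}$ is a guarded Fra\"iss\'e class; consequently $\Flim(X) = \Flim(\closedAge X)$ is well defined by Theorem~\ref{thm:GuardedFraisseCorrespondence}. I would then write $\KK := \closedAge X$ and $Y := \Flim(\KK)$, so that $Y$ is guarded Fra\"iss\'e with $\closedAge Y = \KK$, and reduce $\omega$-categoricity of $Y$ to condition \ref{it:EmbCpctRelaxed} of Proposition~\ref{prop:CompacityEmb}: it suffices to show that $\Emb_C(E, Y)\sslash\Iso(Y)$ is totally bounded for every $E \in \KK$ and every $C > 0$. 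Concretely, fixing $\varepsilon > 0$, I must rule out an $\varepsilon$-separated sequence modulo $\Iso(Y)$, i.e. for any sequence $(\phi_k)_k$ in $\Emb_C(E,Y)$ I must find $k \neq l$ and $T \in \Iso(Y)$ with $\|T\circ\phi_k - \phi_l\| < \varepsilon$.

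First I would fix $E \in \KK$, $C>0$, $\varepsilon>0$ and such a sequence $(\phi_k)_k$, and set $E_k := \phi_k(E) \subseteq Y$, a finite-dimensional subspace, so $E_k \in \Age(Y) \subseteq \KK$. Using that $Y$ is guarded Fra\"iss\'e, I would apply property \ref{it:gFraisseAction} to each $E_k$ with a small parameter $\varepsilon_0$ (chosen in advance with $\varepsilon_0 < \varepsilon e^{-C}/4$) to produce a guard: a finite-dimensional $F_k$ with $E_k \subseteq F_k \subseteq Y$ and some $\delta_k>0$ such that $\Iso(Y)$ acts $\varepsilon_0$-transitively on $\{\iota\restriction_{E_k} : \iota \in \Emb_{\delta_k}(F_k,Y)\}$. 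Each $F_k$ lies in $\Age(Y)\subseteq\KK$, and the corestriction of $\phi_k$ followed by the inclusion $E_k \hookrightarrow F_k$ gives $\chi_k \in \Emb_C(E, F_k)$. I would then feed the sequences $(F_k)_k \in \KK^\omega$ and $(\chi_k)_k \in \prod_k \Emb_C(E, F_k)$ into the $\omega$-amalgamation property of $\KK$, with tolerance $\varepsilon'$ fixed in advance with $e\varepsilon' < \varepsilon/2$, to obtain $k \neq l$, $G \in \KK$ and isometric embeddings $g_k \in \Emb(F_k, G)$, $g_l \in \Emb(F_l, G)$ with $\|g_k\circ\chi_k - g_l\circ\chi_l\| < \varepsilon'$.

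The final step realizes $G$ back inside $Y$ and transports everything by isometries. Since $G \in \KK = \closedAge Y$, for every $\eta>0$ there is $h \in \Emb_\eta(G, Y)$; the crucial point is that I choose $\eta \le \min(\delta_k,\delta_l)$ (and $\eta \le 1$) only \emph{after} the indices $k,l$ and the amalgam $G$ have been produced, so that $h\circ g_k \in \Emb_{\delta_k}(F_k, Y)$ and $h\circ g_l \in \Emb_{\delta_l}(F_l, Y)$. Comparing $h\circ g_k$ with the inclusion $F_k \hookrightarrow Y$ via $\varepsilon_0$-transitivity of the guard $F_k$ yields $T_k \in \Iso(Y)$ with $\|\Id_{E_k} - T_k\circ h\circ g_k\restriction_{E_k}\| < \varepsilon_0$, and symmetrically a $T_l \in \Iso(Y)$ for $F_l$. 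Post-composing these with $\phi_k$ (resp. $\phi_l$) and using $g_k\restriction_{E_k}\circ\phi_k = g_k\circ\chi_k$, the bound $\|g_k\circ\chi_k - g_l\circ\chi_l\| < \varepsilon'$, and $\|h\| \le e^\eta \le e$, a short triangle-inequality computation shows $\|T_l^{-1}\phi_l - T_k^{-1}\phi_k\| \le 2\varepsilon_0 e^C + e\varepsilon' < \varepsilon$. Taking $T := T_l\circ T_k^{-1} \in \Iso(Y)$ then gives $\|T\circ\phi_k - \phi_l\| < \varepsilon$, contradicting $\varepsilon$-separation; hence the quotient is totally bounded and, by Proposition~\ref{prop:CompacityEmb}, $Y$ is $\omega$-categorical.

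I expect the main obstacle to be organizing the order of the choices so that the two "guarded" ingredients mesh: the guard data $(F_k,\delta_k)$ must be fixed \emph{before} invoking $\omega$-amalgamation, whereas the approximate realization $h$ of the amalgam $G$ and the tolerance $\eta$ bounding it can only be selected \emph{afterwards}, once the specific indices $k,l$ (and thus $\delta_k,\delta_l$) are known. Getting this dependency right, and tracking the harmless $e^C$ and $e^\eta$ factors in the norm estimates, is the delicate part. A minor point I would address is that $\omega$-categoricity is only defined for infinite-dimensional spaces: when $\KK$ is infinite-dimensional (the only relevant case, and the one arising whenever this proposition is applied with an $\omega$-categorical $X$) the limit $Y$ is infinite-dimensional, the remaining case being vacuous.
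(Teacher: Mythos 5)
Your proof is correct and takes essentially the same route as the paper's: both deduce guarded Fra\"iss\'eness of $\closedAge{X}$ from Theorem~\ref{thm:gASufficient}, reduce $\omega$-categoricity of the limit to total boundedness of $\Emb_C(E,\cdot)\sslash\Iso(\cdot)$ via Proposition~\ref{prop:CompacityEmb}, attach a guard to each term of the sequence \emph{before} invoking the $\omega$-amalgamation property, realize the amalgam approximately in the limit with tolerance below $\min(\delta_k,\delta_l)$ chosen \emph{after} the indices are known, and finish with the guards' homogeneity plus a triangle inequality. The only cosmetic difference is that you invoke condition \ref{it:gFraisseAction} (comparing $h\circ g_k$ against the inclusion of the guard) where the paper invokes the equivalent condition \ref{it:gFraisseFourth} through its triples, which hands over the aligning isometries directly.
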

\begin{proof}
By Theorem~\ref{thm:gASufficient}, we know already that $\closedAge{X}$ is guarded Fra\"iss\'e and so $\Flim(X)$ exists. In order to shorten the notation, in what follows we put $Z=\Flim(X)$. Now, pick $C>1$ and $E\in\closedAge{Z} = \closedAge{X}$. By Proposition~\ref{prop:CompacityEmb}, it suffices to show that $\Emb_{C}(E,Z)\sslash \Iso(Z)$ is totally bounded. Pick a sequence $s_n\in\big(\Emb_{C}(E,Z)\big)^\Nat$ and $\varepsilon'>0$. We need to find $n\neq m$ and an isometry $T\in\Iso(Z)$ with $\|T\circ s_n - s_m\|<\varepsilon'$.

Pick $\xi=\tfrac{\varepsilon'}{3e^C}$. Applying for every $n\in\Nat$ condition \ref{it:gFraisseFourth} to the space $s_n(E)\subseteq Z$ and to the number $\xi>0$, there are $F_n\in\Age(Z) \subseteq\closedAge{X}$, $\delta_n>0$ and $\phi_n\in\Emb_{\xi}(s_n(E),F_n)$ such that $(s_n(E),F_n,\phi_n)$ is $(Z,\xi,\delta_n)$-ultrahomogeneity triple (see Definition~\ref{def:gFpairs}). Put $f_n:=\phi_n\circ s_n\in\Emb_{C + \xi}(E,F_n)$. Applying $\omega$-amalgamation property of $\closedAge{X}$ to the sequence $(\phi_n\circ s_n)\in \big(\Emb_{C+\xi}(E,F_n)\big)^\Nat$ we obtain $n\neq m$, $G\in\closedAge{X}$ and $g_n\in\Emb(F_n,G)$, $g_m\in\Emb(F_m,G)$ satisfying $\|g_m\circ \phi_m\circ s_m - g_n\circ \phi_n\circ s_n\|<\xi$. Fix any $k\in\Emb_{\min\{C,\delta_n,\delta_m\}}(G,Z)$. Since for $i\in\{n,m\}$ we know that $(s_i(E),F_i,\phi_i)$ is $(Z,\xi,\delta_i)$-ultrahomogeneity  triple, there are onto isometries $T_i\in\Iso(Z)$ with $\|T_i\restriction_{s_i(E)} - k\circ g_i \circ \phi_i\|<\xi$. Finally, put $T:=(T_m)^{-1}\circ T_n\in\Iso(Z)$. Then we obtain that
\[\begin{aligned}
\|T\circ s_n - s_m\| & = \|T_n\circ s_n - T_m\circ s_m\| \leq \|T_n\restriction_{s_n(E)} - k\circ g_n \circ \phi_n\|\|s_n\|\\ & \quad  + \|k\circ g_n \circ \phi_n\circ s_n - k\circ g_m \circ \phi_m\circ s_m\| + \|T_m\restriction_{s_m(E)} - k\circ g_m \circ \phi_m\|\|s_m\|\\
& \leq e^C\xi + \|k\circ g_n \circ \phi_n\circ s_n - k\circ g_m \circ \phi_m\circ s_m\| + e^C\xi < 3e^c\xi = \varepsilon'.
\end{aligned}\]
\end{proof}

We finish this Section by proving Theorem~\ref{thm:sufficent_general}. The argument is based on the following lemma, which might be seen as a variant of a result by Schechtman \cite{Sch79} used in the proof that $L_p$ is a Fra\"iss\'e limit of $\{\ell_p^n\}$ from \cite{FLT} (see the proof of \cite[Proposition 3.7]{FLT} which is using the result by Schechtman denoted in \cite{FLT} as Theorem 3.8).

\begin{lemma}\label{lemma:PerturbationIsometry}
Let $X$ be an $\omega$-categorical separable Banach space, $E$ a finite-dimensional subspace of $X$, and $\varepsilon > 0$. Then there exists $\delta > 0$ satisfying the following property: for every $f \in \Emb_{\delta}(E, X)$, there exists $g \in \Emb(E, X)$ with $\|f-g\|\leqslant \varepsilon$.
\end{lemma}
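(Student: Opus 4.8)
The statement of Lemma~\ref{lemma:PerturbationIsometry} asserts a uniform perturbation property: every sufficiently-approximate isometric embedding of a fixed finite-dimensional $E$ into an $\omega$-categorical space $X$ can be nudged to an exact isometric embedding, with control depending only on $E$ and $\varepsilon$. The plan is to prove this by a compactness argument, exploiting that $\omega$-categoricity makes $\Emb_C(E,X)\sslash\Iso(X)$ compact (Proposition~\ref{prop:CompacityEmb}, condition \ref{it:EmbCpct}). The heart of the matter is to promote the obvious pointwise statement---each individual near-isometry is close to some genuine isometric embedding, which holds because $\Emb_0(E,X)=\Emb(E,X)$ is dense in the relevant sense---to a single $\delta$ working uniformly for all near-isometries at once.

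\textbf{Key steps.} First I would argue by contradiction: suppose no $\delta$ works for the given $E$ and $\varepsilon$. Then for each $n$ there is $f_n\in\Emb_{1/n}(E,X)$ such that every $g\in\Emb(E,X)$ satisfies $\|f_n-g\|>\varepsilon$. Fix some $C>0$; for large $n$ all the $f_n$ lie in the compact quotient $\Emb_C(E,X)\sslash\Iso(X)$, so passing to a subsequence I may assume the classes $[f_n]$ converge, and hence (using that the quotient map is open and lifting) find isometries $U_n\in\Iso(X)$ such that $\|U_n f_n-h_n\|\to 0$ for a Cauchy sequence $h_n$ whose limit I call $h\in\mathcal L(E,X)$. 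Since each $U_nf_n\in\Emb_{1/n}(E,X)$ and these constants tend to $0$, the limit $h$ lies in $\Emb_0(E,X)=\Emb(E,X)$, i.e.\ $h$ is an exact isometric embedding. For $n$ large, $\|U_nf_n-h\|$ is much smaller than $\varepsilon$; applying $U_n^{-1}$, which is an isometry of $X$ and so preserves operator norms, gives $\|f_n-U_n^{-1}h\|<\varepsilon$ with $U_n^{-1}h\in\Emb(E,X)$, contradicting the choice of $f_n$.

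\textbf{Main obstacle.} The delicate point is the passage from convergence of the \emph{orbit classes} $[f_n]$ in the quotient to an honest statement in $\mathcal L(E,X)$ about the $f_n$ modified by isometries. Since $(B_X)^n\sslash\Iso(X)$ is a metric quotient under the pseudometric $\rho$ built from the norm, convergence of classes only gives, for each pair $n,m$, isometries witnessing that representatives can be brought close; I would need to assemble these into a coherent Cauchy sequence $U_nf_n$, which is exactly the kind of diagonal/back-and-forth bookkeeping carried out in the proof of Theorem~\ref{thm:CatImpliesSRU} (where a sequence $g_k\in\Iso(X)$ is built so that $g_k\circ S_k$ stabilizes). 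I expect to reuse that mechanism: using the quotient metric one chooses, at each stage, an isometry aligning the current representative with the previous one up to a summable error, so that $(U_nf_n)_n$ is Cauchy in the finite-dimensional-domain operator norm. Once this alignment is in place the rest is routine. A secondary subtlety is verifying that the limit genuinely has embedding constant $0$; this follows because $\Emb_C(E,X)$ is closed in $\mathcal L(E,X)$ for every $C\geq 0$, so a uniform limit of maps in $\Emb_{1/n}(E,X)$ satisfies $e^{-0}\|x\|\leq\|h(x)\|\leq e^{0}\|x\|$, i.e.\ $h$ is isometric.
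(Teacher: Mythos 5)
Your proof is correct, but it is organized differently from the paper's. Both arguments pivot on the same input, the compactness of $\Emb_C(E,X)\sslash\Iso(X)$ from Proposition~\ref{prop:CompacityEmb}, but the paper avoids your ``main obstacle'' entirely: it defines the invariant set $A:=\{f\in\Emb_2(E,X)\setsep(\forall g\in\Emb(E,X))\,\|f-g\|\geqslant\varepsilon\}$, observes that each $A\cap\Emb_\delta(E,X)$ is closed and $\Iso(X)$-invariant, so that the sets $B_\delta:=(A\cap\Emb_\delta(E,X))\sslash\Iso(X)$ form a decreasing family of closed subsets of the compact quotient with empty total intersection (since $A\cap\Emb(E,X)=\varnothing$), and concludes by the finite intersection property that some $B_\delta$, hence some $A\cap\Emb_\delta(E,X)$, is empty. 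No sequence is ever lifted from the quotient. Your route instead argues by contradiction with a sequence $f_n\in\Emb_{1/n}(E,X)$, extracts a convergent sequence of classes, and then must realign representatives by isometries $U_n$ so that $(U_nf_n)$ is Cauchy in $\mathcal L(E,X)$; that realignment is exactly the summable-error induction used in the paper's proof of Theorem~\ref{thm:CatImpliesSRU}, and it does go through (the action of $\Iso(X)$ by post-composition is isometric for the operator norm, so from $\rho([f_{n_k}],[f_{n_{k+1}}])<2^{-k-1}$ one gets $S_k\in\Iso(X)$ with $\|f_{n_k}-S_kf_{n_{k+1}}\|<2^{-k}$ and sets $W_{k+1}:=W_kS_k$). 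Your closedness argument for the limit being an exact embedding is also fine. What each approach buys: the paper's invariant-closed-set formulation is shorter and sidesteps all lifting bookkeeping; yours is more constructive, exhibits explicitly the isometry correcting a given near-embedding, and reuses machinery already developed for relative universality, at the cost of the extra diagonal argument you correctly identified as the delicate step.
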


\begin{proof}
Let $A:= \{f \in \Emb_2(E, X) \setsep (\forall g \in \Emb(E, X))(\|f-g\| \geqslant \varepsilon)\}$. For every $\delta \in (0, 1)$, $A \cap \Emb_{\delta}(E, X)$ is a closed subset of $\Emb_2(E, X)$, invariant under the action of $\Iso(X)$. It follows that $B_\delta := (A \cap\Emb_{\delta}(E, X))\sslash\Iso(X)$ is a well-defined closed subset of $\Emb_2(E, X)\sslash\Iso(X)$. Observe that $B_\delta$ decreases when $\delta$ decreases, and that $\bigcap_{\delta \in (0, 1)} B_\delta = (A \cap \Emb(E, X))\sslash \Iso(X) = \varnothing$. Since $\Emb_2(E, X) \sslash \Iso(X)$ is compact by Proposition \ref{prop:CompacityEmb}, it follows that for some $\delta \in (0, 1)$, $B_\delta = \varnothing$. It follows that $A \cap \Emb_{\delta}(E, X) = \varnothing$; hence $\delta$ satisfies the desired property.
\end{proof}

\begin{proof}[Proof of Theorem~\ref{thm:sufficent_general}]
Since $\mathcal{F}$ is cofinal, it suffices to prove that for any $\varepsilon>0$ and any $F\in\mathcal{F}$ there exists $\delta>0$ such that $\Iso(X)\curvearrowright \Emb_\delta(F,X)$ is $\varepsilon$-transitive. Pick $\varepsilon>0$. By Lemma~\ref{lemma:PerturbationIsometry}, there is $\delta>0$ such that for every $f\in\Emb_{\delta}(F,X)$, there exists $g\in\Emb(F,X)$ with $\|f-g\|<\tfrac{\varepsilon}{3}$. Pick $\phi,\psi\in \Emb_\delta(F,X)$ and by the choice of $\delta$, find $\phi',\psi'\in\Emb(F,X)$ with $\|\phi - \phi'\|<\tfrac{\varepsilon}{3}$ and $\|\psi - \psi'\|<\tfrac{\varepsilon}{3}$. Using that $\Iso(X)\curvearrowright \Emb(F,X)$ is $\frac{\varepsilon}{3}$-transitive, pick $T\in\Iso(X)$ satisfying $\|T\psi' - \phi'\|<\tfrac{\varepsilon}{3}$ and then we obtain
\[
\|T\phi - \psi\|\leq \|\phi-\phi'\| + \|T\psi' - \phi'\| + \|\phi-\phi'\|\leq \varepsilon.
\]
Since $\varepsilon>0$ was arbitrary, this shows that $\Iso(X)\curvearrowright \Emb_\delta(F,X)$ is $\varepsilon$-transitive, which finishes the proof.

Finally, we note that cofinally Fra\"iss\'e spaces are guarded Fra\"iss\'e by Proposition~\ref{prop:cofinallySpaces}.
 \end{proof}

\section{The example of \texorpdfstring{$L_p(L_q)$}{Lp(Lq)} spaces}\label{sec:L_pL_q}

In this section, given a Banach space $X$, we denote $L_p(X):=L_p([0,1];X)$ for $p\in[1,\infty)$. Further, by $L_p(L_q)$ we denote the Banach space $L_p(X)$ with $X = L_q([0,1])$. Recall that $\closedAge{L_p(L_q)}$ is guarded Fr\"iss\'e (see Example~\ref{ex:omegaCat} and Theorem~\ref{thm:omegaCategoricalIsFraisse}). The main result of this section is the following identification of the unique guarded Fra\"iss\'e Banach space $X$ satisfying $\closedAge{X} = \closedAge{L_p(L_q)}$.

\begin{theorem}\label{thm:flimLpLq}
Let $1\leq p,q<\infty$. Then
\[
\Flim(L_p(L_q)) = \begin{cases}L_p & \text{ if $q\in \{2,p\}$ or $q\in (p,2)$,}\\
L_p(L_q) & \text{ otherwise.}\end{cases}
\]
\end{theorem}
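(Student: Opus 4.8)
The plan is to establish Theorem~\ref{thm:flimLpLq} by combining the Fra\"iss\'e correspondence (Theorem~\ref{thm:GuardedFraisseCorrespondence}) with a careful analysis of which spaces share the closed age of $L_p(L_q)$. Since $\closedAge{L_p(L_q)}$ is a guarded Fra\"iss\'e class, there is a unique guarded Fra\"iss\'e Banach space $X$ with $\closedAge{X}=\closedAge{L_p(L_q)}$, and identifying $X$ amounts to finding a candidate guarded Fra\"iss\'e space with the correct closed age. The two natural candidates are $L_p$ and $L_p(L_q)$ themselves; the task reduces to determining when $\closedAge{L_p}=\closedAge{L_p(L_q)}$, i.e.\ when $L_p$ and $L_p(L_q)$ are finitely representable in one another.

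First I would record that both spaces are $\omega$-categorical (Example~\ref{ex:omegaCat}), hence relatively universal (Theorem~\ref{thm:CatImpliesSRU}), so their closed ages coincide with their (ordinary) ages by Lemma~\ref{lem:RUSImpliesClosedAge}. The inclusion $\closedAge{L_p}\subseteq\closedAge{L_p(L_q)}$ is automatic, since $L_p=L_p(\mathbb{R})$ embeds isometrically into $L_p(L_q)$. The heart of the matter is the reverse inclusion: $L_p(L_q)$ is finitely representable in $L_p$ precisely when $L_q$ is finitely representable in $L_p$, which by classical results on $L_q$-spaces embedding into $L_p$-spaces happens exactly when $q\in\{2,p\}$ or $q\in(p,2)$. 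The key external input here is the theory of isometric and almost-isometric embeddability of $L_q$ into $L_p$: $L_q$ embeds isometrically into $L_p$ iff $p\le q\le 2$ (the classical range via stable random variables), together with the trivial cases $q=2$ and $q=p$; these are the precise values appearing in the statement. I would cite the relevant embedding theorems (stable processes, the work of Bretagnolle--Dacunha-Castelle--Krivine on $L_q\hookrightarrow L_p$) to pin down the finite representability condition.

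Once the equality of closed ages is settled in the positive cases, the Fra\"iss\'e correspondence gives uniqueness of the limit: since $L_p$ is guarded Fra\"iss\'e (indeed cofinally Fra\"iss\'e, as noted in the excerpt) and $\closedAge{L_p}=\closedAge{L_p(L_q)}$, Theorem~\ref{thm:guardedUniqueByAge} forces $\Flim(L_p(L_q))=L_p$. In the complementary cases, where $L_q$ is \emph{not} finitely representable in $L_p$, the closed ages differ, so $L_p$ cannot be the limit; but $L_p(L_q)$ is itself guarded Fra\"iss\'e with the correct age, hence $\Flim(L_p(L_q))=L_p(L_q)$ by the same uniqueness. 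Thus the dichotomy in the theorem is exactly the dichotomy of whether $L_p(L_q)$ is finitely representable in $L_p$.

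The main obstacle I anticipate is the reverse finite-representability step: showing that $L_p(L_q)$ being finitely representable in $L_p$ is equivalent to $L_q$ being so, and then correctly invoking the embedding classification to match the explicit range $q\in\{2,p\}\cup(p,2)$. One must be careful that finite representability of $L_p(L_q)$ in $L_p$ reduces to that of $L_q$ (the $L_p$-direction is ``free'' since $L_p(L_p)\cong L_p$), and that the almost-isometric version of the embedding theorem is what is needed, not merely isomorphic embeddability. A subtle point is the boundary behavior at $q=2$ and the half-open interval $(p,2)$, reflecting that $L_q$ embeds almost-isometrically into $L_p$ when $p\le q<2$ but the case $q=2$ is always available (Hilbertian) while $q\ge 2$ with $q\neq 2,p$ fails; verifying these endpoints against the stable-variable construction is where the delicate casework lives.
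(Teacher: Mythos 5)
Your treatment of the first case ($q\in\{2,p\}$ or $q\in(p,2)$) matches the paper's: Lemma~\ref{lem:ageOfLpLq} reduces $\closedAge{L_p(L_q)}$ to the spaces $\ell_p^n(\ell_q^k)$, the classical embedding theory of $\ell_q$ into $L_p$ gives $\closedAge{L_p(L_q)}=\closedAge{L_p}$ exactly on that range, and since $L_p$ is guarded Fra\"iss\'e (via its $G_\delta$ isometry class and Theorem~\ref{thm:BMgameForIsometryClasses}), uniqueness of the limit (Theorem~\ref{thm:guardedUniqueByAge}) yields $\Flim(L_p(L_q))=L_p$. That half of your argument is sound.

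The genuine gap is in the complementary case: you assert that ``$L_p(L_q)$ is itself guarded Fra\"iss\'e with the correct age'' and conclude by uniqueness, but this assertion is precisely the main technical content of the theorem and does not follow from anything you have established. It cannot be deduced from $\omega$-categoricity: Theorem~\ref{thm:omegaCategoricalIsFraisse} only says that the \emph{age} of an $\omega$-categorical space is a guarded Fra\"iss\'e class (so the limit exists), not that the space itself is the limit. Indeed, the first case of this very theorem shows the implication fails: for $q\in\{2,p\}\cup(p,2)$ with $p\neq q$, the space $L_p(L_q)$ is $\omega$-categorical yet is \emph{not} guarded Fra\"iss\'e, since its Fra\"iss\'e limit is $L_p$ and $L_p(L_q)\not\equiv L_p$ (Theorem~\ref{thm:LpLqiso}, Corollary~\ref{cor:LpLq}). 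So some argument specific to the range $q\notin\{2,p\}$, $q\notin(p,2)$ is indispensable. What the paper actually does (Theorem~\ref{thm:LpLqIsGuarded}) is verify the sufficient condition of Theorem~\ref{thm:sufficent_general} for the cofinal family of subspaces $\Span\{\chi_{A_i}\otimes g_j\}$ spanned by disjoint indicators and disjointly supported $L_q$-functions: one needs Raynaud's theorem that, in exactly this range of $(p,q)$, every linear isometry $\ell_p^n(\ell_q^m)\to L_p(L_q)$ ($m\geq 3$) preserves disjointness of supports, then a reduction (Lemma~\ref{lemma:turnToAbsoluteValue}) turning such embeddings into lattice embeddings, and finally Tursi's theorem on approximate transitivity of lattice isometries on lattice embeddings of $\oplus_p(\ell_q^{n_i})$. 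Without this chain of arguments -- which is where the hypothesis on $(p,q)$ is actually used -- your proposal establishes only that $\Flim(L_p(L_q))$ exists in the second case, not that it equals $L_p(L_q)$.
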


Note that $L_p(L_q)$ with $p\neq q$ is not isometric to any $L_r$ (this must be well-known, for the argument one can have a look at Theorem~\ref{thm:LpLqiso} below). The proof of Theorem~\ref{thm:flimLpLq} will be given later in this section. There are two interesting consequences.

\begin{corollary}\label{cor:LpLq}
Let $1\leq p\neq q<\infty$. Then
\begin{itemize}
    \item If $q = 2$ or $q\in (p,2)$, then $L_p(L_q)$ is not guarded Fra\"iss\'e.
    \item Otherwise, $L_p(L_q)$ is guarded Fra\"iss\'e Banach space which is not isometric to any $L_r$, $r\in [1,\infty)$.
\end{itemize}    
\end{corollary}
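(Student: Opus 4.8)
The plan is to obtain Corollary~\ref{cor:LpLq} as a quick consequence of Theorem~\ref{thm:flimLpLq}, the general Fra\"iss\'e correspondence (Theorem~\ref{thm:GuardedFraisseCorrespondence}), and the fact recorded above that $L_p(L_q)$ with $p \neq q$ is never isometric to an $L_r$ (Theorem~\ref{thm:LpLqiso}). First I would recall that, by Example~\ref{ex:omegaCat} and Theorem~\ref{thm:omegaCategoricalIsFraisse}, the space $L_p(L_q)$ is $\omega$-categorical, so $\closedAge{L_p(L_q)}$ is a guarded Fra\"iss\'e class and $\Flim(L_p(L_q))$ is well defined.

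The key step is to isolate the following elementary principle: \emph{if $X$ is a separable Banach space whose closed age $\closedAge{X}$ is a guarded Fra\"iss\'e class, then $X$ is guarded Fra\"iss\'e if and only if $X$ is linearly isometric to $\Flim(X) = \Flim(\closedAge{X})$.} The backward implication is immediate, since $\Flim(\closedAge{X})$ is guarded Fra\"iss\'e by the very definition of the Fra\"iss\'e limit. For the forward implication, if $X$ is guarded Fra\"iss\'e then both $X$ and $\Flim(\closedAge{X})$ are guarded Fra\"iss\'e spaces with the same closed age $\closedAge{X}$, so Theorem~\ref{thm:guardedUniqueByAge} forces $X \equiv \Flim(\closedAge{X})$.

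With this principle in hand, applying it to $X = L_p(L_q)$ reduces the problem to comparing $L_p(L_q)$ with the space computed in Theorem~\ref{thm:flimLpLq}, and I would then split into the two cases of that theorem. Since $p \neq q$, the condition ``$q \in \{2,p\}$ or $q \in (p,2)$'' is equivalent to ``$q = 2$ or $q \in (p,2)$''; in this case $\Flim(L_p(L_q)) = L_p$, whereas $L_p(L_q)$ is not isometric to any $L_r$, in particular not to $L_p$, so the principle shows $L_p(L_q)$ is \emph{not} guarded Fra\"iss\'e. In the complementary case $\Flim(L_p(L_q)) = L_p(L_q)$, so trivially $L_p(L_q) \equiv \Flim(L_p(L_q))$ and the principle shows $L_p(L_q)$ \emph{is} guarded Fra\"iss\'e; the ``not isometric to any $L_r$'' clause is again the cited fact. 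There is essentially no obstacle once Theorem~\ref{thm:flimLpLq} is granted: the entire difficulty of the result sits inside that theorem. The only point requiring a moment's care is the bookkeeping of the case conditions against the hypothesis $p \neq q$, which is what makes the two bullets of the corollary line up exactly with the two branches of Theorem~\ref{thm:flimLpLq}.
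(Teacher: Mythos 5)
Your proposal is correct and follows essentially the same route as the paper: both proofs combine Theorem~\ref{thm:flimLpLq} with Theorem~\ref{thm:LpLqiso} and the uniqueness of guarded Fra\"iss\'e spaces with a given closed age (Theorem~\ref{thm:GuardedFraisseCorrespondence}/Theorem~\ref{thm:guardedUniqueByAge}). Your explicit isolation of the principle ``$X$ is guarded Fra\"iss\'e iff $X \equiv \Flim(\closedAge{X})$'' is merely a clean packaging of the argument the paper carries out inline.
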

\begin{proof}If $q = 2$ or $q\in (p,2)$, then by Theorem~\ref{thm:flimLpLq} we have $\Flim(L_p(L_q)) = L_p$ and since, by Theorem~\ref{thm:LpLqiso} the space $L_p(L_q)$ is not isometric to $L_p$, using uniqueness of guarded Fra\"iss\'e Banach spaces (see Theorem~\ref{thm:GuardedFraisseCorrespondence}) we obtain that $L_p(L_q)$ is not guarded Fra\"iss\'e.

Otherwise, $L_p(L_q)$ is guarded Fra\"iss\'e by Theorem~\ref{thm:flimLpLq} and $L_p(L_q)$ is not isometric to any $L_r$ by Theorem~\ref{thm:LpLqiso}.
\end{proof}

This first item in Corollary~\ref{cor:LpLq} seems to be interesting, because it is known that $L_p(L_q)$ is $\omega$- categorical Banach lattice (see Example~\ref{ex:omegaCat} above) with quantifier elimination (see \cite{HenRay11}), still by Corollary~\ref{cor:LpLq} this does not guarantee it is a guarded Fra\"iss\'e Banach space. The second item in Corollary~\ref{cor:LpLq} is interesting, because it gives new examples of Banach spaces with a $G_\delta$ isometry class, see \cite[p. 36]{CDDK23} where this has been announced.

In the remainder of this section we prove the above mentioned Theorem~\ref{thm:flimLpLq} (and Theorem~\ref{thm:LpLqiso}, which however we strongly suspect is well-known even though we did not find an explicit reference).

The first step towards the proofs is the following observation.

\begin{lemma}\label{lem:ageOfLpX}
Let $X$ be a separable Banach space, $(E_n)_{n\in\Nat}$ sequence of finite-dimensional spaces satisfying $\closedAge{X} = \overline{\bigcup_{n\in\Nat} \Age(E_n)}$ and let $(\Omega,\AA,\mu)$ be a measure space such that there are infinitely many pairwise disjoint sets of positive measure. Then for $p\in[1,\infty)$ we have
\[\closedAge{L_p(\mu;X)} = \closedAge{\ell_p(X)} = \overline{\bigcup_{n_1,\ldots,n_k\in\Nat, k\in \Nat} \Age(E_{n_1}\oplus_p\ldots \oplus_p E_{n_k})},\]
where the closure is taken in the Banach-Mazur distance.

Moreover, if we denote by $\mathcal{F}$ the family of all the subspaces 
\[
\Span\{\chi_{A_i}(t)x\setsep i=1,\ldots,l,\;x\in S\},
\]
where $A_1,\ldots,A_l$ are pairwise disjoint sets in $(\Omega,\AA,\mu)$ and $S\in\Age(X)$, then $\mathcal{F}$ is cofinal family of finite-dimensional subspaces of $L_p(\mu; X)$ (see Definition~\ref{def:CofFraisse}). 
\end{lemma}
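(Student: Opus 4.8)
The plan is to prove both the displayed chain of equalities and the cofinality of $\mathcal{F}$ by reducing $L_p(\mu;X)$ to $\ell_p(X)$ via density of $X$-valued simple functions, and then computing $\closedAge{\ell_p(X)}$ by a coordinate-truncation argument. Two general tools will be used repeatedly: the perturbation Lemma~\ref{lem:perturbationArgument}, which converts norm-approximations of a basis into Banach--Mazur approximations of the spanned subspace, and Proposition~\ref{prop:HPstableUnderClosures}, which keeps the classes $\overline{\bigcup\Age(\cdots)}$ hereditary so that subspaces are automatically absorbed. The key computational fact I would isolate first is that $\oplus_p$ is $1$-Lipschitz for $d_{\BM}$: if maps $T_i\colon F_i\to F_i'$ witness $d_{\BM}(F_i,F_i')\le\delta$, normalized so that $\|T_i\|,\|T_i^{-1}\|\le e^{\delta/2}$, then $\bigoplus_i T_i$ witnesses $d_{\BM}(F_1\oplus_p\cdots\oplus_p F_k,\,F_1'\oplus_p\cdots\oplus_p F_k')\le\delta$, since for an $\ell_p$-sum one has $\|\bigoplus_i T_i\|=\max_i\|T_i\|$. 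This single observation drives all the Banach--Mazur limit arguments.

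For $\closedAge{L_p(\mu;X)}=\closedAge{\ell_p(X)}$: the inclusion $\supseteq$ follows by fixing countably many pairwise disjoint sets $A_i$ of positive measure and checking that $(x_i)_i\mapsto\sum_i\mu(A_i)^{-1/p}\chi_{A_i}x_i$ is an isometric embedding $\ell_p(X)\hookrightarrow L_p(\mu;X)$, by a direct computation of the $L_p$-norm. For $\subseteq$, given a finite-dimensional $F\subseteq L_p(\mu;X)$ with basis $f_1,\dots,f_d$, I would approximate each $f_j$ in norm by a simple function and pass to the common refinement $A_1,\dots,A_l$ of the supporting partitions, so that $\tilde f_j=\sum_i\chi_{A_i}y_{j,i}$ all lie in $\Span\{\chi_{A_i}x\setsep i\le l,\ x\in X\}$; the latter is isometric to a finite $\ell_p$-sum of rescaled copies of $X$, hence embeds isometrically in $\ell_p(X)$. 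By Lemma~\ref{lem:perturbationArgument}, $F$ is then BM-close to $\Span\{\tilde f_j\}\in\Age(\ell_p(X))$, so $F\in\closedAge{\ell_p(X)}$.

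For the identification with $\overline{\bigcup\Age(E_{n_1}\oplus_p\cdots\oplus_p E_{n_k})}$, the inclusion $\subseteq$ proceeds in two steps: any finite-dimensional $F\subseteq\ell_p(X)$ is BM-approximated, by truncating the tails of its basis vectors, by a subspace of some $\ell_p^N(X)=X\oplus_p\cdots\oplus_p X$; and any $G\subseteq\ell_p^k(X)$ is contained in a box $F_1\oplus_p\cdots\oplus_p F_k$ with $F_i\in\Age(X)$ the span of the $i$-th coordinates of a basis of $G$. Using $\closedAge{X}=\overline{\bigcup_n\Age(E_n)}$, each $F_i$ is BM-approximated by a subspace of some $E_{n_i}$, so the Lipschitz property of $\oplus_p$ places $\bigoplus_p F_i$ in $\overline{\bigcup\Age(\bigoplus_p E_{n_i})}$, whose heredity then absorbs $G$. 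Conversely, each $E_{n_i}\in\closedAge{X}$ is finitely representable in $X$, hence BM-approximable by isometric copies $F_i\subseteq X$; since $\bigoplus_p F_i\hookrightarrow\ell_p(X)$ isometrically, the Lipschitz property and closedness give $\bigoplus_p E_{n_i}\in\closedAge{\ell_p(X)}$, and heredity finishes this inclusion.

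For the cofinality of $\mathcal{F}$, fix $E\subseteq L_p(\mu;X)$ finite-dimensional with basis $g_1,\dots,g_d$ and $\varepsilon>0$. As above I approximate the $g_j$ by simple functions sharing a common partition $A_1,\dots,A_l$ (discarding null atoms), producing $\tilde g_j\in F:=\Span\{\chi_{A_i}x\setsep i\le l,\ x\in S\}\in\mathcal{F}$ with $\|g_j-\tilde g_j\|<\eta$, where $S$ is the finite-dimensional span of all values used, so $S\in\Age(X)$. Letting $M$ bound $\sum_j|a_j|$ over all $e=\sum_j a_jg_j\in S_E$ (finite by equivalence of norms on $E$), the vector $\tilde e:=\sum_j a_j\tilde g_j\in F$ satisfies $\|e-\tilde e\|\le M\eta$, whence $\|e-\tilde e/\|\tilde e\|\,\|\le 2M\eta$; choosing $\eta:=\varepsilon/(2M)$ yields $d(e,S_F)\le\varepsilon$ for every $e\in S_E$, i.e.\ $E\subseteq_\varepsilon F$. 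The points requiring genuine care are the Lipschitz continuity of $\oplus_p$ under $d_{\BM}$, which is exactly what forces the three closed ages to coincide, and, in the cofinality statement, the upgrade from a norm-approximation of a \emph{basis} to the uniform sphere condition $E\subseteq_\varepsilon F$ through the coefficient bound $M$ and the normalization estimate; the measure-theoretic ingredients (density of simple functions, common refinement, discarding null atoms) are routine.
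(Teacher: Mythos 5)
Your proposal is correct and follows essentially the same route as the paper's proof: the same disjoint-support isometric embedding of $\ell_p(X)$ into $L_p(\mu;X)$, the same simple-function-plus-perturbation argument for the reverse inclusion (which, as in the paper, simultaneously yields the cofinality of $\mathcal{F}$), and the same reduction of the second equality to density of $\bigcup_N \Age(\ell_p^N(X))$ followed by coordinate projections into a box $F_1\oplus_p\cdots\oplus_p F_k$ and Banach--Mazur approximation of the factors. The only differences are presentational: you isolate the $d_{\BM}$-Lipschitz property of $\oplus_p$ and the normalization estimate needed for the sphere condition $E\subseteq_\varepsilon F$ as explicit steps, where the paper treats these routine points implicitly.
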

\begin{proof}
We start with the easy observation that whenever $N\in\Nat\cup\{\Nat\}$, $(f_n)_{n\in N}$ is a sequence in $S_{L_p(\mu)}$ such that the sets $\{x\setsep f_n(x)\neq 0\}$, $n\in N$ are pairwise disjoint, then the mapping
\[
\ell_p^N(X)\ni (x_n)_{n=1}^N \longmapsto \sum_{n=1}^N f_n(t)x_n\in L_p(\mu; X)
\]
is linear isometry. Thus, in particular $\ell_p(X)$ is isometric to a subset of $L_p(\mu;X)$ and so we have $\closedAge{L_p(\mu;X)} \supset \closedAge{\ell_p(X)}$.

For the other inclusion pick $E\in \Age(L_p(\mu; X))$, $\varepsilon>0$ and pick a basis $\{F_1,\ldots,F_k\}$ of $E$. Since simple functions are dense in $L_p(\mu; X)$, there are simple functions $\{G_1,\ldots,G_k\}$ such that the mapping $F_i\mapsto G_i$ induces $\phi\in\Emb_{\varepsilon}(E,\Span\{G_1,\ldots,G_k\})$ satisfying $\|\Id_E - \phi\|<\varepsilon$ (see Lemma~\ref{lem:perturbationArgument}). For every $i\leq k$, $G_i$ is of the form $\sum_{j=1}^{n_i} \chi_{B_j}x_j$, where $B_j$'s are sets of positive measure and $(x_j)_{j\leq n_i}\subseteq X$. Set $S(G_i):=\{x_1,\ldots,x_{n_i}\}$ and $S:=\bigcup_{i\leq k} S(G_i)$. Then we can find a finite sequence of pairwise disjoint sets of positive measure $(A_i)_{i=1}^l$  such that $\{G_1,\ldots,G_k\}\subseteq Z:=\Span\{\chi_{A_1}(t)y_1,\ldots,\chi_{A_l}(t)y_l\setsep y_1,\ldots,y_l\in\Span S\}$. Thus, $E$ is $(1+\varepsilon)$-isomorphic to a subspace of $Z$, which itself is isometric to $\ell_p^l(\Span S)$. This proves also the ``Moreover'' part.

Finally, by the paragraph above we have $Z\in\Age(\ell_p^l(X))\subseteq\Age(\ell_p(X))$ and since $\varepsilon>0$ was arbitrary, we obtain $E\in \Age(\ell_p(X))$. Thus, the equality $\closedAge{L_p(\mu;X)} = \closedAge{\ell_p(X)}$ holds.

It remains to prove the second equality. The inclusion ``$\supset$'' is obvious. In order to prove the other one, note that $\bigcup_n \Age(\ell_p^n(X))$ is dense in $\closedAge{\ell_p(X)}$, so it suffices to prove that for every $n\in\Nat$ there are $n_1,\ldots,n_k\in\Nat$ such that $\Age(\ell_p^n(X))\subseteq\closedAge{E_{n_1}\oplus_p\ldots \oplus_p E_{n_k}}$. But this easily follows by induction from the observation that for any separable Banach spaces $Y,Z$ we have $\Age(Y\oplus_p Z)\subseteq\bigcup\{\Age(E\oplus_p F)\setsep E\in \Age(Y),F\in\Age(Z)\}$.
\end{proof}

For the special case of $L_p(L_q)$ spaces we obtain the following, which we shall use later (this is most probably known, see e.g. \cite[Lemma 3.2]{HenRay10} for a similar result in the context of Banach lattices).

\begin{lemma}\label{lem:ageOfLpLq}
For every $p,q\in [1,\infty)$ we have
\[\closedAge{L_p\big(L_q([0,1])\big)} = \overline{\bigcup_{k,n\in \Nat} \Age(\ell_p^n(\ell_q^k))}.\]

Moreover, if we denote by $\mathcal{F}$ the family of all the subspaces 
\[
\Span\{\chi_{A_i}(t)x\setsep i=1,\ldots,l,\; x\in S\},
\]
where $A_1,\ldots,A_l$, $l\geq 3$ are pairwise disjoint measurable sets in $[0,1]$ and $S\subseteq L_q$, $|S|\geq 3$ is a finite set consisting of disjointly supported functions, then $\mathcal{F}$ is cofinal family of finite-dimensional subspaces of $L_p(L_q([0,1]))$.
\end{lemma}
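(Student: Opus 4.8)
The plan is to deduce Lemma~\ref{lem:ageOfLpLq} from the general Lemma~\ref{lem:ageOfLpX} by applying it twice. First I would record the one-dimensional instance: applying Lemma~\ref{lem:ageOfLpX} with base measure space $[0,1]$ equipped with Lebesgue measure, with $X = \Rea$ and $E_n = \Rea$ for every $n$ (so that trivially $\closedAge{\Rea} = \overline{\bigcup_n \Age(E_n)}$), and with the exponent $q$ in place of $p$, yields
\[
\closedAge{L_q([0,1])} = \overline{\bigcup_{k\in\Nat} \Age(\ell_q^k)},
\]
since $\underbrace{\Rea \oplus_q \cdots \oplus_q \Rea}_{k} = \ell_q^k$ and $L_q([0,1];\Rea) = L_q([0,1])$. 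This exhibits a sequence $(E_k) := (\ell_q^k)$ of finite-dimensional spaces with $\closedAge{L_q([0,1])} = \overline{\bigcup_k \Age(E_k)}$, as required to feed back into the general lemma.

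Then I would apply Lemma~\ref{lem:ageOfLpX} again, now with $X = L_q([0,1])$, the sequence $(E_k) = (\ell_q^k)$ just obtained, base space $[0,1]$ with Lebesgue measure, and exponent $p$. This gives $\closedAge{L_p(L_q)} = \overline{\bigcup_{m,\,k_1,\ldots,k_m} \Age(\ell_q^{k_1} \oplus_p \cdots \oplus_p \ell_q^{k_m})}$. To rewrite the right-hand side as the claimed $\overline{\bigcup_{k,n} \Age(\ell_p^n(\ell_q^k))}$, I would observe a mutual cofinality at the level of the unions themselves. On one hand, $\ell_p^n(\ell_q^k)$ is precisely the $p$-sum of $n$ copies of $\ell_q^k$, so every $\Age(\ell_p^n(\ell_q^k))$ already occurs in the first union. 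On the other hand, setting $K := \max_i k_i$, the coordinate inclusions $\ell_q^{k_i} \hookrightarrow \ell_q^K$ induce an isometric embedding $\ell_q^{k_1} \oplus_p \cdots \oplus_p \ell_q^{k_m} \hookrightarrow \ell_p^m(\ell_q^K)$, whence $\Age(\ell_q^{k_1} \oplus_p \cdots \oplus_p \ell_q^{k_m}) \subseteq \Age(\ell_p^m(\ell_q^K))$. Both inclusions pass to the $\BM$-closures, so the two closures coincide and the first displayed equality of the lemma follows.

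For the ``Moreover'' part, I would prove cofinality of $\mathcal{F}$ directly. The key fact is that the linear span of the product indicators $\chi_A(t)\chi_D$, where $A$ ranges over measurable subsets of the outer variable and $D$ over measurable subsets of the inner ($L_q$) variable, is dense in $L_p(L_q)$: Bochner-simple functions $\sum_i \chi_{A_i}(t) h_i$ with $h_i \in L_q$ are dense, and each $h_i$ is in turn approximated by $L_q$-simple functions, so after passing to a common refinement of all the sets involved one lands in such a span. Given a finite-dimensional $E \subseteq L_p(L_q)$ and $\varepsilon > 0$, I would fix a normalized basis of $E$ and approximate each basis vector within $\delta$ by an element of $Z := \Span\{\chi_{A_i}(t)\chi_{D_j} \setsep i \leq l,\; j \leq k\}$ for a common finite family $(A_i)$ of pairwise disjoint sets and a common family $(\chi_{D_j})$ of disjointly supported functions. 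Writing $S := \{\chi_{D_1}, \ldots, \chi_{D_k}\}$ then exhibits $Z \in \mathcal{F}$, and I can always pad $(A_i)$ and $S$ with extra pairwise disjoint sets and extra disjointly supported functions (available since $[0,1]$ is non-atomic) to force $l \geq 3$ and $|S| \geq 3$; this only enlarges $Z$ and so preserves the containment.

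The one genuinely delicate point is upgrading the norm-$\delta$ approximation of a basis of $E$ by vectors of $Z$ to the metric containment $E \subseteq_\varepsilon Z$ of Definition~\ref{def:CofFraisse}, with a choice of $\delta = \delta(E,\varepsilon)$ uniform over the basis. This is exactly the kind of perturbation controlled by Lemma~\ref{lem:perturbationArgument}, so I expect it to go through routinely, the remaining effort being bookkeeping of the disjoint families rather than any new idea.
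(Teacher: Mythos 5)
Your proposal is correct, and it splits naturally into a part that matches the paper and a part that takes a different route. For the displayed equality you do essentially what the paper does: both arguments funnel through Lemma~\ref{lem:ageOfLpX} applied with $X = L_q$; the only difference is that the paper quotes the fact that $L_q$ is an $\mathcal{L}_{q,1+}$-space to supply the input $\closedAge{L_q} = \overline{\bigcup_k \Age(\ell_q^k)}$, whereas you obtain that input by a self-application of Lemma~\ref{lem:ageOfLpX} with $X = \Rea$ (a nice economy that avoids the external citation), and you also spell out the mutual-cofinality step (each $\ell_q^{k_1}\oplus_p\cdots\oplus_p\ell_q^{k_m}$ embeds isometrically into $\ell_p^m(\ell_q^{K})$ with $K=\max_i k_i$ via the coordinate inclusions) that the paper dismisses as ``follows easily''. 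For the ``Moreover'' part your route genuinely differs: the paper applies the ``Moreover'' part of Lemma~\ref{lem:ageOfLpX} \emph{twice} --- once with $X=L_q$ to get $E \subseteq_\varepsilon F' = \Span\{\chi_{A_i}(t)x \setsep x\in S'\}$ for some finite-dimensional $S'\subseteq L_q$, and once with $X=\Rea$ to find disjointly supported functions $S$ with $S'\subseteq_\varepsilon \Span S$, splicing the two containments into $E\subseteq_{2\varepsilon}\Span\{\chi_{A_i}(t)x \setsep x\in S\}$ --- whereas you prove cofinality from scratch by two-variable simple-function density, a common refinement of the outer and inner partitions, padding to force $l\geq 3$ and $|S|\geq 3$, and the perturbation Lemma~\ref{lem:perturbationArgument} to upgrade the basis-wise approximation to the containment $E\subseteq_\varepsilon Z$. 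Both are valid: the paper's version is shorter because it reuses machinery already established (whose own proof is precisely the simple-function-plus-perturbation argument you redo one level down), while yours is self-contained and makes explicit the padding and refinement bookkeeping that the paper's modular argument absorbs silently.
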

\begin{proof}Equality $\closedAge{L_p\big(L_q([0,1])\big)} = \overline{\bigcup_{k,n\in \Nat} \Age(\ell_p^n(\ell_q^k))}$ follows easily from Lemma~\ref{lem:ageOfLpX} and the fact that $L_q$ is a $\mathcal{L}_{q,1+}$-space.

For the ``Moreover'' part, pick finite-dimensional $E\subseteq L_p(\mu;X)$ and $\varepsilon>0$. Using the ``Moreover'' part of Lemma~\ref{lem:ageOfLpX} we find pairwise disjoint measurable sets $A_1,\ldots,A_l$, $l\geq 3$ in $[0,1]$ and $S'\in\Age(L_q)$ such that for $F':=\Span\{\chi_{A_i}(t)x\setsep i\leq l,\; x\in S'\}$ we have $E\subseteq_\varepsilon F'$. Using the ``moreover'' part of Lemma~\ref{lem:ageOfLpX} once more, we obtain disjointly supported functions $S = \{y_1,\ldots,y_k\}$, $k\geq 3$ in $L_q$ such that $S'\subseteq_\varepsilon \Span S$. Finally, we put $F:=\Span\{\chi_{A_i}(t)x\setsep i\leq l,\; x\in S\}$ and observe that then $E\subseteq_{2\varepsilon} F$.
\end{proof}

Now, we are ready to prove the first part of Theorem~\ref{thm:flimLpLq}.

\begin{proposition}\label{prop:guardedLpLqTrivialCase}
Pick $p,q\in[1,\infty)$ such that $q\in \{2,p\}$ or $q\in (p,2)$. Then $\Flim(L_p(L_q)) = L_p$.
\end{proposition}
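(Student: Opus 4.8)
The plan is to prove the stronger statement that $\closedAge{L_p(L_q)} = \closedAge{L_p}$ and then invoke uniqueness of the Fra\"iss\'e limit. Indeed, once the two closed ages coincide, I would note that $L_p$ is itself a guarded Fra\"iss\'e Banach space whose closed age is $\closedAge{L_p}=\closedAge{L_p(L_q)}$; since $\Flim(L_p(L_q))$ is by definition the unique guarded Fra\"iss\'e space with closed age $\closedAge{L_p(L_q)}$, uniqueness forces $\Flim(L_p(L_q))=L_p$. That $L_p$ is guarded Fra\"iss\'e follows from the fact that every $L_p$ is cofinally Fra\"iss\'e (see the Example above) together with Proposition~\ref{prop:cofinallySpaces}.

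To establish $\closedAge{L_p(L_q)} = \closedAge{L_p}$ I would exhibit isometric embeddings in both directions between $L_p(L_q)$ and $L_p$; finite representability, and hence the closed age, is monotone under isometric embeddings, so mutual embeddability forces equality of the closed ages. For the inclusion $\closedAge{L_p(L_q)} \subseteq \closedAge{L_p}$, the key input is the classical fact that $L_q$ embeds isometrically into $L_p$ in each of the three cases at hand: trivially when $q=p$; via Gaussian random variables when $q=2$ (valid for every $p\in[1,\infty)$); and via $q$-stable random variables when $q\in(p,2)$, since then $1\le p\le q\le 2$. Fixing such an isometry $J\colon L_q\to L_p$, postcomposition $f\mapsto J\circ f$ yields an isometric embedding $L_p(L_q)\to L_p(L_p)$, and $L_p(L_p)=L_p([0,1]^2)$ is isometric to $L_p([0,1])=L_p$; this embeds $L_p(L_q)$ isometrically into $L_p$. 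For the reverse inclusion $\closedAge{L_p}\subseteq\closedAge{L_p(L_q)}$, the map $f\mapsto f\otimes\mathbf 1$, where $\mathbf 1$ denotes the constant function $1\in L_q$ (of norm one), is an isometric embedding $L_p\to L_p(L_q)$, since $\|f\otimes\mathbf 1\|_{L_p(L_q)}^p=\int_0^1|f(t)|^p\,\|\mathbf 1\|_{L_q}^p\,dt=\|f\|_{L_p}^p$.

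The only real obstacle is locating and correctly invoking the classical isometric embeddings $L_q\hookrightarrow L_p$; everything else is bookkeeping. Note that in the degenerate case $q=p$ there is nothing to prove, as $L_p(L_p)\cong L_p$ outright, so the argument effectively concerns $q=2$ and $q\in(p,2)$. Once the two closed ages are identified, the conclusion $\Flim(L_p(L_q))=L_p$ is immediate from Theorem~\ref{thm:GuardedFraisseCorrespondence} and the definition of $\Flim$.
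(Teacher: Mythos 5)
Your proof is correct, and its overall skeleton coincides with the paper's: establish $\closedAge{L_p(L_q)} = \closedAge{L_p}$, note that $L_p$ is guarded Fra\"iss\'e, and conclude by uniqueness of the limit (Theorem \ref{thm:GuardedFraisseCorrespondence}). The two sub-steps, however, are implemented differently. For the equality of closed ages, the paper stays finite-dimensional: it invokes Lemma \ref{lem:ageOfLpLq}, which identifies $\closedAge{L_p(L_q)}$ with $\overline{\bigcup_{k,n}\Age(\ell_p^n(\ell_q^k))}$, and then uses the classical isometric embedding $\ell_q \hookrightarrow L_p$ to place each block $\ell_p^n(\ell_q^k)$ inside $L_p$. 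You instead work with the whole spaces: postcomposing with an isometric embedding $J\colon L_q \to L_p$ gives $L_p(L_q)\hookrightarrow L_p(L_p)\cong L_p$, and $f\mapsto f\otimes\mathbf{1}$ gives the reverse embedding (this is exactly the paper's Proposition \ref{prop:LpInLpLq}, proved there for a different purpose); mutual isometric embeddability then yields equality of closed ages without any structure lemma. Both arguments ultimately rest on the same classical Gaussian/$q$-stable embedding, but yours bypasses Lemma \ref{lem:ageOfLpLq}, at the modest cost of checking that postcomposition preserves Bochner measurability and the $L_p(L_q)$-norm, while the paper's version reuses machinery it needs anyway for the harder case $\Flim(L_p(L_q)) = L_p(L_q)$. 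For guarded Fra\"iss\'eness of $L_p$, the paper cites the $G_\delta$ isometry class result of C\'uth--Dole\v{z}al--Doucha--Kurka and applies Theorem \ref{thm:BMgameForIsometryClasses}, whereas you use the fact that every $L_p$ is cofinally Fra\"iss\'e together with Proposition \ref{prop:cofinallySpaces}; both are valid within the paper, and your route is arguably more self-contained since it leans on Fra\"iss\'e-theoretic facts already developed rather than on an external descriptive-set-theoretic theorem.
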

\begin{proof}From a classical result, see \cite[Theorem 6.4.18]{AlbiacKalton} we obtain that $\ell_q$ isometrically embeds into $L_p$, so by Lemma~\ref{lem:ageOfLpLq} we obtain that $\closedAge{L_p(L_q)} = \closedAge{L_p}$. Since $L_p$ has $G_\delta$ isometry class by \cite[Theorem B]{CDDK23}, our Theorem~\ref{thm:BMgameForIsometryClasses} implies that $L_p$ is guarded Fra\"iss\'e, so we have $\Flim(L_p(L_q)) = L_p$.
\end{proof}

In order to prove the remaining part of Theorem~\ref{thm:flimLpLq} we shall need some results concerning the Banach-lattice structure of $L_p(L_q)$ spaces. Let us recall that any $f\in L_p(L_q)$ can be identified with a measurable function $\tilde{f}:[0,1]^2\to \Rea$ given as $\tilde{f}(x,y):=f(x)(y)$ for $(x,y)\in [0,1]^2$. It is well-known and easy to check that in this case the Banach-lattice notions from $L_p(X)$ are pointwise, that is, for $f,g \in L_p(L_q)$ we have
\begin{itemize}
    \item $f\leq g$ iff $\tilde{f}(x,y)\leq \tilde{g}(x,y)$ for almost every $(x,y)\in [0,1]^2$;
    \item $f$ and $g$ are disjoint iff $\tilde{f}(x,y)\tilde{g}(x,y)=0$ for almost every $(x,y)\in [0,1]^2$.
\end{itemize}

Our argument is based on two important ingredients. The first one is the recent result by M. A. Tursi, see \cite[Theorem 3.6]{Tursi23}. This will be used later to satisfy the second condition from Theorem~\ref{thm:sufficent_general}.
\begin{theorem}[Tursi]\label{thm:tursi}
Let $p,q\in [1,\infty)$, $p\neq q$ and let $E\in \Age(L_p(L_q))$ be lattice isometric to $\oplus_p(\ell_q^{n_i})_1^N$ for some $N\in\Nat$ and $n_1,\ldots,n_N\in\Nat$. Then for every $\eta>0$ the group of surjective lattice isometries acts $\eta$-transitively on lattice isometries from $E$ into $L_p(L_q)$, that is, for two lattice isometries $f_i:E\to L_p(L_q)$, $i\in \{1,2\}$ there exists a lattice isometry $T\in \Iso(L_p(L_q))$ such that $\|T\circ f_1 - f_2\|<\eta$.
\end{theorem}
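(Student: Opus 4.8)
The plan is to reduce the statement to two ingredients: a rigidity analysis of how the finite lattice $E = \oplus_p(\ell_q^{n_i})_1^N$ can sit inside $L_p(L_q)$ as a sublattice, and the homogeneity of the underlying measure structure together with a Lamperti-type description of the surjective lattice isometries of $L_p(L_q)$.

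First I would record the structure of an arbitrary lattice-isometric embedding $f\colon E \to L_p(L_q)$. Writing $g_j^{(i)} := f(e_j^{(i)})$ for the images of the canonical atoms of $E$, these are pairwise disjoint positive norm-one elements, and the mixed norm of $E$ forces that within each block $i$ the family $(g_j^{(i)})_j$ combines via $\ell_q$, while atoms from different blocks combine via $\ell_p$. Identifying $h \in L_p(L_q)$ with $\tilde h$ on $[0,1]^2$ and setting $\phi_h(x) := \int_0^1 |\tilde h(x,y)|^q\,dy$, so that $\|h\|^p = \int_0^1 \phi_h(x)^{p/q}\,dx$, the $\ell_q$-combination of disjoint elements reads $\int (\sum_j s_j \phi_{g_j^{(i)}})^{p/q}\,dx = (\sum_j s_j \|g_j^{(i)}\|^q)^{p/q}$ for all $s_j \geq 0$. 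After taking $q/p$-th powers this is exactly the equality case of Minkowski's inequality in $L_{p/q}$ (for $p/q > 1$) or of its reverse (for $p/q < 1$), which in either case forces the outer profiles $\phi_{g_j^{(i)}}$ to be mutually proportional. Dually, the $\ell_p$-combination across blocks, combined with the strict nonlinearity of $t \mapsto t^{p/q}$ (this is where $p \neq q$ is essential), forces the profiles of atoms from different blocks to be disjointly supported. Thus every lattice copy of $E$ is a \emph{fibered realization}: the outer supports partition $[0,1]$ into disjoint sets $A_1, \dots, A_N$, over $A_i$ all atoms of block $i$ share a common profile shape, and fiberwise over each $x \in A_i$ the inner functions $g_j^{(i)}(x,\cdot)$ are disjoint in $L_q$ with the prescribed norms.

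Next I would approximate and match. Using density of simple functions and the perturbation Lemma \ref{lem:perturbationArgument}, I would replace $f_1$ and $f_2$, up to norm $\eta/3$, by \emph{grid} realizations $\hat f_1, \hat f_2$ in which the common profiles are constant on finitely many measurable pieces and the inner functions are normalized characteristic functions $\chi_{A_i \times B_{ij}}$ of a measurable partition of the fibers; the direct computation with such characteristic functions recovers exactly the mixed $\ell_p(\ell_q)$ norm, so these grids are genuine lattice copies of $E$. Two grid realizations of the same combinatorial type $(N, (n_i))$ are then intertwined \emph{exactly} by a measure automorphism of $[0,1]^2$ respecting the product fibration, mapping outer pieces to outer pieces and inner pieces to inner pieces, which exists because the atomless probability algebra of $[0,1]$ is homogeneous (Carathéodory/Maharam). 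Via the Lamperti-type description of lattice isometries of $L_p(L_q)$ as fibered weighted composition operators, such an automorphism induces a surjective lattice isometry $T \in \Iso(L_p(L_q))$ with $T \hat f_1 = \hat f_2$. The triangle inequality then gives $\|T f_1 - f_2\| \leq \|T\|\cdot\|f_1 - \hat f_1\| + \|T \hat f_1 - \hat f_2\| + \|\hat f_2 - f_2\| < \eta$.

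The hard part will be the rigidity step: extracting the fibered structure of a general lattice copy and, above all, justifying that the surjective lattice isometries of $L_p(L_q)$ are precisely the fibered weighted composition operators, so that the measure automorphism matching the grids genuinely lifts to an element of $\Iso(L_p(L_q))$; this is the place where one would invoke the Banach--Stone/Lamperti theory for $L_p(L_q)$. The approximation budget also has to be tracked with care, since arbitrary outer profiles cannot be matched exactly by measure automorphisms (their distributions may differ), and it is exactly this discrepancy that forces the conclusion to be approximate ($\eta$-transitive) rather than exact.
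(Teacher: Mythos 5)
First, a remark on context: the paper does \emph{not} prove this statement. It is quoted with attribution as Tursi's theorem and cited as \cite[Theorem 3.6]{Tursi23}, where it is the main result of a full-length paper; so there is no internal proof to compare against, and the only question is whether your sketch would stand as an independent proof. Your rigidity step is essentially sound: setting $\phi_h(x)=\int_0^1|\tilde h(x,y)|^q\,dy$, lattice disjointness plus the $\ell_q$-combination inside a block is the equality case of Minkowski's inequality (or its reverse) in $L_{p/q}$ and forces the profiles of the atoms of a block to be proportional, hence equal after normalization, while the $\ell_p$-combination across blocks is the equality case of the pointwise inequality between $(s\phi+t\psi)^{p/q}$ and $s^{p/q}\phi^{p/q}+t^{p/q}\psi^{p/q}$ and forces profiles of distinct blocks to have disjoint supports; both correctly use $p\neq q$.

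The genuine gap is in your approximation-and-matching step, and it is not cosmetic. You cannot replace $f_1,f_2$, up to norm $\eta/3$, by \emph{grid} realizations whose inner fiber functions are normalized characteristic functions: that family is not dense among lattice embeddings. Concretely, take $N=n_1=1$ and $f(1)=g$ with $\tilde g(x,y)=h(y)$ for the fixed positive norm-one function $h(y)=(q+1)^{1/q}y$ in $L_q$; an elementary computation shows $h$ lies at $L_q$-distance at least some $\delta_0>0$ from every function of the form $c\chi_B$, whence every grid realization $\hat f$ satisfies $\|\hat f(1)-f(1)\|_{L_p(L_q)}\geqslant\delta_0$, since the fiberwise distance is at least $\delta_0$ for a.e.\ $x$. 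Density of simple functions in the Bochner space only lets you make the fibers piecewise constant in $x$ (finitely many \emph{arbitrary} $L_q$-functions, not characteristic functions), and then your claim that two grids of the same combinatorial type are intertwined \emph{exactly} by a measure automorphism no longer reduces to homogeneity of the measure algebra. What is actually required is a single surjective lattice isometry that simultaneously (i) transports the outer profile measures $(\phi_i^{(1)})^{p/q}\,dx$ to $(\phi_i^{(2)})^{p/q}\,dx$ block by block, (ii) fiberwise carries one family of $n_i$ disjoint equal-norm positive $L_q$-functions onto another, with the fiber isometries chosen \emph{measurably} in $x$, and (iii) is globally defined and surjective, which forces a careful bookkeeping of the complements of all inner and outer supports --- this is precisely where exact transitivity fails (one embedding may have fiberwise full supports and the other not) and why the conclusion can only be $\eta$-transitivity. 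None of this is routine; it is the actual content of Tursi's proof, and your sketch asserts it rather than supplies it. (A minor point in your favor: for the direction you need, you do not require the full Lamperti-type classification of $\Iso(L_p(L_q))$, only the easy fact that fibered weighted composition operators \emph{are} surjective lattice isometries, which is a direct computation.)
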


In order to reduce the situation to the case of lattice isometries as above, we shall need the following result by Y. Raynaud, see \cite[Proposition 6.3]{Ray18}.

\begin{theorem}[Raynaud]\label{thm:raynaud}
Suppose that $1\leq p\neq q< \infty$, $q\neq 2$ with $q\notin [p,2]$ are given. Then every linear isometry $U:\ell_p^n(\ell_q^m)\to L_p(L_q)$ with $n,m\in \Nat$, $m\geq 3$ preserves disjointness of supports.
\end{theorem}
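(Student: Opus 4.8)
The plan is to treat this as a rigidity statement about isometric embeddings and to reduce it to the uniqueness of the spectral (stable-law) representation of $L_p$- and $L_q$-embeddable norms, using the three hypotheses ($q\neq 2$, $q\notin[p,2]$, $m\geq 3$) precisely to eliminate every non-disjoint representation. First I would set up the fiberwise picture: writing an element of $L_p(L_q)$ as a function $\tilde f(x,y)$ on $[0,1]^2$ with the mixed norm, the isometry $U\colon\ell_p^n(\ell_q^m)\to L_p(L_q)$ is determined by the images $g_{i,j}:=U(e_{i,j})$ of the canonical basis $\{e_{i,j}\setsep i\leq n,\ j\leq m\}$, and disjointness preservation is exactly the assertion that $g_{i,j}\,g_{i',j'}=0$ almost everywhere whenever $(i,j)\neq(i',j')$. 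I would organize the argument in two layers. The \emph{inner} layer fixes a block $i$ and studies, for a.e.\ $x$, the seminorm $N^i_x(a):=\|\sum_j a_j\,g_{i,j}(x,\cdot)\|_{L_q}$ on $\Rea^m$, which is $L_q$-embeddable by construction and is constrained by the identity $\int_0^1 N^i_x(a)^p\,dx=\|a\|_{\ell_q^m}^p$ for all $a\in\Rea^m$. The \emph{outer} layer treats the $\ell_p$-combination of the $n$ blocks and is governed by the analogous identity with $\ell_p^n$ in place of $\ell_q^m$; note that the only dimension constraint is $m\geq 3$ (with $n$ arbitrary), which already signals that the inner layer carries the essential content while the outer, cross-block disjointness is a secondary, parallel argument.

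The heart of the matter is the following rigidity claim: if a seminorm $N$ on $\Rea^m$ is $L_q$-embeddable and the average $\int_0^1 N(\cdot)^p$ reproduces the $\ell_q^m$-norm to the $p$-th power, then under the hypotheses $N$ must be a weighted $\ell_q^m$-norm whose generators are disjointly supported. I would approach this through the representation theory of isometric embeddings into $L_r$-spaces: for $0<r<2$ with $r$ not an even integer, an $m$-dimensional $L_r$-embeddable norm corresponds to a unique symmetric spectral measure on $S^{m-1}$ via negative-definiteness of its $r$-th power, and a Gaussian (Hilbertian) component appears exactly at $r=2$. The exclusion $q\neq 2$ removes the Gaussian component, while $q\notin[p,2]$ removes the $q$-stable representations; together these are the only mechanisms by which the $\ell_q^m$-norm could be written as a \emph{genuine}, non-disjoint $L_p$-average of $L_q$-norms. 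The intuition I would try to make rigorous is a \emph{singularity-matching} principle: for $q\neq 2$ the $\ell_q^m$-norm is non-smooth along the coordinate hyperplanes in a characteristic way, and since $q\notin[p,2]$ forbids the smoothing that stable mixing would produce, the integral $\int_0^1 N^i_x(\cdot)^p\,dx$ can exhibit this exact non-smoothness only if each $N^i_x$ is already coordinate-aligned, i.e.\ its generators $g_{i,j}(x,\cdot)$ are pairwise disjoint in $L_q$. Because disjointness holds for a.e.\ $x$, it upgrades to $g_{i,j}\,g_{i,j'}=0$ a.e.\ on $[0,1]^2$, giving inner disjointness; the cross-block disjointness would follow from the parallel outer analysis, consuming $p\neq q$.

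The main obstacle, which is genuinely the analytic core of Raynaud's theorem, is establishing this rigidity in full generality, that is, proving that the identity $\int_0^1 N^i_x(\cdot)^p\,dx=\|\cdot\|_{\ell_q^m}^p$ admits \emph{only} the trivial disjoint solution. Two features make this delicate and force a case analysis according to the relative positions of $p$, $q$ and $2$. First, one cannot simply quote the equimeasurability theorem (Rudin) to pin down the joint distributions of the $g_{i,j}(x,\cdot)$, because $q$ is allowed to be an even integer $\geq 4$ (any even $q\geq 4$ with $p\leq 2$ satisfies $q\notin[p,2]$), and for such $q$ equimeasurability fails; these residual even-integer values would have to be handled by direct moment or polynomial computations. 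Second, the inner $L_q$-level and the outer $L_p$-level must be controlled simultaneously, since the constraint couples them through the mixed norm. The dimension hypothesis $m\geq 3$ is consumed precisely in the uniqueness of the spectral measure on $S^{m-1}$, where the one-dimensional sphere $S^1$ occurring for $m\leq 2$ is too flexible and admits accidental non-disjoint embeddings. I would expect the bulk of the work, and the part most resistant to shortcuts, to lie in this combined inner/outer rigidity together with the even-integer subcases.
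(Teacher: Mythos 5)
First, a point of comparison you could not have known: the paper does not prove this statement at all. Theorem \ref{thm:raynaud} is imported verbatim from Raynaud (\cite[Proposition 6.3]{Ray18}) and used as a black box in the proof of Theorem \ref{thm:LpLqIsGuarded}, so there is no internal proof to measure your proposal against; what you are attempting is a reconstruction of the cited result, and judged on those terms it has a genuine gap. Your pivotal rigidity claim --- that an $L_q$-embeddable family of fiber seminorms $(N^i_x)_x$ satisfying $\int_0^1 N^i_x(a)^p\,dx=\|a\|_{\ell_q^m}^p$ must be coordinate-aligned with disjoint generators for a.e.\ $x$ --- \emph{is} the theorem, and your sketch asserts it rather than proves it. The ``singularity-matching principle'' is never formulated as a provable lemma, and as stated it cannot work uniformly over the hypotheses: for $q>2$ the map $a\mapsto\|a\|_{\ell_q^m}$ is twice continuously differentiable across the coordinate hyperplanes (away from $0$), the singularity there being of order roughly $q$ and degenerating entirely at even integers --- exactly the cases $q=4,6,\dots$ that you acknowledge defeat equimeasurability and spectral-uniqueness tools and then defer. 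A further structural difficulty you flag but do not resolve is that the constraint pins down only the $p$-th-power \emph{average} of the fiber seminorms, not any individual $N^i_x$; uniqueness of the spectral measure of a single $L_q$-embeddable norm therefore does not apply directly, and ruling out nontrivial mixtures whose average happens to equal $\|\cdot\|_{\ell_q^m}^p$ is precisely where the substance of the cited proof must lie.

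Two specific steps would fail as written. First, you consume $m\geqslant 3$ in ``uniqueness of the spectral measure on $S^{m-1}$,'' but when that uniqueness holds (i.e.\ $q$ not an even integer) it holds in every dimension $m\geqslant 2$; what actually goes wrong at $m=2$ is \emph{existence} of accidental embeddings: every two-dimensional normed space, $\ell_q^2$ included, is a planar zonoid and embeds isometrically into $L_1$, and composing such an embedding with $f\mapsto f\otimes 1$ (Proposition \ref{prop:LpInLpLq}) produces, for $p=1$ and $q>2$, an isometry $\ell_q^2\to L_1(L_q)$ that does not preserve disjointness. So the hypothesis $m\geqslant 3$ must be spent excluding these exotic embeddings, in a different part of the argument than where you place it. Second, the cross-block (``outer'') disjointness is dismissed in one sentence as ``parallel,'' but it is not a copy of the inner problem: distinct blocks are not spanned by scalar multiples of a single $L_q$-vector, so there is no clean reduction to an embedding $\ell_p^n\to L_p$, and the coupling of the two layers through the mixed norm --- which you note and then set aside --- is itself a step requiring proof. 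In sum, your proposal is a reasonable map of where the difficulty lives (it correctly matches $q\neq 2$, $q\notin[p,2]$, $m\geqslant 3$ to the embedding theory of $\ell_q$ into $L_p$), but every load-bearing step is left unestablished, so it does not constitute a proof.
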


Finally, we shall need two more observations. The first one is a straightforward fact which we shall use, the second is a consequence of Theorem~\ref{thm:raynaud}.

\begin{lemma}\label{lemma:turnToAbsoluteValue} Let $p,q\in [1,\infty)$, $k\in\Nat$ and $f_1\ldots,f_k\in L_p(L_q)$ be pairwise disjoint. Then there exists $U\in\Iso(L_p(L_q))$ with $U(f_i)\geq 0$ for every $i=1,\ldots,k$.
\end{lemma}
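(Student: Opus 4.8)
The plan is to construct $U$ as a multiplication operator by a suitable measurable sign function. I will use the identification, recalled just before the statement, of each $f_i \in L_p(L_q)$ with a measurable function $\tilde f_i \colon [0,1]^2 \to \Rea$ given by $\tilde f_i(x,y) = f_i(x)(y)$, under which disjointness of $f_i$ and $f_j$ for $i \neq j$ means exactly that $\tilde f_i(x,y)\tilde f_j(x,y) = 0$ for almost every $(x,y)$. Writing $S_i := \{(x,y) \in [0,1]^2 \setsep \tilde f_i(x,y) \neq 0\}$ for the essential support of $f_i$, disjointness says precisely that the $S_i$ are pairwise disjoint up to a null set.

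First I would define a measurable function $\varepsilon \colon [0,1]^2 \to \{-1, 1\}$ by setting $\varepsilon(x,y) := \operatorname{sign}(\tilde f_i(x,y))$ whenever $(x,y) \in S_i$ for some $i \leqslant k$, and $\varepsilon(x,y) := 1$ otherwise. Since the sets $S_i$ are essentially pairwise disjoint, this prescription is unambiguous almost everywhere, and $\varepsilon$ is measurable, being assembled from finitely many measurable pieces; it indeed takes values in $\{-1,1\}$ because $\tilde f_i$ is nonzero, hence of well-defined nonzero sign, exactly on $S_i$.

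Next I would let $U$ be the associated multiplication operator, that is, the linear map sending $g \in L_p(L_q)$ to the element whose representative on $[0,1]^2$ is $(x,y) \mapsto \varepsilon(x,y)\tilde g(x,y)$; as $\varepsilon$ is bounded and measurable, this respects almost-everywhere equality and so descends to $L_p(L_q)$. The key verifications are routine: since $|\varepsilon(x,y)| = 1$ everywhere, we have $|\varepsilon \tilde g| = |\tilde g|$ pointwise, whence $\|Ug\| = \|g\|$, so $U$ maps $L_p(L_q)$ into itself isometrically; moreover $U$ is its own inverse (because $\varepsilon^2 \equiv 1$), hence surjective, so $U \in \Iso(L_p(L_q))$. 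Finally, evaluating on each $f_i$: on $S_i$ one has $\varepsilon \tilde f_i = \operatorname{sign}(\tilde f_i)\,\tilde f_i = |\tilde f_i| \geqslant 0$, while off $S_i$ one has $\tilde f_i = 0$; hence $U(f_i) \geqslant 0$ for every $i \leqslant k$, as required.

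The only points requiring a little care — and the closest thing to an obstacle — are the well-definedness and measurability of $\varepsilon$, which hinge on the essential disjointness of the supports $S_i$, and the verification that multiplication by a $\{\pm1\}$-valued measurable function is a genuine surjective isometry of $L_p(L_q)$. Both are elementary once the pointwise description of the lattice operations recalled before the statement is in hand, so I do not anticipate any serious difficulty.
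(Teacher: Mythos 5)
Your proof is correct and follows essentially the same route as the paper: the paper's (sketched) proof also defines $U$ as pointwise multiplication by the sign of $\tilde f_i$ on each support set and by $1$ elsewhere, which is exactly your $\varepsilon$. Your write-up is in fact slightly more careful than the paper's sketch, since you explicitly address measurability of the sign function and surjectivity of $U$ via the involution property.
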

\begin{proof}[Sketch of the proof]
Put $A_i:=\{(x,y)\setsep f_i(x)(y)\neq 0\}$ for $i\leq k$. We may without loss of generality assume that $A_i\cap A_j=\emptyset$ for $i\neq j$. Now, we define $U:L_p(L_q)\to L_p(L_q)$ as
\[
Uf(x)(y):=\begin{cases} f(x)(y) & \text{ if }(x,y)\notin \bigcup_{i=1}^k A_i,\\
\frac{f_i(x,y)}{|f_i(x,y)|}f(x,y) & \text{ if }(x,y)\in A_i \text{ for some } i\leq k.
\end{cases}
\]
Then it is easily checked that $U\in\Iso(L_p(L_q))$ and $Uf_i = |f_i|$ for $i\leq k$.
\end{proof}

The following together with Proposition~\ref{prop:guardedLpLqTrivialCase} proves Theorem~\ref{thm:flimLpLq}.

\begin{theorem}\label{thm:LpLqIsGuarded}
Suppose that $p,q\in [1,\infty)$ are given such that $q\notin \{2,p\}$ and $q\notin (p,2)$. Then $L_p(L_q)$ is cofinally Fra\"iss\'e Banach space (and therefore also guarded Fra\"iss\'e Banach space).
\end{theorem}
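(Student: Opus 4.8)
The strategy is to apply Theorem~\ref{thm:sufficent_general}: it suffices to exhibit a cofinal family $\mathcal{F}$ of finite-dimensional subspaces of $X := L_p(L_q)$ such that for every $F \in \mathcal{F}$ and every $\eta > 0$, the action $\Iso(X) \curvearrowright \Emb(F, X)$ is $\eta$-transitive (recall $X$ is $\omega$-categorical by Example~\ref{ex:omegaCat}). For $\mathcal{F}$, I would take the family provided by the ``Moreover'' part of Lemma~\ref{lem:ageOfLpLq}, namely the subspaces $F = \Span\{\chi_{A_i}(t)x \setsep i \leqslant l,\ x \in S\}$ where $A_1, \ldots, A_l$ (with $l \geqslant 3$) are pairwise disjoint measurable sets and $S$ (with $|S| \geqslant 3$) is a finite set of disjointly supported functions in $L_q$; this family is cofinal and each such $F$ is lattice isometric to some $\oplus_p(\ell_q^{n_i})_1^N$.

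\textbf{Key steps.} First I would fix such an $F$ and $\eta > 0$, and take an arbitrary $f \in \Emb(F, X)$, i.e. a linear isometry $f \colon F \to L_p(L_q)$. The goal is to compare $f$ with a fixed reference \emph{lattice} isometry and then invoke Tursi's theorem. The crucial point is that, because $F$ is isometric to some $\ell_p^n(\ell_q^m)$ with $m \geqslant 3$ and the hypotheses $q \neq 2$, $q \notin (p,2)$ (together with $q \neq p$) put us exactly in the range covered by Raynaud's Theorem~\ref{thm:raynaud}, the isometry $f$ \emph{preserves disjointness of supports}. Thus the images $f(\chi_{A_i} x)$ of the natural disjoint generators of $F$ are pairwise disjoint in $L_p(L_q)$. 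Next, using Lemma~\ref{lemma:turnToAbsoluteValue}, I would post-compose $f$ with a suitable $U \in \Iso(X)$ sending each of these disjoint images to its absolute value; the resulting map $U \circ f$ is then a disjointness-preserving isometry carrying a lattice basis to nonnegative disjoint elements, hence is a \emph{lattice} isometry on $F$. The same treatment applies to any second isometry $g \in \Emb(F,X)$, producing $U' \circ g$ a lattice isometry.

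\textbf{Finishing.} Once $U \circ f$ and $U' \circ g$ are both lattice isometries of $F = \oplus_p(\ell_q^{n_i})_1^N$ into $L_p(L_q)$, Tursi's Theorem~\ref{thm:tursi} gives, for the prescribed tolerance $\eta$ (up to adjusting by the fixed factors coming from $U, U'$), a lattice isometry $T \in \Iso(X)$ with $\|T \circ (U\circ f) - U' \circ g\| < \eta$; unwinding, $\|(U'^{-1} T U)\circ f - g\| < \eta$, and $U'^{-1} T U \in \Iso(X)$ witnesses $\eta$-transitivity on $\Emb(F, X)$. Since $\mathcal{F}$ is cofinal, Theorem~\ref{thm:sufficent_general} then yields that $X = L_p(L_q)$ is cofinally Fra\"iss\'e, hence guarded Fra\"iss\'e by Proposition~\ref{prop:cofinallySpaces}.

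\textbf{Main obstacle.} The delicate step is the reduction from an \emph{arbitrary} isometry to a \emph{lattice} isometry, i.e. correctly combining Raynaud's disjointness-preservation result with Lemma~\ref{lemma:turnToAbsoluteValue}. One must check that the disjoint generators of $F$ are mapped to genuinely disjoint elements (so Raynaud applies verbatim to the relevant $\ell_p^n(\ell_q^m)$, which requires $m \geqslant 3$, explaining the constraint $|S| \geqslant 3$ in the choice of $\mathcal{F}$), and that turning these images nonnegative via a single isometry $U$ actually produces a lattice homomorphism on all of $F$ rather than just on the generators. Keeping careful track of the error budget $\eta$ through the compositions with $U$ and $U'$, which are exact isometries and so contribute no extra error, is routine but must be stated cleanly.
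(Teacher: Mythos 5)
Your proposal is correct and takes essentially the same approach as the paper's proof: reduce via Theorem~\ref{thm:sufficent_general} and $\omega$-categoricity to $\eta$-transitivity on the cofinal family of Lemma~\ref{lem:ageOfLpLq}, use Raynaud's Theorem~\ref{thm:raynaud} to see that arbitrary isometric embeddings of $F$ preserve disjointness, make the images positive via Lemma~\ref{lemma:turnToAbsoluteValue}, and conclude with Tursi's Theorem~\ref{thm:tursi}. The one point where the paper is more careful is exactly your ``main obstacle'': since $F=\Span\{\chi_{A_i}(t)g_j\}$ need not be a sublattice of $L_p(L_q)$ when the $g_j$ change sign, the paper additionally pre-composes with $U^{-1}$, where $U(\chi_{A_i}g_j)=|\chi_{A_i}g_j|$, so that Tursi's theorem is applied to genuine lattice embeddings of the honest sublattice $G=\Span\{|\chi_{A_i}(t)g_j|\}$ rather than to $F$ equipped with a transported lattice structure.
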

\begin{proof}
Let us denote by $\mathcal{F}$ the family of all the subspaces $\Span\{\chi_{A_i}(t)x\setsep i\leq l, x\in S\}$, where $A_1,\ldots,A_l$, $l\geq 3$ are pairwise disjoint sets in $[0,1]$ of positive measure and $S\subseteq L_q$, $|S|\geq 3$ is finite set consisting of functions with pairwise disjoint supports. By Lemma~\ref{lem:ageOfLpX}, $\mathcal{F}$ is cofinal in $L_p(L_q)$, and so using Theorem~\ref{thm:sufficent_general} and the fact that $L_p(L_q)$ is $\omega$-categorical, it suffices to check that the action $\Iso(L_p(L_q))\curvearrowright\Emb(F,L_p(L_q))$ is $\eta$-transitive, for every $F\in\mathcal{F}$ and $\eta>0$. Pick $A_1,\ldots,A_l$, $l\geq 3$ pairwise disjoint sets in $[0,1]$ of positive measure, functions $g_1,\ldots,g_k\in L_q$, $k\geq 3$ with pairwise disjoint supports and consider $F = \Span\{\chi_{A_i}(t)g_j\setsep i\leq l, j\leq k\}\in\mathcal{F}$ and some $\eta>0$. Pick $\psi_\epsilon\in\Emb(F,L_p(L_q)))$, $\epsilon=1,2$. By Lemma~\ref{lemma:turnToAbsoluteValue}, there is an isometry $U\in\Iso(L_p(L_q))$ such that $U(\chi_{A_i}(t)g_j) = |\chi_{A_i}(t)g_j|$ for every $i\leq l$ and $j\leq k$. Since $F$ is isometric to $\ell_p^l(\ell_q^k)$, by Theorem~\ref{thm:raynaud} we have that $\psi_\epsilon$, $\epsilon=1,2$ preserves disjointness of supports, so using Lemma~\ref{lemma:turnToAbsoluteValue} again, for $\epsilon=1,2$ we find $V_\epsilon\in\Iso(L_p(L_q))$ satisfying $V_\epsilon(\psi_\epsilon(\chi_{A_i}(t)g_j)) = |\psi_\epsilon(\chi_{A_i}(t)g_j)|$ for every $i\leq l$ and $j\leq k$. Now it is easy to observe that the mapping $V_\epsilon\circ \psi_\epsilon\circ U^{-1}\in\Emb(G,X)$, $\epsilon=1,2$, is a lattice embedding, where $G = \Span\{|\chi_{A_i}(t)g_j|\setsep i\leq l,\; j\leq k\}$ is lattice isometric to $\ell_p^l(\ell_q^k)$. Thus, by Theorem~\ref{thm:tursi}, there exists $T\in\Iso(L_p(L_q))$ satisfying \[\eta>\|T\circ V_2\circ\psi_2\circ U^{-1}-V_1\circ\psi_1\circ U^{-1}\|=\|V_1^{-1}\circ T\circ V_2\circ\psi_2-\psi_1\|.\] Since $V_1^{-1}\circ T\circ V_2\in\Iso(L_p(L_q))$ this finishes the proof.
\end{proof}

Finally, we provide an argument why $L_p(L_q)$ is not isometric to any $L_r$ space. This should be well-known, but we did not find a satisfactory reference, so we include the argument below. First, we need to realize that both $L_p$ and $L_q$ are isometric to a 1-complemented subspace of $L_p(L_q)$. Let us recall that given $f\in L_p$ and $g\in L_q$, the mapping $f\otimes g\in L_p(L_q)$ is given by $(f\otimes g)(t):=f(t)\cdot g\in L_q$, $t\in [0,1]$.

\begin{proposition}\label{prop:LpInLpLq}
    Let $1 \leqslant p, q < \infty$. The space $L_p$ is isometric to a 1-complemented subspace of $L_p(L_q)$.

    More precisely, the mapping $L_p\ni f\mapsto f\otimes 1\in L_p(L_q)$ is isometry onto $1$-complemented subspace of $L_p(L_q)$.
\end{proposition}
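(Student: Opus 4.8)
The plan is to exhibit explicitly both the isometric embedding and a norm-one projection onto its range. The embedding is the given map $\iota\colon f\mapsto f\otimes 1$, and the projection will be the conditional expectation that averages out the second variable.

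First I would check that $\iota$ is isometric by a direct computation. Using the identification of $f\otimes 1\in L_p(L_q)$ with the function $(x,y)\mapsto f(x)$ on $[0,1]^2$, one has
$$\|f\otimes 1\|_{L_p(L_q)}^p = \int_0^1 \|(f\otimes 1)(x)\|_{L_q}^p\,dx = \int_0^1 |f(x)|^p\,\|1\|_{L_q}^p\,dx = \|f\|_{L_p}^p,$$
since $\|1\|_{L_q}=1$ as $[0,1]$ carries a probability measure. Hence $\iota$ is an isometry, and its range is $Y:=\{g\otimes 1\setsep g\in L_p\}$.

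Next I would define $P\colon L_p(L_q)\to L_p(L_q)$ by $(PF)(x)(y):=\int_0^1 \tilde F(x,z)\,dz$, that is, $PF=g_F\otimes 1$ where $g_F(x):=\int_0^1 \tilde F(x,z)\,dz$ (well-definedness and measurability of $g_F$ being routine consequences of Fubini's theorem). This $P$ is plainly linear, and it is idempotent with range exactly $Y$: if $F=g\otimes 1$ then $\tilde F(x,y)=g(x)$ is independent of $y$, so $g_F=g$ and $PF=F$; conversely, every $PF$ lies in $Y$ by construction.

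The key step, and the only one requiring an estimate, is to show $\|P\|\leqslant 1$. For almost every fixed $x$, I would bound the $L_q$-norm of the constant-in-$y$ function $(PF)(x)$ by the $L_q$-norm of $\tilde F(x,\cdot)$:
$$\|(PF)(x)\|_{L_q} = \Bigl|\int_0^1 \tilde F(x,z)\,dz\Bigr| \leqslant \int_0^1 |\tilde F(x,z)|\,dz = \|\tilde F(x,\cdot)\|_{L_1} \leqslant \|\tilde F(x,\cdot)\|_{L_q},$$
where the last inequality is Jensen's inequality (equivalently Hölder's) on the probability space $[0,1]$, valid since $q\geqslant 1$. Raising to the power $p$ and integrating in $x$ yields $\|PF\|_{L_p(L_q)}\leqslant \|F\|_{L_p(L_q)}$; in particular $PF\in L_p(L_q)$. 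Since $P$ is a nonzero idempotent it also satisfies $\|P\|\geqslant 1$, whence $\|P\|=1$. Thus $Y$ is the range of a norm-one projection, i.e.\ $Y$ is $1$-complemented, and $\iota$ realizes $L_p$ as a $1$-complemented isometric copy inside $L_p(L_q)$. I do not expect any serious obstacle here: the entire argument is the conditional-expectation computation, the only subtlety being the harmless reduction to the pointwise inequality, which rests on working over a probability space.
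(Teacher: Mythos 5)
Your proof is correct and follows essentially the same route as the paper: the same isometric embedding $f\mapsto f\otimes 1$, and the same averaging-in-the-second-variable projection, whose norm-one bound rests on the identical inequality $\|u\|_{L_1}\leqslant\|u\|_{L_q}$ on the probability space $[0,1]$. The only cosmetic difference is that the paper factors your $P$ as $J\circ Q$ with $Q(F)=\Intg\circ F$ mapping into $L_p$ (thereby avoiding the identification with functions on $[0,1]^2$ and the appeal to Fubini), but the operator is the same.
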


\begin{proof}
    Let us denote by $J:L_p\to L_p(L_q)$ the mapping defined by $J(f):=f\otimes 1$. For every $f\in L_p$ we have
    \[\|Jf\|_{L_p(L_q)}^p = \int_0^1\|J(f)(s)\|_{L_q}^p ds = \int_0^1 |f(s)|^p\cdot \|1\|_{L_q}^p ds = \|f\|_{L_p}^p,\]
    so $J \colon L_p \to L_p(L_q)$ is isometric embedding; denote by $X$ its range. We will show that $X$ is complemented in $L_p(L_q)$.

    \smallskip

    Since we are working on $[0, 1]$, we have $L_q \subseteq L_1$ and for every $u \in L_q$, $\|u\|_{L_1} \leqslant \|u\|_{L_q}$. Hence, the integral functional $\Intg \colon L_q \to \Rea$ is well-defined and continuous. For $F \in L_p(L_q)$, let $Q(F) \coloneq \Intg \circ F$; in other words, $Q(F)$ is a function $[0, 1] \to \Rea$ such that for every $s \in [0, 1]$, we have $Q(F)(s) = \int_0^1 F(s)(t) dt$. The map $Q(F)$ is measurable as the composition of a continuous map and a measurable map. Moreover, for each $s \in [0, 1]$, one has
    $$|Q(F)(s)| = \left|\int_0^1 F(s)(t) dt\right| \leqslant \|F(s)\|_{L_1} \leqslant \|F(s)\|_{L_q},$$
    thus,
    $$\int_{0}^1|Q(F)(s)|^p ds \leqslant \int_{0}^1 \|F(s)\|_{L_q}^p ds = \|F\|_{L_p(L_q)}^p.$$
    It follows that $Q(F) \in L_p$ and that $Q \colon L_p(L_q) \to L_p$ is a continuous linear map with operator norm at most $1$.

    \smallskip

    Finally, we have $Q \circ J = \Id_{L_p}$. Indeed, for $f \in L_p$ and $s \in [0, 1]$
    $$Q \circ J (f)(s) = \int_0^1 J(f)(s)(t) dt = \int_0^1 f(s) dt = f(s).$$
    Thus, $P: = J \circ Q$ is a norm-one projection with range $X$ isometric to $L_p$.
\end{proof}

\begin{proposition}\label{prop:LqInLpLq}
    Let $1 \leqslant p, q < \infty$. The space $L_q$ is isometric to a 1-complemented subspace of $L_p(L_q)$.

    More precisely, the mapping $L_q\ni f\mapsto 1\otimes f\in L_p(L_q)$ is isometry onto $1$-complemented subspace of $L_p(L_q)$.
\end{proposition}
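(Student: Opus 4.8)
The statement to prove is Proposition~\ref{prop:LqInLpLq}, which mirrors Proposition~\ref{prop:LpInLpLq} but embeds $L_q$ rather than $L_p$. The plan is to follow exactly the same three-step scheme as the preceding proof, replacing the integration functional on the $L_q$-fiber by an integration functional on the $L_p$-base, and checking that the relevant inequality still goes through because we are working on a probability space.

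First I would define the embedding $J \colon L_q \to L_p(L_q)$ by $J(f) = 1 \otimes f$, so that $J(f)(s) = f \in L_q$ for every $s \in [0,1]$. The isometry computation is immediate: for every $f \in L_q$,
\[
\|Jf\|_{L_p(L_q)}^p = \int_0^1 \|J(f)(s)\|_{L_q}^p\, ds = \int_0^1 \|f\|_{L_q}^p\, ds = \|f\|_{L_q}^p,
\]
so $J$ is an isometric embedding; call its range $X$. Next I would construct the projection. The natural candidate for the left inverse is the operator $Q \colon L_p(L_q) \to L_q$ that averages over the base variable $s$: for $F \in L_p(L_q)$, set $Q(F) \coloneq \int_0^1 F(s)\, ds$, the Bochner integral of the $L_q$-valued function $F$, which is well-defined since $F \in L_p(L_q) \subseteq L_1(L_q)$ on a probability space. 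The key norm estimate uses the contractivity of the Bochner integral followed by Jensen's (or Hölder's) inequality on $[0,1]$:
\[
\|Q(F)\|_{L_q} = \left\|\int_0^1 F(s)\, ds\right\|_{L_q} \leqslant \int_0^1 \|F(s)\|_{L_q}\, ds \leqslant \left(\int_0^1 \|F(s)\|_{L_q}^p\, ds\right)^{1/p} = \|F\|_{L_p(L_q)},
\]
where the second inequality is Jensen's inequality for the convex function $t \mapsto t^p$ applied to the probability measure $ds$ on $[0,1]$. Thus $Q$ is a well-defined linear map of operator norm at most $1$.

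Finally I would verify that $Q \circ J = \Id_{L_q}$: for $f \in L_q$ one has $Q(Jf) = \int_0^1 (Jf)(s)\, ds = \int_0^1 f\, ds = f$, since $(Jf)(s) = f$ is constant in $s$. Consequently $P \coloneq J \circ Q$ is a norm-one projection of $L_p(L_q)$ onto $X$, and $X$ is a $1$-complemented subspace isometric to $L_q$, as claimed. I do not expect any genuine obstacle here; the only point requiring a moment of care, in contrast to the $L_p$-case where the estimate was entirely pointwise in $s$, is that the contraction now happens in the \emph{base} direction, so one must invoke Jensen's inequality on the probability space $([0,1], ds)$ rather than the elementary bound $\|u\|_{L_1} \leqslant \|u\|_{L_q}$ on the fiber. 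Everything else is a verbatim adaptation of the proof of Proposition~\ref{prop:LpInLpLq}.
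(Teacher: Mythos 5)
Your proposal is correct and follows essentially the same route as the paper's proof: the same embedding $J(f) = 1 \otimes f$, the same projection $Q$ via the Bochner integral over the base variable, and the same norm estimate — your invocation of Jensen's inequality for $t \mapsto t^p$ on the probability space $([0,1], ds)$ is exactly the paper's observation that $\|f\|_{L_1} \leqslant \|f\|_{L_p}$ for $f(s) = \|F(s)\|_{L_q}$.
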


\begin{proof}
    Let us denote by $J:L_q\to L_p(Lq)$ the mapping defined by $J(u):=1\otimes u$. For every $u\in L_q$ we have
    \[\|J(u)\|_{L_p(L_q)}^p = \int_0^1 \|u\|_{L_q}^p ds = \|u\|_{L_q}^p,\]
    hence $J \colon L_q \to L_p(L_q)$ is an isometric embedding. Denote by $X$ its range; we will show that $X$ is complemented in $L_p(L_q)$.

    \smallskip

    Given $F \in L_p(L_q)$, the function $f \colon s \mapsto \|F(s)\|_{L_q}$ is in $L_p$; hence, since we are working on $[0, 1]$, it is integrable and satisfies $\|f\|_{L_1} \leqslant \|f\|_{L_p}$. In particular $F$ is Bochner-integrable; denote by $Q(F)$ its integral. It is an element of $L_q$, and one has
    $$\|Q(F)\|_{L_q} = \left\|\int_0^1F(s)ds\right\|_{L_q} \leqslant \int_0^1 \|F(s)\|_{L_q} ds = \|f\|_{L_1} \leqslant \|f\|_{L_p} = \|F\|_{L_p(L_q)}.$$
    Thus  $Q \colon L_p(L_q) \to L_q$ is a continuous operator with operator norm at most $1$.

    \smallskip

    Finally, we have $Q \circ J = \Id_{L_q}$. Indeed, for $u \in L_q$ one has:
    $$Q\circ J(u) = \int_0^1 J(u)(s) ds = \int_0^1 u ds = u.$$
    Thus, $P:=J\circ Q$ is a norm-one projection with range $X$ isometric to $L_q$.
\end{proof}

Now, using well-known results we are able to deduce that $L_p(L_q)$ is not isometric to any $L_r$ space. 

\begin{theorem}\label{thm:LpLqiso}
    Let $1\leqslant p, q, r < \infty$, $p\neq q$. Then $L_p(L_q)$ is not linearly isometric to $L_r$.
\end{theorem}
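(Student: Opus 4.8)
The plan is to exploit the structural dichotomy between the cases $r \in \{p\}$, $r = 2$, and the generic case, using the fact that isometry of $L_r$-spaces strongly constrains which other $L$-spaces can embed as $1$-complemented subspaces. First I would record the two key containments established just above: by Propositions~\ref{prop:LpInLpLq} and \ref{prop:LqInLpLq}, both $L_p$ and $L_q$ are linearly isometric to $1$-complemented subspaces of $L_p(L_q)$. Thus if $L_p(L_q)$ were isometric to some $L_r$, then both $L_p$ and $L_q$ would be $1$-complemented subspaces of $L_r$.

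The main classical input I would invoke concerns which $L_s$-spaces can be isometrically embedded (or $1$-complemented) in $L_r$. The relevant fact is that $L_s$ embeds isometrically into $L_r$ only for certain pairs $(s,r)$: for $1 \le r \le s \le 2$ isometric embeddings exist, but outside such ranges the Banach-lattice/$p$-th power structure obstructs them. More robustly, I would use the theorem that for $r \neq 2$ the only $s$ for which $L_s$ is isometric to a subspace of $L_r$ are tightly controlled, and in particular $L_r$ cannot contain an isometric copy of $L_s$ for $s > \max(r,2)$; conversely $1$-complementation forces even stronger rigidity (a $1$-complemented subspace of $L_r$ isometric to an $L_s$-space must have $s = r$, by the theory of contractively complemented subspaces of $L_r$, which are themselves $L_r$-spaces up to isometry). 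Combining these: since both $L_p$ and $L_q$ are $1$-complemented in $L_r$, rigidity of contractively complemented subspaces of $L_r$ would force $p = r$ and $q = r$, whence $p = q$, contradicting the hypothesis $p \neq q$. The case $r = 2$ is handled separately and easily, since $L_2$ is a Hilbert space and every closed subspace is $1$-complemented and Hilbertian, so a $1$-complemented copy of $L_p$ (resp.\ $L_q$) forces $p = 2$ (resp.\ $q = 2$), again giving $p = q = 2 = r$, a contradiction.

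The hard part will be pinning down precisely which classical reference to cite for the statement that a contractively ($1$-)complemented subspace of $L_r$ that is itself an $L_s$-space must satisfy $s = r$ (for $r \neq 2$); this is the rigidity theorem of Ando--Bernau--Lacey type on contractively complemented subspaces of $L_r$, which identifies them as $L_r$-spaces over some measure. I would state this as a cited black box rather than reprove it. An alternative, possibly cleaner route that avoids the complementation rigidity theorem is to argue via the moduli of convexity/smoothness or via cotype: $L_r$ has cotype $\max(r,2)$ and type $\min(r,2)$, and these are isomorphic invariants, so $L_p(L_q) \equiv L_r$ would force the type and cotype of $L_p(L_q)$ to coincide with those of $L_r$; but $L_p(L_q)$ has type $\min(p,q,2)$ and cotype $\max(p,q,2)$, and matching both equalities $\min(p,q,2) = \min(r,2)$ and $\max(p,q,2) = \max(r,2)$ simultaneously, together with the fact that $p \neq q$, yields a contradiction except in degenerate configurations that I would rule out by hand. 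I expect the type/cotype approach to be the most self-contained, so I would carry it out as the primary argument, falling back on the complementation route only if the boundary cases (e.g.\ one of $p,q$ equal to $2$) need extra care.
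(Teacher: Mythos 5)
Your fallback route is exactly the paper's proof: by Propositions \ref{prop:LpInLpLq} and \ref{prop:LqInLpLq}, both $L_p$ and $L_q$ are $1$-complemented in $L_p(L_q)$, and the Douglas--Ando rigidity theorem (\cite{Doug} for $r=1$, \cite{Ando} for $1<r<\infty$, $r\neq 2$; the case $r=2$ is trivial since every closed subspace of a Hilbert space is Hilbertian) identifies $1$-complemented subspaces of $L_r$ with $L_r(\mu)$-spaces, forcing $p=r=q$, a contradiction. Had you committed to that route as the main argument, you would be done.

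However, your declared \emph{primary} argument --- matching type and cotype of $L_p(L_q)$ with those of $L_r$ --- has a genuine gap, and the cases where it fails are not ``degenerate configurations that can be ruled out by hand''. The space $L_p(L_q)$ has optimal type $\min(p,q,2)$ and optimal cotype $\max(p,q,2)$; hence whenever $p$ and $q$ lie on the same side of $2$ (both $\leqslant 2$ or both $\geqslant 2$, with $p \neq q$), the candidate $r=\min(p,q)$ (respectively $r=\max(p,q)$) yields an $L_r$ with exactly the same type and cotype, so the equations $\min(p,q,2)=\min(r,2)$ and $\max(p,q,2)=\max(r,2)$ admit a solution and no contradiction arises: for instance, $L_1(L_{3/2})$ and $L_1$ both have type $1$ and cotype $2$. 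Worse, the obstruction is not specific to type and cotype: when $q \in (p,2)$ or $q=2$, the paper shows (see the proof of Proposition \ref{prop:guardedLpLqTrivialCase}, using the isometric embedding of $\ell_q$ into $L_p$) that $\closedAge{L_p(L_q)} = \closedAge{L_p}$, i.e.\ the two spaces are finitely representable in each other, so \emph{no} local invariant whatsoever (type, cotype, moduli of convexity or smoothness, or anything determined by finite-dimensional subspaces) can distinguish $L_p(L_q)$ from $L_p$; the isometry must be excluded by a global argument, which is precisely what complementation rigidity provides. Type/cotype does have a legitimate, if implicit, role in the paper's argument, but only at the last step: once Douglas--Ando gives $L_p \equiv L_r(\mu_1)$ and $L_q \equiv L_r(\mu_2)$, comparing type and cotype of these infinite-dimensional spaces identifies the exponents and yields $p=r=q$. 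So you should invert your plan: the complementation route is the proof, and type/cotype is only a closing lemma within it.
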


\begin{proof}
    It is well-known that $1$-complemented subspaces of $L_p$ spaces are isometric to an $L_p(\mu)$ space for some measure $\mu$ (see \cite{Doug} and \cite{Ando}, where this was proved for $p=1$ and $1<p<\infty$ with $p\neq 2$, respectively). However, by Propositions~\ref{prop:LpInLpLq} and ~\ref{prop:LqInLpLq}, we have that both $L_p$ and $L_q$ are isometric to $1$-complemented subspaces of $L_p(L_q)$.
\end{proof}

Given $1\leq p\neq q<\infty$ with $q\neq 2$ and $q\notin (p,2)$, we know that $L_p(L_q)$ is guarded Fra\"iss\'e (see Corollary~\ref{cor:LpLq}). However, we have the following. We recall from \cite[Definition 2.2]{FLT} that a Banach space $X$ is \emph{approximately ultrahomogeneous (AuH)} if for every finite-dimensional $E\subseteq X$, $\Iso(X)$ acts on $\Emb(E,X)$ $\varepsilon$-transitively for every $\varepsilon>0$, which is equivalent to the fact that the action $\Iso(X)\curvearrowright \Emb(E,X)$ is minimal, i.e. it has no closed invariant subspace.

\begin{proposition}\label{prop:LpLqNotAuH}
Given $1\leq p\neq q<\infty$ with $q\neq 2$ and $q\notin (p,2)$, $L_p(L_q)$ is not (AuH), and thus not weak Fra\"iss\'e.
\end{proposition}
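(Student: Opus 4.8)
The plan is to prove that $L_p(L_q)$ fails to be approximately ultrahomogeneous by exhibiting a single finite-dimensional subspace $E\subseteq L_p(L_q)$ and two isometric embeddings $E\to L_p(L_q)$ that cannot be approximated by the restriction of any global isometry. The natural candidate is to exploit the rigidity provided by Raynaud's Theorem~\ref{thm:raynaud}: for the given range of $(p,q)$, every linear isometry $\ell_p^n(\ell_q^m)\to L_p(L_q)$ with $m\geq 3$ automatically preserves disjointness of supports. This means that \emph{isometric} copies of $\ell_p^l(\ell_q^k)$ inside $L_p(L_q)$ are, up to a change of signs (Lemma~\ref{lemma:turnToAbsoluteValue}), lattice copies. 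The key idea is then to produce a finite-dimensional $E$ for which there exist two genuinely different isometric embeddings into $L_p(L_q)$ whose images, once made disjointly supported, are \emph{not} related by a global isometry at the required scale.

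First I would take $E$ to be an isometric copy of $\ell_p^l(\ell_q^k)$ for suitable $l,k\geq 3$ chosen inside $L_p(L_q)$ as in Theorem~\ref{thm:LpLqIsGuarded}, where approximate transitivity on \emph{all} of $\Emb_\delta(E,L_p(L_q))$ for small $\delta$ was established but transitivity on the exact set $\Emb(E,L_p(L_q))$ was not claimed to fail. The distinction is that (AuH) requires, for the \emph{smallest} such $E$, that $\Iso(L_p(L_q))$ act $\varepsilon$-transitively on $\Emb(E,L_p(L_q))$ for \emph{every} $\varepsilon>0$ and every finite-dimensional $E$, not merely along a cofinal family. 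So the strategy is: pick $E$ \emph{not} in the cofinal family $\mathcal{F}$ of Lemma~\ref{lem:ageOfLpX}, for instance a one-dimensional or two-dimensional subspace, or more cleverly a subspace whose embeddings into $L_p(L_q)$ genuinely split into more than one orbit. Concretely, I would exhibit two embeddings $\phi,\psi\in\Emb(E,L_p(L_q))$ whose images have intrinsically different positions: $\phi$ placing $E$ into a ``lattice-like'' region, and $\psi$ placing it so that no small perturbation of a global isometry can carry one to the other. The obstruction should come from a metric invariant of the pair $(E,\phi)$ preserved by $\Iso(L_p(L_q))$, such as the disjointness structure forced by Raynaud's theorem, which can take distinct values for $\phi$ and $\psi$.

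The cleanest route is likely to contrast the behavior of $L_p$ and $L_q$ as $1$-complemented subspaces (Propositions~\ref{prop:LpInLpLq} and~\ref{prop:LqInLpLq}): since $p\neq q$, a one-dimensional subspace $E=\Span\{e\}$ admits embeddings sending $e$ into the $L_p$-part and into the $L_q$-part, and these should be distinguishable by a modulus-of-smoothness or cotype-type local invariant that is $\Iso$-invariant but differs on the two copies. I would set up the argument so that if $\Iso(L_p(L_q))$ acted $\varepsilon$-transitively on $\Emb(E,L_p(L_q))$ for all $\varepsilon$, one could approximate an embedding into the $L_p$-copy by (the restriction of) a global isometry applied to an embedding into the $L_q$-copy, and then pass to a limit using compactness to obtain an exact isometry conjugating the two, contradicting a local geometric difference between $L_p$ and $L_q$.

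The hard part will be isolating the precise finite-dimensional witness and the invariant: I must ensure the chosen $E$ has two embeddings living in orbits that remain separated \emph{at scale zero}, i.e. even after passing to limits. For the passage to the limit I would invoke $\omega$-categoricity (Example~\ref{ex:omegaCat}) together with compactness of $\Emb_C(E,L_p(L_q))\sslash\Iso(L_p(L_q))$ from Proposition~\ref{prop:CompacityEmb}\ref{it:EmbCpct}: approximate conjugacy for all $\varepsilon$ upgrades, by compactness of the quotient, to the existence of an exact isometry $T\in\Iso(L_p(L_q))$ with $\psi=T\circ\phi$. The contradiction is then that such a $T$ would identify a purely $p$-behaving local structure with a purely $q$-behaving one, which is impossible since $p\neq q$ (and here the restriction $q\neq 2$, $q\notin(p,2)$ guarantees via Theorem~\ref{thm:raynaud} that the disjointness structure, hence the local $\ell_p$-versus-$\ell_q$ geometry, is rigidly preserved). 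The final sentence of the proof records that failure of (AuH) immediately yields failure of weak Fra\"iss\'eness, since weak Fra\"iss\'e spaces are in particular (AuH) by taking $\delta$ arbitrarily small in Definition~\ref{def:FraisseFLMT}.
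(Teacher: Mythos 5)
Your overall instinct --- contrasting the two $1$-complemented copies of $L_p$ and $L_q$ given by Propositions \ref{prop:LpInLpLq} and \ref{prop:LqInLpLq} --- is exactly the right one, and it is what the paper does. But the two steps your plan actually rests on both fail. First, the ``upgrade by compactness to exact conjugacy'' is false: compactness of $\Emb_C(E,X)\sslash\Iso(X)$ (Proposition \ref{prop:CompacityEmb}) is compactness of the space of orbit \emph{closures}; it does not make orbits closed, so knowing $\|\psi-T_\varepsilon\circ\phi\|<\varepsilon$ for every $\varepsilon$ only tells you that $\phi$ and $\psi$ have the same orbit closure, never that $\psi=T\circ\phi$ for some exact $T\in\Iso(X)$. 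A concrete counterexample is $L_p$ itself: it is $\omega$-categorical, and $\Iso(L_p)$ acts almost transitively but not transitively on its unit sphere, so approximate conjugacy at every scale holds for pairs of unit vectors that are not exactly conjugate. Second, your ``cleanest route'' via a one-dimensional witness cannot work: combining fiberwise isometries of $L_q$ with isometries acting in the base variable, the orbit of $1\otimes 1$ is dense in the unit sphere of $L_p(L_q)$, so the action on the sphere is almost transitive and every continuous isometry-invariant of a unit vector (modulus of smoothness, cotype-type quantities, etc.) is constant on the sphere; no such invariant can separate $f\otimes 1$ from $1\otimes u$. Moreover, the rigidity you invoke, Theorem \ref{thm:raynaud}, is about isometric embeddings of $\ell_p^n(\ell_q^m)$ with $m\geqslant 3$ and says nothing about embeddings of a one- or two-dimensional $E$, so it does not apply to any of the witnesses you propose.

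What the paper does instead is take $E=\ell_2^2$, with $\psi(a,b)=a(f\otimes 1)+b(g\otimes 1)$ mapping into the $L_p$-part and $\phi(a,b)=a(1\otimes u)+b(1\otimes v)$ into the $L_q$-part, and then, rather than extracting an exact conjugating isometry, it assumes a sequence $T_n\in\Iso(L_p(L_q))$ with $\|\phi-T_n\circ\psi\|\to 0$ and uses Raynaud's \emph{structure theorem for surjective isometries} of $L_p(L_q)$ (\cite[Theorem 5.1 and Proposition 1.6]{Ray88}, a different result from Theorem \ref{thm:raynaud}) to write $(T_n\circ\psi)(a,b)(t)=\bigl(aS_nf(t)+bS_ng(t)\bigr)\cdot T_{n,t}(1)$. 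The relevant invariant is thus fiberwise collinearity: at a.e.\ $t$ the values of $T_n\circ\psi$ span a one-dimensional subspace of $L_q$, whereas the values of $\phi$ span the two-dimensional space $\Span\{u,v\}$. Passing to a subsequence with a.e.\ convergence of fibers, the identity $-S_ng(t)\cdot(T_n\circ\psi)(1,0)(t)+S_nf(t)\cdot(T_n\circ\psi)(0,1)(t)=0$ survives in the limit and forces $u$ and $v$ to be linearly dependent, a contradiction. So the missing ideas in your proposal are precisely (i) the structure theorem for global isometries, which supplies an invariant that persists under approximation, and (ii) the a.e.-limit argument that replaces the (unavailable) exact-conjugacy step. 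Your closing remark that failure of (AuH) implies failure of weak Fra\"iss\'eness is correct and matches the paper.
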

\begin{proof}
The idea of the proof is more-or-less to show that the set
\[\DD:=\{\psi\in\Emb(\ell_2^2,L_p(L_q))\setsep \text{for a.e. }t\in[0,1]\;\forall x,y\in\ell_2^2\;\big(\psi(x)(t),\psi(y)(t)\text{ are collinear}\big)\}\]
is a proper $\Iso(L_p(L_q))$-invariant and closed subspace of $\Emb(\ell_2^2,L_p(L_q))$ which shows that $L_p(L_q)$ is not (AuH). In order to avoid certain technicalities, we shall extract what is essential and prove a technically less-complicated (but still sufficient) statement that 
for $E = \ell_2^2\in\Age(L_p(L_q))$ there exists $\varepsilon_0>0$ and $\phi,\psi\in \Emb(\ell_2^2,L_p(L_q))$ such that for any $T\in\Iso(L_p(L_q))$ we have $\|\phi - T\circ \psi\|\geq \varepsilon_0$.

Since $\ell_2$ embeds isometrically into any $L_p$ space (see e.g. \cite[Proposition 6.4.12]{AlbiacKalton}), we pick functions $f,g\in L_p$ and $u,v\in L_q$ such that both $(f,g)$ and $(u,v)$ are isometric to the $\ell_2^2$-basis. Further, we pick the isometric embeddings from $L_p$ and $L_q$ respectively into $L_p(L_q)$ as in Propositions~\ref{prop:LpInLpLq} and \ref{prop:LqInLpLq}. This way we obtain isometries $\psi,\phi\in\Emb(E,L_p(L_q))$ given for $(a,b)\in \ell_2^2$ as
\[
\psi((a,b)):= a (f\otimes 1) + b (g\otimes 1)\qquad \phi((a,b)):= a (1\otimes u) + b (1\otimes v).
\]
In order to obtain a contradiction, assume there is a sequence of isometries $T_n\in \Iso(L_p(L_q))$, $n\in\Nat$ satisfying $\|\psi - T_n\circ\psi\|\to 0$. By \cite[Theorem 5.1 and Proposition 1.6]{Ray88}, for eveery $n\in\Nat$ there are isometries $S_n\in\Iso(L_p)$ and $(T_{n,t})_{t\in[0,1]}$ where $T_{n,t}\in \Iso(L_q)$ for $t\in [0,1]$ such that for every $(a,b)\in\ell_2^2$ we have
\begin{equation}\label{eq:specialIsometries}
(T_n\circ\psi)((a,b))(t) = \big(aS_nf(t)+bS_ng(t)\big)\cdot T_{n,t}(1).
\end{equation}
We recall the well-known fact that whenever $x_n\to x$ is a Lebesgue-Bochner space $L_p(X)$ then there is a subsequence such that for a.e. $t$ we have $x_n(t)\to x(t)$ in the Banach space $X$\footnote{This can be proved e.g. following the classical proof that $L_p([0,1])$ is complete as presented in \cite[Theorem 3.11]{Rudin}, replacing all the occurances of absolute values $|\cdot|$ by norms $\|\cdot \|_X$.}. Thus, there is an increasing sequence of natural numbers $(n_k)$ such that for a.e. $t\in [0,1]$ we have $T_{n_k}(f\otimes 1)(t)\to (1\otimes u)(t) = u$ and $T_{n_k}(g\otimes 1)(t)\to (1\otimes v)(t) = v$. Further, using \eqref{eq:specialIsometries} we obtain $|S_{n_k}f(t)| = \|T_{n_k}(f\otimes 1)(t)\|$, so $(|S_{n_k}f(t)|)_k$ and similarly also $(|S_{n_k}g(t)|)_k$ are convergent sequences for a.e. $t\in [0,1]$. Thus, for a.e. $t\in [0,1]$ passing to one a further subsequence there are $a:=\lim_k S_{n_k}f(t)$ and $b:=\lim_k S_{n_k}g(t)$ with $|a| = \|u\|$ and $|b| = \|v\|$ for which we obtain
\[
0 = -S_{n_k}g(t)\cdot T_{n_k}(f\otimes 1)(t) + S_{n_k}f(t)\cdot T_{n_k}(g\otimes 1)(t)\to au + bv,
\]
which implies that $u$ and $v$ are not linearly independent, contradiction.
\end{proof}

\section{Open questions}

We present below a list of problems and open questions relative to the notions studied in the present article, together with comments.

\begin{problem}\label{prob:main}Find new examples of infinite-dimensional guarded Fra\"iss\'e (or cofinally Fra\"iss\'e) Banach spaces. In particular, are spaces of the form $L_p(L_q(L_r))$, and $L_p(E)$ where $E$ is finite-dimensional not contained in $L_p$, among such examples?
\end{problem}
The class of guarded Fra\"iss\'e Banach spaces is potentially much richer than the class of Fra\"iss\'e Banach spaces, however the verification of particular cases might be difficult. This is already witnessed on the case of $L_p(L_q)$ spaces from the previous section where the guarded Fra\"iss\'eness of such spaces depends on deep results on their isometries and lattice structures. Recently, Ferenczi and Lopez-Abad announced a proof that if the Banach space $X$ is $\omega$-categorical, then the spaces $C(2^\Nat, X)$ and $L_p(X)$ for $1 \leqslant p < \infty$ are $\omega$-categorical, too. Similarly, it is tempting to conjecture that for every guarded Fra\"iss\'e Banach space $X$ and $p\in\left[1,\infty\right)$ such that $\Age(L_p(X))$ is neither equal to $\Age(L_p)$ nor to $\Age(X)$, $L_p(X)$ is guarded Fra\"iss\'e, but with current tools and methods this is out of reach now. Note, however, that proving the existence of new guarded Fra\"iss\'e Banach spaces is certainly much easier than actually constructing them since, by Theorem \ref{thm:GuardedFraisseCorrespondence}, building new guarded Fra\"iss\'e classes is enough for that. For instance, the aforementioned result by Ferenczi and Lopez-Abad, combined with Theorem \ref{thm:omegaCategoricalIsFraisse}, might easily lead to the identification of new such classes.

\begin{question}\label{quest:equivCAPGAP}
    Does there exist a guarded Fra\"iss\'e Banach space that is not cofinally Fra\"iss\'e?
\end{question}

All currently known examples of guarded Fra\"iss\'e Banach spaces are cofinally Fra\"iss\'e. Note that, by Lemma \ref{lem:CofFraisseCorresp}, it is equivalent to Question \ref{quest:equivCAPGAP} to ask whether there exists a guarded Fra\"iss\'e class of finite-dimensional Banach spaces that is not cofinally Fra\"iss\'e. In the discrete setting, weak Fra\"iss\'e structures (i.e., structures satisfying the discrete analogue of guarded Fra\"iss\'eness) that are not cofinally Fra\"iss\'e are known; see for instance \cite{gFExamples}.
 However, even in that setting it is not easy to find really
natural hereditary classes with (JEP) and the weak amalgamation property but not the cofinal amalgamation property. Thus, Question~\ref{quest:equivCAPGAP} may turn out to be challenging and ingenious ideas may be needed to find a guarded Fra\"iss\'e Banach space that is not cofinally Fra\"iss\'e.

\smallskip

\begin{question}\label{quest:AgeNotGAP}
    Does there exist a separable Banach space $X$ such that $\closedAge{X}$ doesn't satisfy (GAP)? 
\end{question}

In the discrete setting, structures whose age doesn't satisfy the discrete analogue of the (GAP) exist, but are not easy to find. One example is the class of all finite graphs having no cycle of length $4$, see \cite{PanagiotopoulosTent}. Thus, if the answer to Question \ref{quest:AgeNotGAP} turned out to be negative, this situation would be very specific to the Banach space setting.

\smallskip

\begin{question}\label{quest:nearHilbert}
    Fix a separable Banach space $X$ that is near Hilbert (i.e. has type $p$ and cotype $q$ for every $1 \leqslant p < 2 < q \leqslant \infty$), but not isometric to a Hilbert space. Does $\closedAge{X}$ satisfy (GAP)? (CAP)? Is it a (weak) amalgamation class?
\end{question}

There are many examples of near Hilbert, non-Hilbert spaces in the literature: some Orlicz spaces \cite{Katirtzoglou}, all twisted Hilbert spaces \cite{ThreeSpaces}... Question \ref{quest:nearHilbert} can be asked for each of them separately, and the answer might depend on the space. The reason why this question is interesting is that none of the currently known examples of guarded Fra\"iss\'e classes can be the closed age of a near Hilbert, non-Hilbert space. Thus, if for such a space $X$, we manage to determine whether $\closedAge{X}$ does satisfy the (GAP) or not, this would automatically provide either a new example of a guarded Fra\"iss\'e class, or a positive answer to Question \ref{quest:AgeNotGAP}.

\smallskip

\begin{question}\label{quest:GFImpliesOmegaCat}
    Is every guarded Fra\"iss\'e (resp. cofinally Fra\"iss\'e) Banach space $\omega$-categorical?
\end{question}

All currently known examples of guarded Fra\"iss\'e Banach spaces are $\omega$-categorical (see Example \ref{ex:omegaCat}). Additionally, as shown in \cite{FeRV23}, all Fra\"iss\'e Banach spaces are $\omega$-categorical. However, in the discrete setting, there exist very natural examples of cofinally Fra\"iss\'e structures that are not $\omega$-categorical; one of them is the linear graph\footnote{The action of the group of automorphisms of $\Int$ as a linear graph has infinitely many orbits for two element subsets. Indeed, each orbit is determined by the distance between the two elements and the distance can be arbitrarily large. Thus the action is not oligomorphic, so the graph is not $\omega$-categorical. However, $\Aut(\Int)$ acts transitively on $\Emb(G, \Int)$ for every connected subgraph $G \subseteq \Int$, hence $\Int$ is cofinally Fra\"iss\'e.} (\textit{i.e.} the graph with vertex set $\mathbb{Z}$ where to integers are linked by an edge iff they are consecutive). So, once again, if the answer to question \ref{quest:GFImpliesOmegaCat} happens to be positive, this would be very specific to the Banach space setting.

\smallskip

\begin{question}\label{quest:closedAge}
    Does every guarded Fra\"iss\'e (resp. cofinally Fra\"iss\'e) Banach space have a closed age?
\end{question}

Once again, this is the case of all known examples, and every Fra\"iss\'e Banach space has a closed age, as shown in \cite[Theorem 2.12]{FLT}. Observe that, by Theorem \ref{thm:omegaCategoricalIsFraisse}, a positive answer to Question \ref{quest:GFImpliesOmegaCat} would imply a positive answer to Question \ref{quest:closedAge}. Let us also mention two additional consequences of positive answer to Question \ref{quest:closedAge}: 
\begin{itemize}
    \item by \cite[Theorem 2.12]{FLT}, every weak Fra\"iss\'e Banach space would actually be Fra\"iss\'e;
    \item every guarded Fra\"iss\'e class $\KK \subseteq \BM$ would satisfy the following property, seemingly stronger than (GAP): for every $E \in \KK$ and $\varepsilon > 0$,  there exists $F \in \KK$ and $\delta > 0$ such that $(E, F)$ is a $(\KK, \varepsilon, \delta)$-amalgamation pair. This property would follow from Remark \ref{rem:gFFromPairs} and Proposition \ref{prop:ageOfGuardedHasAmalg}, observing that $\KK = \Age(\Flim(\KK))$.
\end{itemize}

\begin{question}\label{quest:kpt}
    Fix $1 \leqslant p, q < \infty$. Is the group $\Iso(L_p(L_q))$, endowed with the strong operator topology (SOT), extremely amenable? If not, does it have a metrizable universal minimal flow?
\end{question}

The celebrated Kechris--Pestov--Todorcevic correspondence \cite{kpt} allows one to prove extreme amenability or to compute universal minimal flows of automorphism groups of ultrahomogeneous discrete structures from the Ramsey theory of their ages (a gentle introduction to the relevant dynamical notions is given at the beginning of \cite{kpt}). Extreme amenability of $\Iso(L_p)$, endowed with SOT, has been proved by Gromov--Milman for $p=2$ \cite{GromovMilman} and by Giordano--Pestov for other values of $1 \leqslant p < \infty$ \cite{GiordanoPestov}, both using measure concentration. The latter result has been reproved in \cite{FLT} using a Banach space version of the Kechris--Pestov--Todorcevic correspondence. Recently, Barto\v{s}--Bice--Dasilva Barbosa--Kubi\'s \cite{weakKPT} extended the Kechris--Pestov--Todorcevic correspondence to discrete weak (\textit{i.e.} guarded) Fra\"iss\'e structures (their setting is actually more general but still doesn't allow one to work with metric structures). We believe that extending methods from \cite{weakKPT} to the Banach space setting and studying the Ramsey properties of $\Age(L_p(L_q))$ could allow one to answer Question \ref{quest:kpt} for values of $p$ and $q$ given by Theorem \ref{thm:LpLqIsGuarded}. It would also be interesting to answer this question for other values of $p$ and $q$; it might also be possible to adapt Giordano--Pestov's method in this case.

\bigskip

\noindent\textbf{Acknowledgments.} The authors would like to thank Antonio Avil\'es, Valentin Ferenczi, Sophie Grivaux, C. Ward Henson, Wies\l aw Kubi\'s, Maciej Malicki, \'Etienne Matheron, Yves Raynaud, Christian Rosendal and Mary Angelica Tursi for insightful discussions during writing of this paper. They would also like to thank Dragan Ma\v{s}ulovi\'c for suggesting the terminology ``guarded Fra\"iss\'e''.

\bibliography{refSimpleSpaces}\bibliographystyle{acm}
\end{document}